\newtheorem{thmintro}{Theorem}
\newtheorem{propintro}[thmintro]{Proposition}
\newtheorem{theorem}{Theorem}[section]
\newtheorem{corollary}[theorem]{Corollary}
\newtheorem{lemma}[theorem]{Lemma}
\newtheorem{prop}[theorem]{Proposition}
\newtheorem{problem}{Problem}
\theoremstyle{definition}
\newtheorem{remark}[theorem]{Remark}
\newtheorem{example}[theorem]{Example}
\newtheorem{definition}[theorem]{Definition}
\newcommand{\CC}{\mathbb{C}}
\newcommand{\GG}{\mathbb{G}}
\newcommand{\FF}{\mathbb{F}}
\newcommand{\NN}{\mathbb{N}}
\newcommand{\ZZ}{\mathbb{Z}}
\newcommand{\BBBB}{\mathcal{B}}
\newcommand{\DDD}{\mathcal{D}}
\newcommand{\UU}{\mathcal{U}}
\newcommand{\B}{\mathfrak{B}}
\newcommand{\g}{\mathfrak{g}}
\newcommand{\G}{\mathfrak{G}}
\newcommand{\hh}{\mathfrak{h}}
\newcommand{\ii}{\mathfrak{i}}
\newcommand{\nn}{\mathfrak{n}}
\newcommand{\N}{\mathfrak{N}}
\newcommand{\PP}{\mathfrak{P}}
\newcommand{\T}{\mathfrak{T}}
\newcommand{\U}{\mathfrak{U}}
\newcommand{\inv}{^{-1}}
\newcommand{\la}{\langle}
\newcommand{\ra}{\rangle}
\newcommand{\co}{\colon\thinspace}
\newcommand{\Stt}{\mathfrak{St}}
\DeclareMathOperator{\ad}{ad}
\DeclareMathOperator{\Aut}{Aut}
\DeclareMathOperator{\charact}{char}
\DeclareMathOperator{\GL}{GL}
\DeclareMathOperator{\height}{ht}
\DeclareMathOperator{\im}{im}
\DeclareMathOperator{\modulo}{mod}
\DeclareMathOperator{\re}{re}
\DeclareMathOperator{\SL}{SL}
\DeclareMathOperator{\Hom}{Hom}
\DeclareMathOperator{\gr}{gr}
\DeclareMathOperator{\mm}{{\boldsymbol m}}
\DeclareMathOperator{\rr}{{\boldsymbol r}}
\DeclareMathOperator{\s}{{\boldsymbol s}}
\numberwithin{equation}{section}
\begin{document}

\renewcommand{\proofname}{{\bf Proof}}

\title[{Lie correspondence for complete Kac--Moody groups}]{Around the Lie correspondence for complete Kac--Moody groups and Gabber--Kac simplicity}
\author[Timoth\'ee Marquis]{Timoth\'ee \textsc{Marquis}$^*$}
\address{Department Mathematik, FAU Erlangen-Nuernberg, Cauerstrasse 11, 91058 Erlangen, Germany}
\email{marquis@math.fau.de}
\thanks{$^*$Supported by a Marie Curie Intra-European Fellowship}
\subjclass[2010]{20G44, 20E42 (primary), and 20E18 (secondary)}

\begin{abstract}
Let $k$ be a field and $A$ be a generalised Cartan matrix, and let ${\mathfrak G}_A(k)$ be the corresponding minimal Kac--Moody group of simply connected type over $k$. Consider the completion ${\mathfrak G}_A^{pma}(k)$ of ${\mathfrak G}_A(k)$ introduced by O. Mathieu and G. Rousseau, and let ${\mathfrak U}_A^{ma+}(k)$ denote the unipotent radical of the positive Borel subgroup of ${\mathfrak G}_A^{pma}(k)$. In this paper, we exhibit some functorial dependence of the groups ${\mathfrak U}_A^{ma+}(k)$ and ${\mathfrak G}_A^{pma}(k)$ on their Lie algebra. 
We also produce a large class of examples of minimal Kac--Moody groups ${\mathfrak G}_A(k)$ that are not dense in their Mathieu--Rousseau completion ${\mathfrak G}_A^{pma}(k)$.
In addition, we explain how the problematic of providing a unified theory of complete Kac--Moody groups is related to the problem of Gabber--Kac simplicity of ${\mathfrak G}_A^{pma}(k)$, asking whether every normal subgroup of ${\mathfrak G}_A^{pma}(k)$ that is contained in ${\mathfrak U}_A^{ma+}(k)$ must be trivial. We contribute to this problem by giving the first counter-examples to Gabber--Kac simplicity.
We further present several motivations for the study of this problem, as well as several applications of our functoriality theorem, with contributions to the question of (non-)linearity of ${\mathfrak U}_A^{ma+}(k)$, and to the isomorphism problem for complete Kac--Moody groups over finite fields. For $k$ finite, we also make some observations on the structure of ${\mathfrak U}_A^{ma+}(k)$ in the light of some important concepts from the theory of pro-$p$ groups.
\end{abstract}

\maketitle

\section{Introduction}
The main theme of this paper is the correspondence between the properties of a complete Kac--Moody group and its Lie algebra over an arbitrary field, with a special emphasis on the case of finite ground fields.

Let $A=(a_{ij})_{i,j\in I}$ be a generalised Cartan matrix (GCM) and let $\g=\g(A)$ be the associated Kac--Moody algebra (\cite{Kac}). 
Let also $\G_A$ denote the corresponding Tits functor of simply connected type, as defined by J.~Tits (\cite{Tits87}). 
Given a field $k$, the value of $\G_A$ over $k$ is called a \emph{minimal Kac--Moody group}. This terminology is justified by the existence of larger groups, called \emph{maximal} or \emph{complete Kac--Moody groups}, which can be constructed as completions $\widehat{\G}_A(k)$ of $\G_A(k)$ with respect to some suitable topology. 
For instance, the completion of the affine Kac--Moody group $\SL_n(k[t,t\inv])$ of type $\widetilde{A}_{n-1}$ is the maximal Kac--Moody group $\SL_n(k(\!(t)\!))$.

Roughly speaking, a minimal Kac--Moody group $\G_A(k)$ is obtained by ``exponentiating" the real root spaces of the Kac--Moody algebra $\g$, while completions $\widehat{\G}_A(k)$ of $\G_A(k)$ are obtained by exponentiating both real and imaginary root spaces of $\g$. As a result, it becomes easier to make computations in $\widehat{\G}_A(k)$ rather than in $\G_A(k)$ (see e.g. \cite[Remark~2.8]{RCap}).
Another motivation to consider maximal Kac--Moody groups rather than minimal ones is the fact that, when $k$ is a finite field, the groups $\widehat{\G}_A(k)$ form a prominent family of simple, compactly generated totally disconnected locally compact groups. Such groups have received considerable attention in the past years (see \cite{CRWpart2} for a current state of the art). 

Unlike minimal Kac--Moody groups, whose definition is somehow ``canonical" (in the sense that the Tits functor $\G_A$ over the category of fields is uniquely determined by a small number of axioms generalising in a natural way properties of semi-simple algebraic groups), maximal Kac--Moody groups have been constructed in the literature using different approaches. There are essentially three such constructions of completions of a minimal Kac--Moody group $\G_A(k)$, which we now briefly review.

The first approach is geometric. The \emph{R\'emy--Ronan completion} $\G_A^{rr}(k)$ of $\G_A(k)$ (\cite{ReRo}) is the completion of the image of $\G_A(k)$ in the automorphism group $\Aut(X_+)$ of its associated positive building, where $\Aut(X_+)$ is equipped with the topology of uniform convergence on bounded sets. A slight variant of this construction was introduced by P-E.~Caprace and B.~R\'emy (\cite[\S 1.2]{CaRe}): the resulting group $\G_A^{crr}(k)$ admits $\G_A(k)$ as a dense subgroup and $\G_A^{rr}(k)$ as a quotient.

The second approach is representation-theoretic. The \emph{Carbone--Garland completion} $\G_A^{cg\lambda}(k)$ with dominant integral weight $\lambda$ (\cite{CarboneGarland}) is the completion of the image of $\G_A(k)$ in the automorphism group $\Aut(L_k(\lambda))$ of an irreducible $\lambda$-highest-weight module $L_k(\lambda)$ over $k$. 
Again, as for $\G_A^{rr}(k)$, this construction can be slightly modified to produce a group $\G_A^{cgr}(k)$ containing $\G_A(k)$ as a dense subgroup, rather than a quotient of $\G_A(k)$ (\cite[6.2]{Rousseau}).

The third approach is algebraic. It is closer in spirit to the construction of the Tits functor $\G_A$, and produces a (topological) group functor over the category of $\ZZ$-algebras, denoted $\G_A^{pma}$, such that $\G_A(k)$ canonically embeds in $\G_A^{pma}(k)$ for any field $k$. The group $\G_A^{pma}(k)$ was first introduced by O.~Mathieu (\cite{M88b}) and further developed by G.~Rousseau (\cite{Rousseau}), and will be called the \emph{Mathieu--Rousseau completion} of $\G_A(k)$. Over $k=\CC$, the group $\G_A^{pma}(k)$ coincides with the maximal Kac--Moody group constructed by S.~Kumar (\cite[\S 6.1.6]{Kumar}).

The Mathieu--Rousseau completion of $\G_A(k)$, which will be used in this paper, is better suited to the study of finer algebraic properties of Kac--Moody groups, as for instance illustrated in \cite{simpleKM} and \cite{RCap}. The reason for this is that the relation between $\G_A^{pma}(k)$ and its Kac--Moody algebra $\g$ is more transparent than for the other completions. Our first theorem further illustrates this statement.

Let $\g=\hh\oplus\bigoplus_{\alpha\in\Delta(A)}{\g_{\alpha}}$ be the root decomposition of $\g$ with respect to its Cartan subalgebra $\hh$, with corresponding set of roots $\Delta(A)$ (resp. of positive roots $\Delta_+(A)$, of positive real roots $\Delta_+^{\re}(A)$). Let $\g_{\ZZ}$ denote the standard $\ZZ$-form of $\g$ introduced by J.~Tits (\cite[\S 4]{Tits87}) and set $\g_k:=\g_{\ZZ}\otimes_{\ZZ} k$. Set also $\nn^+(A):=\bigoplus_{\alpha\in\Delta_+(A)}{\g_{\alpha}}$ and $\nn_k^+(A):=(\nn^+(A)\cap\g_{\ZZ})\otimes_{\ZZ}k$. Finally, let $\U_A^{ma+}(k)$ denote the unipotent radical of the positive Borel subgroup of $\G_A^{pma}(k)$: the Lie algebra corresponding to $\U_A^{ma+}(k)$ is then some completion of $\nn_k^+(A)$.
Our first theorem exhibits some ``functorial dependence" of the group $\U_A^{ma+}(k)$ on its Lie algebra.

\begin{thmintro}\label{thmintro:funct1}
Let $k$ be a field, and let $A=(a_{ij})_{i,j\in I}$ and $B=(b_{ij})_{i,j\in I}$ be two GCM such that $|b_{ij}|\leq |a_{ij}|$ for all $i,j\in I$. Then the following assertions hold:
\begin{itemize}
\item[(i)]
There exists a surjective Lie algebra morphism $\pi\co \nn^+(A)\to\nn^+(B)$.
\item[(ii)]
$\pi$ gives rise to a surjective, continuous and open group homomorphism $$\widehat{\pi}\co\U_A^{ma+}(k)\to\U_B^{ma+}(k).$$
\end{itemize}
\end{thmintro}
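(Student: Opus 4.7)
My plan is to attack both parts via the Serre presentation of the positive part of the Kac--Moody algebra together with the pro-unipotent structure of its Mathieu--Rousseau completion. For part (i), I would use the presentation of $\nn^+(A)$ (and similarly of $\nn^+(B)$) by Chevalley generators $e_i$ ($i \in I$) subject to the Serre relations $(\ad e_i)^{1-a_{ij}}(e_j) = 0$ for $i \neq j$. The candidate morphism $\pi$ is then defined on generators by $e_i \mapsto e_i$, and well-definedness reduces to verifying that the defining Serre relations of the source pass to the target under this assignment. Under the hypothesis $b_{ij} \leq a_{ij}$, a direct comparison of exponents in the two families of Serre relations shows that each relation to be verified is a consequence of a Serre relation available on the target side (obtained by applying the appropriate number of additional iterations of $\ad e_i$ to an existing relation). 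Surjectivity is immediate from the fact that the image contains all Chevalley generators of the target, and a short verification confirms compatibility with Tits' $\ZZ$-form, so that after extension of scalars $\pi$ descends to a morphism $\nn^+_k(A) \to \nn^+_k(B)$.

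For part (ii), I would exploit Mathieu's realization of $\U_A^{ma+}(k)$ as the $k$-points of a pro-unipotent group scheme, expressible as an inverse limit of finite-dimensional unipotent algebraic groups along the filtration by height of positive roots. The Lie algebra of each finite-level quotient is a nilpotent truncation of the completion of $\nn^+_k(A)$, with group law given by the Campbell--Hausdorff formula. Since $\pi$ is homogeneous with respect to the $Q_+$-grading shared between $\nn^+(A)$ and $\nn^+(B)$, and in particular preserves the filtration by root height, it induces a compatible family of Lie algebra morphisms between the finite-dimensional truncations. Exponentiating each via Campbell--Hausdorff produces a compatible family of morphisms of the corresponding unipotent algebraic groups over $k$, and passing to the inverse limit yields the continuous group morphism $\widehat{\pi}\co \U_A^{ma+}(k) \to \U_B^{ma+}(k)$.

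Continuity is immediate from the inverse-limit construction, and the remaining properties follow by level-wise analysis: surjectivity of the morphism induced on each finite-level truncation (clear from the surjectivity of $\pi$ and its compatibility with the height filtration) yields surjectivity of $\widehat{\pi}$ via a straightforward Mittag--Leffler argument in this pro-finite-dimensional setting, whence openness and closedness follow from standard arguments for morphisms of pro-unipotent groups. The main technical obstacle I anticipate is the careful bookkeeping with the integral structure: one must check that $\pi$ lifts to a morphism between Tits' $\ZZ$-forms of $\nn^+$, so that the construction remains valid over an arbitrary field $k$, and that the divided-power structure built into Mathieu's group scheme is functorial under $\pi$. This is where working with the precise Chevalley basis of Tits, rather than an abstract presentation of $\nn^+$, is likely to require the most care.
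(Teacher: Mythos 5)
Your treatment of part (i) is essentially the paper's: $\nn^+(A)$ is presented by the Chevalley generators subject to the Serre relations (the paper isolates the fact that the quotient by the Serre ideal restricts correctly to the positive half as a separate lemma, proved via the computation $[f_k,(\ad e_i)^{1+|a_{ij}|}e_j]=0$, which shows that the ideal of $\tilde{\g}(A)$ generated by the Serre elements splits as a direct sum of ideals of the two free halves $\tilde{\nn}^{\pm}$), each Serre relation of the source maps to an iterated $\ad e_i$ of a Serre relation of the target, and surjectivity is clear on generators.

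The gap is in part (ii). Over a field of positive characteristic --- and finite fields are precisely the case of main interest in this paper --- the group law on the finite-dimensional unipotent quotients of $\U_A^{ma+}(k)$ is \emph{not} given by the Campbell--Hausdorff formula on nilpotent truncations of $\nn^+_k(A)$: the CH coefficients have denominators divisible by $p=\charact k$ once the nilpotency class exceeds $p$, and in characteristic $p$ a morphism of Lie algebras of unipotent groups does not in general exponentiate to a morphism of groups. So the central step of your construction (``exponentiating each via Campbell--Hausdorff'') is unavailable except in characteristic $0$, or for $p$ large compared with the nilpotency class of each truncation, which is unbounded as one descends the height filtration. You flag compatibility with Tits' $\ZZ$-form and the divided-power structure as a technical point to be checked at the end, but this is in fact the engine of the whole construction and cannot be grafted onto a CH-based argument. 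The paper instead lifts $\pi$ to an algebra morphism $\pi_1\co\UU^+(A)\to\UU^+(B)$ of the integral enveloping algebras --- this is exactly where one uses that $\pi(e_i)$ is an integral multiple of a real root vector $e_{\beta_i}$, so that $\pi_1(e_i^{(n)})=\lambda_i^n e_{\beta_i}^{(n)}$ stays in the $\ZZ$-form --- checks that $\pi_1$ is a morphism of bialgebras, extends it to the completions $\widehat{\UU}_k^+(A)\to\widehat{\UU}_k^+(B)$, and restricts it to the group-like elements of constant term $1$, which is how $\U^{ma+}$ is realised. Surjectivity, openness and closedness then come from the identity $\widehat{\pi}([\exp]\lambda x)=[\exp]\lambda\pi(x)$ on twisted exponentials (giving $\widehat{\pi}(\U^{ma}_{A,n}(k))=\U^{ma}_{B,n}(k)$ for all $n$), which again requires tracking exponential sequences through $\pi_1$ rather than ordinary exponentials. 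As written, your argument would only establish the theorem in characteristic $0$.
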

\noindent
A more precise version of Theorem~\ref{thmintro:funct1} is given in \S\ref{subsection:SZRM} below (see Theorem~\ref{thm:construction_pi}).

Now that we have introduced the three constructions of maximal Kac--Moody groups that can be found in the literature, a very natural question arises: how do these constructions compare to one another? Or, more optimistically stated: \emph{do the geometric, representation-theoretic and algebraic completions of $\G_A(k)$ yield isomorphic topological groups?} Surprisingly, the answer to this question is yes in many cases, and conjecturally yes in \emph{almost} all cases. However, when the field has positive characteristic $p$ smaller than $M_A:=\max_{i\neq j}{|a_{ij}|}$, things become more subtle.

One obstruction to an affirmative answer in all cases is the fact that the closure $\overline{\G_A}(k)$ of $\G_A(k)$ in its Mathieu--Rousseau completion $\G_A^{pma}(k)$ might be proper: in \cite{simpleKM}, we gave for each finite field $k$ an infinite family of GCM $A$ such that $\overline{\G_A}(k)\neq \G_A^{pma}(k)$ (see also \cite[\S 6.10]{Rousseau} for an example over $k=\FF_2$). Here, we exhibit a much wider class of examples.

\begin{propintro}\label{thmintro:nondensity}
Let $k=\FF_q$ be a finite field, and let $A=(a_{ij})_{i,j\in I}$ be a GCM. Assume that there exist indices $i,j\in I$ such that $|a_{ij}|\geq q+1$ and $|a_{ji}|\geq 2$. 
Then $\G_A(k)$ is not dense in $\G_A^{pma}(k)$.
\end{propintro}
\noindent
We give two completely different proofs of this theorem. The first relies on Theorem~\ref{thmintro:funct1}. The second is more constructive, and provides another perspective on this non-density phenomenon. The proof of Proposition~\ref{thmintro:nondensity} can be found in \S\ref{section:ND} below. Note that $\overline{\G_A}(k)=\G_A^{pma}(k)$ as soon as the characteristic of $k$ is zero or bigger than $M_A$ (see \cite[6.11]{Rousseau}). 

On the other hand, G.~Rousseau proved that there always exist continuous group homomorphisms $\overline{\G_A}(k)\to \G_A^{cgr}(k)$ and $\G_A^{cgr}(k)\to\G_A^{crr}(k)$, which are moreover isomorphisms as soon as $\charact k=0$ and $A$ is symmetrisable (see \cite[6.3 and 6.7]{Rousseau}). When $k$ is finite, these homomorphisms are surjective, but the question of their injectivity is open. 

Assume that $\overline{\G_A}(k)=\G_A^{pma}(k)$ and denote by $\phi\co \G_A^{pma}(k)\to \G_A^{crr}(k)$ the composition of the two above homomorphisms. The kernel of $\phi$ then coincides with $Z'_A\cap \U_A^{ma+}(k)$, where $Z'_A$ denotes the kernel of the $\G_A^{pma}(k)$-action on its associated building $X_+$. The injectivity of $\phi$ thus amounts to $Z'_A\cap \U_A^{ma+}(k)$ being trivial or, equivalently, to the statement that every normal subgroup of $\G_A^{pma}(k)$ that is contained in $\U_A^{ma+}(k)$ must be trivial. If this is the case, we call $\G_A^{pma}(k)$ \emph{simple in the sense of the Gabber--Kac theorem}, or simply \emph{GK-simple}. This terminology is motivated by its Lie algebra counterpart, stating that, at least in the symmetrisable case, every (graded) ideal of the Kac--Moody algebra $\g$ that is contained in $\nn^+(A)$ must be trivial: this is an equivalent formulation of the Gabber--Kac theorem\footnote{Here, we define a Kac--Moody algebra using the Serre relations, as in \cite[\S 5.12]{Kac}} (\cite[Theorem~9.11]{Kac}). When $\charact k=0$ and $A$ is symmetrisable, $\G_A^{pma}(k)$ is known to be GK-simple (see \cite[Remarque~6.9.1]{Rousseau}). However, in the other cases, the following problem is widely open:
\begin{problem}[GK-simplicity problem]\label{problem:GKsimplicity}
Let $A$ be a GCM and let $k$ be a field. Determine when $\G_A^{pma}(k)$ is GK-simple.
\end{problem}
To give a feeling for the difficulty of Problem~\ref{problem:GKsimplicity}, note that in characteristic zero (say $k=\CC$), the GK-simplicity of $\G_A^{pma}(k)$ is equivalent to the Gabber--Kac theorem for $\g_k$ (see \cite[Remark~8.104(1)]{KMGbook}); when $A$ is not symmetrisable, this latter problem remains, decades after it was first considered, completely open.
As a second application of Theorem~\ref{thmintro:funct1}, we give the first (negative) contribution to Problem~\ref{problem:GKsimplicity} over finite fields.
\begin{propintro}\label{thmintro:nonGK-simple}
Let $k=\FF_q$ be a finite field. Consider the GCM $A=(\begin{smallmatrix}2 & -m\\ -n & 2\end{smallmatrix})$ with $m,n\geq 2$ and $mn>4$. Assume that $m\equiv n\equiv 2 \ (\modulo q-1)$. If $\charact k=2$, we moreover assume that at least one of $m$ and $n$ is odd. Then $\G^{pma}_{A}(k)$ and $\overline{\G_{A}}(k)$ are not GK-simple, that is, $Z'_A\cap\overline{U_A^+}(k)\neq\{1\}$.
\end{propintro}
The proof of Proposition~\ref{thmintro:nonGK-simple} is given in \S\ref{section:ND} (see Proposition~\ref{prop:noGK}).
Note that the above counter-examples to GK-simplicity all occur for $\charact k<M_A$; the hope is that for $\charact k>M_A$, Problem~\ref{problem:GKsimplicity} has a positive answer. 

To illustrate why the Lie correspondence is better behaved when $\charact k>M_A$, we make the following observations on the pro-$p$ group $\U_A^{ma+}(k)$ (for $k$ a finite field of characteristic $p$) in the light of some important pro-$p$ group concepts, such as the \emph{Zassenhaus--Jennings--Lazard (ZJL) series} (also known as the series of \emph{dimension subgroups}, see \cite[\S 11.1]{padicanalytic}). Given a pro-$p$ group $G$ with ZJL series $(D_n)_{n\geq 1}$, the space $L=\bigoplus_{n\geq 1}{D_n/D_{n+1}}$ has the structure of a graded Lie algebra over $\FF_p$, called the \emph{ZJL Lie algebra} of $G$ (see \cite[page 280]{padicanalytic}).

\begin{propintro}\label{thmintro:ZJLseries}
Let $A$ be a GCM and let $k$ be a finite field of characteristic $p>M_A$. Then the following assertions hold:
\begin{enumerate}
\item The ZJL series of $\U_A^{ma+}(k)$ coincides with its lower central series.
\item The ZJL Lie algebra of $\U_A^{ma+}(k)$ is isomorphic to $\nn^+_k(A)$, viewed as a Lie algebra over $\FF_p$.
\end{enumerate}
\end{propintro}
\noindent
The proof of Proposition~\ref{thmintro:ZJLseries} is given in \S\ref{section:MDS} below.

We now present a few more functoriality results, as well as results that are either applications of Theorem~\ref{thmintro:funct1} or provide motivations for the study of Problem~\ref{problem:GKsimplicity} (or both) -- besides the motivation to clarify the relations between the different completions of $\G_A(k)$, and hence to provide a unified theory of complete Kac--Moody groups.

For each positive real root $\alpha\in \Delta_+^{\re}(A)$, we let $e_{\alpha}$ be a $\ZZ$-basis element of $\g_{\alpha}\cap \g_{\ZZ}$, and we let $e_i=e_{\alpha_i}$, $i\in I$, be the Chevalley generators of $\nn^+(A)$.
\begin{thmintro}\label{thmintro:funct2}
Let $k$ be a field, $B$ a GCM, and let $\{\beta_i \ | \ i\in I\}$ be a linearly independent finite subset of $\Delta^{\re}_+(B)$ such that $\beta_i-\beta_j\notin\Delta(B)$ for all $i,j\in I$. Then the following assertions hold:
\begin{enumerate}
\item
The matrix $A:=(\beta_j(\beta_i^{\vee}))_{i,j\in I}$ is a GCM and the map $\pi\co\nn^+(A)\to\nn^+(B):e_i\mapsto e_{\beta_i}$ is a Lie algebra morphism.
\item
$\pi$ gives rise to a continuous group homomorphism $$\widehat{\pi}\co\U_A^{ma+}(k)\to\U_B^{ma+}(k)$$ whose kernel is normal in $\G_A^{pma}(k)$. In particular, if $\G_A^{pma}(k)$ is GK-simple, then $\widehat{\pi}$ is injective.
\item The restriction of $\widehat{\pi}$ to $\U_A^{ma+}(k)\cap \G_A(k)$ extends to continuous group homomorphisms
$$\G_A(k)\to\G_B(k)\quad\textrm{and}\quad \overline{\G_A}(k)\to\overline{\G_B}(k)$$ with kernels contained in $Z'_A$.
\end{enumerate}
\end{thmintro}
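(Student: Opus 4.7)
For part~(1), I plan to use root-string analysis in $\Delta(B)$. Since $\beta_i,\beta_j\in\Delta_+^{\re}(B)$ and $\beta_i-\beta_j\notin\Delta(B)$ for $i\neq j$, the $\beta_i$-string through $\beta_j$ starts at $\beta_j$ and is of the form $\{\beta_j,\beta_j+\beta_i,\dots,\beta_j+q_{ij}\beta_i\}$ with $q_{ij}=-\beta_j(\beta_i^\vee)=-a_{ij}\in\ZZ_{\geq 0}$. Combined with $a_{ii}=2$, this shows $a_{ij}\in\ZZ_{\leq 0}$ for $i\neq j$ and $a_{ij}=0\iff a_{ji}=0$ (both being equivalent to $\beta_i+\beta_j\notin\Delta(B)$), so $A$ is a GCM. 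The Serre relations $(\ad e_{\beta_i})^{1-a_{ij}}e_{\beta_j}=0$ in $\nn^+(B)$ then drop out of the same string analysis, since $\beta_j+(1+q_{ij})\beta_i$ lies one step beyond the top of the string and is therefore not a root. The presentation of $\nn^+(A)$ on Chevalley generators with Serre relations then delivers the Lie algebra morphism $\pi$.

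For part~(2), the linear map $\pi_*\co Q_+(A)\to Q_+(B),\ \alpha_i^A\mapsto\beta_i$ strictly increases height (each $\beta_i\in\Delta_+(B)$ has height $\geq 1$), so $\pi$ extends continuously to the pro-nilpotent completions of $\nn_k^+(A)$ and $\nn_k^+(B)$; exponentiation via the Baker--Campbell--Hausdorff formula in the pro-nilpotent setting, in the spirit of Theorem~\ref{thmintro:funct1}(ii), yields the desired continuous $\widehat{\pi}$. To prove that $K:=\ker\widehat{\pi}$ is normal in $\G_A^{pma}(k)$, I would first extend $\pi$ to a full Lie algebra morphism $\tilde{\pi}\co\g(A)\to\g(B)$ via $f_i\mapsto f_{\beta_i}$ and $\alpha_i^\vee\mapsto\beta_i^\vee$, checking the Chevalley--Serre relations on the enlarged generating set (in particular, $[e_{\beta_i},f_{\beta_j}]=0$ for $i\neq j$ because $\beta_i-\beta_j\notin\Delta(B)$). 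The Lie-algebraic counterpart of $K$ in the completion of $\nn_k^+(A)$ is then an ideal $\ii$ of the $k$-form of $\g(A)$; via the (pro-)Lie correspondence between $\U_A^{ma+}(k)$ and its Lie algebra, $K$ becomes a $\G_A^{pma}(k)$-invariant closed subgroup, since conjugation by the root-group generators $\tilde x_{\pm\alpha_i}(t)$ acts on $\log u$ through $\exp(\pm t\,\ad e_i)$ resp.\ $\exp(\pm t\,\ad f_i)$, both of which preserve $\ii$, while the torus acts via the $Q_+$-grading.

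For part~(3), I would use the Tits presentation of the simply-connected Tits functor $\G_A$: its generators are the root-group maps $\tilde x_{\pm\alpha_i}(t)$ together with the torus, subject to rank-one relations and rank-two Steinberg commutation relations on pre-nilpotent pairs. Defining $\varphi\co\G_A(k)\to\G_B(k)$ by $\tilde x_{\pm\alpha_i}(t)\mapsto\tilde x_{\pm\beta_i}(t)$ and via $\beta_i^\vee$ on the torus, the rank-one relations are immediate, and each rank-two commutation identity on $(\alpha_i^A,\alpha_j^A)$ maps under $\pi_*$ to the analogous identity on the pre-nilpotent pair $(\beta_i,\beta_j)$ in $\G_B(k)$ (pre-nilpotency being guaranteed by $\beta_i-\beta_j\notin\Delta(B)$), with structure constants matched by a coherent choice of Chevalley signs. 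Extension to $\overline{\G_A}(k)\to\overline{\G_B}(k)$ then follows by density from the compatibility of $\varphi$ with $\widehat{\pi}$ on $\U_A^{ma+}(k)\cap\G_A(k)$. Finally, $\ker\varphi\subseteq Z'_A$ comes from the fact that $\varphi$ induces an equivariant simplicial morphism $X_+^A\to X_+^B$ of positive buildings which is chamber-injective on the orbit of the fundamental chamber; hence $\ker\varphi$ must fix every chamber of $X_+^A$.

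The main obstacle, in my view, is the normality argument in part~(2): one must make the Lie correspondence $K\leftrightarrow\ii$ precise enough that the ideal-theoretic closure of $\ii$ under $\ad$ translates to genuine conjugation-invariance of $K$ under the full group $\G_A^{pma}(k)$, in a setting where neither $\ad$ nor $\Ad$ is locally finite on the completion $\widehat{\nn_k^+(A)}$ — this is the delicate bookkeeping point on which the entire theorem hinges, and the reason that the implication ``GK-simple $\Rightarrow$ $\widehat{\pi}$ injective'' lies just out of reach in general.
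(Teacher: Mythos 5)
Your part~(1) is essentially the paper's argument (root strings via \cite[Corollary~3.6]{Kac} for the sign of $\beta_j(\beta_i^{\vee})$, and the observation that $(|\beta_j(\beta_i^{\vee})|+1)\beta_i+\beta_j=s_i(\beta_j-\beta_i)\notin\Delta(B)$ kills the Serre relations). But parts~(2) and~(3) contain genuine gaps, and the central one is the very point you flag as ``just out of reach'': the normality of $K=\ker\widehat{\pi}$. Your route through BCH, $\log$, and a pro-Lie correspondence $K\leftrightarrow\ii$ is not available here --- over a field of positive characteristic (the main case of interest) there is no logarithm and no BCH, and the whole Mathieu--Rousseau formalism exists precisely to replace these by divided powers and twisted exponentials. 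The paper's proof never passes through a Lie-algebra ideal. Instead: (a) $\widehat{\pi}$ is obtained as the restriction, to group-like elements of constant term $1$, of a bialgebra morphism $\pi_2\co\widehat{\UU}_k^+(A)\to\widehat{\UU}_k^+(B)$ between completed \emph{integral} enveloping algebras (Theorem~\ref{thm:basic_functoriality}); (b) one first shows $K\subseteq\U^{ma}_{\Delta_+\setminus\{\alpha_i\,|\,i\in I\}}(k)$ by reading off the degree-$\beta_i$ components of $\widehat{\pi}\bigl(\prod_x[\exp]\lambda_x x\bigr)$ --- this is where the \emph{linear independence} of the $\beta_i$ (injectivity of $\overline{\pi}$ on $Q(A)$) is used, a hypothesis your argument never invokes; (c) one then computes directly, from the divided-power conjugation formula (\ref{eqn:ei_action}), that $\widehat{\pi}\bigl(\widetilde{s}_i(r)\,u\,\widetilde{s}_i(r)\inv\bigr)=s^*_{\beta_i}(r)\bigl(\widehat{\pi}(u)\bigr)$, so that each $\widetilde{s}_i(r)$ normalises $K$; since $\T(k)$ is generated by the elements $\widetilde{s}_i^{\,-1}\widetilde{s}_i(r)$ and $\G_A^{pma}(k)$ is generated by $\U_A^{ma+}(k)$ and $\N_A(k)$, normality follows. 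No local finiteness of $\ad$ is ever needed, because everything is computed componentwise in the $Q_+$-gradation.

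In part~(3) your outline is in the right spirit but skips the two points that carry the content. First, the Steinberg relations (\ref{R0}) must be verified for \emph{all} prenilpotent pairs of real roots of $A$, not just pairs of simple roots; the paper reduces a pair of opposite signs to a positive pair by conjugating with a suitable $w^*$ and then uses that $\widetilde{\pi}$ agrees with the group homomorphism $\widehat{\pi}$ on $U_A^+(k)$, which disposes of the sign bookkeeping you defer to ``a coherent choice of Chevalley signs.'' Second, your claim that $\ker\varphi$ fixes every chamber rests on the assertion that the induced map is ``chamber-injective on the orbit of the fundamental chamber,'' which is exactly what must be proved: the paper does this via the Bruhat decomposition together with the identity $\overline{\pi}(w(\lambda))=w^{\pi}(\overline{\pi}(\lambda))$ and the injectivity of $\overline{\pi}$, showing $w^{\pi}\neq 1$ for $w\neq 1$. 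Again, linear independence of the $\beta_i$ is the engine, and it is absent from your argument.
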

\noindent
Here, we view $\G_A(k)$ and $\G_B(k)$ as subgroups of their Mathieu--Rousseau completion (with the induced topology).
Note that, in contrast to Theorem~\ref{thmintro:funct2}, the surjective map $\widehat{\pi}\co\U_A^{ma+}(k)\to\U_B^{ma+}(k)$ provided by Theorem~\ref{thmintro:funct1} can typically \emph{not} be extended to the whole group $\G_A^{ma+}(k)$ (or even to $\G_A(k)$) as soon as $A\neq B$: this is a consequence of the simplicity results for these groups (see \cite{Moodyunpublished}, \cite{simpleKM} and \cite[\S 6.13]{Rousseau}).
A more precise version of Theorem~\ref{thmintro:funct2} is given in \S\ref{subsection:IZRM} below (see Theorem~\ref{thm:injective_standard_maps}).

As a third instance of functoriality properties of Kac--Moody groups, we also establish that every symmetrisable Kac--Moody group $\G_A(k)$ can be embedded into some \emph{simply laced} Kac--Moody group $\G_B(k)$, that is, such that the off-diagonal entries of $B$ are either $0$ or $-1$. It is known that any symmetrisable GCM $A$ admits a \emph{simply laced cover}, which is a simply laced GCM $B$ for which there is an embedding $\g(A)\to\g(B)$ (see \cite[\S 2.4]{HKL15}).  
\begin{thmintro}\label{thmintro:funct3}
Let $k$ be a field and $A$ be a GCM. Let $B$ be a simply laced cover of $A$, and consider the associated embedding $\pi\co\g(A)\to\g(B)$. Then $\pi$ gives rise to continuous group homomorphisms
$$\G_A(k)\to\G_B(k)\quad\textrm{and}\quad \overline{\G_A}(k)\to\overline{\G_B}(k)$$ with kernels contained in $Z'_A$. 
\end{thmintro}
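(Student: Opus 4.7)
The plan is to mirror the strategy of Theorem~\ref{thmintro:funct2}(3), adapted to accommodate the fact that, for a simply laced cover, the embedding $\pi$ sends a Chevalley generator $e_i\in\g(A)$ not to a single root vector of $\g(B)$ but to a sum $\pi(e_i)=\sum_{\beta\in T_i}e_\beta$, where $T_i$ is the set of simple roots of $B$ corresponding to the $i$-th simple root of $A$ in the cover. A defining feature of simply laced covers (cf.\ \cite{HKL15}) is that the roots in $T_i$ are pairwise orthogonal, so that the root vectors $\{e_\beta : \beta\in T_i\}$ pairwise commute in $\nn^+(B)$; similarly for the $f_i$'s.

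First, I would construct a continuous group morphism $\widehat\pi\colon\U_A^{ma+}(k)\to\U_B^{ma+}(k)$ by exponentiating $\pi|_{\nn^+(A)}\colon\nn^+(A)\to\nn^+(B)$. Because the summands of $\pi(e_i)$ pairwise commute, the assignment $\exp(te_i)\mapsto\prod_{\beta\in T_i}\exp(te_\beta)$ is unambiguous, and $\pi$ respects the height filtration on $\nn^+$, so the exponentiation scheme used in Theorem~\ref{thmintro:funct1}(ii) applies essentially verbatim.

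Next, I would extend $\widehat\pi$ to $\G_A(k)\to\G_B(k)$ via Tits' presentation of the minimal Kac--Moody group. On the standard generators, set $x_{\pm\alpha_i}(t)\mapsto\prod_{\beta\in T_i}x_{\pm\beta}(t)$ and send the torus via the coroot embedding induced by $\pi$. The defining relations of $\G_A(k)$---Chevalley commutator, Steinberg, and Weyl element relations---should transport correctly because $\pi$ is a Lie algebra morphism preserving the Tits $\ZZ$-form and because orthogonality within each $T_i$ decouples the relevant commutators in $\G_B(k)$. The extension $\overline{\G_A}(k)\to\overline{\G_B}(k)$ then follows by splicing together this map with $\widehat\pi$ on a dense subset of $\overline{\G_A}(k)$ of the form $\U_A^{ma+}(k)\cdot\G_A(k)$ and invoking continuity. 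For the kernel: $\pi$ is injective on every real root space, so the extended morphism is injective on every real root group of $\G_A(k)$; a kernel element must therefore act trivially on all root groups and hence fix every chamber of $X_+$ by a Bruhat-decomposition argument, placing it in $Z'_A$.

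The main obstacle is the relation check in the preceding step: verifying that the Chevalley commutator formula $[x_{\alpha_i}(t),x_{\alpha_j}(u)]=\prod_{m,n\geq 1}x_{m\alpha_i+n\alpha_j}(c_{mn}^{A}t^mu^n)$ in $\G_A(k)$ is compatible with the commutator $[\prod_{\beta\in T_i}x_\beta(t),\prod_{\beta'\in T_j}x_{\beta'}(u)]$ computed in $\G_B(k)$. Expanding the latter via the simply laced commutator formula and matching both sides---signs included---requires a careful combinatorial computation tied to the explicit structure of the cover; that it goes through is ultimately the group-theoretic shadow of $\pi$ being a $\ZZ$-form-compatible Lie algebra morphism.
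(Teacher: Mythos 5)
Your overall architecture coincides with the paper's proof (Theorem~\ref{thm:SLC}): exponentiate $\pi$ to $\widehat{\pi}\co\U_A^{ma+}(k)\to\U_B^{ma+}(k)$ using that the root vectors $e_{(i,1)},\dots,e_{(i,n_i)}$ attached to a single $\alpha_i$ pairwise commute, extend to $\G_A(k)$ through the presentation by generators and relations with $x_{\pm\alpha_i}(t)\mapsto\prod_{\beta}x_{\pm\beta}(t)$ and the torus sent via $r^{\alpha_i^{\vee}}\mapsto r^{\sum_j\alpha_{(i,j)}^{\vee}}$, and bound the kernel by a Bruhat-decomposition argument.

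However, the step you yourself flag as the ``main obstacle'' --- verifying that the Chevalley commutator relations (\ref{R0}) transport to $\G_B(k)$ --- is left genuinely open in your proposal, and the paper does \emph{not} close it by the direct sign-by-sign combinatorial computation you envisage. The idea you are missing is that $\widehat{\pi}$ is constructed as the restriction to group-like elements of a \emph{bialgebra} morphism $\pi_2\co\widehat{\UU}_k^+(A)\to\widehat{\UU}_k^+(B)$ of completed enveloping algebras, so by Proposition~\ref{prop formal sum Rousseau}(1) it is a group homomorphism on all of $\U_A^{ma+}(k)$ for free. Since the candidate extension $\widetilde{\pi}$ agrees with $\widehat{\pi}$ on $U_A^+(k)$, every commutator relation between a prenilpotent pair of \emph{positive} real roots is automatically preserved; prenilpotent pairs of mixed sign are then handled by conjugating into the positive cone by a suitable $w^*\in W^*$ (using $\pi_1 s_i^*=s_{(i,\cdot)}^*\pi_1$) and transporting the already-verified positive relation back. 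You should make this explicit rather than appeal to ``the group-theoretic shadow'' of $\pi$ being a morphism. A smaller omission: in the kernel argument one must also check that $\widetilde{\pi}$ kills no nontrivial Weyl element before concluding $\ker\widetilde{\pi}\subseteq\B^+(k)$; the paper does this by exhibiting the injective map $Q^{\vee}(A)\to Q^{\vee}(B)\co\alpha_i^{\vee}\mapsto\sum_j\alpha_{(i,j)}^{\vee}$ intertwining $s_i$ with $s_{(i,\cdot)}$.
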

\noindent
Note that the embeddings of the minimal Kac--Moody groups (modulo center) provided by Theorem~\ref{thmintro:funct3} preserve the corresponding twin BN-pairs and hence induce embeddings of the corresponding twin buildings. As pointed out to us by B. M\"uhlherr, similar embeddings can be obtained with a totally different approach (not relying on the Lie algebra), using the techniques developed in \cite{BMCrelle} (see also \cite{BMTwinbuild}).
A more precise version of Theorem~\ref{thmintro:funct3} is given in \S\ref{subsection:SLC} below (see Theorem~\ref{thm:SLC}).

As a second motivation for the study of Problem~\ref{problem:GKsimplicity} (besides Theorem~\ref{thmintro:funct2}(2)), as well as a third application of Theorem~\ref{thmintro:funct1}, we present a contribution to the linearity question of $\U_A^{ma+}(k)$ for $k$ a finite field. The long-standing question whether $\U_A^{ma+}(k)$ is linear over some field $k'$ is still open (see \cite[\S 4.2]{RCap}). Caprace and Stulemeijer \cite{CS14} proved that, within the class of non-discrete, compactly generated, topologically simple totally disconnected locally compact groups $G$ (of which the simple Kac--Moody groups $\G_A^{pma}(k)/Z_A'$ for $k$ a finite field are examples), the existence of a linear open subgroup $U$ of $G$ (in the sense that $U$ has a continuous faithful finite-dimensional linear representation over a local field) is equivalent to the linearity of $G$ itself (even more: $G$ is in that case a simple algebraic group over a local field). The following theorem extends this result in the Kac--Moody setting, and addresses the above-mentioned linearity problem for continuous representations over local fields, provided the group $\G_A^{pma}(k)$ is GK-simple (actually, an \emph{a priori} much weaker version of the GK-simplicity of $\G_A^{pma}(k)$ would be sufficient in this case, see Remark~\ref{remark:KZ'} below).

\begin{thmintro}\label{thmintro:nonlinearity}
Let $A$ be an indecomposable GCM of non-finite type and let $k$ be a finite field. Assume that $\G_A^{pma}(k)$ is GK-simple and set $G:=\G_A^{pma}(k)/Z_A'$. Then the following assertions are equivalent:
\begin{enumerate}
\item
Every compact open subgroup of $G$ is just-infinite (i.e. possesses only finite proper quotients).
\item
$\U_A^{ma+}(k)$ is linear over a local field.
\item
$G$ is a simple algebraic group over a local field.
\item
The matrix $A$ is of affine type.
\end{enumerate}
\end{thmintro}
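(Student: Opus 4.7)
The plan is to run the cycle $(4) \Rightarrow (3) \Rightarrow (2) \Rightarrow (3) \Rightarrow (1) \Rightarrow (4)$, where the last implication is the place at which Theorem~\ref{thmintro:funct1} plays its key role.

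For $(4) \Rightarrow (3)$: when $A$ is affine, the loop-algebra realisation of $\g(A)$ together with the standard description of the Mathieu--Rousseau construction in the affine case (and the GK-simplicity hypothesis, which ensures that $Z'_A\cap\U_A^{ma+}(k)=1$ so that the identification descends cleanly to the quotient) identifies $G$ with the group of rational points of an adjoint absolutely simple algebraic group over the local field $k(\!(t)\!)$. The implications $(3)\Rightarrow(2)$ and $(3)\Rightarrow(1)$ are then essentially structural facts: a simple algebraic group over a local field admits a faithful finite-dimensional continuous linear representation, so the compact open subgroup $\U_A^{ma+}(k)\hookrightarrow G$ is linear over that local field; and compact open subgroups of such a group are just-infinite, either by Lazard's theory (in residue characteristic $p = \charact k$, any proper closed normal subgroup would correspond to a proper closed ideal of a simple $p$-adic Lie algebra) or by direct Bruhat--Tits arguments.

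The main analytic step is $(2)\Rightarrow(3)$. Since $\U_A^{ma+}(k)$ is pro-$p$ with $p=\charact k$, any continuous faithful embedding into $\GL_n(K)$ for some non-archimedean local field $K$ forces $K$ to have residue characteristic $p$ (by the standard reduction modulo the maximal ideal argument, cf.\ Dixon--du Sautoy--Mann--Segal). Lazard's theorem then shows that $\U_A^{ma+}(k)$ is a compact $p$-adic analytic group, and since it is open in $G$, the whole compactly generated, topologically simple, t.d.l.c.\ group $G$ inherits a compatible $p$-adic analytic structure. The classification of topologically simple $p$-adic analytic t.d.l.c.\ groups (going back to R.~Pink) then identifies $G$ with the group of rational points of a simple algebraic group over a non-archimedean local field.

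Finally, $(1)\Rightarrow(4)$ is proved by contraposition using Theorem~\ref{thmintro:funct1}. Assume $A$ is not affine; since $A$ is indecomposable and of non-finite type, $A$ must be of indefinite type, and one can pick a GCM $B=(b_{ij})_{i,j\in I}$ with $b_{ij}\leq a_{ij}$ for all $i,j\in I$, with strict inequality for at least one pair, and still indecomposable of non-finite type. Theorem~\ref{thmintro:funct1} then yields a continuous surjective group morphism $\widehat{\pi}\co \U_A^{ma+}(k)\to\U_B^{ma+}(k)$, whose kernel -- mirroring the infinite-dimensional kernel of $\pi\co \nn^+(A)\to\nn^+(B)$ -- is a proper infinite closed normal subgroup of $\U_A^{ma+}(k)$. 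Under the GK-simplicity assumption, $\U_A^{ma+}(k)$ embeds as a compact open subgroup of $G$, which contradicts $(1)$. The main obstacle in this plan is the analytic-to-algebraic step $(2)\Rightarrow(3)$, which requires substantial external input from the theory of $p$-adic analytic groups and the classification of simple t.d.l.c.\ groups; by contrast, the role of Theorem~\ref{thmintro:funct1} makes $(1)\Rightarrow(4)$ short and conceptual.
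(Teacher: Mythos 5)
Your overall architecture is the same as the paper's: $(4)\Rightarrow(3)\Rightarrow(2)$ are treated as essentially known, $(2)\Rightarrow(3)$ and $(3)\Rightarrow(1)$ are delegated to the structure theory of totally disconnected locally compact groups, and the real content is $(1)\Rightarrow(4)$ via the surjections of Theorem~\ref{thmintro:funct1}. Two of your steps, however, have genuine gaps.

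First, your argument for $(2)\Rightarrow(3)$ fails when the local field $K$ has positive characteristic, which is precisely the case that actually occurs: in the affine case $\U_A^{ma+}(k)$ is an Iwahori subgroup of $\SL_n(k(\!(t)\!))$, hence linear over a local field of characteristic $p$ but \emph{not} $p$-adic analytic. Lazard's theory identifies compact $p$-adic analytic groups with closed subgroups of $\GL_n(\ZZ_p)$; a closed subgroup of $\GL_n(\FF_q[[t]])$ need not be of this kind, so ``linear over a local field of residue characteristic $p$'' does not give $p$-adic analyticity, and the subsequent appeal to the classification of $p$-adic analytic simple groups does not apply. The paper instead invokes \cite[Corollary~1.4]{CS14}, which treats linear open subgroups over local fields of arbitrary characteristic; some input of this kind is unavoidable here (the same reference, \cite[Theorem~2.6]{CS14}, is what the paper uses for $(3)\Rightarrow(1)$).

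Second, in $(1)\Rightarrow(4)$ the existence of a GCM $B$ on the same index set, strictly below $A$ and still of non-finite type, is exactly the nontrivial point, and you cannot simply ``pick'' it. If $A$ is of compact hyperbolic type, every proper principal submatrix is of finite type, and one must check --- the paper does this by running through the classification of compact hyperbolic diagrams (Lemma~\ref{lemma:GDA}) --- that the off-diagonal entries can be reduced so as to reach an affine matrix. If instead $A$ has a proper principal submatrix $A|_J$ of non-finite type, the natural candidate for $B$ (zeroing out the entries joining $J$ to its complement) is decomposable, so your requirement that $B$ be indecomposable is both unnecessary and in general unachievable; the paper handles this case directly via the normal subgroup $\U^{ma}_{\Psi_{I\setminus J}}(k)$ with infinite quotient $\U^{ma}_{\Psi_J}(k)$. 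Finally, what contradicts just-infiniteness is that $\ker\widehat{\pi}$ is nontrivial (which does follow from a strict inequality, since the real root groups of $A$ attached to roots outside $\Delta_+(B)$ lie in the kernel) together with the quotient $\U_B^{ma+}(k)$ being infinite, i.e.\ $B$ of non-finite type; the kernel being infinite is neither needed nor by itself sufficient.
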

\noindent
The proof of Theorem~\ref{thmintro:nonlinearity} relies on the paper \cite{CS14} (which already contains the implications (2)$\Leftrightarrow$(3) and (3)$\Rightarrow$(1)), and is given in \S\ref{section:nonlinearity} below.

As a third motivation for the study of Problem~\ref{problem:GKsimplicity}, we also present a contribution to the isomorphism problem for complete Kac--Moody groups over finite fields. The isomorphism problem for minimal Kac--Moody groups has been addressed by P-E.~Caprace (\cite[Theorem~A]{thesePE}). When $k$ is a finite field, the group $\G_A(k)$ turns out to contain, in general, very little information about $A$ (see \cite[Lemma~4.3]{thesePE}). The situation for $\G_A^{pma}(k)$ is completely different (see \cite[Theorem~E]{simpleKM}), and we expect it to be possible to recover $A$ from $\G_A^{pma}(k)$ in all cases.
This difference between $\G_A(k)$ and $\G_A^{pma}(k)$ is in fact related to the non-density of $\G_A(k)$ in $\G_A^{pma}(k)$ (see the proof of Proposition~\ref{thmintro:nondensity}).

Given a GCM $A=(a_{ij})_{i,j\in I}$ and a subset $J\subseteq I$, we define the GCM $A|_{J}:=(a_{ij})_{i,j\in J}$.
\begin{propintro}\label{thmintro:isomproblem}
Let $k,k'$ be finite fields, and let $A=(a_{ij})_{i,j\in I}$ and $B=(b_{ij})_{i,j\in J}$ be GCM. Assume that $p=\charact k>M_A,M_B$ and that all rank $2$ subgroups of $\G_{A}^{pma}(k)$ and $\G_{B}^{pma}(k')$ are GK-simple. 

If $\alpha\co\G_{A}^{pma}(k)/Z'_{A}\to\G_{B}^{pma}(k')/Z'_{B}$ is an isomorphism of topological groups, then $k\cong k'$, and there exist an inner automorphism $\gamma$ of $\G_{B}^{pma}(k')/Z'_{B}$ and a bijection $\sigma\co I\to J$ such that 
\begin{enumerate}
\item
$\gamma\alpha(\U_{A|_{\{i,j\}}}^{ma+}(k))=\U_{B|_{\{\sigma(i),\sigma(j)\}}}^{ma+}(k')$ for all distinct $i,j\in I$. 
\item
$B|_{\{\sigma(i),\sigma(j)\}}\in \big\{(\begin{smallmatrix}2 & a_{ij} \\ a_{ji} & 2 \end{smallmatrix}),(\begin{smallmatrix}2 & a_{ji} \\ a_{ij} & 2 \end{smallmatrix})\big\}$ for all distinct $i,j\in I$.
\end{enumerate}
\end{propintro}
\noindent
The proof of Proposition~\ref{thmintro:isomproblem} can be found in \S\ref{section:isomproblem} below.
Note that if $\G_{A}^{pma}(k)$ is of rank $2$ and if $\alpha$ lifts to an isomorphism $\alpha\co\G_{A}^{pma}(k)\to\G_{B}^{pma}(k')$, then the conclusion of the theorem holds without any GK-simplicity assumption (see Remark~\ref{remark:rank2isom} below).

\subsubsection*{Acknowledgement}
I am very grateful to Pierre-Emmanuel Caprace for triggering the research presented in this paper, as well as for his useful comments. I would also like to thank the anonymous referee for his/her detailed comments.

\section{Preliminaries}
Throughout this paper, $\NN$ denotes the set of nonnegative integers.

\subsection{Generalised Cartan matrices} 
An integral matrix $A=(a_{ij})_{i,j\in I}$ indexed by some finite set $I$ is called a {\bf generalised Cartan matrix} (GCM) if it satisfies the following conditions:
\begin{enumerate}
\item[(C1)] $a_{ii}=2$ for all $i\in I$;
\item[(C2)] $a_{ij}\leq 0$ for all $i,j\in I$ with $i\neq j$;
\item[(C3)] $a_{ij}=0$ if and only if $a_{ji}=0$.
\end{enumerate}
Given two GCM $A=(a_{ij})_{i,j\in I}$ and $B=(b_{ij})_{i,j\in J}$, we write $B\leq A$ if $J\subseteq I$ and $|b_{ij}|\leq |a_{ij}|$ for all $i,j\in J$. 

\subsection{Kac--Moody algebras}\label{subsection:KMA}
The general reference for this paragraph is \cite[Chapters~1--5]{Kac} (see also \cite[Part~II]{KMGbook}).

Let $A=(a_{ij})_{i,j\in I}$ be a GCM and let $(\hh,\Pi=\{\alpha_i \ | \ i\in I\},\Pi^{\vee}=\{\alpha_i^{\vee} \ | \ i\in I\})$ denote a realisation of $A$, as in \cite[\S 1.1]{Kac}. Define $\tilde{\g}(A)$ to be the complex Lie algebra with generators $e_i,f_i$ ($i\in I$) and $\hh$, and with the following defining relations:
\begin{equation*}
\left\{
\begin{array}{rcll}
[e_i,f_j]&=&-\delta_{ij}\alpha_i^{\vee}  \qquad $ $ & \textrm{($i,j\in I$),}\\
$$ [h,h']&=&0  \qquad $ $ & \textrm{($h,h'\in\hh$),}\\
$$ [h,e_i]&=&\la\alpha_i,h\ra e_i,  \qquad $ $ & \textrm{($i\in I$, $h\in\hh$),}\\
$$ [h,f_i]&=&-\la\alpha_i,h\ra f_i  \qquad $ $ & \textrm{($i\in I$, $h\in\hh$).}\\
\end{array}\right.
\end{equation*}
Denote by $\tilde{\nn}^+=\tilde{\nn}^+(A)$ (respectively, $\tilde{\nn}^-=\tilde{\nn}^-(A)$) the subalgebra of $\tilde{\g}(A)$ generated by $e_i$, $i\in I$ (respectively, $f_i$, $i\in I$). Then $\tilde{\nn}^{+}$ (respectively, $\tilde{\nn}^{-}$) is freely generated by $e_i$, $i\in I$ (respectively, $f_i$, $i\in I$), and one has a decomposition
$$\tilde{\g}(A)=\tilde{\nn}^-\oplus\hh\oplus\tilde{\nn}^+\quad\textrm{(direct sum of vector spaces).}$$
Moreover, there is a unique maximal ideal $\ii'$ of $\tilde{\g}(A)$ intersecting $\hh$ trivially. It decomposes as $$\ii'=(\ii'\cap \tilde{\nn}^-)\oplus (\ii'\cap \tilde{\nn}^+)\quad\textrm{(direct sum of ideals),}$$
and contains the ideal $\ii$ of $\tilde{\g}(A)$ generated by the elements
$$x_{ij}^+:=\ad(e_i)^{1+|a_{ij}|}e_j\in \tilde{\nn}^+\quad\textrm{and}\quad x_{ij}^-:=\ad(f_i)^{1+|a_{ij}|}f_j\in \tilde{\nn}^-$$
for all $i,j\in I$ with $i\neq j$. The {\bf Kac--Moody algebra} with GCM $A$ is then the complex Lie algebra $$\g(A):=\tilde{\g}(A)/\ii.$$
We keep the same notation for the images of $e_i,f_i,\hh$ in $\g(A)$. The subalgebra $\hh$ of $\g(A)$ is called its {\bf Cartan subalgebra}. The elements $e_i,f_i$ ($i\in I$) are called the {\bf Chevalley generators} of $\g(A)$. They respectively generate the images $\nn^+=\nn^+(A)$ and $\nn^-=\nn^-(A)$ of $\tilde{\nn}^+$ and $\tilde{\nn}^-$ in $\g(A)$. The {\bf derived Kac--Moody algebra} $\g_A:=[\g(A),\g(A)]$ is generated by the Chevalley generators of $\g(A)$.

Let $Q=Q(A):=\sum_{i\in I}{\ZZ\alpha_i}$ denote the free abelian group generated by the {\bf simple roots} $\alpha_1,\dots,\alpha_n$, and set $Q_+=Q_+(A):=\sum_{i\in I}{\NN\alpha_i}$ and $Q_-=Q_-(A):=-Q_+$. Then $\g(A)$ admits a $Q$-gradation. More precisely, 
$$\g(A)=\nn^-\oplus\hh\oplus\nn^+=\bigoplus_{\alpha\in Q_-\setminus\{0\}}{\g_{\alpha}}\oplus\hh\oplus \bigoplus_{\alpha\in Q_+\setminus\{0\}}{\g_{\alpha}},$$
where for $\alpha\in Q_+\setminus\{0\}$ (respectively, $\alpha\in Q_-\setminus\{0\}$), the {\bf root space} $\g_{\alpha}$ is the linear span of all elements of the form $[e_{i_1},\dots,e_{i_s}]$ (respectively, $[f_{i_1},\dots,f_{i_s}]$) such that $\alpha_{i_1}+\dots+\alpha_{i_s}=\alpha$ (respectively, $=-\alpha$). Here we follow the standard notation
$$[x_1,x_2,\dots,x_s]:=\ad(x_1)\ad(x_2)\dots\ad(x_{s-1})(x_s).$$
The set of {\bf roots} of $\g(A)$ is $\Delta=\Delta(A):=\big\{\alpha\in Q\setminus\{0\} \ | \ \g_{\alpha}\neq \{0\}\big\}$. It decomposes as $\Delta=\Delta_+\cup\Delta_-$, where $\Delta_{\pm}=\Delta_{\pm}(A):=\Delta\cap Q_{\pm}$ is the set of {\bf positive/negative roots}. The subgroup $W=W(A)$ of $\GL(Q)$ generated by the reflections
$$s_{i}\co Q\to Q: \alpha_j\mapsto \alpha_j-a_{ij}\alpha_i$$
for $i\in I$ stabilises $\Delta$. The $W$-orbit $W.\{\alpha_i \ | \ i\in I\}\subseteq \Delta$ is called the set of {\bf real roots} and is denoted $\Delta^{\re}=\Delta^{\re}(A)$. Its complement $\Delta^{\im}=\Delta^{\im}(A):=\Delta\setminus\Delta^{\re}$ is the set of {\bf imaginary roots}. We furthermore set $\Delta^{\re}_{\pm}=\Delta^{\re}_{\pm}(A):=\Delta^{\re}\cap\Delta_{\pm}$ and $\Delta^{\im}_{\pm}=\Delta^{\im}_{\pm}(A):=\Delta^{\im}\cap\Delta_{\pm}$. Given $\alpha=\sum_{i\in I}{n_i\alpha_i}\in Q$, we call $\height(\alpha):=\sum_{i\in I}{n_i}\in\ZZ$ the {\bf height} of $\alpha$. The group $W$ also acts linearly on $Q^{\vee}=Q^{\vee}(A):=\sum_{i\in I}{\ZZ\alpha_i^{\vee}}$ by 
$$s_{i}(\alpha_j^{\vee})=\alpha_j^{\vee}-a_{ji}\alpha_i^{\vee}.$$
Given a real root $\alpha=w\alpha_i$ ($w\in W$, $i\in I$), we define the {\bf coroot} of $\alpha$ as $\alpha^{\vee}:=w\alpha_i^{\vee}\in Q^{\vee}$. Alternatively, $\alpha^{\vee}$ is the unique element of $[\g_{\alpha},\g_{-\alpha}]$ with $\alpha(\alpha^{\vee})=2$.

\subsection{Integral enveloping algebra}\label{subsection:IEA}
The general references for this paragraph are \cite{Tits87} and \cite[Section~2]{Rousseau} (see also \cite[Chapter 7]{KMGbook}).

Let $A=(a_{ij})_{i,j\in I}$ be a GCM, and consider the corresponding derived Kac--Moody algebra $\g=\g_A$. For an element $u$ of the enveloping algebra $\UU_{\CC}(\g)$ of $\g$ and an $s\in\NN$, we write
$$u^{(s)}:=\frac{u}{s!}, \quad (\ad u)^{(s)}:=\frac{1}{s!}(\ad u)^s\quad\textrm{and}\quad \binom{u}{s}:=\frac{1}{s!}u(u-1)\dots (u-s+1).$$
Let $\UU^+$, $\UU^-$ and $\UU^0$ be the $\ZZ$-subalgebras of $\UU_{\CC}(\g)$ respectively generated by the elements $e_i^{(s)}$ ($i\in I$, $s\in\NN$), $f_i^{(s)}$ ($i\in I$, $s\in\NN$) and $\binom{h}{s}$ ($h\in\sum_{i\in I}{\ZZ\alpha_i^{\vee}}$, $s\in\NN$). Then the $\ZZ$-subalgebra $\UU=\UU(A)$ of $\UU_{\CC}(\g)$ generated by $\UU^+$, $\UU^-$ and $\UU^0$ is a $\ZZ$-form of $\UU_{\CC}(\g)$, called the {\bf integral enveloping algebra} of $\g$. It has the structure of a co-invertible $\ZZ$-bialgebra with respect to the coproduct $\nabla$, co-unit $\epsilon$, and co-inverse $\tau$, whose restrictions to $\UU^+=\UU^+(A)$ are respectively given by
$$\nabla e_i^{(m)}=\sum_{k+l=m}{e_i^{(k)}\otimes e_i^{(l)}}, \quad \textrm{$\epsilon e_i^{(m)}=0$ for $m>0$} \quad\textrm{and}\quad \tau e_i^{(m)}=(-1)^me_i^{(m)}.$$
The $\ZZ$-algebra $\UU$ inherits from $\UU_{\CC}(\g)$ a natural filtration, as well as a $Q$-gradation $\UU=\bigoplus_{\alpha\in Q}{\UU_{\alpha}}$.
We set $\g_{\ZZ}:=\g\cap\UU$ and $\nn^+_{\ZZ}=\nn^+_{\ZZ}(A):=\nn^+\cap\UU$. For $\alpha\in Q_+$, we also set $(\nn^+_{\ZZ})_{\alpha}:=\nn^+_{\ZZ}\cap \g_{\alpha}$. For a field $k$, we similarly write $\g_{k}:=\g_{\ZZ}\otimes_{\ZZ}k$, $\nn^+_k=\nn^+_k(A):=\nn^+_{\ZZ}\otimes_{\ZZ}k$ and $(\nn^+_{k})_{\alpha}:=(\nn^+_{\ZZ})_{\alpha}\otimes_{\ZZ}k$, as well as $\UU_k:=\UU\otimes_{\ZZ}k$.

A set of roots $\Psi\subseteq\Delta_+$ is called {\bf closed} if for all $\alpha,\beta\in\Psi$: $\alpha+\beta\in\Delta_+\implies\alpha+\beta\in\Psi$. 
For a closed set $\Psi\subseteq\Delta_+$, we let $\UU(\Psi)$ denote the $\ZZ$-subalgebra of $\UU^+$ generated by all $\UU^{\alpha}:=\UU_{\CC}(\oplus_{n\geq 1}\g_{n\alpha})\cap\UU^+$ for $\alpha\in\Psi$. Given a field $k$, we define the completion $\widehat{\UU}_k(\Psi)$ of $\UU(\Psi)$ over $k$ with respect to the $Q_+$-gradation as $$\widehat{\UU}_k(\Psi)=\prod_{\alpha\in Q_+}{(\UU(\Psi)_{\alpha}\otimes_{\ZZ}k)},$$
where $\UU(\Psi)_{\alpha}:=\UU(\Psi)\cap\UU_\alpha$. For $\Psi=\Delta^+$, we also write $\widehat{\UU}_k^+=\widehat{\UU}_k^+(A):=\widehat{\UU}_k(\Delta^+)$, as well as $\UU^+_{\alpha}:=\UU(\Delta_+)_{\alpha}$.

For each $i\in I$, the element $$s_i^*:=\exp(\ad e_i)\exp(\ad f_i)\exp(\ad e_i)\in \Aut(\UU)$$ satisfies $s_i^*(\UU_{\alpha})=\UU_{s_i(\alpha)}$ for all $\alpha\in Q$. We denote by $W^*=W^*(A)$ the subgroup of $\Aut(\UU)$ generated by the $s_i^*$, $i\in I$. There is a surjective group homomorphism $$\pi_W\co W^*\to W:s_i^*\mapsto s_i$$ such that for any $w^*\in W^*$ and any $i\in I$, the pair $E_{\alpha}:=\{w^*e_i,-w^*e_i\}$ only depends on the root $\alpha:=\pi_W(w^*)\alpha_i\in\Delta^{\re}$, that is, it is the same for any decomposition $\alpha=\pi_W(v^*)\alpha_j$. Moreover, for any $w\in W$ and any reduced decomposition $w=s_{i_1}s_{i_2}\dots s_{i_k}$ for $w$, the element $w^*:=s_{i_1}^*s_{i_2}^*\dots s_{i_k}^*\in W^*$ only depends on $w$, and not on the choice of the reduced decomposition for $w$.
For each $\alpha\in\Delta^{\re}$, we make some choice of an element $e_{\alpha}\in E_{\alpha}$ (with $e_{\alpha_i}:=e_i$ and $e_{-\alpha_i}:=f_i$ for $i\in I$), so that $e_{\alpha}=w^*e_i$ for some $w^*\in W^*$ and $i\in I$ with $\alpha=\pi_W(w^*)\alpha_i$. Then $\{e_{\alpha}\}$ is a $\ZZ$-basis for $\g_{\alpha}\cap\UU$, and we set $$s_{\alpha}^*:=\exp(\ad e_{\alpha})\exp(\ad e_{-\alpha})\exp(\ad e_{\alpha})=w^*s_i^*(w^*)\inv\in W^*.$$

\begin{lemma}\label{lemma:W_bialgebra_morphism}
The group $W$ acts on $\UU$ by bialgebras morphisms.
\end{lemma}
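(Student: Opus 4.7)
The plan is to reduce the statement to a formal computation about primitive elements of the bialgebra $\UU$. Since $W^* \leq \Aut(\UU)$ surjects onto $W$ via $\pi_W$ and is generated by the involutions $s_i^*$, and since bialgebra automorphisms of $\UU$ form a subgroup of $\Aut(\UU)$, it suffices to show that each $s_i^*$ preserves the coproduct $\nabla$, the counit $\epsilon$, and the coinverse $\tau$. The factorisation
$$s_i^* = \exp(\ad e_i) \circ \exp(\ad f_i) \circ \exp(\ad e_i)$$
reduces this further to showing that $\phi_x := \exp(\ad x)$ is a bialgebra automorphism of $\UU$ for each $x \in \{e_i, f_i\}$.

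The key observation is that $e_i$ and $f_i$ are \emph{primitive}: one has $\nabla(x) = x \otimes 1 + 1 \otimes x$, $\epsilon(x) = 0$, and $\tau(x) = -x$, where the first two equalities follow from the formulas recalled in \S\ref{subsection:IEA} for $\nabla$ and $\epsilon$ on divided powers $e_i^{(m)}$ (and their analogues for $f_i$) specialised at $m=1$. For such a primitive $x$, a direct computation exploiting that $\nabla$ is an algebra morphism and $\tau$ an algebra anti-morphism yields the compatibilities
$$\nabla \circ \ad x = \bigl(\ad x \otimes 1 + 1 \otimes \ad x\bigr) \circ \nabla, \qquad \epsilon \circ \ad x = 0, \qquad \tau \circ \ad x = \ad x \circ \tau.$$
Because $\ad x \otimes 1$ and $1 \otimes \ad x$ commute on $\UU \otimes \UU$, iterating $n$ times and normalising by $n!$ gives
$$\nabla \circ (\ad x)^{(n)} = \sum_{k+l=n} \bigl((\ad x)^{(k)} \otimes (\ad x)^{(l)}\bigr) \circ \nabla,$$
and summing over $n \geq 0$ produces exactly the bialgebra morphism axioms $\nabla \circ \phi_x = (\phi_x \otimes \phi_x) \circ \nabla$, $\epsilon \circ \phi_x = \epsilon$, and $\tau \circ \phi_x = \phi_x \circ \tau$.

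The only (mild) obstacle is to justify these summations, i.e. that $\ad e_i$ and $\ad f_i$ act locally nilpotently on $\UU$. This holds on the divided-power algebra generators $e_j^{(s)}$, $f_j^{(s)}$, $\binom{h}{s}$ of $\UU$ by a direct application of the Serre and defining relations of $\g(A)$, and extends to all of $\UU$ because $\ad x$ is a derivation: a straightforward Leibniz-rule induction shows that the product of two locally $\ad x$-nilpotent elements is again locally $\ad x$-nilpotent. Once this is in place, the entire argument is purely formal and requires no further input.
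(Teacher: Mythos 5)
Your proof is correct and follows essentially the same route as the paper: the paper's argument is precisely the computation that, since $e_i$ and $f_i$ are primitive and $\nabla$ is an algebra morphism, $\nabla\circ(\ad e_i)^{(n)}=(\ad e_i\otimes 1+1\otimes\ad e_i)^{(n)}\circ\nabla$, summed over the three exponential factors of $s_i^*$ at once rather than one factor at a time. Your extra remarks on $\tau$ and on local nilpotence are harmless additions (the latter is already implicit in the paper's definition of $s_i^*$ as an element of $\Aut(\UU)$ in \S\ref{subsection:IEA}).
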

\begin{proof}
Let $u\in\UU$ and $i\in I$. Since the coproduct $\nabla$ is an algebra morphism, we have
\begin{equation*}
\begin{aligned}
\nabla(s_i^*u)&=\nabla\Big(\sum_{n_1,n_2,n_3\geq 0}{\!\!\! (\ad e_i)^{(n_1)}(\ad f_i)^{(n_2)}(\ad e_i)^{(n_3)}u}\Big)\\
&=\sum_{n_1,n_2,n_3\geq 0}{\!\!\! (\ad e_i\otimes {\mathbf 1}+{\mathbf 1}\otimes \ad e_i)^{(n_1)}(\ad f_i\otimes {\mathbf 1}+{\mathbf 1}\otimes \ad f_i)^{(n_2)}(\ad e_i\otimes {\mathbf 1}+{\mathbf 1}\otimes \ad e_i)^{(n_3)}\nabla(u)}\\
&=\sum_{\begin{smallmatrix}r_1,r_2,r_3\geq 0 \\ s_1,s_2,s_3\geq 0\end{smallmatrix}}{\!\!\! \big((\ad e_i)^{(r_1)}(\ad f_i)^{(r_2)}(\ad e_i)^{(r_3)}\otimes (\ad e_i)^{(s_1)}(\ad f_i)^{(s_2)}(\ad e_i)^{(s_3)}\big)\nabla(u)}\\
&=(s_i^*\otimes s_i^*)\nabla(u),
\end{aligned}
\end{equation*}
and hence $\nabla s_i^*=(s_i^*\otimes s_i^*)\nabla$. Since clearly $\epsilon s_i^*=\epsilon$ for all $i\in I$, the lemma follows.
\end{proof}

\subsection{Minimal Kac--Moody groups}\label{subsection:MKG}

The general references for this paragraph are \cite{Tits87} and \cite[Chapter~9]{theseBR} (see also \cite[Chapter~7]{KMGbook}).

Given a GCM $A=(a_{ij})_{i,j\in I}$, we denote by $\G_A$ the corresponding {\bf Tits functor} of simply connected type. As a group functor over the category of fields, it is characterised by a small number of properties; one of them ensures that the complex group $\G_A(\CC)$ admits an adjoint action by automorphisms on the corresponding derived Kac--Moody algebra $\g=\g_A$. {\bf Minimal Kac--Moody groups} are by definition the groups obtained by evaluating such Tits functors over a field $k$.

The minimal Kac--Moody group $\G_A(k)$ can be constructed by generators and relations, as follows. For each real root $\alpha\in\Delta^{\re}$, we let $U_{\alpha}$ denote the affine group scheme over $\ZZ$ with Lie algebra $\g_{\alpha}\cap\g_{\ZZ}=\ZZ e_{\alpha}$, and we denote by
$x_{\alpha}\co \GG_a\stackrel{\sim}{\to}U_{\alpha}$
the isomorphism from the additive group scheme $\GG_a$ to $U_{\alpha}$ determined by the choice of $e_{\alpha}\in E_{\alpha}$ as a $\ZZ$-basis element, that is, 
$$x_{\alpha}\co \GG_a(k)=(k,+)\stackrel{\sim}{\to}U_{\alpha}(k): r\mapsto \exp(re_{\alpha})\quad\textrm{for any field $k$.}$$
A pair of roots $\{\alpha,\beta\}\subseteq \Delta^{\re}$ is called {\bf prenilpotent} if there exist some $w,w'\in W$ such that $\{w\alpha,w\beta\}\subseteq \Delta_+^{\re}$ and $\{w'\alpha,w'\beta\}\subseteq \Delta_-^{\re}$. In this case, the interval $$[\alpha,\beta]_{\NN}:=(\NN\alpha+\NN\beta)\cap \Delta^{\re}$$ is finite. One then defines a group functor $\Stt_A$, called the {\bf Steinberg functor} associated to $A$, such that for any field $k$, the group $\Stt_A(k)$ is the quotient of the free product of the {\bf real root groups} $U_{\gamma}(k)$, $\gamma\in\Delta^{\re}$, by the relations
\begin{equation}\label{R0}
[x_{\alpha}(r),x_{\beta}(s)]=\prod_{\gamma}{x_{\gamma}(C^{\alpha\beta}_{ij}r^is^j)}\quad\textrm{for any $r,s\in k$ and any prenilpotent pair $\{\alpha,\beta\}\subseteq\Delta^{\re}$},
\end{equation}
where $\gamma=i\alpha+j\beta$ runs through $]\alpha,\beta[_{\NN}:=[\alpha,\beta]_{\NN}\setminus\{\alpha,\beta\}$ in some prescribed order, and where the $C^{\alpha\beta}_{ij}$ are integral constants (that can be computed) depending on $\alpha,\beta$ and on the chosen order on $]\alpha,\beta[_{\NN}$ (see \cite[9.2.2]{theseBR}). The canonical homomorphisms $U_{\gamma}(k)\to\Stt_A(k)$ turn out to be injective, and we may thus identify each $U_{\gamma}(k)$ with its image in $\Stt_A(k)$. There is a $W^*$-action on $\Stt_A(k)$, defined for any $w^*\in W^*$, $r\in k$ and $\gamma\in\Delta^{\re}$ by 
$$w^*(x_{\gamma}(r))=w^*(\exp(re_{\gamma})):=\exp(rw^*e_{\gamma})=x_{w\gamma}(\epsilon r),$$
where $w:=\pi_W(w^*)\in W$ and where $\epsilon\in\{\pm 1\}$ corresponds to the choice $e_{w\gamma}=\epsilon w^*e_{\gamma}\in E_{w\gamma}$. For any $i\in I$ and $r\in k^{\times}$, we define the element
$$\widetilde{s}_i(r):=x_{\alpha_i}(r)x_{-\alpha_i}(r\inv)x_{\alpha_i}(r)$$
of $\Stt_A(k)$ and we set $\widetilde{s}_i:=\widetilde{s}_i(1)$. 

The second step of the construction is to define the {\bf split torus scheme} $\T=\T_{A}$. Let $\Lambda$ be the free $\ZZ$-module whose $\ZZ$-dual $\Lambda^{\vee}$ is freely generated by $\{\alpha_i^{\vee} \ | \ i\in I\}$. In particular, $\{\alpha_i \ | \ i\in I\}\subseteq \Lambda$, where we view each simple root $\alpha_i$ as a linear functional on $\sum_{i\in I}{\CC\alpha_i^{\vee}}$.
For any field $k$, we set
$$\T(k):=\Hom_{\gr}(\Lambda,k^{\times})\cong (k^{\times})^{|I|}.$$
The torus $\T(k)$ is then generated by the elements
$$r^{\alpha_i^{\vee}}\co \Lambda\to k^{\times}: \lambda\mapsto r^{\alpha_i^{\vee}}(\lambda):=r^{\langle \lambda,\alpha_i^{\vee}\rangle}$$
for $r\in k^{\times}$ and $i\in I$. There is a $W$-action on $\T(k)$, defined for any $i,j\in I$ and $r\in k^{\times}$ by
$$s_i(r^{\alpha_j^{\vee}})=r^{s_i(\alpha_j^{\vee})}=r^{\alpha_j^{\vee}-a_{ji}\alpha_i^{\vee}}.$$

For any field $k$, the {\bf minimal Kac--Moody group $\G_A(k)$ of simply connected type} is now defined as the quotient of the free product $\Stt_A(k)*\T(k)$ by the following relations, where $i\in I$, $r\in k$ and $t\in\T(k)$:
\begin{align}
&t\cdot x_{\alpha_i}(r)\cdot t\inv = x_{\alpha_i}(t(\alpha_i)r), \label{R1} \\ 
&\widetilde{s}_i\cdot t\cdot \widetilde{s}_i^{\thinspace -1} = s_i(t), \label{R2} \\ 
&\widetilde{s}_i(r\inv)=\widetilde{s}_i\cdot r^{\alpha_i^{\vee}} \qquad\textrm{for $r\neq 0$,} \label{R3} \\ 
&\widetilde{s}_i\cdot u\cdot \widetilde{s}_i^{\thinspace -1} = s_i^*(u) \quad\textrm{for $u\in U_{\gamma}(k)$,}\quad \gamma\in\Delta^{\re}. \label{R4}
\end{align}

We let $U^+(k)=U_A^+(k)$ denote the subgroup of $\G_{A}(k)$ generated by all $U_{\alpha}(k)$ with $\alpha\in\Delta_+^{\re}$. The normaliser of $U^+(k)$ in $\G_{A}(k)$ is the {\bf standard Borel subgroup} $\B^+(k)=\T(k)\ltimes U^+(k)$. The center ${\mathcal Z}_A(k)$ of $\G_A(k)$ is given by 
\begin{equation}\label{eqn:center}
{\mathcal Z}_A(k)=\bigcap_{g\in\G_A(k)}{g\B^+(k)g\inv}=\{t\in \T(k) \ | \ t(\alpha_i)=1 \ \forall i\in I\}.
\end{equation}
We let $\N(k)=\N_A(k)$ denote the subgroup of $\G_{A}(k)$ generated by $\T(k)$ and by the elements
$\tilde{s}_i$ for $i\in I$. Then the assignment $\widetilde{s}_i\mapsto s_i$ for $i\in I$ induces an isomorphism $\N(k)/\T(k)\cong W$, and $(\B^+(k),\N(k))$ is a BN-pair for $\G_{A}(k)$.

\subsection{Mathieu--Rousseau completions}\label{subsection:MR completions}
The general reference for this paragraph is \cite{Rousseau} (see also \cite[\S 8.5]{KMGbook}).

Let $A=(a_{ij})_{i,j\in I}$ be a GCM. For each closed set $\Psi\subseteq\Delta_+(A)$ of positive roots, we let $\U^{ma}_{\Psi}$ denote the affine group scheme (viewed as a group functor) whose algebra is the restricted dual $\ZZ[\U^{ma}_{\Psi}]:=\bigoplus_{\alpha\in\NN\Psi}{\UU(\Psi)^*_{\alpha}}$ of $\UU(\Psi)$. 
One can then define {\bf real} and {\bf imaginary root groups} $\U_{(\alpha)}=\U^A_{(\alpha)}$ in $$\U_A^{ma+}:=\U^{ma}_{\Delta_+}$$ by setting $\U_{(\alpha)}:=\U^{ma}_{\{\alpha\}}$ for $\alpha\in\Delta_+^{\re}$ and $\U_{(\alpha)}:=\U^{ma}_{\ZZ_{>0}\alpha}$ for $\alpha\in\Delta_+^{\im}$. 

The {\bf Mathieu--Rousseau completion} $\G^{pma}_A$ of the Tits functor $\G_A$ is a group functor, with the following properties. It contains the split torus scheme $\T_A$, as well as the group functors $\U_A^{ma+}$ and $\N_A$ as subfunctors. Over a field $k$, the identification of the real root groups $U_{\alpha}(k)$ ($\alpha\in\Delta^{\re}_+$) of $\G_{A}(k)$ with the corresponding real root groups $\U_{(\alpha)}(k)$ in $\G^{pma}_A(k)$ produces injections of $U_A^+(k)$ in $\U_A^{ma+}(k)$ and of $\G_A(k)$ in $\G^{pma}_A(k)$. Again, the normaliser of $\U_A^{ma+}(k)$ in $\G^{pma}_A(k)$ is the {\bf standard Borel subgroup} $\B^{ma+}(k)=\T(k)\ltimes \U_A^{ma+}(k)$, and $(\B^{ma+}(k),\N(k))$ is a BN-pair for $\G^{pma}_{A}(k)$. 

The group $\G^{pma}_{A}(k)$ is a Hausdorff topological group, with basis of neighbourhoods of the identity the normal subgroups $\U^{ma}_n(k)$ ($n\in\NN$) of $\U_A^{ma+}(k)$ defined by
$$\U^{ma}_n=\U^{ma}_{A,n}:=\U^{ma}_{\Psi(n)}\quad\textrm{where $\Psi(n)=\{\alpha\in\Delta^+ \ | \ \height(\alpha)\geq n\}$.}$$
It is topologically generated by $\G_A(k)$, together with the imaginary root groups $\U_{(\alpha)}(k)$, $\alpha\in\Delta^{\im}_+$. Unlike the minimal Kac--Moody group $\G_A(k)$, the Mathieu--Rousseau completion $\G^{pma}_{A}(k)$ of $\G_A(k)$ is thus obtained by not only ``exponentiating" the real root spaces of the derived Kac--Moody algebra $\g$, but also the imaginary root spaces. 

The group functor $\U_A^{ma+}$ admits a more tractable description in terms of root groups, which we now briefly review. We call an element $x\in \nn^+_{\ZZ}$ {\bf homogeneous} if $x\in (\nn^+_{\ZZ})_{\alpha}$ for some $\alpha\in\Delta_+$. In this case, we call $\deg(x):=\alpha$ the {\bf degree} of $x$. Given an homogeneous element $x\in \nn^+_{\ZZ}$ with $\deg(x)=\alpha$, we call a sequence $(x^{[n]})_{n\in\NN}$ an {\bf exponential sequence} for $x$ if it satisfies the following conditions:
\begin{enumerate}
\item[(ES1)]
$x^{[0]}=1$, $x^{[1]}=x$, and $x^{[n]}\in\UU_{n\alpha}$ for all $n\in\NN$.
\item[(ES2)]
$x^{[n]}-x^{(n)}$ has filtration less than $n$ in $\UU_{\CC}(\g(A))$ for all $n>0$.
\item[(ES3)]
$\nabla(x^{[n]})=\sum_{k+l=n}{x^{[k]}\otimes x^{[l]}}$ and $\epsilon(x^{[n]})=0$ for all $n>0$.
\end{enumerate}
For a field $k$ and an element $\lambda\in k$, one can then define the {\bf twisted exponential} $$[\exp]\lambda x:=\sum_{n\geq 0}{\lambda^nx^{[n]}}\in\widehat{\UU}^+_k.$$
Note that an exponential sequence $(x^{[n]})_{n\in \NN}$ for $x$ always exists and is essentially unique, in the following sense (see \cite[\S 2.9]{Rousseau} or \cite[Proposition~8.50]{KMGbook}): if $(x^{\{n\}})_{n\in \NN}$ is another exponential sequence for $x$ with associated twisted exponential $\{\exp\}x$, then for any given choice of exponential sequences $(y^{[m]})_{m\in\NN}$ for the homogeneous elements of $\bigoplus_{r\geq 2}\g_{r\alpha\ZZ}$, there exist (uniquely determined) elements $x_m\in\g_{m\alpha\ZZ}$ ($m\geq 2$) such that $$\{\exp\}x=[\exp]x \cdot \prod_{m\geq 2}[\exp]x_m.$$
In particular, when $\alpha\in\Delta_+^{\re}$, one has $x^{[n]}=x^{(n)}=x^n/n!$ for all $n\in\NN$. 
The element $[\exp]\lambda x$ satisfies $\epsilon([\exp]\lambda x)=1$ and is group-like, that is, $\nabla [\exp]\lambda x=[\exp]\lambda x\widehat{\otimes}[\exp]\lambda x$. It is moreover invertible in $\widehat{\UU}^+_k$, with inverse $\tau [\exp]\lambda x=\sum_{n\geq 0}{\lambda^n\tau x^{[n]}}$.

\begin{remark}\label{remark:ES_of_0}
As we will see, it is also convenient to allow $x\in\g_{\alpha\ZZ}$ in the definition of an exponential sequence to be zero, in which case one has to specify $\alpha$, so as to make sense of (ES1). In other words, the sequence $(x^{[n]})_{n\in\NN}$ is an exponential sequence for $x=0$ \emph{viewed as an element of $\g_{\alpha\ZZ}$} if it satisfies the conditions (ES1)--(ES3) for that $\alpha$. Of course, in that case, one should rather call $\sum_{n\geq 0}x^{[n]}\otimes r^n$ for $r$ in some ring $k$ the \emph{twisted exponential of $x$ associated to $(x^{[n]})_{n\in\NN}$ and to $r$}, and one should replace the notation $[\exp](rx)$ by some other notation, such as $[\exp](r,x)$ (keeping the notation $[\exp](x)$ for $\sum_{n\geq 0}x^{[n]}$).

By the above uniqueness statement, the exponential sequences $(x^{[n]})_{n\in\NN}$ for $x=0\in\g_{\alpha\ZZ}$ ($\alpha\in\Delta_+$) can be described as follows. Fix a choice of exponential sequences for the homogeneous elements of $\bigoplus_{r\geq 2}\g_{r\alpha\ZZ}$; in particular, for $y=0$ viewed as a homogeneous element of $\g_{r\alpha\ZZ}$ ($r\geq 2$), one could take $[\exp] y:=1$. Then there exist (uniquely determined) $x_r\in\g_{r\alpha\ZZ}$ ($r\geq 2$) such that $[\exp](x):=\sum_{n\geq 0}x^{[n]}=\prod_{r\geq 2}[\exp](x_r)$; conversely, any such product defines an exponential sequence for $x=0\in\g_{\alpha\ZZ}$.
\end{remark}

For each $\alpha\in\Delta_+$, let $\BBBB_{\alpha}$ be a $\ZZ$-basis of $(\nn^+_{\ZZ})_{\alpha}$. For $\alpha\in\Delta_+^{\re}$, we choose $\BBBB_{\alpha}=\{e_{\alpha}\}$. For a closed subset $\Psi\subseteq \Delta_+$, we then call $\BBBB_\Psi=\BBBB_\Psi(A):=\bigcup_{\alpha\in\Psi}{\BBBB_{\alpha}}$ a {\bf standard $\ZZ$-basis} of $\nn^+_{\ZZ}\cap\UU(\Psi)$. The announced description of $\U_A^{ma+}$ is provided by the following proposition.
\begin{prop}[{\cite[Proposition~3.2]{Rousseau}}] \label{prop formal sum Rousseau}
Let $\Psi\subseteq\Delta_+$ be closed and let $k$ be a field. Then the following hold:
\begin{enumerate}
\item
$\U^{ma}_{\Psi}(k)$ can be identified to the multiplicative subgroup of $\widehat{\UU}_k(\Psi)$ consisting of all group-like elements of $\widehat{\UU}_k(\Psi)$ of constant term $1$.
\item
Let $\BBBB_\Psi$ be a standard $\ZZ$-basis of $\nn^+_{\ZZ}\cap\UU(\Psi)$, and choose for each $x\in \BBBB_\Psi$ an exponential sequence. Then $\U^{ma}_{\Psi}(k)\subseteq \widehat{\UU}_k(\Psi)$ consists of the products $$\prod_{x\in\BBBB_{\Psi}}{[\exp]\lambda_xx}$$ for $\lambda_x\in k$, where the product is taken in any (arbitrary) chosen order on $\BBBB_\Psi$. The expression of an element of $\U^{ma}_{\Psi}(k)$ in the form of such a product is unique.
\end{enumerate}
\end{prop}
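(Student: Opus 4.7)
The plan is to prove (1) via standard bialgebra duality, and (2) via a height-filtered induction exploiting the bialgebra structure of $\UU(\Psi)$.

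For part (1), observe that $\UU(\Psi)$ is a sub-bialgebra of $\UU$ that is $Q_+$-graded with each graded piece free of finite rank over $\ZZ$. Its restricted graded dual $\ZZ[\U_\Psi^{ma}]=\bigoplus_\alpha\UU(\Psi)^*_\alpha$ is therefore a commutative Hopf algebra over $\ZZ$, and a $k$-point of $\U_\Psi^{ma}$ is by definition a $k$-algebra morphism $\phi\co\ZZ[\U_\Psi^{ma}]\otimes_\ZZ k\to k$. Transposing identifies $\phi$ with a unique element $g_\phi\in\prod_\alpha\UU(\Psi)_\alpha\otimes_\ZZ k=\widehat{\UU}_k(\Psi)$ via $\phi(u^*)=u^*(g_\phi)$; the multiplicativity of $\phi$, read off against the coproduct of $\UU(\Psi)$, amounts to $\nabla g_\phi=g_\phi\,\widehat{\otimes}\,g_\phi$, the normalisation $\phi(1)=1$ to $\epsilon(g_\phi)=1$, and the group law on $\U_\Psi^{ma}(k)$ corresponds to multiplication in $\widehat{\UU}_k(\Psi)$.

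For (2), I would first verify that each twisted exponential $[\exp]\lambda_x x$ belongs to $\U_\Psi^{ma}(k)$: by (ES3) it is group-like with constant term $1$, and the product of two such elements remains group-like with constant term $1$, so any product $\prod_{x\in\BBBB_\Psi}[\exp]\lambda_x x$ --- which makes sense in $\widehat{\UU}_k(\Psi)$ because at each $Q_+$-degree $\alpha$ only finitely many factors contribute (the heights of contributing terms being bounded above by $\height(\alpha)$) --- lies in $\U_\Psi^{ma}(k)$ by part (1). For the converse direction, I would induct on height. Partition $\BBBB_\Psi$ into the finite layers $\BBBB^{(n)}:=\{x\in\BBBB_\Psi\mid\height(\deg x)=n\}$ and assume $g\in\U_\Psi^{ma}(k)$ has already been matched in degrees of height $<n$ by a partial product $P_{<n}$ in the prescribed order. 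Then $g':=P_{<n}\inv g$ is group-like with constant term $1$ and $g'-1$ concentrated in degrees of height $\geq n$; the group-like relation $\nabla g'=g'\,\widehat{\otimes}\,g'$, read in height $n$, forces the height-$n$ homogeneous component $g'_{[n]}$ to be primitive, hence to lie in $\bigoplus_{\height(\alpha)=n}(\nn^+_\ZZ)_\alpha\otimes_\ZZ k$. Expanding $g'_{[n]}$ in the basis $\bigcup_{\height(\alpha)=n}\BBBB_\alpha$ yields unique scalars $\lambda_x\in k$ for $x\in\BBBB^{(n)}$; multiplying $g'$ by the inverse of $\prod_{x\in\BBBB^{(n)}}[\exp]\lambda_x x$ then kills the height-$n$ part, thanks to (ES1)--(ES2), closing the induction and yielding both existence and uniqueness.

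The main obstacle is to justify the ``primitive equals Lie-algebra element'' step, i.e.\ the identification of primitives of $\widehat{\UU}_k(\Psi)$ (restricted to a given height $n$) with the corresponding homogeneous part of $\nn^+_k$. This is an integral analogue of the classical identification of primitives of $\UU_\CC(\g)$ with $\g$, and requires the divided-power PBW structure of the integral form $\UU(\Psi)$ in terms of ordered monomials in the $x^{[n]}$ ($x\in\BBBB_\Psi$) to be made explicit --- which is precisely the role of the defining conditions (ES1)--(ES3) on exponential sequences. Once the integral primitive-element statement is in hand, together with the fact that $(\nn^+_\ZZ)_\alpha$ is a direct summand of $\UU(\Psi)_\alpha$ of finite rank, the inductive matching produces scalars in $k$ that are uniquely determined and independent of the various auxiliary choices.
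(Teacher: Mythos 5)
The paper gives no proof of this proposition --- it is quoted directly from Rousseau's \emph{op.\ cit.}, Proposition 3.2 --- so your attempt can only be compared with the standard argument, which it essentially reproduces. Part (1), obtained by transposing $k$-algebra morphisms on the restricted graded dual $\ZZ[\U^{ma}_{\Psi}]$ into elements of $\widehat{\UU}_k(\Psi)$ and reading multiplicativity against the coproduct of $\UU(\Psi)$, is correct. The height induction in part (2) is also the right mechanism, and your verification that the partial products converge and that subtracting $\prod_{x\in\BBBB^{(n)}}[\exp]\lambda_x x$ kills the height-$n$ component is sound.

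Two points keep the proposal from being complete. First, the step you yourself isolate --- that a homogeneous primitive element of $\widehat{\UU}_k(\Psi)$ of degree $\alpha$ lies in $(\nn^+_k)_\alpha$ --- is the genuine content of the proposition and is \emph{not} automatic in positive characteristic: for the ordinary enveloping algebra over $\FF_p$ the elements $e^p$ are primitive but do not lie in the Lie algebra, and the statement holds here only because $\UU(\Psi)$ is the divided-power integral form (in which $e^p=p!\,e^{(p)}=0$ while $e^{(p)}$ is not primitive). The proof requires the PBW-type $\ZZ$-basis of $\UU(\Psi)$ by ordered monomials $\prod_x x^{[n_x]}$ together with the computation $\nabla\bigl(\prod_x x^{[n_x]}\bigr)=\prod_x\sum_{k+l=n_x}x^{[k]}\otimes x^{[l]}$: any monomial of length at least $2$ then contributes a nonzero cross term among linearly independent basis tensors, so a primitive element is supported on the length-one monomials, i.e.\ on $\nn^+_k$. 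You correctly name this as the missing input but do not supply it, so the proof is complete only modulo that lemma. Second, your induction tacitly assumes that the chosen order on $\BBBB_\Psi$ refines the height filtration (you peel off the layers $\BBBB^{(n)}$ in increasing height), whereas the statement allows an \emph{arbitrary} order; one must add the easy but necessary observation that two twisted exponentials commute modulo factors supported in strictly larger height, so that passing from a height-compatible order to an arbitrary one only perturbs coefficients that the induction has not yet fixed, preserving both existence and uniqueness of the decomposition.
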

In this paper, we will always identify $\U_A^{ma+}(k)$ with a subset of $\widehat{\UU}^+_k$, as in Proposition~\ref{prop formal sum Rousseau}(1).
The conjugation action of the torus $\T(k)$ on $\U_A^{ma+}(k)$ is then given by 
\begin{equation}\label{eqn:torus_action}
t([\exp]x)t\inv=[\exp]t(\alpha)x\quad\textrm{for all $t\in\T(k)$ and $x\in (\nn^+_{k})_{\alpha}$, $\alpha\in\Delta_+$.}
\end{equation}
Given $i\in I$, $\lambda\in k$ and $\alpha\in\{\pm\alpha_i\}$, we also have a conjugation action of $\exp \lambda e_{\alpha}$ on $\U_{\Delta_+\setminus\{\alpha\}}^{ma}(k)$ given by
\begin{equation}\label{eqn:ei_action}
\exp(\lambda e_{\alpha})u\exp(-\lambda e_{\alpha})=\sum_{n\geq 0}{(\ad \lambda e_{\alpha})^{(n)}u}\quad\textrm{for all $u\in \U_{\Delta_+\setminus\{\alpha\}}^{ma}(k)$.}
\end{equation}

\begin{lemma}\label{lemma:Winv_twisted_exp}
Let $i\in I$, and let $x\in \nn^+_{\ZZ}$ be an homogeneous element of degree $\alpha\in \Delta_+\setminus\{\alpha_i\}$. Then for any choice of exponential sequence $(x^{[n]})_{n\in\NN}$ for $x$, the sequence $(s_i^*x^{[n]})_{n\in\NN}$ is an exponential sequence for $s_i^*x$, and we have
$$\tilde{s}_i([\exp] x)\tilde{s}_i\inv=[\exp](s_i^*x)\in\U_A^{ma+}(k)$$
for the corresponding twisted exponentials.
\end{lemma}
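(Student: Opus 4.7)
The plan has two parts, corresponding to the two assertions of the lemma.

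For the first assertion, I verify the axioms (ES1)--(ES3) for $(s_i^* x^{[n]})_{n\in\NN}$ as an exponential sequence for $s_i^*x$. The key properties of $s_i^*$ used are: it is a bialgebra automorphism of $\UU$ (Lemma~\ref{lemma:W_bialgebra_morphism}); it maps $\UU_\beta$ to $\UU_{s_i\beta}$ for all $\beta\in Q$; and it preserves the PBW filtration of $\UU_\CC(\g(A))$, since $\ad e_i$ and $\ad f_i$ each preserve that filtration (the top-degree parts cancel in $[e_i,u]=e_iu-ue_i$), so their exponentials do too. The hypothesis $\alpha\neq\alpha_i$ guarantees $s_i\alpha\in\Delta_+$, so that $s_i^*x\in(\nn^+_\ZZ)_{s_i\alpha}$ and (ES1) holds. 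Axiom (ES2) then follows from the filtration-preservation combined with $s_i^*(x^{(n)})=(s_i^*x)^{(n)}$ (since $s_i^*$ is an algebra morphism), and (ES3) is immediate from the bialgebra property.

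For the second assertion, I decompose $\tilde s_i=\exp(e_i)\exp(f_i)\exp(e_i)$ (using $e_{-\alpha_i}=f_i$) and apply formula (\ref{eqn:ei_action}) three times in succession. Set $u_0:=[\exp]x$; since $\alpha\neq\alpha_i$ and the simple real root $\alpha_i$ is not a positive integer multiple of any other positive root, $u_0\in\U^{ma}_{\Delta_+\setminus\{\alpha_i\}}(k)$. Applying (\ref{eqn:ei_action}) with the root $\alpha_i$ yields $u_1:=\exp(e_i)u_0\exp(-e_i)=\exp(\ad e_i)u_0\in\U^{ma}_{\Delta_+\setminus\{\alpha_i\}}(k)$; then applying it with the root $-\alpha_i$ to $u_1\in\U_A^{ma+}(k)=\U^{ma}_{\Delta_+\setminus\{-\alpha_i\}}(k)$ yields $u_2:=\exp(f_i)u_1\exp(-f_i)=\exp(\ad f_i)u_1\in\U_A^{ma+}(k)$. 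A support analysis shows that every nonzero weight in the support of $u_2$ is of the form $n\alpha+q\alpha_i$ with $n\geq 1$ and $q\in\ZZ$; hence $\alpha_i$ cannot be in the support of $u_2$, for otherwise $\alpha$ would be a $\ZZ$-multiple of $\alpha_i$, contradicting that $\alpha_i$ is a real root and $\alpha\neq\alpha_i$. Therefore $u_2\in\U^{ma}_{\Delta_+\setminus\{\alpha_i\}}(k)$, and a third application of (\ref{eqn:ei_action}) produces
\[\tilde s_i[\exp]x\,\tilde s_i\inv=\exp(\ad e_i)u_2=\exp(\ad e_i)\exp(\ad f_i)\exp(\ad e_i)[\exp]x=s_i^*\bigl([\exp]x\bigr).\]
By the continuity of $s_i^*$ acting on $\widehat{\UU}^+_k$ (it permutes the graded pieces), this equals $\sum_{n\geq 0}s_i^*x^{[n]}$, which in turn equals $[\exp](s_i^*x)$ by the first part.

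The main obstacle will be the conjugation computation: verifying at each of the three steps that the intermediate element lies in the subgroup on which the next application of (\ref{eqn:ei_action}) is valid. This requires careful support tracking and relies crucially on the real-root nature of $\alpha_i$, which prevents $\alpha_i$ from appearing as an integer multiple of any other root.
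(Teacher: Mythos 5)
Your proof is correct and follows the same route as the paper's (very terse) proof: part one is verified exactly as in the paper, by noting that $s_i^*$ is a graded, filtration-preserving bialgebra automorphism of $\UU$ sending $\UU_{n\alpha}$ to $\UU_{ns_i(\alpha)}$, and part two is the intended unwinding of the paper's one-line appeal to (\ref{eqn:ei_action}), with the three-fold conjugation and the support tracking (which the paper leaves implicit) carried out correctly.
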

\begin{proof}
We first prove that $(s_i^*x^{[n]})_{n\in\NN}$ is an exponential sequence for $s_i^*x$. Since $s_i^*$ preserves the natural gradation and filtration on $\UU=\UU(A)$ and maps $\UU_{n\alpha}$ to $\UU_{ns_i(\alpha)}$ ($n\in\NN$), the axioms (ES1) and (ES2) are clearly satisfied. 
Since moreover $s_i^*$ acts on $\UU$ by bialgebra morphisms by Lemma~\ref{lemma:W_bialgebra_morphism}, the axiom (ES3) is also satisfied, as desired. The second statement of the lemma follows from (\ref{eqn:ei_action}).
\end{proof}

\subsection{Gabber--Kac kernel and non-density}\label{subsection:GKKND}
The general reference for this paragraph is \cite[Section~6]{Rousseau} (see also \cite[\S 8.5--\S 8.6]{KMGbook}).

Let $A=(a_{ij})_{i,j\in I}$ be a GCM and $k$ be a field. The minimal Kac--Moody group $\G_A(k)$ acts strongly transitively by simplicial automorphisms on its positive building $X_+$, associated to the BN-pair $(\B^+(k),\N(k))$ of $\G_A(k)$. (For general background on buildings and BN-pairs, we refer the reader to \cite[Chapter~6]{BrownAbr}).

The {\bf R\'emy--Ronan completion} $\G_A^{rr}(k)$ of $\G_A(k)$ (see \cite{ReRo}) is the completion of the image of $\G_A(k)$ in the automorphism group $\Aut(X_+)$ of $X_+$, where $\Aut(X_+)$ is equipped with the topology of uniform convergence on bounded sets. The BN-pair $(\B^{ma+}(k),\N(k))$ of the Mathieu--Rousseau completion $\G^{pma}_A(k)$ of $\G_A(k)$ yields the same building $X_+$ (possibly with a larger apartment system). The kernel of the action of $\G^{pma}_A(k)$ on $X_+$ is given by
$$Z'_A:=\bigcap_{g\in \G^{pma}_A(k)}{g\B^{ma+}(k)g\inv}$$ and decomposes as $Z'_A=Z_A\cdot (Z'_A\cap \U_A^{ma+}(k))$, where $Z_A={\mathcal Z}_A(k)$ is the center of $\G_A(k)$. We call the intersection $Z'_A\cap \U_A^{ma+}(k)$ the {\bf Gabber-Kac kernel} of $\G^{pma}_A(k)$, for reasons that will become clear in \S\ref{subsection:GKS} below. It can also be described as $$Z'_A\cap \U_A^{ma+}(k)=\bigcap_{u\in\U_A^{ma+}(k)}{uU^{im+}u\inv},$$
where $U^{im+}:=\U^{ma}_{\Delta_+^{\im}}(k)$ is the {\bf imaginary subgroup} of $\U_A^{ma+}(k)$.
Note that if $A$ is of indefinite type and $k$ is of characteristic zero or is finite, the quotient $\G^{pma}_A(k)/Z'_A$ is simple (see \cite{simpleKM} and \cite[Thm.6.19]{Rousseau}).

Unlike the R\'emy--Ronan completion, the Mathieu--Rousseau completion $\G^{pma}_A(k)$ of $\G_A(k)$ is, in general, not the completion of $\G_A(k)$ (in its own topology). Note however that $\G_A(k)$ is dense in $\G^{pma}_A(k)$ as soon as the characteristic of $k$ is either zero or bigger than 
$$M_A:=\max_{i\neq j}{|a_{ij}|}.$$
We denote by $\overline{U^+_A}(k)$ (respectively, $\overline{\G_A}(k)$) the completion of $U^+_A(k)$ (respectively, $\G_A(k)$) in $\G^{pma}_A(k)$. The completions $\overline{\G_A}(k)$ and $\G_A^{rr}(k)$ of $\G_A(k)$ are strongly related: there is a continuous homomorphism
$$\varphi_{A}\co \overline{\G_A}(k)\to \G_A^{rr}(k)$$
with kernel $Z'_A\cap \overline{\G_A}(k)=Z_A\cdot (Z'_A\cap \overline{U^+_A}(k))$, which is moreover surjective if $k$ is finite. 

\subsection{GK-simplicity}\label{subsection:GKS}
The general reference for this paragraph is \cite[6.5]{Rousseau} (see also \cite[\S 8.6]{KMGbook}).

Let $A=(a_{ij})_{i,j\in I}$ be a GCM and $k$ be a field. 
By a theorem of Gabber--Kac (see \cite[Proposition~1.7 and Theorem~9.11]{Kac}), every ideal of the derived Kac--Moody algebra $\g=\g_A$ intersecting the Cartan subalgebra $\hh$ trivially is reduced to $\{0\}$ (at least when $A$ is symmetrisable). Equivalently, every graded sub-$\g$-module of $\g$ that is contained in $\nn^+$ is reduced to $\{0\}$. The Lie algebra $\g_k$ is called \emph{simple in the sense of the Gabber--Kac theorem}, or simply {\bf GK-simple} if every graded sub-$\UU_k$-module of $\g_k$ that is contained in $\nn^+_k$ is reduced to $\{0\}$. Similarly, the Kac--Moody group $\G^{pma}_A(k)$ is called {\bf GK-simple} if every normal subgroup of $\G^{pma}_A(k)$ that is contained in $\U_A^{ma+}(k)$ is reduced to $\{1\}$.

It is easy to see that the Lie algebra $\g_k$ is GK-simple if and only if for all $\delta\in\Delta_+^{\im}$, any homogeneous element $x\in\g_k$ of degree $\delta$ such that $(\ad f_i)^{(s)}x=0$ for all $i\in I$ and $s\in\NN$ must be zero. By the Gabber--Kac theorem, $\g_k$ is GK-simple when $A$ is symmetrisable and $\charact k=0$. When $\charact k=p>0$, this is not true anymore: for instance, the affine Kac--Moody algebra $\g_k={\mathfrak sl}_m(k)\otimes_kk[t,t\inv]$ is not GK-simple as soon as $p$ divides $m$. Note, however, that the corresponding Kac--Moody group $\G_A^{pma}(k)=\SL_m(k(\!(t)\!))$ is GK-simple (see \cite[Exemple~6.8]{Rousseau}).

Note that $\G^{pma}_A(k)$ is GK-simple if and only if its Gabber--Kac kernel $Z'_A\cap \U_A^{ma+}(k)$ is trivial, that is, if and only if $Z'_A=Z_A$. If $\g_k$ is GK-simple and $k$ is infinite, then $\G^{pma}_A(k)$ is GK-simple by \cite[Remarque~6.9.1]{Rousseau}. In particular, $\G^{pma}_A(k)$ is GK-simple as soon as $A$ is symmetrisable and $\charact k=0$.

\section{Functoriality}
In this section, given two GCM $A$ and $B$, we define a family of Lie algebra maps $\nn^+(A)\to\nn^+(B)$, which we call \emph{$\ZZ$-regular}, and which give rise to continuous group homomorphisms $\U_A^{ma+}(k)\to\U_B^{ma+}(k)$ over any field $k$. We then give concrete examples of such maps, respectively yielding surjective and injective exponentials, as in Theorems~\ref{thmintro:funct1} and \ref{thmintro:funct2}. Finally, we show how Theorem~\ref{thmintro:funct3} can be deduced using the same lines of proof.

\subsection{The exponential of a $\ZZ$-regular map}
\begin{definition}\label{definition:Zreg}
Let $A=(a_{ij})_{i,j\in I}$ and $B$ be two GCM. We call a map $\pi\co\nn^+(A)\to\nn^+(B)$ {\bf $\ZZ$-regular} if it is a Lie algebra morphism such that for each $i\in I$, there is some $\beta_i\in\Delta^{\re}_+(B)$ with $\pi(e_i)\in (\nn^+_{\ZZ}(B))_{\beta_i}$. In this case, we denote by $\overline{\pi}\co Q(A)\to Q(B)$ the $\ZZ$-linear map defined by 
$$\overline{\pi}(\alpha_i)=\beta_i\quad\forall i\in I.$$
\end{definition}

\begin{theorem}\label{thm:basic_functoriality}
Let $k$ be a field, and let $A=(a_{ij})_{i,j\in I}$ and $B$ be two GCM. Let $\pi\co\nn^+(A)\to\nn^+(B)$ be $\ZZ$-regular.
Then there is a continuous group homomorphism $$\widehat{\pi}\co \U_A^{ma+}(k)\to\U_B^{ma+}(k)$$ such that for any nonzero homogeneous $x\in\nn_{\ZZ}^+(A)$ and any choice of exponential sequence for $x$, there is a choice of exponential sequence $(\pi(x)^{[n]})_{n\in\NN}$ for $\pi(x)$ such that
\begin{equation}\label{eqn:basic_exp}
\widehat{\pi}([\exp]\lambda x)=\sum_{n\in\NN}\lambda^n\pi(x)^{[n]}\quad\textrm{for all $\lambda\in k$}.
\end{equation} 
\end{theorem}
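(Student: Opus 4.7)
The plan is to construct $\widehat{\pi}$ by first building an algebraic extension of $\pi$ at the level of the integral enveloping algebras, then extending it to the completions used to define $\U_A^{ma+}(k)$ in Proposition~\ref{prop formal sum Rousseau}. Writing $\pi(e_i)=c_i e_{\beta_i}$ with $c_i\in\ZZ$ and $\beta_i\in\Delta_+^{\re}(B)$, the universal property of the enveloping algebra promotes the Lie algebra morphism $\pi\co\nn^+(A)\to\nn^+(B)\subseteq\UU_{\CC}(\nn^+(B))$ to a unital associative algebra morphism $\tilde\pi\co\UU_{\CC}(\nn^+(A))\to\UU_{\CC}(\nn^+(B))$. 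The map $\tilde\pi$ is $Q_+$-graded of shift $\overline{\pi}$, in the sense that $\tilde\pi(\UU(A)_\alpha)\subseteq \UU(B)_{\overline{\pi}(\alpha)}$, and it preserves the natural filtration coming from $\g$.

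The first key step is integrality: I would show that $\tilde\pi$ restricts to a morphism $\UU^+(A)\to\UU^+(B)$ of $\ZZ$-algebras. Since $\UU^+(A)$ is generated as a $\ZZ$-algebra by the divided powers $e_i^{(s)}$, it suffices to compute $\tilde\pi(e_i^{(s)})=\pi(e_i)^s/s!=c_i^s e_{\beta_i}^{(s)}$ and invoke the fact that, for any positive real root $\beta\in\Delta_+^{\re}(B)$, the divided powers $e_{\beta}^{(s)}$ lie in $\UU^+(B)$; concretely, writing $\beta=\pi_W(w^*)\alpha_j$ gives $e_{\beta}^{(s)}=w^*(e_j^{(s)})\in w^*(\UU^+)\subseteq\UU$, and positivity of $\beta$ places it in $\UU^+(B)$. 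The second key step is that $\tilde\pi$ is a bialgebra morphism: the identities $\nabla\tilde\pi=(\tilde\pi\otimes\tilde\pi)\nabla$ and $\epsilon\tilde\pi=\epsilon$ both reduce, via multiplicativity, to checking them on the generators $e_i$, where they are immediate since each $e_i$ and each $\pi(e_i)=c_i e_{\beta_i}$ is primitive.

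The next step is to extend $\tilde\pi_k:=\tilde\pi\otimes_{\ZZ}k$ to a continuous bialgebra morphism $\widehat{\tilde\pi_k}\co\widehat{\UU}_k^+(A)\to\widehat{\UU}_k^+(B)$. The needed finiteness is the observation that, for $\alpha=\sum_{i}n_i\alpha_i\in Q_+(A)$, one has $\height(\overline{\pi}(\alpha))=\sum_i n_i\height(\beta_i)\geq\sum_i n_i=\height(\alpha)$, which both ensures that any fibre $\overline{\pi}^{-1}(\gamma)\cap Q_+(A)$ is finite (so that the formal sum defining $\widehat{\tilde\pi_k}$ converges coefficient-wise) and that $\widehat{\tilde\pi_k}$ sends $\U^{ma}_{A,n}(k)$ into $\U^{ma}_{B,n}(k)$, yielding continuity. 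Using Proposition~\ref{prop formal sum Rousseau}(1), the map $\widehat{\tilde\pi_k}$ preserves the subsets of group-like elements of constant term $1$, hence restricts to the desired group morphism $\widehat{\pi}\co\U_A^{ma+}(k)\to\U_B^{ma+}(k)$.

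It remains to establish~\eqref{eqn:basic_exp}. Given a homogeneous $x\in\nn^+_{\ZZ}(A)$ of degree $\alpha$ and an exponential sequence $(x^{[n]})_{n\in\NN}$, I would verify that $(\tilde\pi(x^{[n]}))_{n\in\NN}$ is an exponential sequence for $\pi(x)$: (ES1) follows from $\tilde\pi(\UU(A)_{n\alpha})\subseteq\UU(B)_{n\overline\pi(\alpha)}$ together with $\tilde\pi(x^{[1]})=\pi(x)$; (ES2) from $\tilde\pi(x^{[n]})-\tilde\pi(x)^{(n)}=\tilde\pi(x^{[n]}-x^{(n)})$ and the filtration-preservation of $\tilde\pi$; and (ES3) from the fact, already established, that $\tilde\pi$ is a bialgebra morphism. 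Then $\widehat{\pi}([\exp]\lambda x)=\sum_n\lambda^n\tilde\pi(x^{[n]})=[\exp]\lambda\pi(x)$ for this choice of exponential sequence for $\pi(x)$, which is exactly~\eqref{eqn:basic_exp}. I expect the main technical obstacle to be the integrality statement of step two (placing $e_{\beta_i}^{(s)}$ in $\UU^+(B)$), which is the only place where the $\ZZ$-regularity hypothesis really enters and which is what forces the image of each Chevalley generator to be supported in a positive real root space.
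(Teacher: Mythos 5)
Your proposal is correct and follows essentially the same route as the paper's proof: lift $\pi$ to the enveloping algebras, check integrality on the divided powers $e_i^{(s)}\mapsto c_i^s e_{\beta_i}^{(s)}$, verify the bialgebra property (the paper does this via the fact that $W^*$ acts on $\UU$ by bialgebra morphisms, you via primitivity of $e_{\beta_i}$ --- the same computation), extend to the completions using the height inequality $\height(\overline{\pi}(\alpha))\geq\height(\alpha)$, restrict to group-like elements, and push exponential sequences forward through $\pi_1$ to get~(\ref{eqn:basic_exp}). The only (harmless) difference is that you are slightly more explicit than the paper about the finiteness of the fibres of $\overline{\pi}$ needed for the extension to $\widehat{\UU}_k^+$ to be well defined.
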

\begin{proof}
By assumption, there exist for each $i\in I$ some real root $\beta_i\in\Delta_+^{\re}(B)$ and some $\lambda_i\in\ZZ$ such that 
$$\pi(e_i)=\lambda_i e_{\beta_i} \quad\textrm{for all $i\in I$.}$$
Since $e_{\beta_i}^{(n)}\in\UU^+(B)$ for all $n\in\NN$, the map $\UU_{\CC}(\nn^+(A))\to \UU_{\CC}(\nn^+(B))$ lifting $\pi$ at the level of the corresponding enveloping algebras restricts to an algebra morphism
$$\pi_1\co \UU^+(A)\to \UU^+(B).$$
Since $W(B)$ acts on $\UU^+(B)$ by bialgebra morphisms (see Lemma~\ref{lemma:W_bialgebra_morphism}), we get
$$\nabla_B\pi_1(e_i^{(m)})=\lambda_i^m\nabla_Be_{\beta_i}^{(m)}=\lambda_i^m\sum_{r+s=m}{e_{\beta_i}^{(r)}\otimes e_{\beta_i}^{(s)}}=(\pi_1\otimes\pi_1)\sum_{r+s=m}{e_i^{(r)}\otimes e_i^{(s)}}=(\pi_1\otimes\pi_1)\nabla_A e_i^{(m)}$$
for all $i\in I$ and $m\in\NN$, where $\nabla_X$ denotes the coproduct on $\UU^+(X)$, $X=A,B$. Hence $\nabla_B\pi_1=(\pi_1\otimes\pi_1)\nabla_A$. Similarly, denoting by $\epsilon_X$ the co-unit on $\UU^+(X)$, we have $\epsilon_B\pi_1=\epsilon_A$, and hence $\pi_1$ is a bialgebra morphism.

Note also that $\pi_1$ preserves the natural gradations on $\UU^+(A)$ and $\UU^+(B)$, in the sense that
\begin{equation}\label{eqn:gradations}
\pi_1(\UU^+_{\alpha}(A))\subseteq \UU^+_{\overline{\pi}(\alpha)}(B)\quad\textrm{for all $\alpha\in Q_+(A)$}.
\end{equation}
In particular, the map $$\UU^+(A)\otimes_{\ZZ}k\to\UU^+(B)\otimes_{\ZZ}k$$ obtained from $\pi_1$ by extension of scalars can be further extended 
to a bialgebra morphism 
$$\pi_2\co \widehat{\UU}_k^+(A)\to \widehat{\UU}_k^+(B)$$ between the corresponding completions.
Finally, since $\pi_2$ preserves the group-like elements of constant term $1$, it restricts to a group homomorphism
$$\widehat{\pi}\co \U_A^{ma+}(k)\to \U_B^{ma+}(k)$$
by Proposition~\ref{prop formal sum Rousseau}(1).

Let now $x\in\nn_{\ZZ}^+(A)$ be homogeneous of degree $\alpha\in\Delta_+(A)$, and choose an exponential sequence $(x^{[n]})_{n\in\NN}$ for $x$. Then $y:=\pi(x)\in\nn^+_{\ZZ}(B)$ is homogeneous of degree $\overline{\pi}(\alpha)\in Q_+(B)$. We claim that the sequence $(y^{[n]})_{n\in\NN}$ defined by 
\begin{equation}\label{eqn:piES}
y^{[n]}:=\pi_1(x^{[n]})\quad\textrm{for all $n\in\NN$}
\end{equation}
is an exponential sequence for $y$ (viewed as an element of degree $\overline{\pi}(\alpha)$ if $y=0$, cf. Remark~\ref{remark:ES_of_0}), so that 
\begin{equation}\label{eqn:exp_sequences}
\widehat{\pi}([\exp]\lambda x)=\sum_{n\in\NN}\lambda^n\pi(x)^{[n]}\quad\textrm{for all $\lambda\in k$.}
\end{equation}
Indeed, $y^{[0]}=1$ and $y^{[1]}=y$ by the corresponding properties for $x$. Since $\pi_1(\UU^+_{n\alpha}(A))\subseteq \UU^+_{n\overline{\pi}(\alpha)}(B)$ for all $n\in\NN$, we also have $y^{[n]}\in \UU^+_{n\overline{\pi}(\alpha)}(B)$ for all $n$, so that the condition (ES1) is satisfied. Similarly, $$y^{[n]}-y^{(n)}=\pi_1(x^{[n]})-\pi_1(x)^{(n)}=\pi_1(x^{[n]}-x^{(n)})$$ has filtration less than $n$ in $\UU_{\CC}(\g(B))$, because $\pi_1$ preserves the natural filtrations, yielding (ES2). Finally, (ES3) readily follows from the corresponding property for $x$ and the fact that $\pi_1$ is a bialgebra morphism. 

Note that (\ref{eqn:gradations}) and (\ref{eqn:exp_sequences}), together with Proposition~\ref{prop formal sum Rousseau}(2), imply that 
\begin{equation}\label{eqn:pi_root_groups}
\widehat{\pi}(\U^A_{(\alpha)}(k))\subseteq \U^B_{(\overline{\pi}(\alpha))}(k)\quad\textrm{for all $\alpha\in\Delta_+(A)$},
\end{equation}
where $\U^B_{(\overline{\pi}(\alpha))}(k):=\{1\}$ if $\overline{\pi}(\alpha)\notin\Delta_+(B)$ (see also Remark~\ref{remark:ES_of_0}). Since $\height(\overline{\pi}(\alpha))\to\infty$ as $\height(\alpha)\to\infty$, $\alpha\in\Delta_+(A)$, we deduce in particular that $\widehat{\pi}$ is continuous. This concludes the proof of the theorem.
\end{proof}

\begin{definition}\label{definition:exponential}
For a $\ZZ$-regular map $\pi\co\nn^+(A)\to\nn^+(B)$, we have just proved that the unique continuous map $\widehat{\pi}\co \U_A^{ma+}(k)\to\U_B^{ma+}(k)$ defined on the (topological) generators $[\exp]\lambda x$ of $\U_A^{ma+}(k)$ by the formulas (\ref{eqn:piES}) and (\ref{eqn:exp_sequences}) (where $\lambda\in k$, $x\in\nn_{\ZZ}^+(A)$ is a homogeneous element, and $[\exp]\lambda x=\sum_{n\geq 0}\lambda^nx^{[n]}$ a twisted exponential) is a group homomorphism, which we call the {\bf exponential} of $\pi$.
\end{definition}

\subsection{Surjective $\ZZ$-regular maps}\label{subsection:SZRM}

\begin{lemma}\label{lemma:Serre_relations}
Let $A=(a_{ij})_{i,j\in I}$ be a GCM, and let $\g(A)=\tilde{\g}(A)/\ii$ be the associated Kac--Moody algebra. Then $\ii$ decomposes as a direct sum of ideals $\ii=\ii^+\oplus\ii^-$, where $\ii^{\pm}\subseteq \tilde{\nn}^{\pm}$ is generated, as an ideal of the Lie algebra $\tilde{\nn}^{\pm}$, by the elements $x_{ij}^{\pm}$, $i,j\in I$. 
\end{lemma}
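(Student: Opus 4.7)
The plan is to define $\ii^{\pm}$ as ideals of $\tilde{\nn}^{\pm}$, then promote each to an ideal of the whole Lie algebra $\tilde{\g}(A)$, and finally identify $\ii^{+}\oplus\ii^{-}$ with $\ii$. First I would let $\ii^{+}$ (resp. $\ii^{-}$) denote the ideal of the Lie algebra $\tilde{\nn}^{+}$ (resp. $\tilde{\nn}^{-}$) generated by the $x_{ij}^{+}$ (resp. $x_{ij}^{-}$), $i\neq j$. Since $\tilde{\nn}^{\pm}$ is $Q_{\pm}$-graded and the generators are homogeneous, each $\ii^{\pm}$ is graded, so $\ad(\hh)$ preserves it automatically.

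The heart of the proof is the identity
$$[f_l, x_{ij}^{+}] = 0 \quad\text{and}\quad [e_l, x_{ij}^{-}] = 0 \qquad \text{for all } i,j,l\in I \text{ with } i\neq j.$$
To establish it, I would set $u_n := (\ad e_i)^{n} e_j$ and compute $[f_l, u_n]$ recursively via the Jacobi identity and the defining relations of $\tilde{\g}(A)$. The cases $l\notin\{i,j\}$ and $l=j$ are immediate inductions on $n$; the interesting case is $l=i$, where $[f_i,e_i]=-\alpha_i^{\vee}$ together with $[\alpha_i^{\vee},u_{n-1}]=(2(n-1)+a_{ij})u_{n-1}$ yield $[f_i,u_n]=c_n u_{n-1}$ with $c_n=-n(n-1+a_{ij})$, and plugging in the Serre exponent $n = 1+|a_{ij}| = 1-a_{ij}$ makes $c_n$ vanish. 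This vanishing is precisely the reason behind the choice of exponent in the definition of $x_{ij}^{\pm}$, and it is where I expect the only real work to lie.

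Granted this identity, I would bootstrap it to the full ideal by a Jacobi expansion: for any element of the form $y = \ad(e_{i_1})\cdots\ad(e_{i_m}) x_{ij}^{+}\in\ii^{+}$, the Leibniz expansion of $[f_l, y]$ replaces each occurrence of $e_{i_k}$ successively by $[f_l,e_{i_k}] = -\delta_{l,i_k}\alpha_{i_k}^{\vee}$, and any such Cartan element then acts as a scalar on the enclosed homogeneous vector thanks to the grading of $\tilde{\nn}^{+}$, producing elements still in $\ii^{+}$; the final summand, involving $[f_l, x_{ij}^{+}]$, vanishes by the key identity. Thus $\ad(f_l)\ii^{+}\subseteq\ii^{+}$, and combined with $\ad(\hh)\ii^{+}\subseteq\ii^{+}$ and the tautological $\ad(\tilde{\nn}^+)\ii^{+}\subseteq\ii^{+}$ we conclude that $\ii^{+}$ is an ideal of $\tilde{\g}(A)$. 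The argument for $\ii^{-}$ is symmetric.

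Finally, the sum $\ii^{+}+\ii^{-}$ is then an ideal of $\tilde{\g}(A)$ containing every Serre generator $x_{ij}^{\pm}$, hence contains $\ii$; conversely each $\ii^{\pm}\subseteq \ii$ because $\ii$ is already a $\tilde{\g}(A)$-ideal containing the $x_{ij}^{\pm}$ and the bracket on $\tilde{\nn}^{\pm}$ is the restriction of that of $\tilde{\g}(A)$; and the sum is direct since $\ii^{\pm}\subseteq\tilde{\nn}^{\pm}$ and the triangular decomposition forces $\tilde{\nn}^{+}\cap\tilde{\nn}^{-}=\{0\}$. The main obstacle, as flagged above, is the precise scalar computation showing $[f_i,x_{ij}^{+}]=0$; everything else is structural.
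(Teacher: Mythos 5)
Your proof is correct and follows essentially the same route as the paper's: both hinge on the key identity $[f_k,x_{ij}^{+}]=0$, established via the recursion $[f_i,(\ad e_i)^{m}e_j]=m(m-1-|a_{ij}|)(\ad e_i)^{m-1}e_j$, whose scalar vanishes precisely at the Serre exponent $m=1+|a_{ij}|$. (Only note that this paper's convention is $[e_i,f_j]=-\delta_{ij}\alpha_i^{\vee}$, so the scalar comes out as $+n(n-1+a_{ij})$ rather than your $-n(n-1+a_{ij})$ --- the vanishing at $n=1-a_{ij}$ is unaffected --- and your spelled-out Leibniz bootstrap is exactly the step the paper leaves implicit.)
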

\begin{proof}
Let $\ii^{\pm}$ denote the ideal of $\tilde{\nn}^{\pm}$ generated by the elements $x_{ij}^{\pm}$, $i,j\in I$. We claim that $[f_k,x_{ij}^+]=0$ for all $i,j,k\in I$. If $k\neq i$, this is clear. For $k=i$, this follows from the formula
$$[f_i,(\ad e_i)^me_j]=m(m-1-|a_{ij}|)(\ad e_i)^{m-1}e_j,$$
obtained by an easy induction on $m\geq 0$. This implies that $\ii^{+}$ is in fact an ideal in $\tilde{\g}(A)$, and similarly for $\ii^{-}$. In particular, $\ii=\ii^+\oplus\ii^-$, as desired.
\end{proof}

\begin{lemma}\label{lemma:Zreg_ex1}
Let $A=(a_{ij})_{i,j\in I}$ and $B=(b_{ij})_{i,j\in J}$ be two GCM such that $B\leq A$. Then the assignment $e_i\mapsto e_i$ if $i\in J$ and $e_i\mapsto 0$ otherwise defines a surjective Lie algebra morphism $$\pi_{AB}\co \nn_+(A)\to \nn_+(B)$$
such that $\pi(\g(A)_{\alpha})=\g(B)_{\alpha}$ for all $\alpha\in Q_+(B)=\sum_{i\in J}{\NN\alpha_i}\subseteq Q_+(A)=\sum_{i\in I}{\NN\alpha_i}$. In particular, $\pi_{AB}$ is $\ZZ$-regular and $$\Delta_+(B)\subseteq \Delta_+(A).$$
\end{lemma}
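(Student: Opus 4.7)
The plan is to build $\pi_{AB}$ via the universal property of the free Lie algebra $\tilde{\nn}^+(A)$ and then descend through the Serre ideal to $\nn^+(A)$. Since $\tilde{\nn}^+(A)$ is free on the generators $\{e_i : i \in I\}$, the prescription ``$e_i \mapsto e_i$ if $i \in J$, $e_i \mapsto 0$ otherwise'' uniquely determines a Lie algebra morphism $\tilde{\pi}\co \tilde{\nn}^+(A) \to \nn^+(B)$. To produce the desired map on $\nn^+(A) = \tilde{\nn}^+(A)/\ii^+(A)$, I must check that $\tilde{\pi}$ vanishes on the positive Serre ideal $\ii^+(A)$.

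By Lemma~\ref{lemma:Serre_relations}, $\ii^+(A)$ is generated as an ideal of $\tilde{\nn}^+(A)$ by the Serre elements $x_{ij}^+ = (\ad e_i)^{1+|a_{ij}|} e_j$ for distinct $i, j \in I$, so it is enough to verify $\tilde{\pi}(x_{ij}^+) = 0$ for each such pair. When at least one of $i, j$ lies outside $J$, this is immediate, since one of $\tilde{\pi}(e_i)$ or $\tilde{\pi}(e_j)$ is zero. When both $i, j \in J$, the image is $\tilde{\pi}(x_{ij}^+) = (\ad e_i)^{1+|a_{ij}|} e_j$ inside $\nn^+(B)$, and this is precisely where the hypothesis $B \leq A$ enters: it provides the inequality of Serre exponents needed to deduce this vanishing from the relation $(\ad e_i)^{1+|b_{ij}|} e_j = 0$ of $\nn^+(B)$ by iterating $\ad e_i$. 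This compatibility of exponents is the sole nontrivial ingredient of the proof and constitutes its main step.

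Once $\pi_{AB}$ is produced, the remaining assertions follow essentially at once. Surjectivity holds because $\nn^+(B)$ is generated by $\{e_i : i \in J\}$, all of which lie in the image of $\pi_{AB}$. For the grading claim, the map $\pi_{AB}$ is $Q$-graded with respect to the inclusion $Q(B) = \sum_{i \in J} \ZZ\alpha_i \subseteq Q(A)$, sending $\g(A)_\alpha$ into $\g(B)_\alpha$ for $\alpha \in Q_+(B)$ (and into $0$ for $\alpha \in Q_+(A) \setminus Q_+(B)$). The reverse inclusion $\g(B)_\alpha \subseteq \pi_{AB}(\g(A)_\alpha)$ is equally clear: $\g(B)_\alpha$ is spanned by iterated brackets $[e_{i_1}, \dots, e_{i_s}]$ with $i_k \in J$ and $\alpha_{i_1} + \dots + \alpha_{i_s} = \alpha$, each of which is the image under $\pi_{AB}$ of the analogous bracket formed in $\g(A)$. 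Finally, the inclusion $\Delta_+(B) \subseteq \Delta_+(A)$ follows as an immediate corollary: for any $\alpha \in \Delta_+(B)$, the equality $\g(B)_\alpha = \pi_{AB}(\g(A)_\alpha)$ forces $\g(A)_\alpha \neq 0$, and hence $\alpha \in \Delta_+(A)$.
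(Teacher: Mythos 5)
Your proof is correct and follows exactly the route the paper intends: the author's own proof of this lemma is the one-liner ``This readily follows from Lemma~\ref{lemma:Serre_relations}'', and what you have written is precisely the expansion of that line (freeness of $\tilde{\nn}^+(A)$, reduction to the Serre elements via Lemma~\ref{lemma:Serre_relations}, surjectivity from the generators, and the grading argument). One point, however, should not be left implicit, because it is --- as you yourself say --- the sole nontrivial step: the inequality of Serre exponents. What your deduction needs is $|b_{ij}|\leq|a_{ij}|$ for distinct $i,j\in J$, so that
$$(\ad e_i)^{1+|a_{ij}|}e_j=(\ad e_i)^{|a_{ij}|-|b_{ij}|}\big((\ad e_i)^{1+|b_{ij}|}e_j\big)=0\quad\textrm{in }\nn^+(B).$$
Be aware that the paper's formal definition of $B\leq A$ in \S 2.1 (namely $b_{ij}\leq a_{ij}$, which for off-diagonal entries means $|b_{ij}|\geq|a_{ij}|$) gives the \emph{opposite} inequality, under which the displayed identity is unavailable and the lemma as literally stated is false: for $A$ of type $A_2$ and $B=(\begin{smallmatrix}2 & -2\\ -2 & 2\end{smallmatrix})$ one would be claiming a surjection from a $3$-dimensional Lie algebra onto an infinite-dimensional one. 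The convention actually used in every application (Theorem~\ref{thm:non-density}, Lemma~\ref{lemma:GDA}, Example~\ref{example:funny}) is $|b_{ij}|\leq|a_{ij}|$, and under that reading your ``iterating $\ad e_i$'' step is exactly right. So: write the inequality out and say which sign convention you are using; everything else in your argument, including $\pi(\g(A)_{\alpha})=\g(B)_{\alpha}$ via matching iterated brackets and the consequence $\Delta_+(B)\subseteq\Delta_+(A)$, is complete and correct.
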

\begin{proof}
The assignment $e_i\mapsto e_i$ if $i\in J$ and $e_i\mapsto 0$ otherwise defines a surjective Lie algebra morphism $\tilde{\pi}_{AB}\co\tilde{\nn}_+(A)\to \tilde{\nn}_+(B)$, which by hypothesis maps the ideal $\ii^+(A)$ inside the ideal $\ii^+(B)$. In particular, $\tilde{\pi}_{AB}$ factors through a surjective Lie algebra morphism $\pi_{AB}\co \nn_+(A)\to \nn_+(B)$ by Lemma~\ref{lemma:Serre_relations}. Since $\g(A)_{\alpha}$ is spanned by all iterated brackets $[e_{i_1},\dots,e_{i_s}]$ with $\alpha_{i_1}+\dots+\alpha_{i_s}=\alpha$ ($\alpha\in Q_+(A)$) and similarly for $\g(B)_{\alpha}$, the other claims follow.
\end{proof}

\begin{theorem}\label{thm:construction_pi}
Let $k$ be a field. Let $A$ and $B$ be two GCM such that $B\leq A$. Let $\pi_{AB}\co \nn_+(A)\to \nn_+(B)$ be the corresponding $\ZZ$-regular map, as in Lemma~\ref{lemma:Zreg_ex1}. Then the following hold:
\begin{enumerate}
\item
The exponential $\widehat{\pi}_{AB}\co \U_A^{ma+}(k)\to\U_B^{ma+}(k)$ of $\pi_{AB}$ is surjective, continuous and open.
\item
For any closed set of roots $\Psi_A\subseteq \Delta_+(A)$,
$$\widehat{\pi}(\U^{ma}_{\Psi_A}(k))=\U^{ma}_{\Psi_A\cap\Delta_+(B)}(k).$$
In particular, $\widehat{\pi}(\U_{(\alpha)}^A(k))=\U_{(\alpha)}^B(k)$ for all $\alpha\in\Delta^+(A)$, where $\U_{(\alpha)}^B(k):=\{1\}$ if $\alpha\notin \Delta^+(B)$.
\end{enumerate}
\end{theorem}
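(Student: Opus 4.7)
My plan is to establish part~(2) first and deduce part~(1) from it, the argument resting on Proposition~\ref{prop formal sum Rousseau}(2), which expresses elements of $\U^{ma}_\Psi(k)$ as ordered products of twisted exponentials over a standard $\ZZ$-basis of $\nn^+_\ZZ\cap\UU(\Psi)$, together with the formula $\widehat{\pi}_{AB}([\exp]\lambda x)=[\exp]\lambda\pi_{AB}(x)$ from Theorem~\ref{thm:basic_functoriality}.

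For the inclusion $\widehat{\pi}_{AB}(\U^{ma}_{\Psi_A}(k))\subseteq \U^{ma}_{\Psi_A\cap\Delta_+(B)}(k)$, I fix a standard $\ZZ$-basis $\BBBB_{\Psi_A}(A)$. Since $\pi_{AB}$ is $Q$-graded and $\g(B)_\alpha=0$ for $\alpha\notin\Delta_+(B)$, each factor $[\exp]\lambda_x\pi_{AB}(x)$ is either trivial (when $\deg(x)\notin\Delta_+(B)$, which forces $\pi_{AB}(x)=0$) or lies in $\U^B_{(\deg(x))}(k)\subseteq\U^{ma}_{\Psi_A\cap\Delta_+(B)}(k)$, and the whole product consequently belongs to $\U^{ma}_{\Psi_A\cap\Delta_+(B)}(k)$. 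The reverse inclusion reduces to the lifting lemma that $\pi_{AB}\co(\nn^+_\ZZ(A))_\alpha\twoheadrightarrow(\nn^+_\ZZ(B))_\alpha$ is surjective for every $\alpha\in\Delta_+(B)$. Granting this, for each element $y$ of a standard $\ZZ$-basis $\BBBB_\alpha(B)$ I pick a preimage $x_y\in(\nn^+_\ZZ(A))_\alpha$; I may complete $\{x_y\}$ with a $\ZZ$-basis of $\ker(\pi_{AB}|_{(\nn^+_\ZZ(A))_\alpha})$ and, for roots $\alpha\in\Psi_A\setminus\Delta_+(B)$, with an arbitrary $\ZZ$-basis of $(\nn^+_\ZZ(A))_\alpha$, into a standard $\ZZ$-basis $\BBBB_{\Psi_A}(A)$ whose ordering matches that of $\BBBB_{\Psi_A\cap\Delta_+(B)}(B)$ on the lifts. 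Then for $w=\prod_y[\exp]\mu_y y\in\U^{ma}_{\Psi_A\cap\Delta_+(B)}(k)$, setting $v:=\prod_y[\exp]\mu_y x_y\in\U^{ma}_{\Psi_A}(k)$ (with all other basis factors taking coefficient~$0$, hence trivial) gives $\widehat{\pi}_{AB}(v)=w$ by Theorem~\ref{thm:basic_functoriality}.

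Part~(1) follows promptly. Surjectivity is part~(2) with $\Psi_A=\Delta_+(A)$; continuity is already built into Theorem~\ref{thm:basic_functoriality}. Openness comes from part~(2) specialised to $\Psi_A=\Psi(n)$, which yields $\widehat{\pi}_{AB}(\U^{ma}_{A,n}(k))=\U^{ma}_{B,n}(k)$ for every $n$, so $\widehat{\pi}_{AB}$ maps the canonical basis of open neighbourhoods of~$1$ in $\U_A^{ma+}(k)$ onto its counterpart in $\U_B^{ma+}(k)$. For closedness, I would combine openness and surjectivity with the explicit description of $\ker\widehat{\pi}_{AB}$ provided by the adapted basis above, exploiting the compatibility of $\widehat{\pi}_{AB}$ with the natural filtrations $\{\U^{ma}_{A,n}(k)\}$ and $\{\U^{ma}_{B,n}(k)\}$ to conclude.

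The main technical obstacle is the $\ZZ$-form lifting lemma. For real roots $\alpha$ it is immediate since $(\nn^+_\ZZ(X))_\alpha=\ZZ e_\alpha$ for $X=A,B$ and $\pi_{AB}$ sends $e_\alpha\mapsto e_\alpha$; the delicate case is that of imaginary roots~$\alpha$, where $(\nn^+_\ZZ)_\alpha$ has higher rank and its integral structure is encoded by the integral enveloping algebra $\UU$ rather than by mere Lie brackets. Producing preimages inside $(\nn^+_\ZZ(A))_\alpha=\nn^+(A)_\alpha\cap\UU^+(A)$ relies on the compatibility of $\pi_{AB}$ with the surjective $\ZZ$-bialgebra morphism $\pi_1\co\UU^+(A)\to\UU^+(B)$ constructed in the proof of Theorem~\ref{thm:basic_functoriality}.
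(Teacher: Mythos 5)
Your overall strategy is exactly the paper's: everything is made to rest on the identity $\widehat{\pi}_{AB}([\exp]\lambda x)=[\exp]\lambda\pi_{AB}(x)$ from Theorem~\ref{thm:basic_functoriality} together with the product decomposition of Proposition~\ref{prop formal sum Rousseau}(2), and openness/closedness are extracted from the filtration identity $\widehat{\pi}_{AB}(\U^{ma}_{A,n}(k))=\U^{ma}_{B,n}(k)$. The paper disposes of the whole theorem in three lines; your write-up is a faithful expansion of it, and your forward inclusion and the reduction of surjectivity to an adapted choice of standard $\ZZ$-basis are correct.

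The problem is that you explicitly defer the one step that carries actual mathematical content: the lifting lemma that $\pi_{AB}\co(\nn^+_{\ZZ}(A))_\alpha\to(\nn^+_{\ZZ}(B))_\alpha$ is surjective for every $\alpha\in\Delta_+(B)$. Saying that it ``relies on the compatibility of $\pi_{AB}$ with the surjective $\ZZ$-bialgebra morphism $\pi_1$'' does not close it: surjectivity of $\pi_1$ on each graded piece $\UU^+(A)_\alpha\to\UU^+(B)_\alpha$ is clear (generators go to generators), but $(\nn^+_{\ZZ})_\alpha$ sits inside $\UU^+_\alpha$ as the primitive elements, and a surjection of $\ZZ$-bialgebras need not be surjective on primitives --- a primitive element of the target need not admit a primitive preimage over $\ZZ$, only over $\QQ$. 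Lemma~\ref{lemma:Zreg_ex1} only gives $\pi(\g(A)_\alpha)=\g(B)_\alpha$ over $\CC$, which guarantees surjectivity onto a finite-index subgroup of $(\nn^+_{\ZZ}(B))_\alpha$; if that index were divisible by $\charact k$, surjectivity of $\widehat{\pi}_{AB}$ would fail. Note that for imaginary $\alpha$ the lattice $(\nn^+_{\ZZ})_\alpha$ is genuinely larger than the $\ZZ$-span of iterated brackets of the $e_i$ (already $(\ad e_i)^{(s)}e_j$ for real roots shows the span of brackets can be a proper sublattice), so one cannot lift simply by lifting brackets. To be fair, the paper's own proof is equally silent on this point, so you have correctly located the crux --- but a complete argument must actually prove the integral (or at least mod-$p$) surjectivity. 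Separately, your treatment of closedness is only a plan: for finite $k$ it is immediate since $\U_A^{ma+}(k)$ is compact, but for infinite $k$ a continuous open surjection with non-compact kernel need not be a closed map, so ``compatibility with the filtrations'' alone will not do; this step needs either the compactness observation or a genuine argument.
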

\begin{proof}
Recall that, for any closed set of roots $\Psi_A\subseteq \Delta_+(A)$, the subgroup $\U_{\Psi_A}^{ma}(k)$ of $\U_A^{ma+}(k)$ is topologically generated by the twisted exponentials $[\exp]\lambda x$ for $\lambda\in k$ and $x\in\nn_{\ZZ}^+(A)$ a homogeneous element of degree in $\Psi_A$ (and similarly for subgroups of $\U_B^{ma+}(k)$). The surjectivity of $\widehat{\pi}_{AB}$ as well as the second statement of the theorem thus readily follow from (\ref{eqn:basic_exp}). In particular, $\widehat{\pi}_{AB}(\U^{ma}_{A,n}(k))=\U^{ma}_{B,n}(k)$ for all $n\in\NN$, and hence $\pi_{AB}$ is also open, as desired.
\end{proof}

\begin{corollary}\label{corollary:functoriality_min}
Let $k$ be a field. Let $A$ and $B$ be two GCM such that $B\leq A$. Then the map $\widehat{\pi}_{AB}\co \U_A^{ma+}(k)\to\U_B^{ma+}(k)$ restricts to group homomorphisms
$$U^+_A(k)\to U^+_B(k)\quad\textrm{and}\quad\overline{U^+_A}(k)\to\overline{U^+_B}(k).$$
\end{corollary}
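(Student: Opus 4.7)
My plan is to reduce the minimal statement to checking generators, and then obtain the completion statement as a direct consequence of continuity.

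I focus first on the inclusion $\widehat{\pi}_{AB}(U_A^+(k)) \subseteq U_B^+(k)$. By definition, $U_A^+(k)$ is the subgroup of $\U_A^{ma+}(k)$ generated by the real root groups $U_\alpha^A(k) = \U^A_{(\alpha)}(k)$ for $\alpha \in \Delta^{\re}_+(A)$, and since $\widehat{\pi}_{AB}$ is a group morphism, it suffices to verify $\widehat{\pi}_{AB}(U_\alpha^A(k)) \subseteq U_B^+(k)$ for each such fixed $\alpha$.

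Applying Theorem~\ref{thm:construction_pi}(2) to the singleton $\Psi_A = \{\alpha\}$, I get $\widehat{\pi}_{AB}(U_\alpha^A(k)) = \U^B_{(\alpha)}(k)$, where by the convention adopted there this equals $\{1\}$ whenever $\alpha \notin \Delta_+(B)$. The trivial case is immediate, so I assume $\alpha \in \Delta^{\re}_+(A) \cap \Delta_+(B)$. The goal then becomes to show $\U^B_{(\alpha)}(k) \subseteq U_B^+(k)$, which is automatic as soon as $\alpha \in \Delta^{\re}_+(B)$, for then $\U^B_{(\alpha)}(k) = U^B_\alpha(k)$ is a real root group of $B$, hence one of the defining generators of $U_B^+(k)$.

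I expect the main obstacle to be the verification that $\alpha \in \Delta^{\re}_+(A) \cap \Delta_+(B)$ indeed forces $\alpha \in \Delta^{\re}_+(B)$. A first step is the observation that $\pi_{AB}$ restricts to a surjection $\g(A)_\alpha \twoheadrightarrow \g(B)_\alpha$ by Lemma~\ref{lemma:Zreg_ex1}, and since $\dim_\ZZ \g(A)_\alpha = 1$ (as $\alpha$ is real in $A$), the root space $\g(B)_\alpha$ is forced to be at most one-dimensional. Pushing further, one should track the element $\pi_{AB}(e_\alpha^A) = \pi_1\bigl((w^A)^* e_i^A\bigr)$ through the bialgebra morphism $\pi_1$ appearing in the proof of Theorem~\ref{thm:basic_functoriality}, using Lemma~\ref{lemma:W_bialgebra_morphism} to commute $\pi_1$ past the Weyl-group factors $s_j^*$; the resulting element should coincide, up to sign and an integer scalar, with a real root vector in $\nn^+_\ZZ(B)$.

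With the minimal inclusion in hand, the closure claim is immediate from continuity of $\widehat{\pi}_{AB}$: any element of $\overline{U_A^+}(k)$ is the limit of a net in $U_A^+(k)$, whose image under $\widehat{\pi}_{AB}$ is a net in $U_B^+(k)$, and continuity forces its limit to lie in $\overline{U_B^+}(k)$. Symbolically,
\[
\widehat{\pi}_{AB}\bigl(\overline{U_A^+}(k)\bigr) \subseteq \overline{\widehat{\pi}_{AB}(U_A^+(k))} \subseteq \overline{U_B^+}(k),
\]
which is the second assertion of the corollary.
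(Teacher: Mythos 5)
Your overall architecture (reduce to real root group generators, then get the completion statement from continuity) matches the paper's, and the continuity step at the end is fine. But the key step — that $\alpha\in\Delta^{\re}_+(A)\cap\Delta_+(B)$ forces $\alpha\in\Delta^{\re}_+(B)$ — is exactly where your argument has a genuine gap, and you acknowledge it without closing it. Your first idea, that $\dim\g(B)_{\alpha}\leq 1$, does not suffice: imaginary roots can perfectly well have one-dimensional root spaces (e.g.\ $\alpha_1+\alpha_2$ for $A=(\begin{smallmatrix}2 & -3\\ -3 & 2\end{smallmatrix})$ is imaginary with $\g_{\alpha_1+\alpha_2}=\CC[e_1,e_2]$), so multiplicity one does not characterise real roots. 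Your second idea, tracking $e_{\alpha}=(w^A)^*e_i$ through $\pi_1$, does not work either: $\pi_1$ is only defined on $\UU^+(A)$, whereas $s_j^*=\exp(\ad e_j)\exp(\ad f_j)\exp(\ad e_j)$ involves $f_j$, which lies outside its domain; moreover $\pi_{AB}$ does not extend to $\g(A)\to\g(B)$ (the relations $[\alpha_j^{\vee},e_i]=a_{ji}e_i$ versus $b_{ji}e_i$ are incompatible), and the Weyl groups $W(A)$ and $W(B)$ act by different reflection formulas, so there is no intertwining of $(w^A)^*$ with a corresponding element of $W^*(B)$. So the sketch is essentially a restatement of what needs to be proved.

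The paper closes this gap with a purely root-theoretic observation: by \cite[Propositions~5.1 and 5.5]{Kac}, a root $\alpha$ is real if and only if $2\alpha$ is not a root. Since Lemma~\ref{lemma:Zreg_ex1} identifies $\Delta_+(B)$ with a subset of $\Delta_+(A)$, if $\alpha\in\Delta_+(B)$ were imaginary in $B$ then $2\alpha\in\Delta_+(B)\subseteq\Delta_+(A)$, so $\alpha$ would be imaginary in $A$ as well; contrapositively $\Delta^{\re}_+(A)\cap\Delta_+(B)\subseteq\Delta^{\re}_+(B)$, and the rest of your argument then goes through verbatim.
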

\begin{proof}
By Lemma~\ref{lemma:Zreg_ex1}, we may identify $\Delta_+(B)$ with a subset of $\Delta_+(A)$. Since a root $\alpha$ is real if and only if $2\alpha$ is not a root by \cite[Propositions~5.1 and 5.5]{Kac}, we deduce that $\Delta^{\re}_+(A)\cap\Delta_+(B)\subseteq \Delta^{\re}_+(B)$.
It then follows from Theorem~\ref{thm:construction_pi} that $\widehat{\pi}_{AB}(\U_{(\alpha)}^A(k))\subseteq U^+_B(k)$ for any $\alpha\in\Delta^{\re}_+(A)$, and hence that $\widehat{\pi}_{AB}$ restricts to a map $U^+_A(k)\to U^+_B(k)$. The corresponding statement for the completions follows from the continuity of $\widehat{\pi}_{AB}$. 
\end{proof}

\begin{remark}\label{remark:restriction_minimal_groups}
Let $A=(a_{ij})_{i,j\in I}$ and $B=(b_{ij})_{i,j\in I}$ be two GCM such that $B\leq A$. Then the restriction $U^+_A(k)\to U^+_B(k)$ of $\widehat{\pi}_{AB}$ provided by Corollary~\ref{corollary:functoriality_min} is, in general, not surjective anymore. Indeed, assume for instance that the matrices $A$ and $B$ are symmetric, and for $X\in\{A,B\}$, let $(\cdot,\cdot)_X$ denote the bilinear form on $Q_+(X)$ introduced in \cite[\S 2.1]{Kac}. Thus, given $\alpha=\sum_{i\in I}{n_i\alpha_i}\in Q_+(A)=Q_+(B)$ with support $J:=\{i\in I \ | \ n_i\neq 0\}$, we have
\begin{equation}\label{eqn:rk_restriction}(\alpha,\alpha)_A=\sum_{i,j\in I}{n_in_ja_{ij}}=2\sum_{i\in I}{n_i^2}-\sum_{i\neq j}{n_in_j|a_{ij}|}\leq 2\sum_{i\in I}{n_i^2}-\sum_{i\neq j}{n_in_j|b_{ij}|}=(\alpha,\alpha)_B.
\end{equation}
Moreover, if $\alpha\in\Delta_+(X)$, then $\alpha\in\Delta^{\re}_+(X)$ if and only if $(\alpha,\alpha)_X=2$, while $\alpha\in\Delta^{\im}_+(X)$ if and only if $(\alpha,\alpha)_X\leq 0$ (see \cite[Propositions~3.9 and 5.2]{Kac}). 
In particular, if $a_{ij}\neq b_{ij}$ for some $i,j$ in the support $J$ of the real root $\alpha\in\Delta_+^{\re}(A)$, then $\alpha\notin\Delta_+(B)$, because $(\alpha,\alpha)_A<(\alpha,\alpha)_B$ by (\ref{eqn:rk_restriction}). Hence in that case the real root group $\U_{(\alpha)}^A(k)$ is in the kernel of $\widehat{\pi}_{AB}$. 
For instance, if $A=(\begin{smallmatrix}2 & -a\\ -a & 2\end{smallmatrix})$ and $B=(\begin{smallmatrix}2 & -b\\ -b & 2\end{smallmatrix})$ with $b<a$, then $\widehat{\pi}_{AB}(\U_{\alpha}^A(k))=\{1\}$ for all $\alpha\in\Delta^{\re}_+(A)\setminus\{\alpha_1,\alpha_2\}$. Thus, in that case, $\widehat{\pi}_{AB}(U^+_A(k))$ is the subgroup of $U^+_B(k)$ generated by the real root groups $\U_{\alpha_1}^B(k)$ and $\U_{\alpha_2}^B(k)$ associated to the simple roots.
\end{remark}

\subsection{Injective $\ZZ$-regular maps}\label{subsection:IZRM}

\begin{lemma}\label{lemma:subalgebras}
Let $B$ be a GCM, and let $\{\beta_i \ | \ i\in I\}$ be a finite subset of $\Delta^{\re}_+(B)$ such that $\beta_i-\beta_j\notin\Delta(B)$ for all $i,j\in I$. Then the matrix $A:=(\beta_j(\beta_i^{\vee}))_{i,j\in I}$ is a GCM. Moreover, the assignment $e_i\mapsto e_{\beta_i}$, $f_i\mapsto e_{-\beta_i}$ for all $i\in I$ defines a Lie algebra morphism $\pi\co\g_A\to\g_B$.
\end{lemma}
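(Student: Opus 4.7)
The plan has two parts: verify that $A$ satisfies the axioms (C1)--(C3) of a GCM, then construct $\pi$ by checking the defining relations of the derived Kac--Moody algebra $\g_A$ for the candidate images in $\g_B$.

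For the GCM axioms I will exploit the $\beta_i$-string through $\beta_j$. Since $\beta_i\in\Delta_+^{\re}(B)$ is a real root, for any $\beta_j\in\Delta(B)$ with $\beta_j\neq\pm\beta_i$ this string is unbroken of the form $\{\beta_j-p\beta_i,\dots,\beta_j+q\beta_i\}$ with $p-q=\beta_j(\beta_i^\vee)$. The hypothesis $\beta_j-\beta_i\notin\Delta(B)$ forces $p=0$, whence $a_{ij}=\beta_j(\beta_i^\vee)=-q\leq 0$, giving (C2). Axiom (C1) is trivial since each $\beta_i$ is a real root. For (C3), $a_{ij}=0$ implies $q=0$ as well, so the whole string is $\{\beta_j\}$; repeating the argument with the roles of $i$ and $j$ swapped then gives $a_{ji}=0$.

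For the construction of $\pi$ I will use the presentation of $\g_A$ by the Chevalley generators $e_i, f_i$ subject to the relations of $\tilde{\g}(A)$ from \S\ref{subsection:KMA} together with the Serre relations $x_{ij}^{\pm}=0$; the coroots are then realised as $\alpha_i^\vee=-[e_i,f_i]$. It suffices to verify that the assignment $e_i\mapsto e_{\beta_i}$, $f_i\mapsto e_{-\beta_i}$, $\alpha_i^\vee\mapsto\beta_i^\vee$ preserves each relation. The Cartan and mixed relations reduce to the standard action of $\hh$ on root spaces in $\g(B)$ together with the bracket identity $[e_{\beta_i},e_{-\beta_j}]\in\g(B)_{\beta_i-\beta_j}$, which vanishes by hypothesis when $i\neq j$. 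The Serre relations follow from the same string argument as above: $(\ad e_{\beta_i})^{1+|a_{ij}|}e_{\beta_j}$ lies in the root space indexed by $\beta_j+(1+q)\beta_i$, which is beyond the $\beta_i$-string through $\beta_j$ and hence not a root of $\g(B)$, so this element vanishes; the symmetric statement for the negative-root side handles the other Serre relation.

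The main obstacle is the sign bookkeeping in the diagonal case $[e_{\beta_i},e_{-\beta_i}]=-\beta_i^\vee$: the vectors $e_{\pm\beta_i}$ are each defined only up to sign within their respective pairs $E_{\pm\beta_i}$, so to obtain the correct sign I will choose them compatibly through a single element $w^*\in W^*$, writing $\beta_i=\pi_W(w^*)\alpha_\ell$ and setting $e_{\beta_i}:=w^*e_\ell$, $e_{-\beta_i}:=w^*f_\ell$. Since $w^*$ acts on $\UU$ by algebra automorphisms and sends $\alpha_\ell^\vee$ to $\beta_i^\vee$, the convention $[e_\ell,f_\ell]=-\alpha_\ell^\vee$ from \S\ref{subsection:KMA} then yields $[e_{\beta_i},e_{-\beta_i}]=w^*(-\alpha_\ell^\vee)=-\beta_i^\vee$, as required. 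Once this normalisation is fixed, the remaining checks amount to bookkeeping of $Q$-gradings in $\g(B)$, and the resulting map descends to the desired Lie algebra morphism $\pi\co\g_A\to\g_B$.
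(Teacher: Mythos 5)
Your proof is correct and follows essentially the same route as the paper's: both deduce $a_{ij}\le 0$ and the vanishing of the Serre elements from the hypothesis $\beta_j-\beta_i\notin\Delta(B)$ (your root-string computation and the paper's reflection argument $s_{\beta_i}(\beta_j-\beta_i)=(1+|a_{ij}|)\beta_i+\beta_j\notin\Delta(B)$ are equivalent), and both then verify the defining relations of $\g_A$ on the candidate images. Your explicit normalisation $e_{\beta_i}=w^*e_\ell$, $e_{-\beta_i}=w^*f_\ell$ to secure $[e_{\beta_i},e_{-\beta_i}]=-\beta_i^\vee$ addresses a sign issue the paper leaves implicit at this point (it only fixes that convention later, in the proof of Theorem~\ref{thm:injective_standard_maps}), so this is added care rather than a deviation.
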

\begin{proof}
Let $\widetilde{\pi}$ be the Lie algebra morphism from the free complex Lie algebra on the generators $\{e_i, f_i \ | i\in I\}$ to $\g_B$ defined by the assignment $e_i\mapsto e_{\beta_i}$, $f_i\mapsto e_{-\beta_i}$ for all $i\in I$.
Since $\beta_j-\beta_i\notin\Delta(B)$ for all $i,j\in I$, we deduce from \cite[Corollary~3.6]{Kac} that $\beta_j(\beta_i^{\vee})\leq 0$, so that $A$ is indeed a GCM. Moreover, $$[e_{\beta_i},e_{-\beta_j}]=0\quad\textrm{for all $i,j\in I$ with $i\neq j$.}$$
Similarly, since $s_i(\beta_j-\beta_i)=(|\beta_j(\beta_i^{\vee})|+1)\beta_i+\beta_j\notin\Delta(B)$, we have $$(\ad e_{\pm\beta_i})^{|\beta_j(\beta_i^{\vee})|+1}e_{\pm\beta_j}=0 \quad\textrm{for all $i,j\in I$ with $i\neq j$.}$$
Finally, the elements $\beta_i^{\vee}=[e_{-\beta_i},e_{\beta_i}]$ of $\g_{B}$ ($i\in I$) satisfy
$$[\beta_i^{\vee},\beta_j^{\vee}]=0 \quad\textrm{and}\quad [\beta_i^{\vee},e_{\pm\beta_j}]=\pm\beta_j(\beta_i^{\vee})e_{\pm\beta_j}$$
for all $i,j\in I$. Hence all the defining relations of $\g_A=[\g(A),\g(A)]$ (see \S\ref{subsection:KMA}) lie in the kernel of $\widetilde{\pi}$, so that $\widetilde{\pi}$ factors through a Lie algebra morphism $\pi\co\g_A\to\g_B$.
\end{proof}

\begin{theorem}\label{thm:injective_standard_maps}
Let $k$ be a field, $B$ a GCM, and let $\{\beta_i \ | \ i\in I\}$ be a linearly independent finite subset of $\Delta^{\re}_+(B)$ such that $\beta_i-\beta_j\notin\Delta(B)$ for all $i,j\in I$. Let $A:=(\beta_j(\beta_i^{\vee}))_{i,j\in I}$ be the corresponding GCM, and consider the $\ZZ$-regular map $\pi\co\nn^+(A)\to\nn^+(B):e_i\mapsto e_{\beta_i}$. Then the following holds:
\begin{enumerate}
\item
The kernel of the exponential $\widehat{\pi}\co \U_A^{ma+}(k)\to \U_B^{ma+}(k)$ of $\pi$ is a normal subgroup of $\G^{pma}_A(k)$. In particular, if $\G_A^{pma}(k)$ is GK-simple, then $\widehat{\pi}$ is injective. 
\item
The restriction of $\widehat{\pi}$ to $U^+_A(k)$ extends to continuous group homomorphisms
$$\G_A(k)\to\G_B(k)\quad\textrm{and}\quad \overline{\G_A}(k)\to\overline{\G_B}(k)$$ with kernels respectively contained in ${\mathcal Z}_A(k)$ and ${\mathcal Z}_A(k)\cdot (Z'_A\cap \overline{U^+_A}(k))$. Here, we view $\G_A(k)$ and $\G_B(k)$ as subgroups of $\G^{pma}_A(k)$ and $\G^{pma}_B(k)$ respectively, with the induced topology.
\end{enumerate}
\end{theorem}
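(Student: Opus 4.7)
The plan starts from the preliminary observation that, by Lemma~\ref{lemma:subalgebras}, $\pi$ extends to a Lie algebra morphism $\g_A\to\g_B$ with $f_i\mapsto e_{-\beta_i}$, and this extension intertwines the adjoint action of the Chevalley generators: $\pi\circ s_i^*=s_{\beta_i}^*\circ\pi$ on $\UU(A)$. Since $\{\beta_i\mid i\in I\}$ is linearly independent in $\Lambda_B$ and $A=(\beta_j(\beta_i^\vee))_{i,j\in I}$, the orders of $s_{\beta_i}s_{\beta_j}$ in $W(B)$ match those of $s_is_j$ in $W(A)$, so $s_i\mapsto s_{\beta_i}$ embeds $W(A)$ as a Coxeter subgroup of $W(B)$. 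Consequently, $\overline\pi$ restricts to an injection $\Delta^{\re}_+(A)\hookrightarrow\Delta^{\re}_+(B)$, and $\pi(e_\alpha)=\pm e_{\overline\pi(\alpha)}$ for every $\alpha\in\Delta^{\re}_+(A)$.

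For part (2), I would extend $\widehat\pi|_{U^+_A(k)}$ via Tits's presentation of $\G_A(k)$. Define $\tilde\Phi$ on generators by $x_{\pm\alpha_i}(r)\mapsto x_{\pm\beta_i}(r)$, $\widetilde s_i\mapsto\widetilde s_{\beta_i}$, together with a compatible torus map $\T_A(k)\to\T_B(k)$ induced by the coroot assignment $\alpha_i^\vee\mapsto\beta_i^\vee$ (extended arbitrarily to a cocharacter basis of $\Lambda_B^\vee$ containing the $\beta_i^\vee$, possible since these form a free family). The Tits relations (R0)--(R4) are then verified: (R0) from the Chevalley commutator formula, whose integral structure constants are transported by the Lie algebra morphism $\pi$; (R1) from the torus--grading compatibility built into the definition; (R2)--(R4) from the $W^*$-equivariance above. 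Continuity of $\tilde\Phi$ (from $\widehat\pi(\U^{ma}_{A,n}(k))\subseteq\U^{ma}_{B,n}(k)$, as $\overline\pi$ preserves heights) allows its extension to $\overline{\G_A}(k)\to\overline{\G_B}(k)$. The kernel is analyzed with the Bruhat decomposition: $\ker\tilde\Phi\cap\T_A(k)=\mathcal{Z}_A(k)$ (by the condition $\tilde\Phi(t)(\beta_i)=t(\alpha_i)$ and the definition of $\mathcal{Z}_A$), and $\ker\tilde\Phi\cap U^+_A(k)=\{1\}$ by part (1); hence $\ker\tilde\Phi\subseteq\mathcal{Z}_A(k)$ on $\G_A(k)$, while the completion absorbs at most the additional term $Z'_A\cap\overline{U^+_A}(k)$.

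For part (1), the crux is $\ker\widehat\pi\subseteq U^{im+}$. Writing any $u\in\U_A^{ma+}(k)$ uniquely as $u=u_{re}\cdot u_{im}$ via Proposition~\ref{prop formal sum Rousseau}(2) with a standard $\ZZ$-basis of $\nn^+_\ZZ(A)$ ordered by real roots first, one analyzes the graded components of $\widehat\pi(u)$ by induction on height. At each height $n$, the coefficient $\lambda_\alpha$ of $u_{re}$ at a real basis element $e_\alpha$ of height $n$ is isolated in the degree-$\overline\pi(\alpha)$ component of $\widehat\pi(u)$, and the injectivity of $\overline\pi$ on $\Delta^{\re}_+(A)$ together with the $W^*$-equivariance of $\pi$ ensures that contributions from $\widehat\pi(u_{im})$ at this degree cannot cancel the nontrivial real contribution $\pm\lambda_\alpha e_{\overline\pi(\alpha)}$. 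Thus $\widehat\pi(u)=1$ forces $u_{re}=1$. Stability of $\ker\widehat\pi$ under conjugation by $\U_A^{ma+}(k)$ then yields $\ker\widehat\pi\subseteq\bigcap_{v\in\U_A^{ma+}(k)}vU^{im+}v\inv=Z'_A\cap\U_A^{ma+}(k)$. For normality in $\G_A^{pma}(k)$, which is topologically generated by $\G_A(k)$ and $\U_A^{ma+}(k)$, stability under $\U_A^{ma+}(k)$-conjugation is automatic, and stability under $\G_A(k)$-conjugation follows from $\tilde\Phi$ in part (2), since $\ker\widehat\pi\subseteq\bigcap_g g\U_A^{ma+}(k)g\inv$ keeps the conjugates inside $\U_A^{ma+}(k)$, where $\tilde\Phi$ agrees with $\widehat\pi$. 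The GK-simplicity implication is then immediate from the definition.

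The main obstacles are twofold. First, in part (2), verifying Tits relation (R0) requires careful sign-tracking through the Chevalley commutator formula, since the signs in $\pi(e_\alpha)=\pm e_{\overline\pi(\alpha)}$ arise from the ambiguity $E_{\overline\pi(\alpha)}=\{\pm e_{\overline\pi(\alpha)}\}$; this ultimately reduces to a computation in the integral enveloping algebras, intertwined by $\pi$. Second, in part (1), the inductive argument for $\ker\widehat\pi\subseteq U^{im+}$ must handle the possibility that a positive real root of $A$ is $\NN$-decomposable into combinations involving imaginary roots (for instance, $\alpha_1+\alpha_2=\delta$ in affine $\tilde A_1$), so one must leverage Lie-algebra-level injectivity on real root spaces to preclude cancellations between real and imaginary contributions at the same degree.
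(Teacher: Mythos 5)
Your part (2) follows essentially the same route as the paper (define $\widetilde{\pi}$ on $\T_A(k)$ and the real root groups, verify (R0)--(R4) via $W^*$-equivariance, bound the kernel with the Bruhat decomposition), so the substance of the review concerns part (1), where your plan has two genuine gaps. The crux you propose, namely $\ker\widehat{\pi}\subseteq U^{im+}$ proved by isolating the coefficient $\lambda_\alpha$ of each real basis vector $e_\alpha$ height by height, does not go through as described. For a non-simple real root $\alpha$ of height $n$, the degree-$\overline{\pi}(\alpha)$ component of $\widehat{\pi}(u)$ equals $\pi_1$ applied to the degree-$\alpha$ component of $u$, and that component is $\lambda_{e_\alpha}e_\alpha$ \emph{plus} a sum of products $\prod_j\lambda_{x_j}^{n_j}x_j^{[n_j]}$ coming from lower-height basis elements, including imaginary ones whose coefficients your induction hypothesis does not control. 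Since $e_{\overline{\pi}(\alpha)}$ is itself an iterated bracket, hence lies in the square of the augmentation ideal of $\UU^+(B)$, there is no filtration or primitivity argument that separates $\lambda_{e_\alpha}\pi(e_\alpha)$ from $\pi_1$ of these products; neither the injectivity of $\overline{\pi}$ nor $W^*$-equivariance rules out cancellation. The inclusion $\ker\widehat{\pi}\subseteq U^{im+}$ is in fact true, but only \emph{a posteriori}, as a consequence of the normality you are trying to establish (a normal subgroup of $\G^{pma}_A(k)$ contained in $\U_A^{ma+}(k)$ lies in $Z'_A\cap\U_A^{ma+}(k)\subseteq U^{im+}$), so your order of deduction is backwards. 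The paper extracts only the coefficients $\lambda_{e_i}$ at the \emph{simple} roots, where the argument is clean precisely because $\alpha_i$ is not a sum of positive roots and hence no lower-order products interfere; this yields the weaker but sufficient inclusion $\ker\widehat{\pi}\subseteq\U^{ma}_{\Delta_+\setminus\{\alpha_i\,|\,i\in I\}}(k)$.

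The second gap is in your normality argument. You invoke the extension $\widetilde{\Phi}$ of part (2) to handle conjugation by $\G_A(k)$, on the grounds that $\widetilde{\Phi}$ ``agrees with $\widehat{\pi}$'' on $\U_A^{ma+}(k)$; but $\widetilde{\Phi}$ agrees with $\widehat{\pi}$ only on $U^+_A(k)$ (and on $\overline{U^+_A}(k)$ after completion), whereas an element $u\in\ker\widehat{\pi}$ and its conjugate $gug\inv$ live in $\U_A^{ma+}(k)$, which may strictly contain $\overline{U^+_A}(k)$ -- the paper's Theorem~D exhibits many such cases. So the identity $\widehat{\pi}(gug\inv)=\widetilde{\Phi}(g)\widehat{\pi}(u)\widetilde{\Phi}(g)\inv$ requires its own proof and cannot be read off from part (2). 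The paper supplies exactly this missing computation at the level of $\widehat{\UU}^+_k$: using the formula for conjugation by $\exp(\lambda e_{\pm\alpha_i})$ on $\U^{ma}_{\Delta_+\setminus\{\alpha_i\}}(k)$ and the intertwining $\pi_1\circ s^*_{\alpha_i}(r)=s^*_{\beta_i}(r)\circ\pi_1$, it shows $\widehat{\pi}\big(\widetilde{s}_i(r)v\widetilde{s}_i(r)\inv\big)=s^*_{\beta_i}(r)\big(\widehat{\pi}(v)\big)$ for $v$ in the subgroup where the kernel is already known to live; hence $\N_A(k)$ normalises $\ker\widehat{\pi}$, and since $\G^{pma}_A(k)$ is generated by $\U_A^{ma+}(k)$ and $\N_A(k)$, normality follows. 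You should replace both steps of your part (1) by this argument.
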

\begin{proof}
By Lemma~\ref{lemma:subalgebras}, the $\ZZ$-regular map $\pi$ extends to a Lie algebra morphism $$\g_A\to \g_B:e_i\mapsto e_{\beta_i}, \ f_i\mapsto e_{-\beta_i}.$$
Since $e_{\pm\beta_i}^{(n)}\in\UU(B)$ for all $i\in I$ and $n\in\NN$, this then yields a $\ZZ$-algebra morphism $\UU(A)\to \UU(B)$, which in turn extends to a $k$-algebra morphism $$\pi_1\co \UU_k(A)\to \UU_k(B).$$  Note that the $\ZZ$-linear map $$\overline{\pi}\co Q(A)\to Q(B):\alpha_i\mapsto \beta_i$$ induced by $\pi$ is injective because the $\beta_i$ are linearly independent. Set $K:=\ker\widehat{\pi}$, where $\widehat{\pi}$ is the exponential of $\pi$ (see Definition~\ref{definition:exponential}).

Choose a $\ZZ$-basis $\BBBB$ of $\nn^+_{\ZZ}(A)$, as well as exponential sequences for the elements of $\BBBB$. Choose also exponential sequences for the elements $\pi(x)\in\nn^+_{\ZZ}(B)$, $x\in\BBBB$, as in Theorem~\ref{thm:basic_functoriality}, so that for any $y=\prod_{x\in\BBBB}{[\exp]\lambda_xx}\in\U^{ma+}_A(k)$ we have $$\widehat{\pi}(y)=\prod_{x\in\BBBB}{[\exp]\lambda_x\pi(x)}\in\U^{ma+}_B(k).$$ Thus, if $y\in K$, then for any $i\in I$ the component of $\prod_{x\in\BBBB}{[\exp]\lambda_x\pi(x)}\in\widehat{\UU}^+_k$ of degree $\beta_i$ must be zero. Since $\overline{\pi}$ is injective and $\pi(e_i)=e_{\beta_i}\neq 0$, this implies that $\lambda_{e_i}=0$. Hence $K\subseteq \U_{\Delta_+\setminus\{\alpha_i  |  i\in I\}}^{ma}(k)$.

For each real root $\alpha$ and each $r\in k^{\times}$, we set 
$$s_{\alpha}^*(r):=\exp(\ad re_{\alpha})\exp(\ad r\inv e_{-\alpha})\exp(\ad re_{\alpha})\in \Aut(\UU_k),$$
so that $s_{\alpha}^*=s_{\alpha}^*(1)$ (cf. \S\ref{subsection:IEA}). 
For any $i\in I$ and $r\in k^{\times}$, any homogeneous $x\in \nn^+_k(A)$ of degree $\alpha\neq\alpha_i$ and any choice of exponential sequence $(x^{[n]})_{n\in\NN}$ for $x$, we deduce from (\ref{eqn:ei_action}) that
\begin{equation*}
\begin{aligned}
\widehat{\pi}\big(\widetilde{s}_i(r)([\exp]x)\widetilde{s}_i(r)\inv\big)&=\widehat{\pi}\bigg(\sum_{n\geq 0}{s_{\alpha_i}^*(r)x^{[n]}}\bigg)=\sum_{n\geq 0}{\pi_1\big(s_{\alpha_i}^*(r)x^{[n]}\big)}\\
&=\sum_{n\geq 0}{s_{\beta_i}^*(r)\pi_1\big(x^{[n]}\big)} = s_{\beta_i}^*(r)\big(\widehat{\pi}([\exp]x)\big).
\end{aligned}
\end{equation*}
In particular, $\widetilde{s}_i(r)K \thinspace \widetilde{s}_i(r)\inv\subseteq K$ for any $i\in I$ and $r\in k^{\times}$.
Since the torus $\T(k)$ is generated by $$\big\{\widetilde{s}_{i}^{\thinspace -1} \widetilde{s}_{i}(r) \ | \ i\in I, \ r\in k^{\times}\big\}$$ (see \S\ref{subsection:MKG}), we deduce that $\N_A(k)\subseteq\G^{pma}_A(k)$ normalises $K$. As $\G^{pma}_A(k)$ is generated by $\U_A^{ma+}(k)$ and $\N_A(k)$, we conclude that $K$ is a normal subgroup of $G^{pma}_A(k)$, proving (1).

We now turn to the proof of (2).
Let $X\in\{A,B\}$, and let $I_X$ denote the indexing set of $X$. 
Given $w\in W(X)$ and a reduced decomposition $w=s_{i_1}s_{i_2}\dots s_{i_k}$ for $w$, we write $$w^*:=s_{i_1}^*s_{i_2}^*\dots s_{i_k}^*\in W^*\quad\textrm{and}\quad \widetilde{w}:=\widetilde{s}_{i_1}\widetilde{s}_{i_2}\dots \widetilde{s}_{i_k}\in\N_X(k)\subseteq \Stt_X(k).$$
We recall that $w^*$ depends only on $w$. Similarly, the coset $\widetilde{w}\thinspace\T_X(k)$ is uniquely determined by $w$.
The relations (\ref{R2}) and (\ref{R4}) in $\G_X(k)$ respectively imply that
\begin{equation}\label{eqn:A1}
\widetilde{w}\cdot t\cdot \widetilde{w}\inv=w(t)\quad\textrm{for any $t\in\T_X(k)$}
\end{equation}
and
\begin{equation}\label{eqn:A2}
\widetilde{w}\cdot u\cdot \widetilde{w}\inv=w^*(u)\quad\textrm{for any $u\in\Stt_X(k)$}.
\end{equation}
Moreover, in view of the relations (\ref{R3}), the torus $\T_X(k)$ is generated by the elements
\begin{equation}\label{eqn:A3}
r^{\alpha_i^{\vee}}=\widetilde{s}_i^{\thinspace -1}\widetilde{s}_i(r\inv)\quad\textrm{for all $r\in k^{\times}$ and $i\in I_X$}.
\end{equation}

For each positive real root $\gamma\in\Delta_+^{\re}(X)$, we fix some $w_{\gamma}\in W(X)$ and some $i_{\gamma}\in I_X$ such that $\gamma=w_{\gamma}\alpha_{i_{\gamma}}$ (with the choice $w_{\gamma}=1$ if $\gamma=\alpha_i$), and we choose the basis elements $e_{\gamma}\in E_{\gamma}$ and $e_{-\gamma}\in E_{-\gamma}$ so that $e_{\gamma}=w_{\gamma}^*e_{i_{\gamma}}$ and $e_{-\gamma}=w_{\gamma}^*f_{i_{\gamma}}$. To lighten the notation, we will also write $w_j:=w_{\beta_j}\in W(B)$ and $\sigma_j:=i_{\beta_j}$ for all $j\in I_A$, so that $$\beta_i=w_i\alpha_{\sigma_i}\quad\textrm{and}\quad e_{\pm\beta_i}=w_i^*e_{\pm\alpha_{\sigma_i}}\quad\textrm{for all $i\in I_A$.}$$ Defining for all $\gamma\in\Delta_+^{\re}(X)$ the reflection $$s_{\gamma}:Q(X)\to Q(X):\lambda\mapsto \lambda-\la\lambda,\gamma^{\vee}\ra \thinspace\gamma,$$ we then have $s_{\gamma}=w_{\gamma}s_{i_{\gamma}}w_{\gamma}\inv\in W(X)$. We will also view $s_{\gamma}$ as acting on the coroot lattice $Q^{\vee}(X)=\sum_{i\in I_X}{\ZZ\alpha_i^{\vee}}$ by  
$$s_{\gamma}:Q^{\vee}(X)\to Q^{\vee}(X):h\mapsto h-\la\gamma,h\ra \thinspace\gamma^{\vee}.$$

We define the map $$\widetilde{\pi}\co \T_A(k) * \bigg(\bigast_{\gamma\in\Delta^{\re}(A)}{U_{\gamma}(k)}\bigg)\to \G_B(k):x_{\pm\alpha_i}(r)\mapsto x_{\pm\beta_i}(r), \quad \left\{
\begin{array}{ll}
r^{\alpha_i^{\vee}}&\mapsto \widetilde{\pi}(\widetilde{s}_i^{\thinspace -1}\widetilde{s}_i(r\inv))\\
x_{\pm\gamma}(r)&\mapsto \widetilde{\pi}(\widetilde{w}_{\gamma}x_{\pm\alpha_{i_{\gamma}}}(r)\widetilde{w}_{\gamma}\inv)
\end{array}
\right.$$
on the free product of $\T_A(k)$ with all real root groups $U_{\gamma}(k)$, and we prove that $\widetilde{\pi}$ factors through a group homomorphism $\G_A(k)\to\G_B(k)$.
Note first that 
\begin{equation}\label{eqn:A4}
\widetilde{\pi}(\widetilde{s}_i(r))=\widetilde{\pi}(x_{\alpha_i}(r)x_{-\alpha_i}(r\inv)x_{\alpha_i}(r))=x_{\beta_i}(r)x_{-\beta_i}(r\inv)x_{\beta_i}(r)=w_i^*(\widetilde{s}_{\sigma_i}(r))
\end{equation}
for all $r\in k^{\times}$ and $i\in I_A$.
In particular, we deduce from (\ref{eqn:A2}) that
\begin{equation}\label{eqn:A5}
\widetilde{\pi}(\widetilde{s}_i)=w_i^*(\widetilde{s}_{\sigma_i})=\widetilde{w}_i\widetilde{s}_{\sigma_i}\widetilde{w}_i\inv\in \N_B(k)\quad\textrm{for all $i\in I_A$}.
\end{equation}
Hence for any $\gamma\in\Delta_+^{\re}$ and $r\in k$, we have
\begin{equation}\label{eqn:A6}
\widetilde{\pi}(x_{\pm\gamma}(r))=\widetilde{\pi}(\widetilde{w}_{\gamma}x_{\pm\alpha_{i_\gamma}}(r)\widetilde{w}_{\gamma}\inv) = \widetilde{w}^{\pi}_{\gamma}x_{\pm\beta_{i_\gamma}}(r)(\widetilde{w}^{\pi}_{\gamma})\inv=w_{\gamma}^{\pi *}(x_{\pm\beta_{i_\gamma}}(r)),
\end{equation}
where $$\widetilde{w}^{\pi}_{\gamma}:=w_{i_1}^*(\widetilde{s}_{\sigma_{i_1}})\dots w_{i_k}^*(\widetilde{s}_{\sigma_{i_k}})\in \N_B(k)\quad\textrm{and}\quad w_{\gamma}^{\pi *}:=s_{\beta_{i_1}}^*\dots s_{\beta_{i_k}}^*\in W^*(B)$$ for some prescribed reduced decomposition $w_{\gamma}=s_{i_1}\dots s_{i_k}$ of $w_{\gamma}\in W(A)$.
Finally, using (\ref{eqn:A1}), (\ref{eqn:A2}), (\ref{eqn:A3}) and (\ref{eqn:A4}), we see that the restriction of $\widetilde{\pi}$ to $\T_A(k)$ is given for all $r\in k^{\times}$ and $i\in I_A$ by 
\begin{equation}\label{eqn:A7}
\widetilde{\pi}(r^{\alpha_i^{\vee}})=\widetilde{\pi}(\widetilde{s}_i^{\thinspace -1}\widetilde{s}_i(r\inv))=w_{i}^*(\widetilde{s}_{\sigma_i}^{\thinspace -1}\widetilde{s}_{\sigma_i}(r\inv))= \widetilde{w}_{i}\cdot r^{\alpha_{\sigma_i}^{\vee}}\cdot\widetilde{w}_{i}\inv = w_{i}(r^{\alpha_{\sigma_i}^{\vee}})=r^{w_{i}\alpha_{\sigma_i}^{\vee}}=r^{\beta_i^{\vee}}.
\end{equation}

We are now ready to prove that the image by $\widetilde{\pi}$ of the relations (\ref{R0}), (\ref{R1}), (\ref{R2}), (\ref{R3}) and (\ref{R4}) defining $\G_A(k)$ are still satisfied in $\G_B(k)$. Observe first that $\widetilde{\pi}$ and $\widehat{\pi}$ coincide on $U^+_A(k)$. Indeed, this follows from (\ref{eqn:A6}) and the fact that for any $\gamma\in\Delta^{\re}_+(A)$ and any $r\in k$, 
$$\widehat{\pi}(x_{\gamma}(r))=\widehat{\pi}(\exp re_{\gamma})=\exp r\pi(e_{\gamma})=\exp r\pi_1(w_{\gamma}^*e_{i_{\gamma}})=\exp rw^{\pi *}_{\gamma}e_{\beta_{i_{\gamma}}}=w_{\gamma}^{\pi *}(x_{\beta_{i_\gamma}}(r)).$$
In particular, the image by $\widetilde{\pi}$ of the relations (\ref{R0}) are satisfied in $\G_B(k)$ for any prenilpotent pair $\{\alpha,\beta\}\subseteq \Delta^{\re}_+(A)$ of positive real roots (and hence also of negative real roots by symmetry). Let now $\{\alpha,\beta\}\subseteq \Delta^{\re}(A)$ be a prenilpotent pair of roots of opposite sign, say $\alpha\in\Delta^{\re}_+(A)$ and $\beta\in\Delta^{\re}_-(A)$. Then there exists some $w\in W$ such that $\{w\alpha,w\beta\}\subseteq \Delta^{\re}_+(A)$. Up to modifying $e_{w\alpha}$ and $e_{w\beta}$ by their opposite, we may then assume that $we_{\alpha}=e_{w\alpha}$ and $we_{\beta}=e_{w\beta}$ (note that $\{\alpha,\beta\}\neq\{w\alpha,w\beta\}\subseteq\Delta_+^{\re}(A)$). Hence $ww_{\alpha}e_{i_{\alpha}}=e_{w\alpha}$ and we may thus assume, up to modifying $w_{w\alpha}$, that $w_{w\alpha}w_{\alpha}\inv=w$. Set $$w^{\pi *}:=w_{w\alpha}^{\pi *}(w_{\alpha}^{\pi *})\inv.$$ Consider the relation
$$[x_{\alpha}(r),x_{\beta}(s)]=\prod_{\gamma}{x_{\gamma}(C^{\alpha\beta}_{ij}r^is^j)}$$ in $\G_A(k)$ for some $r,s\in k$, where $\gamma=i\alpha+j\beta$ runs, as in (\ref{R0}), through the interval $]\alpha,\beta[_{\NN}$. For each $\gamma\in ]\alpha,\beta[_{\NN}$, let $\epsilon_{\gamma}\in\{\pm 1\}$ be such that $e_{w\gamma}=\epsilon_{\gamma}w^*e_{\gamma}$. Note that $w\big(]\alpha,\beta[_{\NN}\big)=]w\alpha,w\beta[_{\NN}$. We have
$$[x_{w\alpha}(r),x_{w\beta}(s)]=w^*([x_{\alpha}(r),x_{\beta}(s)])=w^*\bigg(\prod_{\gamma}{x_{\gamma}(C^{\alpha\beta}_{ij}r^is^j)}\bigg)=\prod_{\gamma}{x_{w\gamma}(\epsilon_{\gamma}C^{\alpha\beta}_{ij}r^is^j)},$$
so that $C^{w\alpha,w\beta}_{ij}=\epsilon_{i\alpha+j\beta}C^{\alpha\beta}_{ij}$ for all $i,j$. It then follows from (\ref{eqn:A6}) that
\begin{equation*}
\begin{aligned}
\widetilde{\pi}([x_{\alpha}(r),x_{\beta}(s)])&=\widetilde{\pi}((w^*)\inv([x_{w\alpha}(r),x_{w\beta}(s)]))=(w^{\pi *})\inv\widetilde{\pi}([x_{w\alpha}(r),x_{w\beta}(s)])\\
&=(w^{\pi *})\inv\widehat{\pi}([x_{w\alpha}(r),x_{w\beta}(s)])=(w^{\pi *})\inv\widehat{\pi}\bigg(\prod_{\gamma}{x_{w\gamma}(C^{w\alpha,w\beta}_{ij}r^is^j)}\bigg)\\
&=(w^{\pi *})\inv\widetilde{\pi}\bigg(\prod_{\gamma}{x_{w\gamma}(\epsilon_{\gamma}C^{\alpha\beta}_{ij}r^is^j)}\bigg)=(w^{\pi *})\inv\widetilde{\pi}w^*\bigg(\prod_{\gamma}{x_{\gamma}(C^{\alpha\beta}_{ij}r^is^j)}\bigg)\\
&=\widetilde{\pi}\bigg(\prod_{\gamma}{x_{\gamma}(C^{\alpha\beta}_{ij}r^is^j)}\bigg),
\end{aligned}
\end{equation*}
so that the relations (\ref{R0}) are indeed satisfied.

We next check (\ref{R1}). Let $t=r^{\alpha_j^{\vee}}\in\T_A(k)$ for some $r\in k^{\times}$ and some $j\in I_A$, and let $s\in k$ and $i\in I_A$. We then deduce from (\ref{eqn:A7}) and the relations (\ref{R1}), (\ref{eqn:A1}) and (\ref{eqn:A2}) in $\G_B(k)$ that
\begin{equation*}
\begin{aligned}
\widetilde{\pi}(t\cdot x_{\alpha_i}(s)\cdot t\inv)&=r^{\beta_j^{\vee}} w_i^*(x_{\alpha_{\sigma_i}}(s)) r^{-\beta_j^{\vee}}=
w_i^*\big(r^{w_i\inv\beta_j^{\vee}} x_{\alpha_{\sigma_i}}(s) r^{-w_i\inv\beta_j^{\vee}}\big)\\
&=w_i^*\big(x_{\alpha_{\sigma_i}}(r^{\la w_i\alpha_{\sigma_i},\beta_j^{\vee}\ra}s)\big)=\widetilde{\pi}\big(x_{\alpha_i}(r^{\la \beta_i,\beta_j^{\vee}\ra}s)\big)\\
&= \widetilde{\pi}(x_{\alpha_i}(t(\alpha_i)s)).
\end{aligned}
\end{equation*}

To check (\ref{R2}), let again $t=r^{\alpha_j^{\vee}}\in\T_A(k)$ for some $r\in k^{\times}$ and some $j\in I_A$, and let $i\in I_A$. We then deduce from (\ref{eqn:A5}), (\ref{eqn:A7}) and the relations (\ref{R2}) in $\G_B(k)$ that
\begin{equation*}
\begin{aligned}
\widetilde{\pi}(\widetilde{s}_it\widetilde{s}_i^{\thinspace -1})&=w_i^*\big(\widetilde{s}_{\sigma_i}r^{w_i\inv\beta_j^{\vee}}\widetilde{s}_{\sigma_i}^{\thinspace -1}\big)=w_i^*\big(r^{s_{\sigma_i}w_i\inv\beta_j^{\vee}}\big)\\
&=r^{w_is_{\sigma_i}w_i\inv\beta_j^{\vee}}=r^{s_{\beta_i}\beta_j^{\vee}}=r^{\beta_j^{\vee}-\la \beta_i,\beta_j^{\vee}\ra \beta_i^{\vee}}\\
&=\widetilde{\pi}\big(r^{\alpha_j^{\vee}-\la \beta_i,\beta_j^{\vee}\ra\alpha_i^{\vee}}\big)=\widetilde{\pi}\big(r^{s_i\alpha_j^{\vee}}\big)\\
&=\widetilde{\pi}(s_i(t)).
\end{aligned}
\end{equation*}

Since (\ref{R3}) and (\ref{R4}) are an immediate consequence of the definition of $\widetilde{\pi}$, we conclude that $\widetilde{\pi}$ factors through a group homomorphism $$\widetilde{\pi}\co\G_A(k)\to\G_B(k),$$ which is continuous because it coincides with the continuous group homomorphism $\widehat{\pi}$ on $U_A^+(k)$. In particular, it extends to a continuous group homomorphism $\overline{\widetilde{\pi}}\co\overline{\G_A}(k)\to\overline{\G_B}(k)$ coinciding with $\widehat{\pi}$ on $\overline{U_A^+}(k)$.
It thus remains to show that $\ker\widetilde{\pi}\subseteq {\mathcal Z}_A(k)$ and $\ker \overline{\widetilde{\pi}}\subseteq Z'_A\cap \overline{\G_A}(k)={\mathcal Z}_A(k)\cdot (Z'_A\cap \overline{U^+_A}(k))$.

Note that $\widetilde{\pi}(U_A^+(k))=\widehat{\pi}(U_A^+(k))\subseteq U_B^+(k)$. Similarly, (\ref{eqn:A5}) and (\ref{eqn:A7}) respectively imply that
$$\widetilde{\pi}(\N_A(k))\subseteq \N_B(k)\quad\textrm{and}\quad \widetilde{\pi}(\T_A(k))\subseteq \T_B(k).$$
Let $g\in \ker\widetilde{\pi}$. The Bruhat decomposition $$\G_A(k)=\dot{\bigcup}_{w\in W(A)}{\B^+(k)\widetilde{w}\B^+(k)}$$ for $\G_A(k)$ implies that $g=b_1\widetilde{w}b_2$ for some $w\in W(A)$ and some $b_1,b_2\in \B^+(k)$. Hence $$\widetilde{\pi}(g)=\widetilde{\pi}(b_1)\widetilde{\pi}(\widetilde{w})\widetilde{\pi}(b_2)=1,$$ so that the Bruhat decomposition for $\G_B(k)$ implies that $\widetilde{\pi}(\widetilde{w})=1$. We claim that for any reduced decomposition $w=s_{i_1}\dots s_{i_k}$ with $k\geq 1$, the element $w^{\pi}:=s_{\beta_{i_1}}\dots s_{\beta_{i_k}}\in W(B)$ is nontrivial. Indeed, for any $i\in I_A$ and $\lambda\in Q(A)$, we have
$$\overline{\pi}(s_i(\lambda))=\overline{\pi}(\lambda)-\la\lambda,\alpha_i^{\vee}\ra\beta_i=\overline{\pi}(\lambda)-\la\overline{\pi}(\lambda),\beta_i^{\vee}\ra\beta_i=s_{\beta_i}(\overline{\pi}(\lambda)).$$ In particular, $\overline{\pi}(w(\lambda))=w^{\pi}(\overline{\pi}(\lambda))$ for all $\lambda\in Q(A)$. Since $\overline{\pi}$ is injective, the claim follows.

This shows that $\widetilde{w}\in\T_A(k)$, and hence that $\ker\widetilde{\pi}\subseteq \B^+(k)$. Therefore,
$$\ker\widetilde{\pi}\subseteq \bigcap_{h\in\G_A(k)}{h\B^+(k)h\inv}={\mathcal Z}_A(k).$$
The same argument (using the Bruhat decompositions in $\G^{pma}_A(k)$ and $\G^{pma}_B(k)$) yields $\ker \overline{\widetilde{\pi}}\subseteq Z'_A$, as desired. This concludes the proof of the theorem.
\end{proof}

\begin{remark}\label{remark:passage_quotient}
Note that the map $\widetilde{\pi}\co \G_A(k)\to\G_B(k)$ provided by Theorem~\ref{thm:injective_standard_maps} maps ${\mathcal Z}_A(k)$ into ${\mathcal Z}_B(k)$. Indeed, recall from (\ref{eqn:center}) that ${\mathcal Z}_A(k)=\{t\in \T_A(k) \ | \ t(\alpha_j)=1 \ \forall j\in I\}$ (and similarly for ${\mathcal Z}_B(k)$). Hence, if we write $t\in \T_A(k)$ as a product $t=\prod_{i\in I}r_i^{\alpha_i^{\vee}}$ for some $r_i\in k^{\times}$, then $t\in {\mathcal Z}_A(k)$ if and only if $\prod_{i\in I}r_i^{\la\alpha_j,\alpha_i^{\vee}\ra}=1$ for all $j\in I$ (and similarly for $t\in \T_B(k)$, with $\alpha_i$ replaced by $\beta_i$). Since $\la\alpha_j,\alpha_i^{\vee}\ra=\la\beta_j,\beta_i^{\vee}\ra$ for all $i,j\in I$, the claim then follows from (\ref{eqn:A7}).

In particular, $\widetilde{\pi}$ induces a continuous injective group homomorphism $$\G_A(k)/{\mathcal Z}_A(k)\to\G_B(k)/{\mathcal Z}_B(k).$$
\end{remark}

\begin{example}\label{example:funny}
Let $k$ be a field and let $a\in\NN$ with $a\geq 2$. We define recursively the sequence $(a_n)_{n\in\NN}$ by $a_0:=a$ and $a_{n+1}:=a_n(a_n^2-3)$. For each $n\in\NN$, consider the GCM $A_n=(\begin{smallmatrix}2 & -a_n\\ -a_n & 2\end{smallmatrix})$. 
By Theorem~\ref{thm:construction_pi}, the assignment $e_i\mapsto e_i$, $i=1,2$, defines surjective group homomorphisms $$\pi_n\co \U_{A_{n+1}}^{ma+}(k)\to \U_{A_n}^{ma+}(k).$$ Similarly, by Theorem~\ref{thm:injective_standard_maps}, the assignment $e_i\mapsto e_{\beta_i}$, $i=1,2$, where $\beta_1=s_1\alpha_2$ and $\beta_2=s_2\alpha_1$, defines group homomorphisms $$\iota_n\co \U_{A_{n+1}}^{ma+}(k)\to \U_{A_n}^{ma+}(k),$$ which are moreover injective if the corresponding Kac--Moody groups are GK-simple. Indeed, this follows from the fact that $$\beta_1(\beta_2^{\vee})=\la s_1\alpha_2,s_2\alpha_1^{\vee}\ra=\la a_n\alpha_1+\alpha_2,\alpha_1^{\vee}+a_n\alpha_2^{\vee}\ra=3a_n-a_n^3=-a_{n+1},$$
and similarly for $\beta_2(\beta_1^{\vee})$. Thus, we get two projective systems
\begin{equation*}
\begin{aligned}
&\dots\stackrel{\pi_{n+1}}{\longrightarrow} \U_{A_{n+1}}^{ma+}(k)\stackrel{\pi_n}{\longrightarrow} \U_{A_n}^{ma+}(k)\stackrel{\pi_{n-1}}{\longrightarrow}\dots \stackrel{\pi_1}{\longrightarrow}\U_{A_1}^{ma+}(k)\stackrel{\pi_0}{\longrightarrow}\U_{A_0}^{ma+}(k)\\
&\dots\stackrel{\iota_{n+1}}{\longrightarrow} \U_{A_{n+1}}^{ma+}(k)\stackrel{\iota_n}{\longrightarrow} \U_{A_n}^{ma+}(k)\stackrel{\iota_{n-1}}{\longrightarrow}\dots \stackrel{\iota_1}{\longrightarrow}\U_{A_1}^{ma+}(k)\stackrel{\iota_0}{\longrightarrow}\U_{A_0}^{ma+}(k).
\end{aligned}
\end{equation*}
The projective limit of the first system should be, in some sense to be made precise, the group $\U_{A_{\infty}}^{ma+}(k)$ associated to the matrix $A_{\infty}=(\begin{smallmatrix}2 & -\infty\\ -\infty & 2\end{smallmatrix})$ and with corresponding Lie algebra $\nn^+(A_{\infty})=\tilde{\nn}^+$ freely generated by $e_1,e_2$ (see also \cite[Remark on page 55]{KM95}). The projective limit of the second system is trivial. 
\end{example}

\begin{remark}
If $B$ is a GCM of affine type, then every subsystem $\{\beta_i \ | \ i\in I\}\subseteq\Delta(B)$ as in Theorem~\ref{thm:injective_standard_maps} yields a GCM $A=(\beta_i(\beta_j^{\vee}))_{i,j\in I}$ all whose factors are of finite or affine type. For instance, the case $a=2$ in Example~\ref{example:funny} together with Theorem~\ref{thm:injective_standard_maps} show that for the affine matrix $B=(\begin{smallmatrix}2 & -2\\ -2 & 2\end{smallmatrix})$, the Kac--Moody group $\G_B(k)/{\mathcal Z}_B(k)$ embeds properly into itself. Note that, at the algebraic level, $\G_B(k)=\SL_2(k[t,t\inv])$ and the maps $k[t,t\inv]\to k[t,t\inv]:t\mapsto t^m$ ($m\geq 2$) provide examples of such embeddings.

By constrast, as soon as $B$ is of indefinite type, Example~\ref{example:funny} shows that there exist GCM $A=(\begin{smallmatrix}2 & -m\\ -n & 2\end{smallmatrix})$ with $m,n$ arbitrarily large such that $\G_A(k)/{\mathcal Z}_A(k)$ embeds into $\G_B(k)/{\mathcal Z}_B(k)$.
\end{remark}

\subsection{Simply laced covers}\label{subsection:SLC}

A GCM $A$ is called \emph{simply laced} if every off-diagonal entry of $A$ is either $0$ or $-1$. Equivalently, $A$ is simply laced if its Dynkin diagram $D(A)$ is a graph with only simple (unoriented, unlabelled) edges (see \cite[\S 4.7]{Kac}).

Let $A=(a_{ij})_{i,j\in I}$ be a symmetrisable GCM. A \emph{simply laced cover} of $A$ is a simply-laced GCM $B$ whose Dynkin diagram $D(B)$ has $n_i$ vertices $\alpha_{(i,1)},\dots,\alpha_{(i,n_i)}$ for each simple root $\alpha_i\in\Delta(A)$ (where the $n_i$ are some positive integers), and such that each $\alpha_{(i,r)}$ is connected in $D(B)$ to exactly $|a_{ji}|$ of the vertices $\alpha_{(j,1)},\dots,\alpha_{(j,n_j)}$ for $j\neq i$, and to none of the other vertices $\alpha_{(i,s)}$.
Such simply laced covers $B$ of $A$ always exist, but are in general non-unique (if one restricts to those of minimal rank). For more details about simply laced covers, we refer to \cite[\S 2.4]{HKL15}. 

Given a simply laced cover $B$ of $A$ as above, we write the indexing set $J$ of $B$ as the set of couples $$J=\{(i,j) \ | \ i\in I, \ 1\leq j\leq n_i\}.$$
In particular, we denote by $e_{(i,j)}$ and $e_{-(i,j)}:=f_{(i,j)}$ the Chevalley generators of $\g_B$, by $s_{(i,j)}$ the simple reflections generating $W(B)$, and so on. For a field $k$, and elements $i\in I$ and $r\in k$, we also set for short
$$x_{\pm(i,\cdot)}(r):=\prod_{j=1}^{n_i}{x_{\pm\alpha_{(i,j)}}(r)}\in \G_B(k), \quad \widetilde{s}_{(i,\cdot)}(r):=x_{(i,\cdot)}(r)x_{-(i,\cdot)}(r\inv)x_{(i,\cdot)}(r)=\prod_{j=1}^{n_i}{\widetilde{s}_{(i,j)}(r)}\in \G_B(k),$$
as well as $$s_{(i,\cdot)}:=\prod_{j=1}^{n_i}{s_{(i,j)}}\in W(B), \quad \widetilde{s}_{(i,\cdot)}:=\widetilde{s}_{(i,\cdot)}(1)\in\N_B(k) \quad\textrm{and}\quad s_{(i,\cdot)}^*:=\prod_{j=1}^{n_i}{s^*_{(i,j)}}\in W^*(B).$$
Note that each of the above four products (indexed by $j$) consists of pairwise commuting factors. For $i\in I$, we also set
$$e_{\pm(i,\cdot)}:=\sum_{j=1}^{n_i}{e_{\pm(i,j)}}\in\g_B, \quad \alpha_{(i,\cdot)}:=\sum_{j=1}^{n_i}{\alpha_{(i,j)}}\in Q(B)\quad\textrm{and}\quad \alpha_{(i,\cdot)}^{\vee}:=\sum_{j=1}^{n_i}{\alpha_{(i,j)}^{\vee}}\in Q^{\vee}(B).$$ Then for all $i,j\in I$ and $m\in\{1,\dots,n_j\}$,
$$\la \alpha_{(j,m)},\alpha_{(i,\cdot)}^{\vee}\ra=a_{ij}.$$

The following lemma is extracted from \cite[\S 2.4]{HKL15}; we give here a more detailed proof.
\begin{lemma}\label{lemma:SLC}
Let $A=(a_{ij})_{i,j\in I}$ be a symmetrisable GCM, and let $B$ be a simply laced cover of $A$ as above. Then the assignment $e_{\pm\alpha_i}\mapsto e_{\pm(i,\cdot)}$ for $i\in I$ defines an injective Lie algebra morphism $\pi\co\g_A\to\g_B$.
\end{lemma}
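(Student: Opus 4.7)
The plan is to extend the proposed assignment to a candidate morphism $\pi\co \g_A\to\g_B$ by additionally prescribing $\alpha_i^{\vee}\mapsto \alpha_{(i,\cdot)}^{\vee}$ for each $i\in I$, and to verify that the resulting map respects the defining relations of the derived Kac--Moody algebra $\g_A$. The relation $[\alpha_i^{\vee},\alpha_j^{\vee}]=0$ is immediate, since the individual $\alpha_{(i,r)}^{\vee}$ all lie in the abelian Cartan subalgebra of $\g_B$. The Chevalley relation $[e_i,f_j]=-\delta_{ij}\alpha_i^{\vee}$ reduces to the double sum $\sum_{r,s}[e_{(i,r)},f_{(j,s)}]=-\delta_{ij}\sum_r\alpha_{(i,r)}^{\vee}$, which collapses to $-\delta_{ij}\alpha_{(i,\cdot)}^{\vee}$, as desired. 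The relation $[\alpha_i^{\vee},e_j]=a_{ij}e_j$ (and the symmetric one for $f_j$) reduces, after expanding bilinearly, to the combinatorial identity $\la\alpha_{(j,m)},\alpha_{(i,\cdot)}^{\vee}\ra=a_{ij}$ already recorded in the excerpt.

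The main work is to verify the Serre relations $(\ad e_i)^{1+|a_{ij}|}e_j=0$ for $i\neq j$ (and their negative counterparts). The key observation is that since $B$ is simply laced and, by the very definition of a simply laced cover, the vertices $\alpha_{(i,1)},\dots,\alpha_{(i,n_i)}$ form a totally disconnected subset of $D(B)$, the elements $e_{(i,1)},\dots,e_{(i,n_i)}$ pairwise commute in $\g_B$, and so do the corresponding inner derivations $\ad e_{(i,r)}$. I would then expand $(\ad e_{(i,\cdot)})^{1+|a_{ij}|}=\bigl(\sum_r\ad e_{(i,r)}\bigr)^{1+|a_{ij}|}$ multinomially and apply it to a fixed $e_{(j,s)}$. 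A summand $\prod_r(\ad e_{(i,r)})^{k_r}e_{(j,s)}$ with $\sum_rk_r=1+|a_{ij}|$ vanishes unless (a) $k_r=0$ whenever $\alpha_{(i,r)}$ is unconnected to $\alpha_{(j,s)}$ (since then $[e_{(i,r)},e_{(j,s)}]=0$, together with the commutativity of the $\ad e_{(i,r)}$), and (b) $k_r\leq 1$ when $\alpha_{(i,r)}$ is connected to $\alpha_{(j,s)}$ (by the simply laced Serre relation $(\ad e_{(i,r)})^2e_{(j,s)}=0$, again using commutativity to pass $(\ad e_{(i,r)})^2$ through the product). By the defining property of a simply laced cover, $\alpha_{(j,s)}$ is connected to exactly $|a_{ij}|$ of the $\alpha_{(i,r)}$'s, so any surviving multi-index $\vec k$ satisfies $\sum_rk_r\leq|a_{ij}|<1+|a_{ij}|$, a contradiction. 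Hence every term vanishes, and $\pi$ extends to a Lie algebra morphism $\g_A\to\g_B$. The analogous computation yields the Serre relations for the $f_i$.

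For injectivity, the clean device is the $\ZZ$-linear surjection $p\co Q(B)\to Q(A)$, $\alpha_{(i,r)}\mapsto\alpha_i$, which endows $\g_B$ with a coarser $Q(A)$-grading with respect to which $\pi$ is graded (as the generators $e_{(i,\cdot)}$ and $f_{(i,\cdot)}$ are homogeneous of $Q(A)$-degree $\alpha_i$ and $-\alpha_i$ respectively). The kernel $\ker\pi$ is thus a graded ideal of $\g_A$. Its intersection with the Cartan subalgebra is trivial, since $\pi$ sends the linearly independent coroots $\alpha_1^{\vee},\dots,\alpha_{|I|}^{\vee}$ to the elements $\alpha_{(i,\cdot)}^{\vee}=\sum_r\alpha_{(i,r)}^{\vee}$ of $\g_B$, whose supports in the coroot basis of $B$ are pairwise disjoint. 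By the Gabber--Kac theorem applied to the symmetrisable GCM $A$ (recalled in \S\ref{subsection:GKS}), any ideal of $\g_A$ intersecting the Cartan subalgebra trivially is $\{0\}$, so $\ker\pi=0$. I expect the Serre verification to be the main obstacle, since it is where the combinatorics of simply laced covers enters essentially; once this is in place, the injectivity argument via $p$ and Gabber--Kac is short, and a fully detailed treatment can be found in \cite{HKL15}.
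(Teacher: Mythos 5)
The paper offers no proof of this lemma---it is imported directly from \cite[\S 2.4]{HKL15} (``We extract from \cite[\S 2.4]{HKL15} the following lemma'')---so there is nothing internal to compare your argument against; judged on its own, your proof is correct and complete. The relation checks all go through: the collapse of $\sum_{r,s}[e_{(i,r)},f_{(j,s)}]$ uses $[e_{(i,r)},f_{(j,s)}]=0$ for $(i,r)\neq (j,s)$, and the identity $\la\alpha_{(j,m)},\alpha_{(i,\cdot)}^{\vee}\ra=a_{ij}$ is exactly the adjacency count built into the definition of a simply laced cover (each $\alpha_{(j,m)}$ meets exactly $|a_{ij}|$ of the $\alpha_{(i,r)}$). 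Your Serre-relation verification is the heart of the matter and is sound: the operators $\ad e_{(i,r)}$ pairwise commute because the $\alpha_{(i,r)}$ are mutually disconnected in $D(B)$, so the multinomial expansion is legitimate, and every multi-index with $\sum_r k_r=1+|a_{ij}|$ dies either from $[e_{(i,r)},e_{(j,s)}]=0$ (non-adjacent pair), from $(\ad e_{(i,r)})^{2}e_{(j,s)}=0$ (adjacent pair, simply laced), or from the pigeonhole bound $\sum_r k_r\leq |a_{ij}|$. For injectivity, the $Q(A)$-grading of $\g_B$ pulled back along $\alpha_{(i,r)}\mapsto\alpha_i$ makes $\ker\pi$ a graded ideal, the disjoint supports of the $\alpha_{(i,\cdot)}^{\vee}$ give $\ker\pi\cap\hh=\{0\}$, and the Gabber--Kac theorem in the form recalled in \S\ref{subsection:GKS} (every ideal of $\g_A$ meeting $\hh$ trivially vanishes, valid here since $A$ is symmetrisable) concludes. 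The one step you leave implicit is that the Chevalley and Serre relations you verify actually present $\g_A$; this is the convention the paper itself adopts (see the footnote in the introduction and the identical step in the proof of Lemma~\ref{lemma:subalgebras}, where ``all the defining relations of $\g_A$'' are checked to lie in the kernel), so it is consistent, but a sentence acknowledging it would make the proof airtight.
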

\begin{proof}
We proceed as in the proof of Lemma~\ref{lemma:subalgebras}. Let $\widetilde{\pi}$ be the Lie algebra morphism from the free complex Lie algebra on the generators $\{e_{\pm\alpha_i} \ | i\in I\}$ to $\g_B$ defined by the assignment $e_{\pm\alpha_i}\mapsto e_{\pm(i,\cdot)}$ for $i\in I$.
Since $\alpha_{(j,\cdot)}-\alpha_{(i,\cdot)}\notin\Delta(B)$ for all $i,j\in I$, we have $$[e_{(i,\cdot)},e_{-(j,\cdot)}]=0\quad\textrm{for all $i,j\in I$ with $i\neq j$.}$$
Similarly, $$(\ad e_{\pm(i,\cdot)})^{|a_{ij}|+1}e_{\pm(j,\cdot)}=\sum_{r_1+\dots+r_{n_i}=|a_{ij}|+1}\binom{|a_{ij}|+1}{r_1,\dots,r_{n_i}}(\ad e_{\pm(i,1)})^{r_1}\dots (\ad e_{\pm (i,n_i)})^{r_{n_i}}e_{\pm(j,\cdot)}=0$$
for all $i,j\in I$ with $i\neq j$. Indeed, each homogeneous component of $(\ad e_{\pm(i,\cdot)})^{|a_{ij}|+1}e_{\pm(j,\cdot)}$ has degree of the form $\alpha:=\pm(\alpha_{j,m}+\sum_{s=1}^{n_i}r_s\alpha_{(i,s)})$ for some $m\in\{1,\dots,n_j\}$ and some $r_s\in\NN$ with $\sum_{s=1}^{n_i}r_s=|a_{ij}|+1$. On the other hand, since
$$s_{(i,\cdot)}\alpha_{(i,s)}=-\alpha_{(i,s)}\quad\textrm{and}\quad s_{(i,\cdot)}\alpha_{(j,m)}=\alpha_{(j,m)}-\sum_{s=1}^{n_i}\la \alpha_{(j,m)},\alpha_{(i,s)}^{\vee}\ra \alpha_{(i,s)}=\alpha_{(j,m)}+\alpha_{[i]}$$
for some $\alpha_{[i]}\in\sum_{s=1}^{n_i}\NN\alpha_{(i,s)}$ of height $|a_{ij}|$, we have
$$s_{(i,\cdot)}\alpha=\pm(\alpha_{j,m}+\alpha_{[i]}-\sum_{s=1}^{n_i}r_s\alpha_{(i,s)})=\pm(\alpha_{j,m}+\alpha'_{[i]})$$
for some $\alpha'_{[i]}\in\sum_{s=1}^{n_i}\ZZ\alpha_{(i,s)}$ of height $-1$. Hence $s_{(i,\cdot)}\alpha$ (and thus also $\alpha$) cannot be a root, yielding the claim.

Finally, the elements $\alpha_{(i,\cdot)}^{\vee}=[e_{-(i,\cdot)},e_{(i,\cdot)}]$ of $\g_{B}$ ($i\in I$) satisfy
$$[\alpha_{(i,\cdot)}^{\vee},\alpha_{(j,\cdot)}^{\vee}]=0 \quad\textrm{and}\quad [\alpha_{(i,\cdot)}^{\vee},e_{\pm(j,\cdot)}]=\sum_{m=1}^{n_j}[\alpha_{(i,\cdot)}^{\vee},e_{\pm(j,m)}]=\sum_{m=1}^{n_j}\pm a_{ij}e_{\pm(j,m)}=\pm a_{ij}e_{\pm(j,\cdot)}$$
for all $i,j\in I$. Hence all the defining relations of $\g_A=[\g(A),\g(A)]$ (see \S\ref{subsection:KMA}) lie in the kernel of $\widetilde{\pi}$, so that $\widetilde{\pi}$ factors through a Lie algebra morphism $\pi\co\g_A\to\g_B$.

For the injectivity, note that $\ker\pi$ intersects the Cartan subalgebra of $\g_A$ trivially. Hence $\ker\pi=\{0\}$ by the Gabber--Kac theorem (see \S\ref{subsection:GKS}), as desired.
\end{proof}

The proof of the following theorem follows the lines of the proof of Theorems~\ref{thm:basic_functoriality} and \ref{thm:injective_standard_maps}. We prefer, however, to repeat the arguments, as a common treatment of these results would necessitate very cumbersome notation. 

\begin{theorem}\label{thm:SLC}
Let $k$ be a field and $A=(a_{ij})_{i,j\in I}$ be a symmetrisable GCM. Let $B$ be a simply laced cover of $A$, and let $\pi\co \g_A\to\g_B$ be the embedding provided by Lemma~\ref{lemma:SLC}. Then the following holds:
\begin{enumerate}
\item
There exists a continuous group homomorphism $\widehat{\pi}\co\U_A^{ma+}(k)\to\U_B^{ma+}(k)$ such that for all $r\in k$, $i\in I$ and $\gamma\in\Delta_+^{\re}(A)\setminus\{\alpha_i\}$,
$$\widehat{\pi}(x_{\alpha_i}(r))=x_{(i,\cdot)}(r)\quad\textrm{and}\quad \widehat{\pi}(\widetilde{s}_i\cdot x_{\gamma}(r)\cdot\widetilde{s}_i^{\thinspace -1})=\widetilde{s}_{(i,\cdot)}\cdot\widehat{\pi}(x_{\gamma}(r))\cdot\widetilde{s}_{(i,\cdot)}^{\thinspace -1}.$$
\item
The restriction of $\widehat{\pi}$ to $U^+_A(k)$ extends to continuous group homomorphisms
$$\G_A(k)\to\G_B(k)\quad\textrm{and}\quad \overline{\G_A}(k)\to\overline{\G_B}(k)$$ with kernels respectively contained in ${\mathcal Z}_A(k)$ and ${\mathcal Z}_A(k)\cdot (Z'_A\cap \overline{U^+_A}(k))$. Here, we view $\G_A(k)$ and $\G_B(k)$ as subgroups of $\G^{pma}_A(k)$ and $\G^{pma}_B(k)$ respectively, with the induced topology.
\end{enumerate}
\end{theorem}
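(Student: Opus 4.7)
The plan is to mirror the proofs of Theorems~\ref{thm:basic_functoriality} and~\ref{thm:injective_standard_maps}, with the crucial observation that although $\pi$ is not $\ZZ$-regular in the sense of Definition~\ref{definition:Zreg}---it sends the Chevalley generator $e_i$ to a \emph{sum} $e_{(i,\cdot)}=\sum_{j=1}^{n_i}e_{(i,j)}$ rather than to a single root vector---the pairwise commutation relations $[e_{(i,j)},e_{(i,j')}]=0$ for $j\neq j'$, which hold because $\alpha_{(i,j)}$ and $\alpha_{(i,j')}$ are disconnected in $D(B)$, allow the same machinery to go through.

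For~(1), I first lift $\pi$ to the integral enveloping algebras. Since the $e_{(i,j)}$ pairwise commute for fixed $i$, the multinomial formula gives
$$e_{(i,\cdot)}^{(s)}=\sum_{s_1+\cdots+s_{n_i}=s}e_{(i,1)}^{(s_1)}\cdots e_{(i,n_i)}^{(s_{n_i})}\in\UU^+(B)\qquad\text{for all }s\in\NN,$$
so the assignment $e_i^{(s)}\mapsto e_{(i,\cdot)}^{(s)}$ extends to an algebra morphism $\pi_1\colon\UU^+(A)\to\UU^+(B)$, graded with respect to the $\ZZ$-linear map $\overline{\pi}\colon Q(A)\to Q(B)\colon\alpha_i\mapsto\alpha_{(i,\cdot)}$. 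Note that $\overline{\pi}$ is injective because the simple-root supports $\{\alpha_{(i,1)},\dots,\alpha_{(i,n_i)}\}$, $i\in I$, are pairwise disjoint in $Q(B)$. The bialgebra morphism property of $\pi_1$ reduces, thanks once more to the above commutativity, to the same direct computation on generators as in Theorem~\ref{thm:basic_functoriality}. Extending scalars to $k$, completing along the $Q_+$-grading, and restricting to group-like elements of constant term $1$ via Proposition~\ref{prop formal sum Rousseau}(1) then produces the continuous group morphism $\widehat{\pi}\colon\U_A^{ma+}(k)\to\U_B^{ma+}(k)$. The first identity in~(1) follows from $[\exp]re_{(i,\cdot)}=\prod_{j=1}^{n_i}\exp(re_{(i,j)})=x_{(i,\cdot)}(r)$ (valid because the $e_{(i,j)}$ commute), and the second from Lemma~\ref{lemma:Winv_twisted_exp} applied coordinate-wise to the commuting factors of $\widetilde{s}_{(i,\cdot)}$.

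For~(2), the strategy of Theorem~\ref{thm:injective_standard_maps} transfers essentially verbatim: define $\widetilde{\pi}\colon\G_A(k)\to\G_B(k)$ on the free product of $\T_A(k)$ with the real root groups by $r^{\alpha_i^{\vee}}\mapsto r^{\alpha_{(i,\cdot)}^{\vee}}$, $x_{\pm\alpha_i}(r)\mapsto x_{\pm(i,\cdot)}(r)$, and $x_{\pm\gamma}(r)\mapsto \widetilde{w}_\gamma^{\pi}\, x_{\pm(i_\gamma,\cdot)}(r)\,(\widetilde{w}_\gamma^{\pi})^{-1}$ for non-simple $\gamma\in\Delta_+^{\re}(A)$, where $\widetilde{w}_\gamma^{\pi}$ is obtained from a reduced expression of $w_\gamma$ by replacing each $\widetilde{s}_{i}$ with $\widetilde{s}_{(i,\cdot)}$. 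One then checks that the defining relations (\ref{R0})--(\ref{R4}) are preserved; the only new algebraic input is the identity $\langle\alpha_{(j,m)},\alpha_{(i,\cdot)}^{\vee}\rangle=a_{ij}$, which is what makes (\ref{R1}) and (\ref{R2}) close up correctly. For the kernel, the Bruhat decomposition argument of Theorem~\ref{thm:injective_standard_maps} shows that $\ker\widetilde{\pi}\subseteq\B^+(k)$: the key point is that for any nontrivial $w=s_{i_1}\cdots s_{i_k}\in W(A)$, the product $w^{\pi}:=s_{(i_1,\cdot)}\cdots s_{(i_k,\cdot)}\in W(B)$ is nontrivial, as follows at once from $\overline{\pi}\circ w=w^{\pi}\circ\overline{\pi}$ and the injectivity of $\overline{\pi}$. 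Hence $\ker\widetilde{\pi}\subseteq\bigcap_{h}h\B^+(k)h\inv={\mathcal Z}_A(k)$, and the continuous extension to $\overline{\G_A}(k)\to\overline{\G_B}(k)$, together with its kernel estimate, follows identically.

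The main obstacle I foresee is the verification of the Steinberg relations (\ref{R0}) for prenilpotent pairs $\{\alpha,\beta\}\subseteq\Delta^{\re}(A)$ of \emph{opposite} signs: as in the last part of the proof of Theorem~\ref{thm:injective_standard_maps}, this requires first transporting the pair by a Weyl element $w\in W(A)$ to one of positive roots (where (\ref{R0}) holds automatically because $\widetilde{\pi}$ agrees with the group morphism $\widehat{\pi}$ on $U^+_A(k)$), then transporting the resulting identity back to $\{\alpha,\beta\}$ while carefully tracking the $W^*$-action sign constants $\epsilon_\gamma\in\{\pm 1\}$. Since this is precisely the bookkeeping performed in Theorem~\ref{thm:injective_standard_maps}, it is the reason the authors prefer to repeat the argument in full rather than try to subsume both results under a single abstract framework.
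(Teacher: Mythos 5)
Your proposal is correct and follows essentially the same route as the paper's proof: the multinomial identity $e_{(i,\cdot)}^{(s)}=\sum_{s_1+\cdots+s_{n_i}=s}e_{(i,1)}^{(s_1)}\cdots e_{(i,n_i)}^{(s_{n_i})}$ to lift $\pi$ to a graded bialgebra morphism $\pi_1$, completion and restriction to group-like elements for $\widehat{\pi}$, and then the same relation-checking and Bruhat-decomposition kernel argument for $\widetilde{\pi}$ (the paper phrases the second identity in (1) via the explicit intertwining $\pi_1\circ s_i^*=s_{(i,\cdot)}^*\circ\pi_1$, which is exactly what your coordinate-wise application of Lemma~\ref{lemma:Winv_twisted_exp} amounts to, and it defines $\overline{\pi}$ on $Q^{\vee}(A)$ rather than $Q(A)$, an immaterial difference).
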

\begin{proof}
For $i\in I$ and a multi-index $\mm=(m_1,\dots,m_{n_i})\in\NN^{n_i}$, we write
$$| \negthinspace \mm \negthinspace |:=\sum_{j=1}^{n_i}{m_j}\quad\textrm{and}\quad e_{\pm(i,\cdot)}^{(\mm)}:=\prod_{j=1}^{n_i}{e_{\pm(i,j)}^{(m_j)}}\in\UU(B).$$
Note that the $e_{\pm(i,j)}$ pairwise commute (for $i$ fixed). Since for any $i\in I$ and $n\in\NN$, 
$$e_{\pm(i,\cdot)}^{(n)}=\bigg(\sum_{j=1}^{n_i}{e_{\pm(i,j)}}\bigg)^{(n)}=\sum_{| \negthinspace \mm \negthinspace |=n}{e_{\pm(i,\cdot)}^{(\mm)}}\in\UU(B)$$
and since $\pi(\sum_{i\in I}{\ZZ\alpha_i^{\vee}})\subseteq \sum_{i\in I}\sum_{j=1}^{n_i}{\ZZ\alpha_{(i,j)}^{\vee}}$,
the map $\UU_{\CC}(\g_A)\to \UU_{\CC}(\g_B)$ lifting $\pi$ at the level of the corresponding enveloping algebras restricts to an algebra morphism
$$\pi_1\co \UU(A)\to \UU(B).$$
Moreover, for any $i\in I$ and $n\in\NN$,
\begin{equation*}
\begin{aligned}
\nabla_B\pi_1\big(e_i^{(n)}\big)&=\nabla_Be_{(i,\cdot)}^{(n)}=\sum_{| \negthinspace \mm \negthinspace |=n}{\nabla_Be_{(i,\cdot)}^{(\mm)}}=\sum_{r+s=n}{\sum_{| \negthinspace \rr \negthinspace |=r}{\sum_{| \negthinspace \s \negthinspace |=s}{e_{(i,\cdot)}^{(\rr)}\otimes e_{(i,\cdot)}^{(\s)}}}}=\sum_{r+s=n}{e_{(i,\cdot)}^{(r)}\otimes e_{(i,\cdot)}^{(s)}}\\ &=(\pi_1\otimes\pi_1)\nabla_A e_i^{(n)}.
\end{aligned}
\end{equation*}
Since clearly $\epsilon_B\pi_1=\epsilon_A$, we deduce that the restriction of $\pi_1$ to $\UU^+(A)$ is a bialgebra morphism.

Note also that $\pi_1$ preserves the $\NN$-gradations on $\UU^+(A)$ and $\UU^+(B)$ induced by $\height\co Q_+\to\NN$.
In particular, the map $$\UU^+(A)\otimes_{\ZZ}k\to\UU^+(B)\otimes_{\ZZ}k$$ obtained from $\pi_1$ by extension of scalars can be further extended 
to a bialgebra morphism 
$$\pi_2\co \widehat{\UU}_k^+(A)\to \widehat{\UU}_k^+(B)$$ between the corresponding completions.
Finally, since $\pi_2$ preserves the group-like elements of constant term $1$, it restricts to a group homomorphism
$$\widehat{\pi}\co \U_A^{ma+}(k)\to \U_B^{ma+}(k)$$
by Proposition~\ref{prop formal sum Rousseau}(1), which is moreover continuous because $\pi_2$ preserves the $\NN$-gradations on $\widehat{\UU}_k^+(A)$ and $\widehat{\UU}_k^+(B)$.

Let now $i\in I$ and $r\in k$. By definition, 
$$\widehat{\pi}(x_{\alpha_i}(r))=\pi_2\bigg(\sum_{n\geq 0}{r^ne_i^{(n)}}\bigg)=\sum_{n\geq 0}{r^ne_{(i,\cdot)}^{(n)}}=\sum_{n\geq 0}\sum_{| \negthinspace \mm \negthinspace |=n}{r^ne_{(i,\cdot)}^{(\mm)}}=\prod_{j=1}^{n_i}{\exp re_{(i,j)}}=x_{(i,\cdot)}(r).$$
Moreover, since for any $u\in\UU(A)$,
\begin{equation*}
\begin{aligned}
\pi_1\big((\exp \ad e_{\pm\alpha_i})(u)\big)&=\pi_1\bigg(\sum_{n\geq 0}\sum_{r+s=n}{(-1)^re_{\pm\alpha_i}^{(r)}ue_{\pm\alpha_i}^{(s)}}\bigg)=\sum_{n\geq 0}\sum_{r+s=n}\sum_{| \negthinspace \rr \negthinspace |=r}\sum_{| \negthinspace \s \negthinspace |=s}{(-1)^re_{\pm(i,\cdot)}^{(\rr)}\pi_1(u)e_{\pm(i,\cdot)}^{(\s)}}\\
&=\bigg(\prod_{j=1}^{n_i}{\exp \ad e_{\pm(i,j)}}\bigg)(\pi_1(u)),
\end{aligned}
\end{equation*}
so that
\begin{equation*}
\pi_1(s_i^*u)=s_{(i,\cdot)}^*\pi_1(u),
\end{equation*}
we deduce from the relations (\ref{R4}) that for any $\gamma\in\Delta_+^{\re}(A)\setminus\{\alpha_i\}$,  
\begin{equation*}
\begin{aligned}
\widehat{\pi}(\widetilde{s}_i\cdot x_{\gamma}(r)\cdot\widetilde{s}_i^{\thinspace -1})&=\widehat{\pi}\bigg(\sum_{n\geq 0}{r^ns_i^*e_{\gamma}^{(n)}}\bigg)=\sum_{n\geq 0}{r^n\pi_1\big(s_i^*e_{\gamma}^{(n)}\big)}=\sum_{n\geq 0}{r^ns_{(i,\cdot)}^*\pi_1\big(e_{\gamma}^{(n)}\big)}\\
&=s_{(i,\cdot)}^*\widehat{\pi}(x_{\gamma}(r))=\widetilde{s}_{(i,\cdot)}\cdot\widehat{\pi}(x_{\gamma}(r))\cdot\widetilde{s}_{(i,\cdot)}^{\thinspace -1}.
\end{aligned}
\end{equation*}
This concludes the proof of (1).

We now turn to the proof of (2).
Let $X\in\{A,B\}$, and let $I_X$ denote the indexing set of $X$. 
Given $w\in W(X)$ and a reduced decomposition $w=s_{i_1}s_{i_2}\dots s_{i_k}$ for $w$, we write 
\begin{equation}\label{eqn:wstar_wtilde}
w^*:=s_{i_1}^*s_{i_2}^*\dots s_{i_k}^*\in W^*\quad\textrm{and}\quad \widetilde{w}:=\widetilde{s}_{i_1}\widetilde{s}_{i_2}\dots \widetilde{s}_{i_k}\in\N_X(k)\subseteq \Stt_X(k).
\end{equation}

For each positive real root $\gamma\in\Delta_+^{\re}(X)$, we fix some $w_{\gamma}\in W(X)$ and some $i_{\gamma}\in I_X$ such that $\gamma=w_{\gamma}\alpha_{i_{\gamma}}$ (with the choice $w_{\gamma}=1$ if $\gamma=\alpha_i$), and we choose the basis elements $e_{\gamma}\in E_{\gamma}$ and $e_{-\gamma}\in E_{-\gamma}$ so that $e_{\gamma}=w_{\gamma}^*e_{i_{\gamma}}$ and $e_{-\gamma}=w_{\gamma}^*f_{i_{\gamma}}$. 

We define the map $$\widetilde{\pi}\co \T_A(k) * \bigg(\bigast_{\gamma\in\Delta^{\re}(A)}{U_{\gamma}(k)}\bigg)\to \G_B(k):x_{\pm\alpha_i}(r)\mapsto x_{\pm(i,\cdot)}(r), \quad \left\{
\begin{array}{ll}
r^{\alpha_i^{\vee}}&\mapsto \widetilde{\pi}(\widetilde{s}_i^{\thinspace -1}\widetilde{s}_i(r\inv))\\
x_{\pm\gamma}(r)&\mapsto \widetilde{\pi}(\widetilde{w}_{\gamma}x_{\pm\alpha_{i_{\gamma}}}(r)\widetilde{w}_{\gamma}\inv)
\end{array}
\right.$$
on the free product of $\T_A(k)$ with all real root groups $U_{\gamma}(k)$, and we prove that $\widetilde{\pi}$ factors through a group homomorphism $\G_A(k)\to\G_B(k)$.
Note first that 
\begin{equation}\label{eqn:A11}
\widetilde{\pi}(\widetilde{s}_i(r))=\widetilde{\pi}(x_{\alpha_i}(r)x_{-\alpha_i}(r\inv)x_{\alpha_i}(r))=x_{(i,\cdot)}(r)x_{-(i,\cdot)}(r\inv)x_{(i,\cdot)}(r)=\widetilde{s}_{(i,\cdot)}(r)
\end{equation}
for all $r\in k^{\times}$ and $i\in I_A$.
In particular, 
\begin{equation}\label{eqn:A12}
\widetilde{\pi}(\widetilde{s}_i)=\widetilde{s}_{(i,\cdot)}\in \N_B(k)\quad\textrm{for all $i\in I_A$}.
\end{equation}
Hence for any $\gamma\in\Delta_+^{\re}$ and $r\in k$, we have
\begin{equation}\label{eqn:A13}
\widetilde{\pi}(x_{\pm\gamma}(r))=\widetilde{\pi}(\widetilde{w}_{\gamma}x_{\pm\alpha_{i_\gamma}}(r)\widetilde{w}_{\gamma}\inv) = \widetilde{w}^{\pi}_{\gamma}x_{\pm(i_{\gamma},\cdot)}(r)(\widetilde{w}^{\pi}_{\gamma})\inv=w_{\gamma}^{\pi *}(x_{\pm(i_{\gamma},\cdot)}(r)),
\end{equation}
where $$\widetilde{w}^{\pi}_{\gamma}:=\widetilde{s}_{(i_1,\cdot)}\dots \widetilde{s}_{(i_k,\cdot)}\in \N_B(k)\quad\textrm{and}\quad w_{\gamma}^{\pi *}:=s_{(i_1,\cdot)}^*\dots s_{(i_k,\cdot)}^*\in W^*(B)$$ for some prescribed reduced decomposition $w_{\gamma}=s_{i_1}\dots s_{i_k}$ of $w_{\gamma}\in W(A)$.
Finally, using (\ref{eqn:A11}) and the relations (\ref{R3}) in $\G_B(k)$, we see that the restriction of $\widetilde{\pi}$ to $\T_A(k)$ is given for all $r\in k^{\times}$ and $i\in I_A$ by 
\begin{equation}\label{eqn:A14}
\widetilde{\pi}(r^{\alpha_i^{\vee}})=\widetilde{\pi}(\widetilde{s}_i^{\thinspace -1}\widetilde{s}_i(r\inv))=\widetilde{s}_{(i,\cdot)}^{\thinspace -1}\widetilde{s}_{(i,\cdot)}(r\inv)= \prod_{j=1}^{n_i}{\big(\widetilde{s}_{(i,j)}^{\thinspace -1}\widetilde{s}_{(i,j)}(r\inv)\big)}=\prod_{j=1}^{n_i}{r^{\alpha_{(i,j)}^{\vee}}}=r^{\alpha_{(i,\cdot)}^{\vee}}.
\end{equation}

We are now ready to prove that the image by $\widetilde{\pi}$ of the relations (\ref{R0}), (\ref{R1}), (\ref{R2}), (\ref{R3}) and (\ref{R4}) defining $\G_A(k)$ are still satisfied in $\G_B(k)$. Observe first that $\widetilde{\pi}$ and $\widehat{\pi}$ coincide on $U^+_A(k)$ by (\ref{eqn:A12}) and the first statement of the theorem.
In particular, the image by $\widetilde{\pi}$ of the relations (\ref{R0}) are satisfied in $\G_B(k)$ for any prenilpotent pair $\{\alpha,\beta\}\subseteq \Delta^{\re}_+(A)$ of positive real roots (and hence also of negative real roots by symmetry). Let now $\{\alpha,\beta\}\subseteq \Delta^{\re}(A)$ be a prenilpotent pair of roots of opposite sign, say $\alpha\in\Delta^{\re}_+(A)$ and $\beta\in\Delta^{\re}_-(A)$. Then there exists some $w\in W$ such that $\{w\alpha,w\beta\}\subseteq \Delta^{\re}_+(A)$. Up to modifying $e_{w\alpha}$ and $e_{w\beta}$ by their opposite, we may then assume that $we_{\alpha}=e_{w\alpha}$ and $we_{\beta}=e_{w\beta}$ (note that $\{\alpha,\beta\}\neq\{w\alpha,w\beta\}\subseteq\Delta_+^{\re}(A)$). Hence $ww_{\alpha}e_{i_{\alpha}}=e_{w\alpha}$ and we may thus assume, up to modifying $w_{w\alpha}$, that $w_{w\alpha}w_{\alpha}\inv=w$. Set $$w^{\pi *}:=w_{w\alpha}^{\pi *}(w_{\alpha}^{\pi *})\inv.$$ Consider the relation
$$[x_{\alpha}(r),x_{\beta}(s)]=\prod_{\gamma}{x_{\gamma}(C^{\alpha\beta}_{ij}r^is^j)}$$ in $\G_A(k)$ for some $r,s\in k$, where $\gamma=i\alpha+j\beta$ runs, as in (\ref{R0}), through the interval $]\alpha,\beta[_{\NN}$. For each $\gamma\in ]\alpha,\beta[_{\NN}$, let $\epsilon_{\gamma}\in\{\pm 1\}$ be such that $e_{w\gamma}=\epsilon_{\gamma}w^*e_{\gamma}$. Note that $w\big(]\alpha,\beta[_{\NN}\big)=]w\alpha,w\beta[_{\NN}$. We have
$$[x_{w\alpha}(r),x_{w\beta}(s)]=w^*([x_{\alpha}(r),x_{\beta}(s)])=w^*\bigg(\prod_{\gamma}{x_{\gamma}(C^{\alpha\beta}_{ij}r^is^j)}\bigg)=\prod_{\gamma}{x_{w\gamma}(\epsilon_{\gamma}C^{\alpha\beta}_{ij}r^is^j)},$$
so that $C^{w\alpha,w\beta}_{ij}=\epsilon_{i\alpha+j\beta}C^{\alpha\beta}_{ij}$ for all $i,j$. It then follows from (\ref{eqn:A13}) that
\begin{equation*}
\begin{aligned}
\widetilde{\pi}([x_{\alpha}(r),x_{\beta}(s)])&=\widetilde{\pi}((w^*)\inv([x_{w\alpha}(r),x_{w\beta}(s)]))=(w^{\pi *})\inv\widetilde{\pi}([x_{w\alpha}(r),x_{w\beta}(s)])\\
&=(w^{\pi *})\inv\widehat{\pi}([x_{w\alpha}(r),x_{w\beta}(s)])=(w^{\pi *})\inv\widehat{\pi}\bigg(\prod_{\gamma}{x_{w\gamma}(C^{w\alpha,w\beta}_{ij}r^is^j)}\bigg)\\
&=(w^{\pi *})\inv\widetilde{\pi}\bigg(\prod_{\gamma}{x_{w\gamma}(\epsilon_{\gamma}C^{\alpha\beta}_{ij}r^is^j)}\bigg)=(w^{\pi *})\inv\widetilde{\pi}w^*\bigg(\prod_{\gamma}{x_{\gamma}(C^{\alpha\beta}_{ij}r^is^j)}\bigg)\\
&=\widetilde{\pi}\bigg(\prod_{\gamma}{x_{\gamma}(C^{\alpha\beta}_{ij}r^is^j)}\bigg),
\end{aligned}
\end{equation*}
so that the relations (\ref{R0}) are indeed satisfied.

We next check (\ref{R1}). Let $t=r^{\alpha_j^{\vee}}\in\T_A(k)$ for some $r\in k^{\times}$ and some $j\in I_A$, and let $s\in k$ and $i\in I_A$. We then deduce from (\ref{eqn:A14}) and the relations (\ref{R1}) in $\G_B(k)$ that
\begin{equation*}
\begin{aligned}
\widetilde{\pi}(t\cdot x_{\alpha_i}(s)\cdot t\inv)&=r^{\alpha_{(j,\cdot)}^{\vee}} x_{(i,\cdot)}(s) r^{-\alpha_{(j,\cdot)}^{\vee}}= \prod_{m=1}^{n_i}{\big(r^{\alpha_{(j,\cdot)}^{\vee}} x_{(i,m)}(s) r^{-\alpha_{(j,\cdot)}^{\vee}}\big)}=\prod_{m=1}^{n_i}{x_{(i,m)}\big(r^{\la\alpha_{(i,m)},\alpha_{(j,\cdot)}^{\vee}\ra} s\big)}\\
&=\prod_{m=1}^{n_i}{x_{(i,m)}(r^{a_{ji}} s)}= x_{(i,\cdot)}(t(\alpha_i) s)\\
&=\widetilde{\pi}(x_{\alpha_i}(t(\alpha_i)s)).
\end{aligned}
\end{equation*}

To check (\ref{R2}), let again $t=r^{\alpha_j^{\vee}}\in\T_A(k)$ for some $r\in k^{\times}$ and some $j\in I_A$, and let $i\in I_A$. We then deduce from (\ref{eqn:A12}), (\ref{eqn:A14}) and the relations (\ref{R2}) in $\G_B(k)$ that
\begin{equation*}
\begin{aligned}
\widetilde{\pi}(\widetilde{s}_it\widetilde{s}_i^{\thinspace -1})&=\widetilde{s}_{(i,\cdot)}r^{\alpha_{(j,\cdot)}^{\vee}}\widetilde{s}_{(i,\cdot)}^{\thinspace -1}=s_{(i,\cdot)}(r^{\alpha_{(j,\cdot)}^{\vee}})=r^{s_{(i,\cdot)}(\alpha_{(j,\cdot)}^{\vee})}\\
&=r^{\alpha_{(j,\cdot)}^{\vee}-\sum_{m=1}^{n_i}{\la\alpha_{(i,m)},\alpha_{(j,\cdot)}^{\vee}\ra\alpha_{(i,m)}^{\vee}}}=r^{\alpha_{(j,\cdot)}^{\vee}-a_{ji}\alpha_{(i,\cdot)}^{\vee}}\\
&=\widetilde{\pi}\big(r^{\alpha_j^{\vee}-a_{ji}\alpha_i^{\vee}}\big)=\widetilde{\pi}(r^{s_i(\alpha_j^{\vee})})\\
&=\widetilde{\pi}(s_i(t)).
\end{aligned}
\end{equation*}

Since (\ref{R3}) and (\ref{R4}) are an immediate consequence of the definition of $\widetilde{\pi}$, we conclude that $\widetilde{\pi}$ factors through a group homomorphism $$\widetilde{\pi}\co\G_A(k)\to\G_B(k),$$ which is continuous because it coincides with the continuous group homomorphism $\widehat{\pi}$ on $U_A^+(k)$. In particular, it extends to a continuous group homomorphism $\overline{\widetilde{\pi}}\co\overline{\G_A}(k)\to\overline{\G_B}(k)$ coinciding with $\widehat{\pi}$ on $\overline{U_A^+}(k)$.
It thus remains to show that $\ker\widetilde{\pi}\subseteq {\mathcal Z}_A(k)$ and $\ker \overline{\widetilde{\pi}}\subseteq Z'_A\cap \overline{\G_A}(k)={\mathcal Z}_A(k)\cdot (Z'_A\cap \overline{U^+_A}(k))$.

Note that $\widetilde{\pi}(U_A^+(k))=\widehat{\pi}(U_A^+(k))\subseteq U_B^+(k)$. Similarly, (\ref{eqn:A12}) and (\ref{eqn:A14}) respectively imply that
$$\widetilde{\pi}(\N_A(k))\subseteq \N_B(k)\quad\textrm{and}\quad \widetilde{\pi}(\T_A(k))\subseteq \T_B(k).$$
Let $g\in \ker\widetilde{\pi}$. The Bruhat decomposition $$\G_A(k)=\dot{\bigcup}_{w\in W(A)}{\B^+(k)\widetilde{w}\B^+(k)}$$ for $\G_A(k)$ implies that $g=b_1\widetilde{w}b_2$ for some $w\in W(A)$ and some $b_1,b_2\in \B^+(k)$. Hence $$\widetilde{\pi}(g)=\widetilde{\pi}(b_1)\widetilde{\pi}(\widetilde{w})\widetilde{\pi}(b_2)=1,$$ so that the Bruhat decomposition for $\G_B(k)$ implies that $\widetilde{\pi}(\widetilde{w})=1$. We claim that for any reduced decomposition $w=s_{i_1}\dots s_{i_k}$ with $k\geq 1$, the element $w^{\pi}:=s_{(i_1,\cdot)}\dots s_{(i_k,\cdot)}\in W(B)$ is nontrivial. Indeed, define the $\ZZ$-linear map
$$\overline{\pi}\co Q^{\vee}(A)\to Q^{\vee}(B):\alpha_i^{\vee}\mapsto\alpha^{\vee}_{(i,\cdot)}.$$ Then for any $i,j\in I_A$, we have
$$s_{(i,\cdot)}(\overline{\pi}(\alpha_j^{\vee}))=s_{(i,\cdot)}(\alpha^{\vee}_{(j,\cdot)})=\alpha^{\vee}_{(j,\cdot)}-\sum_{m=1}^{n_i}{\la \alpha_{(i,m)},\alpha_{(j,\cdot)}^{\vee}\ra\alpha_{(i,m)}^{\vee}}=\alpha_{(j,\cdot)}^{\vee}-a_{ji}\alpha_{(i,\cdot)}^{\vee}=\overline{\pi}(s_i(\alpha_j^{\vee}))$$
and hence $\overline{\pi}(s_i(h))=s_{(i,\cdot)}(\overline{\pi}(h))$ for any $i\in I$ and $h\in Q^{\vee}(A)$. In particular, $\overline{\pi}(w(h))=w^{\pi}(\overline{\pi}(h))$ for all $h\in Q^{\vee}(A)$. Since $\overline{\pi}$ is injective, the claim follows.

This shows that $\widetilde{w}\in\T_A(k)$, and hence that $\ker\widetilde{\pi}\subseteq \B^+(k)$. Therefore,
$$\ker\widetilde{\pi}\subseteq \bigcap_{h\in\G_A(k)}{h\B^+(k)h\inv}={\mathcal Z}_A(k).$$
The same argument (using the Bruhat decompositions in $\G^{pma}_A(k)$ and $\G^{pma}_B(k)$) yields $\ker \overline{\widetilde{\pi}}\subseteq Z'_A$, as desired. This concludes the proof of the theorem.
\end{proof}

\begin{remark}
Proceeding exactly as in Remark~\ref{remark:passage_quotient}, we see that the map $\widetilde{\pi}\co \G_A(k)\to\G_B(k)$ provided by Theorem~\ref{thm:SLC} maps ${\mathcal Z}_A(k)$ into ${\mathcal Z}_B(k)$ by 
(\ref{eqn:A14}), and hence induces a continuous injective group homomorphism $$\G_A(k)/{\mathcal Z}_A(k)\to\G_B(k)/{\mathcal Z}_B(k).$$
\end{remark}

\section{Non-density and Gabber--Kac simplicity}\label{section:ND}
This section is devoted to the proof of Propositions~\ref{thmintro:nondensity} and \ref{thmintro:nonGK-simple}. 

\begin{prop}\label{proposition:non-density_induction}
Let $k$ be a field and let $B$ be a GCM. Assume that $U^+_B(k)$ is not dense in $\U_B^{ma+}(k)$. Then $U^+_A(k)$ is not dense in $\U_A^{ma+}(k)$ for all GCM $A$ such that $B\leq A$.
\end{prop}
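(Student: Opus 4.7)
The plan is to prove the contrapositive: assuming $U^+_A(k)$ is dense in $\U_A^{ma+}(k)$, we will derive that $U^+_B(k)$ is dense in $\U_B^{ma+}(k)$. The tool is precisely the $\ZZ$-regular morphism $\pi_{AB}\co\nn^+(A)\to\nn^+(B)$ from Lemma~\ref{lemma:Zreg_ex1} and its exponential $\widehat{\pi}_{AB}\co\U_A^{ma+}(k)\to\U_B^{ma+}(k)$ supplied by Theorem~\ref{thm:construction_pi}.

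First I would recall the general topological fact that if $f\co X\to Y$ is a continuous surjection between topological spaces and $D\subseteq X$ is dense, then $f(D)$ is dense in $Y$: for any nonempty open $V\subseteq Y$, the preimage $f\inv(V)$ is open and, by surjectivity, nonempty, so it meets $D$, and consequently $V$ meets $f(D)$. I would apply this to the map $\widehat{\pi}_{AB}$, which by Theorem~\ref{thm:construction_pi}(1) is indeed continuous and surjective. Thus, under the contrapositive hypothesis that $U^+_A(k)$ is dense in $\U_A^{ma+}(k)$, its image $\widehat{\pi}_{AB}(U^+_A(k))$ is dense in $\U_B^{ma+}(k)$.

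Next I would invoke Corollary~\ref{corollary:functoriality_min}, which asserts that $\widehat{\pi}_{AB}$ restricts to a group morphism $U^+_A(k)\to U^+_B(k)$; in particular $\widehat{\pi}_{AB}(U^+_A(k))\subseteq U^+_B(k)$. Combined with the previous step, this forces $U^+_B(k)$ to be dense in $\U_B^{ma+}(k)$, contradicting the hypothesis. This completes the proof by contradiction.

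There is no real obstacle here: the whole statement is designed to be a clean consequence of the functoriality package established in \S\ref{subsection:SZRM}, and the only substantive ingredients are the surjectivity and continuity of $\widehat{\pi}_{AB}$ together with the fact that it restricts to the real-root-group part. The only mild subtlety is remembering that the inclusion $\Delta_+(B)\subseteq\Delta_+(A)$ of Lemma~\ref{lemma:Zreg_ex1} underlies the restriction statement of Corollary~\ref{corollary:functoriality_min}, so that the identification $\widehat{\pi}_{AB}(U^+_A(k))\subseteq U^+_B(k)$ is meaningful; everything else is a one-line topological argument.
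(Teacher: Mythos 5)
Your proposal is correct and follows essentially the same route as the paper: both rest on the surjectivity and continuity of $\widehat{\pi}_{AB}$ from Theorem~\ref{thm:construction_pi} together with the restriction $\widehat{\pi}_{AB}(U^+_A(k))\subseteq U^+_B(k)$ from Corollary~\ref{corollary:functoriality_min}. The paper phrases the topological step as $\U_B^{ma+}(k)=\widehat{\pi}_{AB}(\overline{U^+_A}(k))\subseteq\overline{U^+_B}(k)$ rather than via the ``continuous surjective image of a dense set is dense'' lemma, but this is the identical argument.
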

\begin{proof}
By Corollary~\ref{corollary:functoriality_min}, the surjective map $\widehat{\pi}_{AB}\co \U_A^{ma+}(k)\to \U_B^{ma+}(k)$ provided by Theorem~\ref{thm:construction_pi} restricts to a group homomorphism $\overline{U^+_A}(k)\to\overline{U^+_B}(k)$. Thus, if $U^+_A(k)$ were dense in $\U_A^{ma+}(k)$, we would conclude that $$\overline{U^+_B}(k)\supseteq \widehat{\pi}_{AB}(\overline{U^+_A}(k))=\widehat{\pi}_{AB}(\U_A^{ma+}(k))=\U_B^{ma+}(k),$$
and hence that $\overline{U^+_B}(k)=\U_B^{ma+}(k)$, yielding the desired contradiction.
\end{proof}

\begin{lemma}\label{lemma:density_U_G}
Let $k$ be a field and $A$ be a GCM. Then $U^+_A(k)$ is dense in $\U_A^{ma+}(k)$ if and only if the minimal Kac--Moody group $\G_A(k)$ is dense in its Mathieu--Rousseau completion $\G_A^{pma}(k)$.
\end{lemma}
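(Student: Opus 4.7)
The plan is to exploit two basic facts: first, that $\U_A^{ma+}(k)$ is an \emph{open} subgroup of $\G_A^{pma}(k)$, and second, that the intersection $\U_A^{ma+}(k)\cap\G_A(k)$ coincides with $U_A^+(k)$. The openness is automatic from the description of the topology recalled in \S\ref{subsection:MR completions}: every positive root has height at least $1$, so $\Psi(1)=\Delta^+$ and hence the basic open subgroup $\U^{ma}_1(k)$ equals $\U_A^{ma+}(k)$ itself. For the intersection identity, I would argue that an element $g\in\U_A^{ma+}(k)\cap\G_A(k)$ lies in $\B^{ma+}(k)\cap\G_A(k)$, which, by comparing the Bruhat decompositions of $\G_A(k)$ and $\G_A^{pma}(k)$ with respect to their common Weyl group, equals $\B^+(k)=\T(k)\ltimes U_A^+(k)$. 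Writing $g=tu$ with $t\in\T(k)$ and $u\in U_A^+(k)\subseteq\U_A^{ma+}(k)$, the uniqueness of the decomposition $\B^{ma+}(k)=\T(k)\ltimes\U_A^{ma+}(k)$ forces $t=1$, so $g\in U_A^+(k)$.

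With these tools in hand, the implication ($\Leftarrow$) is almost immediate: assuming $\G_A(k)$ is dense in $\G_A^{pma}(k)$, for any $u\in\U_A^{ma+}(k)$ pick a net $(g_\lambda)\subseteq\G_A(k)$ with $g_\lambda\to u$; since $\U_A^{ma+}(k)$ is open, eventually $g_\lambda\in\U_A^{ma+}(k)\cap\G_A(k)=U_A^+(k)$, and thus $u\in\overline{U_A^+(k)}$. For the implication ($\Rightarrow$), assuming $U_A^+(k)$ is dense in $\U_A^{ma+}(k)$, I would invoke the fact recalled in \S\ref{subsection:MR completions} that $\G_A^{pma}(k)$ is topologically generated by $\G_A(k)$ together with the imaginary root groups $\U_{(\alpha)}(k)$ for $\alpha\in\Delta_+^{\im}$. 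Since each such imaginary root group is contained in $\U_A^{ma+}(k)=\overline{U_A^+(k)}\subseteq\overline{\G_A(k)}$, the closed subgroup $\overline{\G_A(k)}$ contains a topological generating set of $\G_A^{pma}(k)$ and therefore equals it.

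There is no real obstacle here; the lemma amounts to unpacking the compatibility between the BN-pairs of $\G_A(k)$ and $\G_A^{pma}(k)$ and using the openness of $\U_A^{ma+}(k)$. The only mildly technical point is the intersection identity $\U_A^{ma+}(k)\cap\G_A(k)=U_A^+(k)$, and this is a standard consequence of the Bruhat decomposition together with the semidirect product structure of the Borel subgroups.
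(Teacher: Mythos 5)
Your proof is correct and follows essentially the same route as the paper, which simply cites [Rousseau, 3.16] for the two key facts you use — that $U_A^+(k)=\U_A^{ma+}(k)\cap\G_A(k)$ and that $\G_A^{pma}(k)$ is (topologically) generated by $\G_A(k)$ together with $\U_A^{ma+}(k)$. Your added details (the openness of $\U_A^{ma+}(k)$ and the Bruhat-decomposition argument for the intersection identity) correctly fill in what the paper leaves to the reference.
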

\begin{proof}
This follows from the fact that $\G_A^{pma}(k)$ is generated by $\U_A^{ma+}(k)$ and $\G_A(k)$ and that $U^+_A(k)=\U_A^{ma+}(k)\cap \G_A(k)$ (see \cite[3.16]{Rousseau}).
\end{proof}

The following lemma is a slight generalisation of \cite[Lemma~5.4]{simpleKM}.
\begin{lemma}\label{lemma:gen_Lemma5.4}
Let $k$ be a field, and let $A=(\begin{smallmatrix}2 & -m\\ -n & 2\end{smallmatrix})$ be a GCM such that $mn>4$. If $\charact k=2$, we moreover assume that at least one of $m$ and $n$ is odd and $\geq 3$. Then the imaginary subgroup $U^{im+}$ of $\U_A^{ma+}(k)$ is not contained in $Z'_A$.
\end{lemma}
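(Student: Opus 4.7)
The goal is to exhibit $v\in U^{im+}$ and $u\in\U_A^{ma+}(k)$ with $uvu\inv\notin U^{im+}$. By the description
$$Z'_A\cap\U_A^{ma+}(k)=\bigcap_{u\in\U_A^{ma+}(k)}{uU^{im+}u\inv}$$
from \S\ref{subsection:GKKND}, this will yield the desired $v\in U^{im+}\setminus Z'_A$. Since $U^{im+}\subseteq\widehat{\UU}_k(\Delta^{\im}_+)$ is supported on $\NN$-combinations of positive imaginary roots, and no positive real root belongs to this cone (the standard invariant form being strictly positive on real roots and nonpositive on the closed imaginary cone), it suffices to arrange that the expansion of $uvu\inv$ in $\widehat{\UU}_k^+$ admits a nonzero component of degree some $\beta\in\Delta^{\re}_+(A)$.

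The plan is to find $\beta\in\Delta^{\re}_+(A)$ and $i\in\{1,2\}$ satisfying: (a) $\beta+\alpha_i\notin\Delta(A)$, whence $[e_i,e_{\beta}]=0$; (b) $N:=\la\beta,\alpha_i^{\vee}\ra\geq 2$ and $\delta:=\beta-\alpha_i\in\Delta^{\im}_+(A)$; (c) $\charact k\nmid N$. Given such data, set $y:=[f_i,e_{\beta}]\in(\nn^+_{\ZZ})_{\delta}$, $v:=[\exp]y\in U_{(\delta)}(k)\subseteq U^{im+}$, and $u:=\exp e_i\in\U_{(\alpha_i)}(k)\subseteq\U_A^{ma+}(k)$. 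The conjugation formula (\ref{eqn:ei_action}) gives
$$uvu\inv=\sum_{n\geq 0}(\ad e_i)^{(n)}[\exp]y\in\widehat{\UU}_k^+,$$
and, by $\ZZ$-linear independence of $\alpha_i$ and $\delta$, the degree-$\beta=\alpha_i+\delta$ component of the right-hand side receives contribution only from the bidegree-$(1,1)$ term $(\ad e_i)y^{[1]}=(\ad e_i)y$. Using the Jacobi identity, the relation $[e_i,f_i]=-\alpha_i^{\vee}$, and (a), one computes
$$(\ad e_i)y=[e_i,[f_i,e_{\beta}]]=[[e_i,f_i],e_{\beta}]+[f_i,[e_i,e_{\beta}]]=-\alpha_i^{\vee}(e_{\beta})=-Ne_{\beta},$$
which by (c) is nonzero in $(\nn^+_k)_{\beta}=k\cdot e_{\beta}$. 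Hence $uvu\inv$ has a nonzero real-root component and lies outside $U^{im+}$, as desired.

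It remains to realize (a), (b), (c) simultaneously. Since $mn>4$, $A$ is of indefinite type and $W(A)$ is infinite dihedral, so the real roots $W(A)\cdot\{\alpha_1,\alpha_2\}$ form two infinite sequences approaching the boundary of the imaginary cone. Choosing $\beta$ to be $\alpha_i$-maximal in its $\alpha_i$-string yields (a) and $N\geq 0$; taking $\beta$ sufficiently deep in the orbit forces $N\geq 2$. In the rank-$2$ indefinite setting, the intermediate lattice points of the $\alpha_i$-string through $\beta$ (other than the two endpoints $\beta$ and $s_i\beta$) are positive roots but not $W$-translates of any simple root, and are therefore imaginary, yielding (b). For (c), writing $\beta=a\alpha_1+b\alpha_2$ with $i=1$ gives $N=2a-bn$, and the values of $N$ over $\beta\in W\cdot\alpha_2$ are generated by a Chebyshev-like recursion in $mn$, producing infinitely many residues modulo $p=\charact k$ absent systematic parity obstructions. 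The parity subtlety in characteristic $2$ is precisely what the extra hypothesis handles: $N$ is odd iff $bn$ is odd, so when $p=2$ one needs both $b$ and $n$ odd; the assumption that $m$ or $n$ is odd and $\geq 3$ (say $n$, after possibly swapping the role of $i$) permits one to select $\beta\in W\cdot\alpha_2$ with $b$ odd and $N\geq 3$. The main obstacle is this combinatorial/parity verification of (b) and (c) in tandem, and in particular the characteristic-$2$ bookkeeping that motivates the odd-and-$\geq 3$ hypothesis on $m$ or $n$.
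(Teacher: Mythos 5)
Your strategy is essentially the paper's, run in mirror image: the paper picks an imaginary root $\delta$ with $\delta-\alpha_i$ real and conjugates $[\exp]x$ (with $x$ of degree $\delta$) by $\exp(f_i)$ to create a nonzero component in a real root space, whereas you pick a real root $\beta$ with $\delta=\beta-\alpha_i$ imaginary and conjugate by $\exp(e_i)$; the $\mathfrak{sl}_2$-computation producing the scalar $\pm\la\beta,\alpha_i^{\vee}\ra$ (resp.\ $\la\gamma,\alpha_i^{\vee}\ra$ in the paper) is the same. Your variant does have the small advantage that the conjugating element lies in $\U_A^{ma+}(k)$, so the displayed description of $Z'_A\cap\U_A^{ma+}(k)$ in \S\ref{subsection:GKKND} applies directly and you do not need to invoke normality of $U^{im+}$ in all of $\G_A^{pma}(k)$, which the paper derives from the contradiction hypothesis.

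There is, however, a genuine gap, and you flag it yourself: you never actually exhibit a pair $(\beta,i)$ satisfying (a), (b), (c) simultaneously. That existence statement is the entire content of the lemma --- it is exactly where the hypotheses $mn>4$ and the characteristic-$2$ condition are consumed --- and what you offer in its place (``Chebyshev-like recursion \dots producing infinitely many residues modulo $p$ absent systematic parity obstructions'', ``permits one to select $\beta$ with $b$ odd and $N\geq 3$'') is heuristic, not a verification; there is also an index slip, since for $\beta=a\alpha_1+b\alpha_2$ and $i=1$ one has $N=2a-bm$, not $2a-bn$. The genuinely delicate case is $p\mid m$ and $p\mid n$ (e.g.\ $p=m=n=3$), where the natural first candidates $\beta=s_1\alpha_2$ (giving $N=m$) and $\beta=s_2\alpha_1$ (giving $N=n$) both fail and one must go one step deeper into the Weyl orbit. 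The paper closes this by an explicit two-case analysis: it uses the scalar $2-mn$ attached to $\gamma=s_2\alpha_1$ when $p\nmid 2-mn$, and otherwise the scalar $n(3-mn)$ attached to $s_1(s_2\alpha_1)$, checking that $p\mid 2-mn$ forces $p\nmid n(3-mn)$ precisely under the stated oddness hypothesis. Some explicit choice of this kind is needed in your setup as well --- for instance, noting that $\beta=s_1s_2\alpha_1$ with $i=1$ gives $N=mn-2$, and that $p$ cannot divide all of $m$, $n$ and $mn-2$ unless $p=2$ and $m,n$ are both even, which is excluded by hypothesis. Without such a verification the argument is a correct reduction but not a proof.
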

\begin{proof}
Assume for a contradiction that $U^{im+}$ is contained in $Z'_A$. 

Note first that 
\begin{equation}
U^{im+}=\bigcap_{w\in W}\widetilde{w}\U^{ma+}_A(k)\widetilde{w}\inv,
\end{equation}
 where $\widetilde{w}$ is as in (\ref{eqn:wstar_wtilde}) (see also \cite[Definition~7.58]{KMGbook}): indeed, the inclusion $\subseteq$ readily follows from Lemma~\ref{lemma:Winv_twisted_exp} and the fact that $W$ stabilises $\Delta^{im}_+$ (see \cite[Theorem~5.4]{Kac}). Conversely, if $g\in\U^{ma+}_A(k)\setminus U^{im+}$, then by Proposition~\ref{prop formal sum Rousseau}(2) we can write $g$ as a product $g=\prod_{x\in\BBBB_{\Delta_+}}{[\exp]\lambda_xx}$ for some $\lambda_x\in k$ such that $\lambda_y\neq 0$ for some $y$ with $\deg(y)\in\Delta^{\re}_+$. In particular, by Lemma~\ref{lemma:Winv_twisted_exp}, we find some $v\in W$ such that $\widetilde{v}g\widetilde{v}\inv=x_{\alpha_i}(r)h$ for some $i\in I$, some nonzero $r\in k$, and some $h\in\U^{ma}_2(k)$. Hence $\widetilde{w}g\widetilde{w}\inv\notin \U^{ma+}_A(k)$ for $w:=s_iv\in W$, proving the reverse inclusion.
 
 As $Z'_A=Z_A\cdot (Z'_A\cap \U_A^{ma+}(k))$ and as $Z'_A\cap \U_A^{ma+}(k)$ is normal in $\G_A^{pma}(k)$ by \cite[Proposition 6.4]{Rousseau}, we deduce that $U^{im+}=Z'_A\cap \U_A^{ma+}(k)$ is a normal subgroup of $\G_A^{pma}(k)$. We now exhibit some imaginary root $\delta\in\Delta_+^{\im}$, some simple root $\alpha_i$, and some element $x\in(\nn^+_k)_{\delta}$ such that $\delta-\alpha_i\in\Delta_+^{\re}$ and such that $\ad(f_i)x$ is nonzero in $\nn^+_k$. This will show that the element $\exp(f_i)\in \U_{(-\alpha_i)}(k)\subseteq \G_A^{pma}(k)$ conjugates the element $[\exp]x\in \U_{(\delta)}(k)\subseteq U^{im+}$ outside $U^{im+}$ (see (\ref{eqn:ei_action})), yielding the desired contradiction.

Set $p=\charact k$. By hypothesis, $mn>4$. Up to interchanging $m$ and $n$, we may then assume that $n\geq 3$. If $p=2$, we may moreover assume that $n$ is odd. Set $\beta:=s_1(\alpha_2)=\alpha_2+m\alpha_1\in\Delta^{\re}_+$ and $\gamma:=s_2(\alpha_1)=\alpha_1+n\alpha_2\in\Delta^{\re}_+$, so that 
$$\la\gamma,\alpha_1^{\vee}\ra=\la\beta,\alpha_2^{\vee}\ra=2-mn, \quad \la\gamma,\alpha_2^{\vee}\ra=n,\quad \la\beta,\alpha_1^{\vee}\ra=m,\quad\textrm{and}\quad \la\gamma,\beta^{\vee}\ra=n(3-mn).$$

Assume first that $p$ does not divide $2-mn$ or that $p=0$. Set $\delta:=\alpha_1+\gamma$. Then $\delta\in\Delta_+^{\im}$ because $\delta(\alpha_1^{\vee})=4-mn<0$ and $\delta(\alpha_2^{\vee})=0$ (see \cite[Lemma~5.3]{Kac}). Set also $x:=[e_1,e_{\gamma}]\in\nn^+_{k}$. Since $\gamma-\alpha_1=n\alpha_2\notin\Delta$, we deduce that 
$$[f_1,x]= \la \gamma,\alpha_1^{\vee}\ra \cdot e_{\gamma}=(2-mn)\cdot e_{\gamma}\neq 0 \quad\textrm{in $\nn^+_{k}$},$$
as desired. 

Assume next that $p$ divides $2-mn$. Since $n$ is odd if $p=2$, this implies that $p$ does not divide $n(3-mn)$. Set $\delta:=s_1(\beta+\gamma)=\alpha_2+s_1(\gamma)$. Note that if $m\geq 2$, then
$$\la s_1(\delta),\alpha_1^{\vee}\ra=m+2-mn\leq 2-2m<0 \quad\textrm{and}\quad \la s_1(\delta),\alpha_2^{\vee}\ra=2-mn+n\leq 2-n<0,$$ 
while if $m=1$, so that $n\geq 5$, then 
$$\la s_2s_1(\delta),\alpha_1^{\vee}\ra=\la s_1(\delta),\alpha_1^{\vee}+\alpha_2^{\vee}\ra= 5-n\leq 0 \quad\textrm{and}\quad \la s_2s_1(\delta),\alpha_2^{\vee}\ra=-\la s_1(\delta),\alpha_2^{\vee}\ra= -2<0.$$  
Hence $\delta\in\Delta_+^{\im}$ by \cite[Theorem~5.4]{Kac}. Set $x:=[e_2,e_{\gamma'}]\in\nn^+_{k}$, where $\gamma'=s_1(\gamma)\in\Delta^{\re}_+$. Since $\gamma-\beta=-(m-1)\alpha_1+(n-1)\alpha_2\notin\Delta$ and hence also $\gamma'-\alpha_2=s_1(\gamma-\beta)\notin\Delta$, we deduce that 
$$[f_2,x]=\la \gamma',\alpha_2^{\vee}\ra \cdot e_{\gamma'}= \la \gamma,\beta^{\vee}\ra \cdot e_{\gamma'}=n(3-mn)\cdot e_{\gamma'}\neq 0 \quad\textrm{in $\nn^+_{k}$},$$
as desired.
\end{proof}

We record the following more precise version of \cite[Theorem~E]{simpleKM}.
\begin{prop}\label{prop:gen_TheoremE}
Let $k=\FF_q$ be a finite field. Consider the GCM $A_1=(\begin{smallmatrix}2 & -2\\ -2 & 2\end{smallmatrix})$ and $A_2=(\begin{smallmatrix}2 & -m\\ -n & 2\end{smallmatrix})$ with $m,n\geq 2$ and $mn>4$. Assume that $m\equiv n\equiv 2 \ (\modulo q-1)$. If $\charact k=2$, we moreover assume that at least one of $m$ and $n$ is odd. Then the minimal Kac--Moody groups $\G_{A_1}(\FF_q)$ and $\G_{A_2}(\FF_q)$ are isomorphic as abstract groups, but the simple quotients $\G^{pma}_{A_1}(\FF_q)/Z'_{A_1}$ and $\G^{pma}_{A_2}(\FF_q)/Z'_{A_2}$ of the corresponding Mathieu--Rousseau completions are not isomorphic as topological groups.
\end{prop}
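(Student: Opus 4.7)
The plan is to prove the two assertions separately.

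For the abstract isomorphism $\G_{A_1}(\FF_q) \cong \G_{A_2}(\FF_q)$, I would inspect the defining relations (\ref{R0})--(\ref{R4}) of the Tits presentation from \S\ref{subsection:MKG}, and check that they become formally identical for $A_1$ and $A_2$ after the natural identification of simple roots. The torus relation (\ref{R1}) reads $t \cdot x_{\alpha_i}(s) \cdot t^{-1} = x_{\alpha_i}(t(\alpha_i) s)$, and for $t = r^{\alpha_j^{\vee}} \in \T(\FF_q)$ it produces the scalar $r^{a_{ji}}$, which depends only on $a_{ji} \pmod{q-1}$ since $r \in \FF_q^{\times}$. The Weyl--torus relation (\ref{R2}) involves $s_i(\alpha_j^{\vee}) = \alpha_j^{\vee} - a_{ji}\alpha_i^{\vee}$, and again factors through $a_{ji} \pmod{q-1}$ at the level of $\T(\FF_q) \cong (\FF_q^{\times})^2$. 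Both $A_1$ and $A_2$ yield an infinite dihedral Weyl group, so the combinatorics of real roots and of prenilpotent pairs (hence the form of (\ref{R0})) is identical. Under the hypothesis $m \equiv n \equiv 2 \pmod{q-1}$, the matrices are entrywise congruent modulo $q-1$, and the presentations match. This is the rank-$2$ instance of \cite[Lemma~4.3]{thesePE}.

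For the topological non-isomorphism of $\G^{pma}_{A_1}(\FF_q)/Z'_{A_1}$ and $\G^{pma}_{A_2}(\FF_q)/Z'_{A_2}$, my strategy is to compare the local structure of compact open subgroups. Since $A_1 = \widetilde{A}_1$ is affine, one has $\G^{pma}_{A_1}(\FF_q) \cong \SL_2(\FF_q(\!(t)\!))$ and this group is GK-simple by \cite[Exemple~6.8]{Rousseau}. The quotient $\G^{pma}_{A_1}(\FF_q)/Z'_{A_1}$ is therefore a simple algebraic group over the local field $\FF_q(\!(t)\!)$; its compact open subgroups are commensurable with $\SL_2(\FF_q[\![t]\!])$ modulo its center, which is an $\FF_q(\!(t)\!)$-analytic pro-$p$ group of bounded analytic dimension. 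For $A_2$ of indefinite type with $mn>4$, by contrast, Lemma~\ref{lemma:gen_Lemma5.4} provides an imaginary root $\delta \in \Delta_+^{\im}(A_2)$ with $\U_{(\delta)}(\FF_q) \not\subseteq Z'_{A_2}$. Since $Z'_{A_2} \cap \U^{ma+}_{A_2}(\FF_q)$ is normalised by $\T_{A_2}(\FF_q)$, it decomposes along the weight grading of $\U^{ma+}_{A_2}$, so the contributions of distinct imaginary root groups to the compact open subgroup $\U^{ma+}_{A_2}(\FF_q)/(Z'_{A_2} \cap \U^{ma+}_{A_2}(\FF_q))$ remain independent.

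The main obstacle will be to iterate the conclusion of Lemma~\ref{lemma:gen_Lemma5.4} so as to produce elementary abelian $p$-subgroups of unbounded $\FF_p$-rank inside $\U^{ma+}_{A_2}(\FF_q)/(Z'_{A_2} \cap \U^{ma+}_{A_2}(\FF_q))$: concretely, I would sweep the base imaginary root $\delta$ through the infinite orbit $W(A_2)\delta$, using the normality of $Z'_{A_2}$ under the $\N_{A_2}(\FF_q)$-action to transport the nontriviality witness along the orbit, and using the torus weight decomposition to ensure independence of the resulting contributions. The existence of such unbounded-rank subgroups inside a compact open subgroup contradicts the bounded analytic dimension of compact open subgroups of $\mathrm{PSL}_2(\FF_q(\!(t)\!))$, yielding the desired non-isomorphism.
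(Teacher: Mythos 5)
Your first half (the abstract isomorphism $\G_{A_1}(\FF_q)\cong\G_{A_2}(\FF_q)$) is essentially the argument the paper relies on: the Tits presentation only sees the off-diagonal entries through the scalars $r^{a_{ji}}$ with $r\in\FF_q^{\times}$ and through the $W$-action on $\T(\FF_q)\cong(\FF_q^{\times})^2$, both of which factor through $a_{ji}\bmod(q-1)$, while for $mn\geq 4$ the commutation relations (\ref{R0}) for prenilpotent pairs of distinct real roots are trivial. The paper outsources this to \cite[Lemma~5.3]{simpleKM} (which moreover records that the isomorphism preserves the BN-pairs), you to \cite[Lemma~4.3]{thesePE}; this part is fine.

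The second half contains a genuine error. You propose to distinguish the two completions by producing elementary abelian $p$-subgroups of unbounded $\FF_p$-rank in a compact open subgroup on the $A_2$ side, claiming this contradicts the ``bounded analytic dimension'' of compact open subgroups of $\mathrm{PSL}_2(\FF_q(\!(t)\!))$. But $\FF_q(\!(t)\!)$ has positive characteristic and no such bound exists: already the root subgroup $\{(\begin{smallmatrix}1 & f\\ 0 & 1\end{smallmatrix}) \mid f\in\FF_q[\![t]\!]\}\cong(\FF_q[\![t]\!],+)$ of $\SL_2(\FF_q[\![t]\!])$ is an infinite elementary abelian $p$-group, so $\U_{A_1}^{ma+}(\FF_q)$ itself contains elementary abelian $p$-subgroups of arbitrarily large rank. (Finiteness of analytic dimension controls subgroup rank only for $p$-adic analytic groups, i.e.\ over local fields of characteristic zero.) Your invariant therefore fails to separate the two groups; moreover the auxiliary steps are unjustified, since normalisation by the finite torus $\T_{A_2}(\FF_q)$ does not force $Z'_{A_2}\cap\U^{ma+}_{A_2}(\FF_q)$ to split along the $Q_+$-grading ($(\FF_q^{\times})^2$ has only $(q-1)^2$ characters and cannot separate infinitely many weights), and the transported twisted exponentials over a $W$-orbit of imaginary roots need not commute. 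The invariant the paper actually uses (via \cite[Theorem~E]{simpleKM} together with Lemma~\ref{lemma:gen_Lemma5.4}) is just-infiniteness of the maximal pro-$p$ subgroup: by \cite{Riehm70} the pro-unipotent radical of an Iwahori subgroup of $\mathrm{PSL}_2(\FF_q(\!(t)\!))$ has only finite proper quotients, whereas on the $A_2$ side the image of the imaginary subgroup $U^{im+}$ is a closed normal subgroup of $\U^{ma+}_{A_2}(\FF_q)/(Z'_{A_2}\cap\U^{ma+}_{A_2}(\FF_q))$ which is nontrivial precisely by Lemma~\ref{lemma:gen_Lemma5.4} and has infinite quotient; compare Theorem~\ref{thm:non-linearity} and Proposition~\ref{prop:noGK}, where this dichotomy is made explicit.
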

\begin{proof}
The proof of \cite[Theorem~E]{simpleKM} on p.~725 of \emph{loc. cit.} applies \emph{verbatim}, with the same notation [note: that proof uses Lemma~5.3 in \cite{simpleKM}; for the convenience of the reader, we provide below (see Lemma~\ref{lemma:exceptional_isom}) a more detailed proof of that lemma]. The only difference is that \cite[Lemma~5.4]{simpleKM}, which is used to conclude the proof, must be replaced by its generalisation, Lemma~\ref{lemma:gen_Lemma5.4} above (hence the extra assumption in characteristic $2$).
\end{proof}

\begin{lemma}\label{lemma:exceptional_isom}
Let $k=\FF_q$ be a finite field. Consider the GCM $A=(\begin{smallmatrix}2&-m \\ -n&2\end{smallmatrix})$ and $B=(\begin{smallmatrix}2&-m' \\ -n'&2\end{smallmatrix})$ with $m,m',n,n'\geq 2$. Assume moreover that $m\equiv m' \ (\modulo q-1)$ and $n\equiv n' \ (\modulo q-1)$. Then the minimal Kac--Moody groups $\G_A(k)$ and $\G_{B}(k)$ are isomorphic as abstract groups, and the corresponding R\'emy-Ronan completions $\G_{A}^{rr}(\FF_q)$ and $\G_{B}^{rr}(\FF_q)$ are isomorphic as topological groups.
\end{lemma}
\begin{proof}
We can identify the Weyl groups $W(A)$ and $W(B)$ (both isomorphic to the infinite dihedral group), and hence also the corresponding sets of real roots $\Delta^{\re}(A)$ and $\Delta^{re}(B)$. Moreover, since $A$ and $B$ do not have any $-1$ entry, it follows from \cite[\S 3]{Morita88} that the commutation relations (\ref{R0}) are all trivial, and hence one can identify the Steinberg functors $\Stt_A$ and $\Stt_B$. Let us fix this identification $\Stt_A\stackrel{\sim}{\to}\Stt_B$ as follows (we add a superscript $A$ or $B$ to the usual notations, to distinguish between the objects related to the GCM $A$ or $B$). For $X\in\{A,B\}$, each real root $\alpha\in\Delta^{\re}_+(X)$ can be uniquely written as $\alpha=w_{\alpha}\alpha_i$ for some $w_{\alpha}\in W(X)$ and $i\in I=\{1,2\}$. We then choose the sign of $e_{\pm\alpha}^X$ in the double basis $E^X_{\alpha}$ by setting $$e_{\alpha}^X:=w_{\alpha}^*e_i^X\quad\textrm{and}\quad e_{-\alpha}^X:=w_{\alpha}^*f_i^X,$$
where $w_{\alpha}^*$ is as in (\ref{eqn:wstar_wtilde}) (see also \cite[Definition~7.58]{KMGbook}), and we define the corresponding parametrisations $x^X_{\pm\alpha}\co k\to U^X_{\pm\alpha}:r\mapsto\exp(re_{\pm\alpha})$ of the real root groups accordingly. The identification $\Stt_A\stackrel{\sim}{\to}\Stt_B$ is now obtained by mapping $x^A_{\alpha}$ to $x^B_{\alpha}$ for each $\alpha\in\Delta^{\re}(A)=\Delta^{\re}(B)$. 

Similarly, identifying the coroots associated to $A$ and $B$, we obtain an identification of the tori $\T_A(k)\stackrel{\sim}{\to}\T_B(k)$ mapping $r^{\alpha_i^{\vee}}\in\T_A(k)$ ($r\in k^{\times}$, $i\in I$) to the corresponding element of $\T_B(k)$. This yields an isomorphism $\varphi\co \Stt_A(k)*\T_A(k)\to\Stt_B(k)*\T_B(k)$, and to see it induces an isomorphism $\G_A(k)\to\G_B(k)$, we only have to show that the relations (\ref{R1})--(\ref{R4}) are the same in $\Stt_A(k)*\T_A(k)$ and $\Stt_B(k)*\T_B(k)$.

For the relations (\ref{R3}), this is clear by construction. For the relations (\ref{R1}) and (\ref{R2}), this follows from the fact that
\begin{equation}\label{eqn:congruence}
r^m=r^{m'}\quad\textrm{and}\quad r^n=r^{n'}\quad\textrm{for all $r\in k$.}
\end{equation}
Finally, for the relations (\ref{R4}), let $i\in I$, $\alpha\in\Delta^{\re}$ and $r\in k$, and let us check that $\widetilde{s}_i\cdot x^A_{\alpha}(r)\cdot \widetilde{s}_i^{\thinspace -1}\cdot (s_i^*x^A_{\alpha}(r))\inv$ is mapped to $\widetilde{s}_i\cdot x^B_{\alpha}(r)\cdot \widetilde{s}_i^{\thinspace -1}\cdot (s_i^*x^B_{\alpha}(r))\inv$ under $\varphi$, or else that
\begin{equation}\label{eqn:TPR4}
\varphi(s_i^*x^A_{\alpha}(r))=s_i^*x^B_{\alpha}(r).
\end{equation}
 We may assume that $\alpha\in\Delta^{\re}_+$ (the case $\alpha\in\Delta^{\re}_-$ being symmetric). Let $X\in\{A,B\}$.  If $\alpha=\alpha_i$, then $s_i^*x_{\alpha}^X(r)=x_{-\alpha_i}^X(r)$, yielding (\ref{eqn:TPR4}) in that case. Assume now that $\alpha\in\Delta^{\re}_+\setminus\{\alpha_i\}$. By definition, $x_{\alpha}^X(r)=\exp(re^X_{\alpha})=\exp(r\cdot w_{\alpha}^*e^X_j)=w_{\alpha}^*x^X_{\alpha_j}(r)$ for some $j\in I$ (determined by $\alpha$). If $\ell(s_iw_{\alpha})=\ell(w_{\alpha})+1$ (where $\ell\co W\to \NN$ is the word length on $W=W(X)$ with respect to the generating set $\{s_1,s_2\}$), then $s_i^*w_{\alpha}^*=(s_iw_{\alpha})^*=w_{s_i\alpha}^*$, and hence $s_i^*x^X_{\alpha}(r)=x^X_{s_i\alpha}(r)$, yielding (\ref{eqn:TPR4}) in that case. Finally, suppose $\ell(s_iw_{\alpha})=\ell(w_{\alpha})-1$. Then $w^*_{\alpha}=s^*_i\cdot (s_iw_{\alpha})^*$, and hence 
 $s_i^*x^X_{\alpha}(r)=(s_i^*)^2\cdot x^X_{s_i\alpha}(r)$. On the other hand, by \cite[Proposition~4.18(6)]{KMGbook}, we have $$(s_i^*)^2\cdot x^X_{s_i\alpha}(r)=x^X_{s_i\alpha}((-1)^{\langle s_i\alpha,\alpha_i^{\vee}\rangle}r)=x^X_{s_i\alpha}((-1)^{\langle \alpha,\alpha_i^{\vee}\rangle}r).$$
It thus remains to check that $(-1)^{\langle \alpha,\alpha_i^{\vee}\rangle}$ yields the same element of $k$, regardless of whether $\alpha$ is viewed as a root of $\Delta^{\re}_+(A)$ or of $\Delta^{\re}_+(B)$. But if $\alpha=\alpha_j$ is a simple root, this follows from (\ref{eqn:congruence}), and in general, this follows from an easy induction on $\ell(w_{\alpha})$ using the fact that $\la s_j\alpha, \alpha_i^{\vee}\ra= \la\alpha,\alpha_i^{\vee}\ra -\la\alpha,\alpha_j^{\vee}\ra \cdot \la \alpha_j,\alpha_i^{\vee}\ra$. 

We have thus shown that $\varphi$ induces an isomorphism $\phi\co\G_A(k)\to\G_B(k)$. On the other hand, note that $\phi$ identifies the (positive) BN-pairs of $\G_A(k)$ and $\G_B(k)$ (see \S\ref{subsection:GKKND}), and hence also induces an isomorphism of topological groups between the corresponding R\'emy-Ronan completions, yielding the lemma.
\end{proof}

Finally, we prove a slight generalisation of \cite[Corollary~F]{simpleKM}.
\begin{lemma}\label{lemma:nondensity_initial}
Let $k=\FF_q$ be a finite field. Consider the GCM $A=(\begin{smallmatrix}2 & -m\\ -n & 2\end{smallmatrix})$ with $m,n\geq 2$ and $mn>4$. Assume that $m\equiv n\equiv 2 \ (\modulo q-1)$. If $\charact k=2$, we moreover assume that at least one of $m$ and $n$ is odd. Then $U^+_{A}(\FF_q)$ is not dense in $\U_{A}^{ma+}(\FF_q)$.
\end{lemma}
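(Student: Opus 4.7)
The plan is to argue by contradiction and combine Proposition~\ref{prop:gen_TheoremE} with the identification of $\G^{pma}_X(\FF_q)/Z'_X$ with the R\'emy--Ronan completion $\G^{rr}_X(\FF_q)$.

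First, I would assume for contradiction that $U^+_A(\FF_q)$ is dense in $\U_A^{ma+}(\FF_q)$. By Lemma~\ref{lemma:density_U_G}, this means $\overline{\G_A}(\FF_q)=\G_A^{pma}(\FF_q)$. On the other side, for the affine matrix $A_1=(\begin{smallmatrix}2 & -2\\ -2 & 2\end{smallmatrix})$ of type $\widetilde{A}_1$, we have $\G_{A_1}(\FF_q)=\SL_2(\FF_q[t,t\inv])$ and $\G_{A_1}^{pma}(\FF_q)=\SL_2(\FF_q(\!(t)\!))$, and the former is well known to be dense in the latter, so $\overline{\G_{A_1}}(\FF_q)=\G_{A_1}^{pma}(\FF_q)$ as well.

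The second step is to identify $\G^{pma}_X(\FF_q)/Z'_X$ with $\G^{rr}_X(\FF_q)$ for $X\in\{A,A_1\}$. Under the density assumptions, $\overline{U^+_X}(\FF_q)=\U_X^{ma+}(\FF_q)$ (again by Lemma~\ref{lemma:density_U_G}), so the kernel of the continuous surjection $\varphi_X\co \overline{\G_X}(\FF_q)\to \G_X^{rr}(\FF_q)$ described in \S\ref{subsection:GKKND}, namely $Z_X\cdot(Z'_X\cap\overline{U^+_X}(\FF_q))$, coincides with $Z'_X=Z_X\cdot(Z'_X\cap \U_X^{ma+}(\FF_q))$. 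Hence $\varphi_X$ descends to a topological group isomorphism $\G^{pma}_X(\FF_q)/Z'_X\stackrel{\sim}{\to}\G^{rr}_X(\FF_q)$.

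Now Proposition~\ref{prop:gen_TheoremE} applied with $A_2:=A$ provides an abstract group isomorphism $\alpha\co\G_A(\FF_q)\stackrel{\sim}{\to}\G_{A_1}(\FF_q)$. I would then verify that the explicit construction of $\alpha$ respects the BN-pair structures (mapping $\B^+_A(\FF_q)$ to $\B^+_{A_1}(\FF_q)$ and $\N_A(\FF_q)$ to $\N_{A_1}(\FF_q)$), relying on the hypothesis $m\equiv n\equiv 2\pmod{q-1}$ which ensures that the Steinberg commutator relations transport correctly between the two groups. Such BN-pair compatibility induces an equivariant isomorphism of the positive buildings $X_+^A\cong X_+^{A_1}$, and hence a topological group isomorphism $\G^{rr}_A(\FF_q)\cong\G^{rr}_{A_1}(\FF_q)$. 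Combining with the identifications of the previous step yields a topological isomorphism $\G^{pma}_A(\FF_q)/Z'_A\cong\G^{pma}_{A_1}(\FF_q)/Z'_{A_1}$, contradicting the last conclusion of Proposition~\ref{prop:gen_TheoremE}.

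The main obstacle is the BN-pair compatibility of $\alpha$; the remaining steps follow formally from Lemma~\ref{lemma:density_U_G} and the general properties of $\varphi_X$ recalled in \S\ref{subsection:GKKND}. Since the isomorphism $\alpha$ of Proposition~\ref{prop:gen_TheoremE} is constructed by the explicit procedure of \cite[Theorem~E]{simpleKM} (which matches Chevalley generators modulo adjustment of root parameters by the congruences $m,n\equiv 2\pmod{q-1}$), this compatibility should be a direct inspection of the formulas; no new ideas beyond those of loc.\ cit.\ are needed, hence the qualification of this lemma as a ``slight generalisation'' of \cite[Corollary~F]{simpleKM}.
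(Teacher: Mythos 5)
Your argument follows the same strategy as the paper's proof: play the abstract isomorphism $\G_A(\FF_q)\cong\G_{A_1}(\FF_q)$ of Proposition~\ref{prop:gen_TheoremE} (which preserves the BN-pairs and hence identifies the R\'emy--Ronan completions) against the non-isomorphism of $\G^{pma}_{A}(\FF_q)/Z'_{A}$ and $\G^{pma}_{A_1}(\FF_q)/Z'_{A_1}$, using density to identify each $\G^{pma}_X(\FF_q)/Z'_X$ with $\G^{rr}_X(\FF_q)$ via $\varphi_X$. The one place where you diverge is also the one place that needs care: you assert that $\G_{A_1}(\FF_q)$ is dense in $\G_{A_1}^{pma}(\FF_q)$ because ``$\SL_2(\FF_q[t,t^{-1}])$ is well known to be dense in $\SL_2(\FF_q(\!(t)\!))$''. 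For $\charact k>2$ this follows from the general criterion $\charact k>M_{A_1}=2$, but the lemma explicitly allows $\charact k=2$ (with one of $m,n$ odd), and there the general criterion fails; your claim then rests on the nontrivial identification of the Mathieu--Rousseau completion of type $\widetilde{A}_1$ with $\SL_2(\FF_q(\!(t)\!))$ (Rousseau's Exemple~6.8), and even that identification is only up to a central torus factor for the simply connected form. The paper avoids this input entirely with a cheaper move: it does \emph{not} claim $\G_{A_1}(\FF_q)$ is dense, but argues that if it is not, then $U^+_{A_1}(\FF_q)$ is not dense in $\U^{ma+}_{A_1}(\FF_q)$ by Lemma~\ref{lemma:density_U_G} and the conclusion for $A$ follows at once from Proposition~\ref{proposition:non-density_induction}, so one may assume density without loss of generality. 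You should either adopt that reduction or supply the reference justifying density in characteristic~$2$; the rest of your argument (including the BN-pair compatibility of $\alpha$, which both you and the paper delegate to the explicit construction in \cite[Lemma~5.3]{simpleKM}) matches the paper's proof.
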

\begin{proof}
Set $A_1=(\begin{smallmatrix}2 & -2\\ -2 & 2\end{smallmatrix})$ and $A_2=A$. For $i=1,2$, we also set $G_i:=\G_{A_i}(\FF_q)$, $\widehat{G}_i:=\G^{pma}_{A_i}(\FF_q)$, and $Z'_i:=Z'_{A_i}$. It follows from Proposition~\ref{prop:gen_TheoremE} that $G_1$ and $G_2$ are isomorphic as abstract groups, whereas $\widehat{G}_1/Z'_1$ and $\widehat{G}_2/Z'_2$ are not isomorphic as topological groups. Note also that the R\'emy-Ronan completions $\G_{A_1}^{rr}(\FF_q)$ of $G_1$ and $\G_{A_2}^{rr}(\FF_q)$ of $G_2$ are isomorphic as topological groups by Lemma~\ref{lemma:exceptional_isom}.
Finally, we may assume without loss of generality that $G_1$ is dense in $\widehat{G}_1$, for otherwise $U^+_{A_1}(\FF_q)$ would not be dense in $\U_{A_1}^{ma+}(\FF_q)$ by Lemma~\ref{lemma:density_U_G}, so that the conclusion of the lemma would immediately follow from Proposition~\ref{proposition:non-density_induction}.

Assume now for a contradiction that $U^+_{A}(\FF_q)$ is dense in $\U_{A}^{ma+}(\FF_q)$. Then $G_2$ is dense in $\widehat{G}_2$ by Lemma~\ref{lemma:density_U_G}. Hence the continuous surjective group homomorphisms $\varphi_{A_i}\co \widehat{G}_i\to \G_{A_i}^{rr}(\FF_q)$, $i=1,2$, induce isomorphisms
$$\widehat{G}_1/Z'_1\cong \G_{A_1}^{rr}(\FF_q)\cong \G_{A_2}^{rr}(\FF_q)\cong \widehat{G}_2/Z'_2$$ of topological groups,
yielding the desired contradiction. 
\end{proof}

\begin{theorem}\label{thm:non-density}
Let $k=\FF_q$ be a finite field, and let $A=(a_{ij})_{i,j\in I}$ be a GCM. Assume that there exist indices $i,j\in I$ such that $|a_{ij}|\geq q+1$ and $|a_{ji}|\geq 2$. 
Then $U^+_A(k)$ is not dense in $\U_A^{ma+}(k)$.
\end{theorem}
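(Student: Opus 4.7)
The plan is to reduce Theorem~\ref{thm:non-density} to the rank-$2$ case handled by Lemma~\ref{lemma:nondensity_initial}, via Proposition~\ref{proposition:non-density_induction}. Fix indices $i,j\in I$ with $|a_{ij}|\geq q+1$ and $|a_{ji}|\geq 2$; I will construct a rank-$2$ GCM
\[
B:=\begin{pmatrix} 2 & -m\\ -n & 2\end{pmatrix}
\]
indexed by $\{i,j\}\subseteq I$ that simultaneously satisfies $B\leq A$ (so that Proposition~\ref{proposition:non-density_induction} applies) and the hypotheses of Lemma~\ref{lemma:nondensity_initial}. Unpacking the order of Section~2.1 and those hypotheses, the integers $(m,n)$ must satisfy $m\geq|a_{ij}|$, $n\geq|a_{ji}|$, $m\equiv n\equiv 2\pmod{q-1}$, $m,n\geq 2$ with $mn>4$, and, in case $\charact k=2$, at least one of $m,n$ odd.

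Such a pair always exists. The residue class $\{r\geq 2:r\equiv 2\pmod{q-1}\}=\{2,q+1,2q,3q-1,\ldots\}$ is infinite and contains arbitrarily large integers, so I pick $n$ to be the smallest element of the class with $n\geq|a_{ji}|$ and $m$ to be the smallest element of the class with $m\geq|a_{ij}|$, if needed replacing $m$ by the next odd element of the class. The parity constraint is no obstacle: when $\charact k=2$, i.e.\ when $q-1$ is odd, consecutive elements of the class differ by the odd integer $q-1$, so the class contains infinitely many odd integers, and an odd $m\geq|a_{ij}|$ can always be found. Finally $mn>4$ is automatic, since $m\geq|a_{ij}|\geq q+1\geq 3$ and $n\geq|a_{ji}|\geq 2$ force $mn\geq 2(q+1)\geq 6$.

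With such a $B$ in hand, Lemma~\ref{lemma:nondensity_initial} yields that $U^+_B(k)$ is not dense in $\U_B^{ma+}(k)$, and Proposition~\ref{proposition:non-density_induction} propagates this non-density from $B$ to $A$, completing the proof. The main technical point is the simultaneous satisfaction of the congruence, lower-bound, and parity conditions on $(m,n)$; this is ultimately a short arithmetic check on the single arithmetic progression $r\equiv 2\pmod{q-1}$, and I anticipate no serious obstacle.
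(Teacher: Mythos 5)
Your overall strategy --- reduce to a rank-$2$ matrix $B$ handled by Lemma~\ref{lemma:nondensity_initial} and propagate the non-density to $A$ via Proposition~\ref{proposition:non-density_induction} --- is exactly the paper's. However, your construction of $B$ points in the wrong direction, and this is a genuine gap rather than a cosmetic one. You unpack ``$B\leq A$'' as $m\geq|a_{ij}|$ and $n\geq|a_{ji}|$, i.e.\ you place $B$ \emph{above} $A|_{\{i,j\}}$ in absolute value. But the mechanism behind Proposition~\ref{proposition:non-density_induction} is the surjection $\widehat{\pi}_{AB}\co\U_A^{ma+}(k)\to\U_B^{ma+}(k)$ of Theorem~\ref{thm:construction_pi}, exponentiated from the Lie algebra surjection $\nn^+(A)\to\nn^+(B)$, $e_i\mapsto e_i$, of Lemma~\ref{lemma:Zreg_ex1}; that map is well defined only when the Serre relations of $A$ imply those of $B$, i.e.\ when $(\ad e_i)^{1+|b_{ij}|}e_j$ lies in the ideal generated by $(\ad e_i)^{1+|a_{ij}|}e_j$, which forces $|b_{ij}|\leq|a_{ij}|$: the \emph{source} of the surjection must be the freer algebra, the one with the larger off-diagonal entries in absolute value. (Compare $A=(\begin{smallmatrix}2&-1\\-1&2\end{smallmatrix})$ and $B=(\begin{smallmatrix}2&-2\\-2&2\end{smallmatrix})$: here $\nn^+(A)$ is $3$-dimensional while $\nn^+(B)$ is infinite-dimensional, so no surjection $\nn^+(A)\to\nn^+(B)$ fixing the $e_i$ can exist.) With your choice of $(m,n)$ the available surjection runs $\U_B^{ma+}(k)\to\U_A^{ma+}(k)$, and the non-density of $U^+_B(k)$ transfers nowhere. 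A sanity check confirms the problem: your argument uses the hypotheses $|a_{ij}|\geq q+1$, $|a_{ji}|\geq 2$ only marginally, and the same construction would ``prove'' non-density for the Cartan matrix of ${\mathfrak sl}_3$ over $\FF_2$ (take $m=n=3$), where $\U_A^{ma+}(k)=U^+_A(k)$ is finite.

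The correct choice is the opposite one: take $B=(\begin{smallmatrix}2&-(q+1)\\-2&2\end{smallmatrix})$, which satisfies the congruence, size, product and parity conditions of Lemma~\ref{lemma:nondensity_initial} (in characteristic $2$, $q+1$ is odd), and which fits \emph{below} $A|_{\{i,j\}}$ precisely because $|a_{ij}|\geq q+1$ and $|a_{ji}|\geq 2$ --- this is exactly what the hypotheses of the theorem are for, and it is what the paper does. In your defence, the displayed definition of $B\leq A$ in \S 2.1 does read the way you took it; but the consistent usage elsewhere (Lemma~\ref{lemma:Zreg_ex1}, Remark~\ref{remark:restriction_minimal_groups}, Example~\ref{example:funny}), and the underlying mathematics, require $|b_{ij}|\leq|a_{ij}|$. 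Your arithmetic discussion of the progression $r\equiv 2\pmod{q-1}$ is correct but is solving the wrong existence problem; once the inequalities are reversed, no search is needed, since $(m,n)=(q+1,2)$ works outright.
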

\begin{proof}
Consider the GCM $B=(\begin{smallmatrix}2 & -m\\ -n & 2\end{smallmatrix})$ with $m=q+1$ and $n=2$. Then $U^+_B(k)$ is not dense in $\U_B^{ma+}(k)$ by Lemma~\ref{lemma:nondensity_initial}. Since $B\leq (\begin{smallmatrix}2 & a_{ij}\\ a_{ji} & 2\end{smallmatrix})$ or $B\leq (\begin{smallmatrix}2 & a_{ji}\\ a_{ij} & 2\end{smallmatrix})$, the conclusion then follows from Proposition~\ref{proposition:non-density_induction}.
\end{proof}

We now give a completely different proof of Theorem~\ref{thm:non-density}, which provides another perspective on this non-density phenomenon.

\begin{prop}\label{prop:element_not_dense}
Let $k=\FF_q$ be a finite field, and let $A=(a_{ij})_{i,j\in I}$ be a GCM. Fix distinct $i,j\in I$, and let $g\in \U_A^{ma+}(k)\subseteq \widehat{\UU}_k^+$ be one of the twisted exponentials $[\exp][e_i,e_j]$ or $[\exp](\ad e_i)^{(q)}e_j$.
\begin{enumerate}
\item
If $|a_{ij}|\geq q$, then $g\notin \overline{[\U_A^{ma+}(k),\U_A^{ma+}(k)]}$. 
\item
If moreover $|a_{ij}|\geq q+1$ and $|a_{ji}|\geq 2$, then $g\notin\overline{U_A^+}(k)$.
\end{enumerate}
\end{prop}
\begin{proof}
As usual, we realise $\U_A^{ma+}(k)$ inside $\widehat{\UU}_k^+$. Assume that $|a_{ij}|\geq q$. We claim that for any element $h=\sum_{\alpha\in Q_+}{h_{\alpha}}\in V:=[\U_A^{ma+}(k),\U_A^{ma+}(k)]$, where $h_{\alpha}\in \UU^+_{\alpha}\otimes_{\ZZ}k$ for all $\alpha\in Q_+$, the homogeneous components $h_{\alpha_i+\alpha_j}$ and $h_{q\alpha_i+\alpha_j}$ are either both zero or both nonzero. 

Set
$$\Psi=Q_+\setminus\{m\alpha_i+n\alpha_j\in Q_+ \ | \ 0\leq m\leq q, \ 0\leq n\leq 1\}\quad\textrm{and}\quad \widehat{\UU}^+_{\Psi}:=\prod_{\alpha\in\Psi}{(\UU^+_{\alpha}\otimes_{\ZZ}k)}\subseteq \widehat{\UU}_k^+.$$
Note that $\widehat{\UU}^+_{\Psi}$ is an ideal of the $k$-algebra $\widehat{\UU}_k^+$. To prove the claim, we will compute modulo $\widehat{\UU}^+_{\Psi}$. 

Any element of $\U_A^{ma+}(k)$ is congruent modulo $\widehat{\UU}^+_{\Psi}$ to an element of the form
$$g_{\underline{\lambda}}:=\exp\lambda e_i\cdot \prod_{s=0}^q{[\exp]\lambda_s(\ad e_i)^{(s)}e_j}\equiv \exp\lambda e_i\cdot (1+x(\underline{\lambda}))\mod \widehat{\UU}^+_{\Psi}$$
for some tuple $\underline{\lambda}:=(\lambda,\lambda_0,\dots,\lambda_q)\in k^{q+2}$, where
$$x(\underline{\lambda}):=\sum_{s=0}^q{\lambda_s(\ad e_i)^{(s)}e_j}.$$
Using the identity (see for instance \cite[Lemma~4.9]{thesemoi})
\begin{equation*}
\begin{aligned}
\exp\mu e_i\cdot x(\underline{\lambda})\cdot \exp (-\mu e_i) &= (\exp \ad\mu e_i)x(\underline{\lambda})=\sum_{s=0}^q{\sum_{t\geq 0}{\lambda_s\mu^t\binom{s+t}{t}(\ad e_i)^{(s+t)}e_j}}
\end{aligned}
\end{equation*}
and the fact that 
$$\big([\exp]\lambda_s(\ad e_i)^{(s)}e_j\big)\inv \equiv 1-\lambda_s(\ad e_i)^{(s)}e_j \mod \widehat{\UU}^+_{\Psi},$$
we may now compute, for two tuples $\underline{\lambda}$ and $\underline{\mu}$ in $k^{q+2}$ as above, that
\begin{equation*}
\begin{aligned}
{[}g_{\underline{\lambda}},g_{\underline{\mu}}]&\equiv \exp\lambda e_i\cdot (1+x(\underline{\lambda}))\cdot\exp\mu e_i \cdot (1+x(\underline{\mu}))\cdot (1-x(\underline{\lambda}))\cdot\exp(-\lambda e_i) \cdot (1-x(\underline{\mu}))\cdot\exp(-\mu e_i)\\
&\equiv  1+(\exp \ad\lambda e_i)x(\underline{\lambda})+(\exp\ad (\lambda+\mu)e_i)(x(\underline{\mu})-x(\underline{\lambda}))-(\exp \ad\mu e_i)x(\underline{\mu})\\
&\equiv 1+\sum_{s=1}^q{C_s(\underline{\lambda},\underline{\mu})\cdot(\ad e_i)^{(s)}e_j} \mod \widehat{\UU}^+_{\Psi}
\end{aligned}
\end{equation*}
for some polynomials $C_s\in k[\lambda,\lambda_0,\dots,\lambda_q,\mu,\mu_0,\dots,\mu_q]$ satisfying 
$$C_1(\underline{\lambda},\underline{\mu})=\lambda\mu_0-\mu\lambda_0= \lambda^q\mu_0-\mu^q\lambda_0=C_q(\underline{\lambda},\underline{\mu}).$$
Here we used the fact that $\binom{q}{t}=0$ in $k$ unless $t=0$ or $t=q$. 

Let now $h=\sum_{\alpha\in Q_+}{h_{\alpha}}\in V$. Then $h$ is congruent modulo $\widehat{\UU}^+_{\Psi}$ to a (finite) product of elements of the form $[g_{\underline{\lambda}},g_{\underline{\mu}}]$ as above, say
$$h\equiv \prod_r{[g_{\underline{\lambda}^r},g_{\underline{\mu}^r}]}\equiv 1+\sum_r{([g_{\underline{\lambda}^r},g_{\underline{\mu}^r}]-1)}\mod \widehat{\UU}^+_{\Psi}$$ for some tuples $\underline{\lambda}^r$, $\underline{\mu}^r$ in $k^{q+2}$. The above discussion then implies that there is some $c\in k$ such that
$$h_{\alpha_i+\alpha_j}=c[e_i,e_j]\quad\textrm{and}\quad h_{q\alpha_i+\alpha_j}=c(\ad e_i)^{(q)}e_j,$$
proving our claim. This shows in particular that $g\notin V+ \widehat{\UU}^+_{\Psi}$. Since $1+\widehat{\UU}^+_{\Psi}$ contains the open subgroup $\U^{ma}_{q+2}(k)$, so that $V\U^{ma}_{q+2}(k)\subseteq V+\widehat{\UU}^+_{\Psi}$, we deduce that $g\notin \overline{V}$, proving (1).

Assume now that $|a_{ij}|\geq q+1$ and $|a_{ji}|\geq 2$. In particular, the only real roots not in $\Psi$ are the simple roots $\alpha_i$ and $\alpha_j$ (see \cite[Chapter~5]{Kac}). Assume for a contradiction that $g\in\overline{U_A^+}(k)$. Then 
$$g\equiv \exp(\lambda e_i)\exp(\mu e_j) \mod V+ \widehat{\UU}^+_{\Psi}$$
for some $\lambda,\mu\in k$. Since $V+ \widehat{\UU}^+_{\Psi}\subseteq 1+\widehat{\UU}^+_{\geq 2}$, where $\widehat{\UU}^+_{\geq 2}:=\prod_{\height(\alpha)\geq 2}{(\UU^+_{\alpha}\otimes_{\ZZ}k)}\subseteq\widehat{\UU}^+_k$, the components of degree $\alpha_i$ and $\alpha_j$ of $\exp(\lambda e_i)\exp(\mu e_j)$ must be zero, so that $\lambda=\mu=0$. Hence $g\in V+ \widehat{\UU}^+_{\Psi}$. But this contradicts the first part of the proof, yielding (2).
\end{proof}

As pointed out to us by Pierre-Emmanuel Caprace, the methods of this section can also be used to show that Kac--Moody groups $G^{pma}_A(k)$ (or even $\overline{\G_A}(k)$) are in general not GK-simple if $\charact k\leq M_A$.
\begin{prop}\label{prop:noGK}
Let $k=\FF_q$ be a finite field. Consider the GCM $A=(\begin{smallmatrix}2 & -m\\ -n & 2\end{smallmatrix})$ with $m,n\geq 2$ and $mn>4$. Assume that $m\equiv n\equiv 2 \ (\modulo q-1)$. If $\charact k=2$, we moreover assume that at least one of $m$ and $n$ is odd. Then $\G^{pma}_{A}(k)$ and $\overline{\G_{A}}(k)$ are not GK-simple, that is, $Z'_A\cap\overline{U_A^+}(k)\neq\{1\}$.
\end{prop}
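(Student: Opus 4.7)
Set $A_1 := \left(\begin{smallmatrix} 2 & -2 \\ -2 & 2 \end{smallmatrix}\right) \leq A$. My plan is to exploit the continuous surjection $\widehat{\pi}_{AA_1}\co \U_A^{ma+}(\FF_q) \twoheadrightarrow \U_{A_1}^{ma+}(\FF_q)$ from Theorem~\ref{thm:construction_pi}, the identification $\G_{A_1}^{pma}(\FF_q) \cong \SL_2(\FF_q(\!(t)\!))$ of \cite[Exemple~6.8]{Rousseau}, and the Hopfian property of topologically finitely generated pro-$p$ groups to derive a contradiction.

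Since $\G_{A_1}(\FF_q) = \SL_2(\FF_q[t,t\inv])$ is dense in $\G_{A_1}^{pma}(\FF_q) = \SL_2(\FF_q(\!(t)\!))$, one has $\overline{\G_{A_1}}(\FF_q) = \G_{A_1}^{pma}(\FF_q)$ and in particular $\overline{U^+_{A_1}}(\FF_q) = \U_{A_1}^{ma+}(\FF_q)$, so that Corollary~\ref{corollary:functoriality_min} provides a continuous surjection $\widehat{\pi}\co \overline{U^+_A}(\FF_q) \twoheadrightarrow \U_{A_1}^{ma+}(\FF_q)$. By Theorem~\ref{thm:construction_pi}(2), its kernel $K$ contains $\U^A_{(\alpha)}(\FF_q) \cong (\FF_q,+)$ for every real root $\alpha \in \Delta_+^{\re}(A) \setminus \Delta_+(A_1)$. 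As the positive roots of the affine matrix $A_1$ are $\{(k+1)\alpha_1 + k\alpha_2, \thinspace k\alpha_1 + (k+1)\alpha_2 : k\geq 0\} \cup \{k(\alpha_1+\alpha_2) : k\geq 1\}$ and $\max(m,n) \geq q+1 > 2$, at least one of the real roots $m\alpha_1 + \alpha_2 = s_1\alpha_2$ and $\alpha_1 + n\alpha_2 = s_2\alpha_1$ of $A$ fails to belong to $\Delta_+(A_1)$, whence $K \neq \{1\}$.

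Now assume for contradiction that $Z'_A \cap \overline{U^+_A}(\FF_q) = \{1\}$. The decomposition $Z'_A \cap \overline{\G_A}(\FF_q) = {\mathcal Z}_A(\FF_q) \cdot (Z'_A \cap \overline{U^+_A}(\FF_q))$ recalled in \S\ref{subsection:GKKND} then reduces the kernel of $\varphi_A$ to ${\mathcal Z}_A(\FF_q)$, so that $\varphi_A$ induces a topological isomorphism $\overline{\G_A}(\FF_q)/{\mathcal Z}_A(\FF_q) \cong \G_A^{rr}(\FF_q)$. Combined with the topological isomorphism $\G_{A_1}^{rr}(\FF_q) \cong \G_A^{rr}(\FF_q)$ induced by the BN-pair preserving abstract isomorphism $\G_{A_1}(\FF_q) \cong \G_A(\FF_q)$ of \cite[Lemma~5.3]{simpleKM}, and with the topological isomorphism $\G_{A_1}^{pma}(\FF_q)/{\mathcal Z}_{A_1}(\FF_q) \cong \G_{A_1}^{rr}(\FF_q)$ (coming from the GK-simplicity of $\SL_2(\FF_q(\!(t)\!))$), one obtains a continuous injection $\overline{U^+_A}(\FF_q) \hookrightarrow \G_{A_1}^{pma}(\FF_q)/{\mathcal Z}_{A_1}(\FF_q)$ whose image, after transport via the BN-pair identification, is the closure of the image of $U^+_{A_1}(\FF_q)$; by density of $U^+_{A_1}(\FF_q)$ in $\U_{A_1}^{ma+}(\FF_q)$ and the injectivity of $\U_{A_1}^{ma+}(\FF_q) \hookrightarrow \G_{A_1}^{pma}(\FF_q)/{\mathcal Z}_{A_1}(\FF_q)$, this closure coincides with $\U_{A_1}^{ma+}(\FF_q)$. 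The resulting continuous bijection $\phi\co \overline{U^+_A}(\FF_q) \to \U_{A_1}^{ma+}(\FF_q)$ between compact groups is a topological isomorphism, and $\widehat{\pi}\circ\phi\inv$ is then a continuous surjective endomorphism of the topologically finitely generated pro-$p$ group $\U_{A_1}^{ma+}(\FF_q)$ (generated by $\U^{A_1}_{\alpha_1}(\FF_q) \cup \U^{A_1}_{\alpha_2}(\FF_q)$, by density of $U^+_{A_1}(\FF_q)$), hence an automorphism by the Hopfian property of such groups. Thus $K = \ker \widehat{\pi} = \{1\}$, contradicting the nontriviality of $K$ established above. I expect the main difficulty to lie in this last paragraph, specifically in the careful verification that the image of $\overline{U^+_A}(\FF_q)$ under the chain of topological isomorphisms coincides with the full $\U_{A_1}^{ma+}(\FF_q)$, which requires tracking the BN-pair compatibility of the linking isomorphisms between the R\'emy--Ronan and Mathieu--Rousseau completions.
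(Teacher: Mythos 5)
Your reduction to the affine matrix $A_1=(\begin{smallmatrix}2 & -2\\ -2 & 2\end{smallmatrix})$ and your use of the R\'emy--Ronan completions to identify $\overline{U^+_A}(\FF_q)$ with $\U_{A_1}^{ma+}(\FF_q)$ under the contradiction hypothesis are essentially the paper's argument, and that part is sound. The gap is in your endgame: you assert that Corollary~\ref{corollary:functoriality_min} provides a continuous \emph{surjection} $\widehat{\pi}\co \overline{U^+_A}(\FF_q)\twoheadrightarrow \U_{A_1}^{ma+}(\FF_q)$, but that corollary only yields a homomorphism $\overline{U^+_A}(k)\to\overline{U^+_{A_1}}(k)$ with no surjectivity claim. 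Only the full map $\U_A^{ma+}(k)\to\U_{A_1}^{ma+}(k)$ of Theorem~\ref{thm:construction_pi} is surjective, and $\overline{U^+_A}(\FF_q)$ is a \emph{proper} closed subgroup of $\U_A^{ma+}(\FF_q)$ here (that is exactly Lemma~\ref{lemma:nondensity_initial}). Indeed, Remark~\ref{remark:restriction_minimal_groups} and the root computation in the paper's own proof show that $\widehat{\pi}_{AA_1}(U^+_A(\FF_q))$ is only the subgroup of $U^+_{A_1}(\FF_q)$ generated by the two simple root groups (plus at most one more real root group), so the image of $\overline{U^+_A}(\FF_q)$ is merely the closure of that subgroup. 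Proving this closure equals $\U_{A_1}^{ma+}(\FF_q)$ is a genuine extra step: for $p>2$ it follows from a Frattini argument together with Proposition~\ref{prop:suite_centrale} (which needs $\charact k>M_{A_1}=2$), but for $\charact k=2$ --- a case explicitly allowed by the hypotheses --- none of the paper's results give it, and without surjectivity the Hopfian argument collapses. (You also locate the difficulty in the wrong place: the identification of the image of $\phi$ with $\U_{A_1}^{ma+}(\FF_q)$ is unproblematic; it is the image of $\widehat{\pi}$ that is the issue.)

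The paper sidesteps this entirely by replacing the Hopfian property with just-infiniteness: by Riehm's theorem, $U^{rr+}_{A_1}(\FF_q)\subseteq \mathrm{PSL}_2(\FF_q(\!(t)\!))$ is just-infinite, so under the contradiction hypothesis $\overline{U^+_A}(\FF_q)$ would be just-infinite; but the restriction of $\widehat{\pi}_{AA_1}$ to $\overline{U^+_A}(\FF_q)$ has \emph{infinite} kernel (it kills infinitely many real root groups) and \emph{infinite} image (the subgroup generated by the simple root groups of $A_1$ is already infinite), which is incompatible with just-infiniteness. That argument needs only the two facts you actually establish --- nontrivial (indeed infinite) kernel and infinite image --- and not full surjectivity. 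If you want to keep your structure, swap the Hopfian step for this just-infinite step; otherwise you must supply a proof that the simple root groups topologically generate $\U_{A_1}^{ma+}(\FF_q)$ in all characteristics covered by the statement.
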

\begin{proof}
Consider the (affine) GCM $B=(\begin{smallmatrix}2 & -2\\ -2 & 2\end{smallmatrix})$. Note first that the hypotheses of Lemma~\ref{lemma:nondensity_initial} are satisfied. Hence, as noted in the proof of this lemma, there is an isomorphism of the R\'emy-Ronan completions $\G_{A}^{rr}(k)$ of $\G_A(k)$ and $\G_{B}^{rr}(k)$ of $\G_B(k)$  preserving the corresponding BN-pair structures. In particular, the R\'emy-Ronan completions $U^{rr+}_A(k)\subseteq \G_{A}^{rr}(k)$ of $U_A^+(k)$ and $U^{rr+}_B(k)\subseteq \G_{B}^{rr}(k)$ of $U_B^+(k)$ are isomorphic. 

Assume for a contradiction that $Z'_A\cap\overline{U_A^+}(k)=\{1\}$. Then the surjective homomorphism $\varphi_A\co \overline{U_A^+}(k)\to U^{rr+}_A(k)$ (see \S\ref{subsection:GKKND}) is an isomorphism, so that $\overline{U_A^+}(k)\cong U^{rr+}_A(k)\cong U^{rr+}_B(k)$. On the other hand, it follows from \cite{Riehm70} (and the fact that $\G_{B}^{rr}(k)\cong \mathrm{PSL}_2(k(\!(t)\!))$) that $U^{rr+}_B(k)$ is just-infinite: every proper quotient of $U^{rr+}_B(k)$ is finite. But Corollary~\ref{corollary:functoriality_min} provides a map $\pi_{AB}\co \overline{U_A^+}(k)\to \overline{U_B^+}(k)$ with nontrivial kernel: in fact, $\ker\pi_{AB}$ is even infinite, as it contains all real root groups in $U_A^+(k)$ associated to positive real roots $\alpha=x\alpha_1+y\alpha_2$ with $x,y\geq 2$ (i.e., by \cite[Exercises~5.25--5.27]{Kac}, the element $\alpha$ is a positive real root in both $\Delta_+^{\re}(A)$ and $\Delta_+^{\re}(B)$ if and only if $nx^2-mnxy+my^2\in\{m,n\}$ and $|x-y|=1$, which is easily seen to have no positive integral solutions $(x,y)$ other than $(x,y)=(1,2)$ if $n=2$ and $(x,y)=(2,1)$ if $m=2$. One then concludes as in Remark~\ref{remark:restriction_minimal_groups}). Moreover, $\pi_{AB}$ has infinite image, as $\pi_{AB}(U_A^+(k))$ contains the subgroup of $U_B^+(k)$ generated by the simple root groups. Hence $\overline{U_A^+}(k)$ cannot be just-infinite, a contradiction.
\end{proof}

\section{Non-linearity}\label{section:nonlinearity}
This section is devoted to the proof of Theorem~\ref{thmintro:nonlinearity}. For earlier contributions to the linearity problem for the group $\U_A^{ma+}(k)$ over a finite field $k$, we refer to \cite[\S 4.2]{RCap} (see also \cite{CS14}).

We recall that a GCM $A=(a_{ij})_{i,j\in I}$ is called \emph{indecomposable} if, up to a permutation of the index set $I$, it does not admit any nontrivial block-diagonal decomposition $A=(\begin{smallmatrix}A_1 & 0\\ 0 & A_2\end{smallmatrix})$. Indecomposable GCM are either of finite, affine or indefinite type (see \cite[Chapter~4]{Kac}). If $A$ is of indefinite type and all proper submatrices of $A$ (corresponding to proper subdiagrams of the Dynkin diagram of $A$) are of finite type, then $A$ is moreover said to be of \emph{compact hyperbolic type}.

\begin{lemma}\label{lemma:GDA}
Let $A$ be a GCM of compact hyperbolic type. Then there exists some $B\leq A$ such that $B$ is of affine type.
\end{lemma}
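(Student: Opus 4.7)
The plan is to split on the rank of $A$.

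In rank $2$, one has $A = \bigl(\begin{smallmatrix}2 & -a\\-b & 2\end{smallmatrix}\bigr)$ with $a,b\geq 1$, and $A$ is compact hyperbolic precisely when $ab>4$. I would exhibit an affine $B\leq A$ explicitly: when $a,b\geq 2$, take $B = \bigl(\begin{smallmatrix}2 & -2\\-2 & 2\end{smallmatrix}\bigr)$, of affine type $A_1^{(1)}$; when $a=1$, the hypothesis $ab>4$ forces $b\geq 5$, so one takes $B = \bigl(\begin{smallmatrix}2 & -1\\-4 & 2\end{smallmatrix}\bigr)$, of affine type $A_2^{(2)}$, and symmetrically if $b=1$. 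In each case $|b_{ij}|\leq |a_{ij}|$ for all $i,j$, so $B\leq A$ as required.

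In rank $n\geq 3$, I would appeal to the classical classification of compact hyperbolic generalised Cartan matrices, which is a finite list (see \cite[Chapter~4]{Kac} and the work of Lanner, Koszul and Chein to which it refers; in particular, such matrices exist only up to a bounded rank). For each $A$ in this classification, I would exhibit an affine $B\leq A$ by the following template: choose a vertex $i_0\in I$ such that the finite-type subdiagram $A|_{I\setminus\{i_0\}}$ is a connected Dynkin diagram $F$ of rank $n-1$ (such a vertex exists in every case of the classification), and take $B$ to realise a suitable affine extension $\tilde F$ of $F$ of rank $n$ -- the untwisted extension $F^{(1)}$ whenever feasible, or a twisted variant $F^{(2)}$ or $F^{(3)}$ otherwise. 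Verifying $B\leq A$ then reduces to checking that the labels on the edges of $\tilde F$ through $i_0$ are bounded entry-wise by those of $A$.

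The main obstacle is the case-by-case verification in rank $\geq 3$. While each individual case reduces to comparing two small Cartan matrices, one must select the vertex $i_0$ and the type of affine extension carefully: in some configurations the default untwisted extension has an edge through $i_0$ whose label exceeds that in $A$, and one must then fall back on a twisted affine extension or reselect $i_0$. The finiteness of the classification guarantees that this verification necessarily terminates.
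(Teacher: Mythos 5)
Your rank-$2$ case is correct and coincides with the paper's: for $a,b\geq 2$ one takes the affine matrix of type $A_1^{(1)}$, and if one of $a,b$ equals $1$ (forcing the other to be $\geq 5$) one takes $A_2^{(2)}$. Your reading of $B\leq A$ as $|b_{ij}|\leq |a_{ij}|$ on off-diagonal entries is the one that makes the surjection $\nn^+(A)\to\nn^+(B)$ exist, and it is the comparison actually used throughout the paper, so this part is fine.

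The rank $\geq 3$ part, however, contains a genuine gap. You reduce to the finite classification of compact hyperbolic diagrams and then describe a search template (pick $i_0$ with $A|_{I\setminus\{i_0\}}$ a connected finite diagram $F$, and try affine extensions of $F$), but you only argue that this search \emph{terminates}, not that it \emph{succeeds}: you yourself observe that the untwisted extension may fail and that one must then ``fall back on a twisted affine extension or reselect $i_0$,'' without showing that some working choice always exists. Termination of a finite check does not imply the check passes, so the existence of an affine $B\leq A$ is simply not established in rank $\geq 3$. The paper closes exactly this point by making the case analysis explicit and short: if the Dynkin diagram of $A$ is a cycle of length $\ell+1$ (which in particular covers every compact hyperbolic $A$ of rank $\geq 4$), the all-simple-edge cycle $A_\ell^{(1)}$ works uniformly, since a simple edge is dominated by any edge of $A$; the remaining diagrams form an explicit list of $7$ rank-$3$ non-cycle diagrams, and for each of these one of the affine matrices $D_4^{(3)}$ or $G_2^{(1)}$ (a simple edge plus a triple edge, suitably oriented) is exhibited as the required $B$. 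To repair your argument you would need to produce such an explicit list and verify a choice of $B$ for each entry (or give a uniform reason why one always exists), rather than asserting that the finite verification will come out right.
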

\begin{proof}
We use the notation of \cite[\S 4.8]{Kac} for the parametrisation of affine GCM. If $A$ is of rank $2$, then one can take for $B$ the GCM of affine type $A_1^{(1)}$ or $A_2^{(2)}$. If the Dynkin diagram of $A$ is a cycle of length $\ell+1$ for some $\ell\geq 2$, then one can take for $B$ the GCM of affine type $A^{(1)}_{\ell}$. Assume now that the Dynkin diagram of $A$ is not a cycle and that $A$ is of rank at least $3$. Then $A$ must correspond to one of the $7$ Dynkin diagrams $H^{(3)}_{100}$, $H^{(3)}_{106}$, $H^{(3)}_{101}$, $H^{(3)}_{105}$, $H^{(3)}_{114}$, $H^{(3)}_{115}$ and $H^{(3)}_{116}$ from \cite[Section~7]{MR2608277}. One can then respectively choose $B$ to be affine of type $D^{(3)}_4$, $G^{(1)}_2$, $D^{(3)}_4$, $G^{(1)}_2$, $D^{(3)}_4$, $D^{(3)}_4$ and $G^{(1)}_2$.
\end{proof}

Using the results of \cite{CS14}, we can now prove our non-linearity theorem.

\begin{theorem}\label{thm:non-linearity}
Let $A$ be an indecomposable GCM of non-finite type and let $k$ be a finite field. Assume that $\G_A^{pma}(k)$ is GK-simple and set $G:=\G_A^{pma}(k)/Z_A'$. Then the following assertions are equivalent:
\begin{enumerate}
\item
Every compact open subgroup of $G$ is just-infinite (i.e. possesses only finite proper quotients).
\item
$\U_A^{ma+}(k)$ is linear over a local field.
\item
$G$ is a simple algebraic group over a local field.
\item
The matrix $A$ is of affine type.
\end{enumerate}
\end{theorem}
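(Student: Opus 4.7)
The implications $(4)\Rightarrow(3)\Rightarrow(2)$ are essentially classical. In affine type, the completion $\G_A^{pma}(k)$ is the group of $k(\!(t)\!)$-points of a split, simply connected, simple group scheme (the ``loop group'' picture already invoked in the introduction), so quotienting by $Z'_A$ produces a simple algebraic group $G$ over $k(\!(t)\!)$. For $(3)\Rightarrow(2)$, note that the GK-simplicity assumption forces $Z'_A\cap \U_A^{ma+}(k)=\{1\}$, so $\U_A^{ma+}(k)$ identifies with its image in $G$; as $G$ is $k(\!(t)\!)$-linear, so is this compact open subgroup. The step $(2)\Rightarrow(1)$ is where the results of \cite{CS14} enter: a compactly generated, topologically simple, t.d.l.c.\ group (which $G$ is, by \cite{simpleKM}) admitting a linear compact open subgroup over a local field has just-infinite compact open subgroups (via the resulting algebraic rigidity; Pink's classification combined with the structure of parahoric subgroups).

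The heart of the theorem is the implication $(1)\Rightarrow(4)$. Assume $A$ is not of affine type; being indecomposable and of non-finite type, $A$ is then of indefinite type. The plan is to construct a closed, nontrivial normal subgroup $K\trianglelefteq \U_A^{ma+}(k)$ of infinite index, contradicting (1) (since under GK-simplicity $\U_A^{ma+}(k)$ identifies with a compact open subgroup of $G$). The first step is to find a GCM $B\leq A$ of affine type. If $A$ is already of compact hyperbolic type, this is Lemma~\ref{lemma:GDA}. Otherwise, I would descend to a minimal subdiagram $A'$ of $A$ of non-finite type: such an $A'$ is indecomposable, of non-finite type, and all its proper subdiagrams are of finite type, hence $A'$ is either of affine or of compact hyperbolic type; in the latter case, a second application of Lemma~\ref{lemma:GDA} produces $B\leq A'\leq A$ of affine type.

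With such a $B$ at hand, Theorem~\ref{thm:construction_pi} provides a continuous, open, surjective group morphism $\widehat{\pi}_{AB}\co\U_A^{ma+}(k)\to\U_B^{ma+}(k)$. Setting $K:=\ker \widehat{\pi}_{AB}$, one has $\U_A^{ma+}(k)/K\cong \U_B^{ma+}(k)$, which is infinite since $B$ is of non-finite (affine) type. It remains to check that $K$ is nontrivial: if the index set of $B$ is a proper subset of that of $A$, the simple real root groups $\U^A_{(\alpha_i)}(k)$ for $i$ outside the index set of $B$ lie in $K$ by Theorem~\ref{thm:construction_pi}(2); if the index sets coincide but some entries of $B$ are strictly smaller than those of $A$, the analysis of Remark~\ref{remark:restriction_minimal_groups} still exhibits real root groups of $\U_A^{ma+}(k)$ killed by $\widehat{\pi}_{AB}$. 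In either case, $\U_A^{ma+}(k)$ admits a proper continuous quotient of infinite order and therefore fails to be just-infinite.

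The main obstacle I anticipate is the $(2)\Rightarrow(1)$ step, which requires a careful invocation of the classification results of \cite{CS14} combined with the specific features of $G$ (topological simplicity, compact generation, the pro-$p$ nature of its compact open subgroups, linearity of $\U_A^{ma+}(k)$ over a local field of possibly different characteristic from $k$). The combinatorial descent to an affine or compact hyperbolic subdiagram in $(1)\Rightarrow(4)$ should be essentially routine, but must be articulated using the classification of indecomposable GCMs of affine and compact hyperbolic type on which Lemma~\ref{lemma:GDA} is based.
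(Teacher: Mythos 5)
Your proof is correct and follows essentially the same route as the paper's: the implications linking (2), (3), (4) and $(3)\Rightarrow(1)$ are dispatched exactly as in the text via \cite[Corollary~1.4 and Theorem~2.6]{CS14}, and $(1)\Rightarrow(4)$ is obtained by contradiction, producing an infinite proper quotient of $\U_A^{ma+}(k)$ from an affine submatrix via Lemma~\ref{lemma:GDA} and the surjection of Theorem~\ref{thm:construction_pi}. The only (harmless) deviation is in the sub-case where $A$ has a proper submatrix $A|_J$ of non-finite type: you descend further to an affine $B\leq A$ and again apply Theorem~\ref{thm:construction_pi}, whereas the paper instead uses the normal subgroup $\U^{ma}_{\Psi_{I\setminus J}}(k)$ attached to the ideal of roots $\Psi_{I\setminus J}$ (via \cite[Lemme~3.3(c)]{Rousseau}) to get the infinite quotient $\U^{ma}_{\Psi_J}(k)$ directly; your variant has the small merit of making explicit the non-triviality of the kernel, which the paper leaves implicit.
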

\begin{proof}
Note that the GK-simplicity assumption on $\G_A^{pma}(k)$ allows to view $\U_A^{ma+}(k)$ (rather than a quotient of $\U_A^{ma+}(k)$) as a subgroup of the simple group $G$.

The implications $(4)\Rightarrow (3)\Rightarrow (2)$ are clear. Since $G$ is a non-discrete, compactly generated, topologically simple, totally disconnected locally compact group (see, for instance, \cite[Appendix~A]{CRWpart2}) and since $\U_A^{ma+}(k)$ is an open compact subgroup of $G$, the implication $(2)\Rightarrow (3)$ follows from \cite[Corollary~1.4]{CS14}, while the implication $(3)\Rightarrow (1)$ follows from \cite[Theorem~2.6]{CS14}. We are thus left with the proof of $(1)\Rightarrow (4)$.

Assume thus that $\U_A^{ma+}(k)$ is just-infinite, and suppose for a contradiction that $A$ is of indefinite type. Assume first that $A=(a_{ij})_{i,j\in I}$ has a proper submatrix $(a_{ij})_{i,j\in J}$ of non-finite type. Consider the closed sets of positive roots $$\Psi_J:=\Delta_+(A)\cap\bigoplus_{j\in J}{\NN\alpha_j}\quad\textrm{and}\quad\Psi_{I\setminus J}:=\Delta_+(A)\setminus\Psi_J.$$
Note that $\Psi_{I\setminus J}$ is an ideal in $\Delta_+(A)$, in the sense that $\alpha+\beta\in \Psi_{I\setminus J}$ for all $\alpha\in\Delta_+(A)$ and $\beta\in \Psi_{I\setminus J}$ such that $\alpha+\beta\in\Delta_+(A)$. It then follows from \cite[Lemme~3.3(c)]{Rousseau} that $\U_{\Psi_{I\setminus J}}^{ma}(k)$ is normal in $\U_A^{ma+}(k)$ and that 
$$\U_A^{ma+}(k)/\U_{\Psi_{I\setminus J}}^{ma}(k)\cong \U_{\Psi_{J}}^{ma}(k)$$
is infinite, contradicting (1).

We may thus assume that $A$ is of compact hyperbolic type. By Lemma~\ref{lemma:GDA}, there exists a matrix $B$ of affine type such that $B\leq A$. It then follows from Theorem~\ref{thm:construction_pi} that there is a surjective map $\widehat{\pi}_{AB}\co \U_A^{ma+}(k)\to \U_B^{ma+}(k)$. This again yields an infinite quotient $$\U_A^{ma+}(k)/K\cong \U_B^{ma+}(k)$$ of $\U_A^{ma+}(k)$ for $K:=\ker\widehat{\pi}_{AB}$, in contradiction with (1). This concludes the proof of the theorem.
\end{proof}

\begin{remark}\label{remark:KZ'}
Note that, up to replacing $\U_A^{ma+}(k)$ by $\U_A^{ma+}(k)/Z$ where $Z:=Z'_A\cap \U_A^{ma+}(k)$ in the statement of Theorem~\ref{thm:non-linearity}, the GK-simplicity assumption on $\G_A^{pma}(k)$ can be substantially weakened. Indeed, the only issue that may arise in the above proof of Theorem~\ref{thm:non-linearity} if we replace $\U_A^{ma+}(k)$ by its quotient $\U_A^{ma+}(k)/Z$ is that for $A$ of compact hyperbolic type, the implication $(1)\Rightarrow (4)$ would require to ensure that the map $$\U_A^{ma+}(k)/Z\to \U_B^{ma+}(k)/\widehat{\pi}_{AB}(Z)$$ induced by $\widehat{\pi}_{AB}$ has still infinite image. In other words, we need to know that $KZ$ is not open in $\U_A^{ma+}(k)$ where $K:=\ker\widehat{\pi}_{AB}$, which is \emph{a priori} much weaker than the GK-simplicity assumption $Z=\{1\}$.
\end{remark}

\section{On the isomorphism problem}\label{section:isomproblem}
This section is devoted to the proof of Proposition~\ref{thmintro:isomproblem}.
Let $A=(a_{ij})_{i,j\in I}$ be a GCM, and let $k$ be a field. Set $\gamma_1(\U_A^{ma+}(k)):=\U_A^{ma+}(k)$, and for each $n\geq 1$, define recursively $$\gamma_{n+1}(\U_A^{ma+}(k)):=\overline{[\U_A^{ma+}(k),\gamma_n(\U_A^{ma+}(k))]},$$ 
that is, $\gamma_{n+1}(\U_A^{ma+}(k))$ is the closure in $\U_A^{ma+}(k)$ of the commutator subgroup $[\U_A^{ma+}(k),\gamma_n(\U_A^{ma+}(k))]$.

\begin{remark}\label{remark:LCS}
If $k$ is a finite field of characteristic $p>M_A$, then $\U_A^{ma+}(k)$ is a finitely generated pro-$p$ group by \cite[\S 2.2]{RCap}. It then follows from \cite[Exercise~1.17]{padicanalytic} that $\big(\gamma_n(\U_A^{ma+}(k))\big)_{n\geq 1}$ coincides with the lower central series of $\U_A^{ma+}(k)$.
\end{remark}

The proof of the following proposition is an adaptation of the proof of \cite[Proposition~6.11]{Rousseau} (see also \cite[\S 2.2]{RCap}).
\begin{prop}\label{prop:suite_centrale}
Let $A=(a_{ij})_{i,j\in I}$ be a GCM and let $k$ be a field. Assume that $\charact k=0$ or that $\charact k>M_A$. Then $\gamma_n(\U_A^{ma+}(k))=\U_{A,n}^{ma}(k)$ for all $n\geq 1$.
\end{prop}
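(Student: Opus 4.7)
The plan is to establish both inclusions by induction, the first being a formal consequence of the multiplicative structure of $\widehat{\UU}_k^+$, while the second exploits the Lie-algebraic generation of $\nn^+_k$ by the Chevalley generators, which crucially depends on the characteristic hypothesis.

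For $\gamma_n(\U_A^{ma+}(k)) \subseteq \U_{A,n}^{ma}(k)$, I would work inside $\widehat{\UU}_k^+$ and consider the closed two-sided ideals $\widehat{I}_m := \prod_{\height\alpha \geq m}(\UU_\alpha\otimes_{\ZZ}k)$, which satisfy $\widehat{I}_m\widehat{I}_{m'} \subseteq \widehat{I}_{m+m'}$ and $\U_{A,m}^{ma}(k) \subseteq 1 + \widehat{I}_m$. Given $g \in \U_A^{ma+}(k) \subseteq 1 + \widehat{I}_1$ and $h \in \U_{A,n}^{ma}(k) \subseteq 1 + \widehat{I}_n$, a direct expansion gives $gh - hg = (g-1)(h-1) - (h-1)(g-1) \in \widehat{I}_{n+1}$, hence $[g,h] - 1 = (gh - hg)g\inv h\inv \in \widehat{I}_{n+1}$, yielding $[\U_A^{ma+}(k), \U_{A,n}^{ma}(k)] \subseteq \U_{A,n+1}^{ma}(k)$; closedness of the latter and induction on $n$ conclude.

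For the reverse inclusion, I would prove by induction on $m \geq 1$ that $[\exp]\lambda x \in \gamma_m(\U_A^{ma+}(k))$ for every homogeneous $x \in (\nn^+_\ZZ)_\alpha$ of height $\height\alpha = m$ and every $\lambda \in k$; granted this, every $u \in \U_{A,n}^{ma}(k)$ admits, by Proposition~\ref{prop formal sum Rousseau}, a convergent-product expression with factors of height $\geq n$ each lying in $\gamma_n(\U_A^{ma+}(k))$, whose closedness concludes. The two key identities in $\widehat{\UU}_k^+$, both obtained by the same first-order expansion as above, are the additivity
\[
[\exp](a+b) \equiv [\exp]a\cdot[\exp]b \pmod{\U_{A,2\height\alpha}^{ma}(k)} \qquad (a,b \in (\nn^+_k)_\alpha)
\]
and the commutator formula
\[
[\exp(\lambda e_i),\, [\exp]\mu y] \equiv [\exp]\lambda\mu[e_i, y] \pmod{\U_{A,\height\beta+2}^{ma}(k)}
\]
for $y$ homogeneous of degree $\beta$ with $\alpha_i+\beta \in \Delta_+$. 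Under the hypothesis $\charact k = 0$ or $>M_A$, every Serre-type divided power of order $\leq M_A$ becomes a $k$-rational iterated bracket of Chevalley generators, so $\nn^+_k$ is Lie-generated by $\{e_i\}_{i\in I}$ and, for $\height\alpha \geq 2$, $(\nn^+_k)_\alpha = \sum_{i\in I}[e_i,(\nn^+_k)_{\alpha-\alpha_i}]$. Writing $x = \sum_r c_r[e_{i_r}, y_r]$ with $y_r$ homogeneous of height $m-1$, the induction hypothesis gives $[\exp]y_r \in \gamma_{m-1}$, whence $[\exp(\lambda c_r e_{i_r}), [\exp]y_r] \in [\U_A^{ma+}(k), \gamma_{m-1}] \subseteq \gamma_m$, and combining the two identities produces $[\exp]\lambda x \in \gamma_m(\U_A^{ma+}(k))\cdot\U_{A,m+1}^{ma}(k)$.

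To control the $\U_{A,m+1}^{ma}(k)$-remainder, I would run the argument jointly with a secondary induction on the layer $N$, establishing that $[\exp]\lambda x \in \gamma_m(\U_A^{ma+}(k))\cdot\U_{A,N}^{ma}(k)$ for all $N\geq m$; the normal subgroup $\gamma_m(\U_A^{ma+}(k))\cdot\U_{A,N}^{ma}(k)$ absorbs successive remainders via the standard identity $[a, gu] = [a,g]\cdot g[a,u]g\inv$ together with the inclusion $[\U_A^{ma+}(k), \U_{A,N}^{ma}(k)] \subseteq \U_{A,N+1}^{ma}(k)$ from the first part. Letting $N\to\infty$ and invoking the closedness of $\gamma_m(\U_A^{ma+}(k))$ together with the neighbourhood-basis property of the $\U_{A,N}^{ma}(k)$ concludes the induction. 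The main obstacle is the Lie algebra generation $(\nn^+_k)_\alpha = \sum_i[e_i,(\nn^+_k)_{\alpha-\alpha_i}]$ in positive degree, which is precisely where the assumption $\charact k = 0$ or $>M_A$ is indispensable: otherwise, uninvertible Serre denominators would obstruct this generation, in accordance with the non-density phenomena established in Section~\ref{section:ND}.
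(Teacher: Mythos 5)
Your overall strategy is the same as the paper's: the easy inclusion $\gamma_n\subseteq\U^{ma}_{A,n}(k)$ via the filtration of $\widehat{\UU}_k^+$ (the paper simply quotes \cite[Lemme~3.3]{Rousseau} for this), and the hard inclusion via the fact that in characteristic $0$ or $>M_A$ every graded piece of $\nn^+_k$ in height $\geq 2$ is spanned by brackets $[e_i,\cdot]$ with entries of lower height, transported to the group by the first-order commutator congruence and then telescoped through the layers $\U^{ma}_{A,N}(k)$ before taking closures. The paper outsources exactly this single-step congruence to the proof of \cite[Proposition~6.11]{Rousseau} (for $g\in U^{ma}_m$ there are $i$ and $h\in U^{ma}_{m-1}$ with $g\equiv[\exp e_i,h]\bmod U^{ma}_{m+1}$), so your proposal is essentially a self-contained expansion of the same argument.

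There is, however, one organizational point you must repair. Your primary induction is on the height $m$ of a homogeneous $x$, and your secondary induction pushes the remainder from $\gamma_m\cdot\U^{ma}_{A,N}(k)$ to $\gamma_m\cdot\U^{ma}_{A,N+1}(k)$. That step requires $\U^{ma}_{A,N}(k)\subseteq\gamma_m\cdot\U^{ma}_{A,N+1}(k)$, i.e.\ you must decompose homogeneous elements of height $N-1\geq m$ inside $\gamma_{m-1}\cdot\U^{ma}_{A,N}(k)$ — and this is \emph{not} covered by your primary induction hypothesis, which only concerns heights $\leq m-1$. As literally written the two inductions do not close. Two fixes are available. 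Either interleave everything into the single chain of statements $\U^{ma}_{A,N}(k)\subseteq\gamma_N\cdot\U^{ma}_{A,N+1}(k)$ for all $N\geq 1$, proved by induction on $N$ using precisely your identity $[a,gu]=[a,g]\cdot g[a,u]g\inv$ and $[\U^{ma+}_A(k),\U^{ma}_{A,N}(k)]\subseteq\U^{ma}_{A,N+1}(k)$; the telescoping $\U^{ma}_{A,n+1}(k)\subseteq\gamma_{n+1}\cdot\U^{ma}_{A,N}(k)$ for all $N$ then gives the result by closedness. Or, more economically (this is what the paper does), note that the second entry of the commutator need only lie in $\U^{ma}_{A,m-1}(k)$ rather than in $\gamma_{m-1}$: one gets $\U^{ma}_{A,m}(k)\subseteq[\U^{ma+}_A(k),\U^{ma}_{A,n}(k)]\cdot\U^{ma}_{A,m+1}(k)$ for all $m\geq n+1$ with no reference to the $\gamma$'s, iterates to get $\U^{ma}_{A,n+1}(k)\subseteq\overline{[\U^{ma+}_A(k),\U^{ma}_{A,n}(k)]}$, and only then runs an outer induction on the full equality $\gamma_n=\U^{ma}_{A,n}(k)$ to identify the right-hand side with $\gamma_{n+1}$. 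With either repair your argument is complete; the remaining ingredients (the additivity and commutator congruences, the identification of $1+\widehat{I}_{n+1}$ with $\U^{ma}_{A,n+1}(k)$ via the unique product decomposition of Proposition~\ref{prop formal sum Rousseau}, and the generation of $(\nn^+_k)_\alpha$ by $\sum_i[e_i,(\nn^+_k)_{\alpha-\alpha_i}]$ under the characteristic hypothesis) are all correct as stated.
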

\begin{proof}
To lighten the notation, we set $U^{ma+}=\U_A^{ma+}(k)$ and $U^{ma}_n=\U_{A,n}^{ma}(k)$. 
Given some $n\geq 1$, it follows from \cite[proof of Proposition~6.11]{Rousseau} that $$U^{ma}_m\subseteq [U^{ma+},U^{ma}_n]\cdot U^{ma}_{m+1}\quad\textrm{for all $m\geq n+1$.}$$ Indeed, in the notation of \emph{loc. cit.}, G.~Rousseau proves that for any given $g\in U^{ma}_m$, there exists some $i\in I$ and some $h\in U^{ma}_{m-1}$ such that $g\equiv [\exp e_i,h] \mod U^{ma}_{m+1}$, yielding the claim. 
By definition of the topology on $U^{ma+}$, we deduce that $U^{ma}_{n+1}\subseteq \overline{[U^{ma+},U^{ma}_n]}$ for all $n\geq 1$.
Since the reverse inclusion holds as well by \cite[Lemme~3.3]{Rousseau}, so that
$$U^{ma}_{n+1}= \overline{[U^{ma+},U^{ma}_n]}\quad\textrm{for all $n\geq 1$},$$
the proposition follows from an easy induction on $n$.
\end{proof}

\begin{remark}
If $k=\FF_q$ is finite and such that $|a_{ij}|\geq q$ for some $i,j\in I$, Proposition~\ref{prop:element_not_dense} shows that the conclusion of Proposition~\ref{prop:suite_centrale} does not hold anymore, i.e. $\gamma_2(\U_A^{ma+}(k))$ is properly contained in $\U_{A,2}^{ma}(k)$.
\end{remark}

We now apply the above observations to the study of the isomorphism problem for Mathieu--Rousseau completions of Kac--Moody groups over finite fields. 
We first record some known facts about complete Kac--Moody groups allowing to recognise specific subgroups from the topological group structure. 

For this, we will need to define Kac--Moody groups in a slightly more general context, namely by considering arbitrary Kac--Moody root data (see for instance \cite[\S 1.1]{Rousseau} or \cite[\S 7.3]{KMGbook}). 

To simplify the notation, we have so far considered Kac--Moody root data $\DDD$ of simply connected type, as we are mainly interested in the structure of the subgroup $\U_A^{ma+}$, which only depends on the GCM $A$ and not on a specific choice of $\DDD$. For $\DDD=(I,A,\Lambda,(c_i)_{i\in I},(h_i)_{i\in I})$ arbitrary with associated GCM $A=(a_{ij})_{i,j\in I}$, we denote by $\G_{\DDD}^{pma}$ the Mathieu--Rousseau completion of the Tits functor $\G_{\DDD}$ of type $\DDD$ (see \cite[\S 3.19]{Rousseau}), and by $Z'_{\DDD}$ the kernel of the action of $\G_{\DDD}^{pma}$ on its associated building.

The additional information provided by $\DDD$ is encoded in the torus scheme $\T_{\DDD}$. We denote as before by $\B_{\DDD}^{ma+}=\T_{\DDD}\ltimes\U_A^{ma+}$ the standard Borel subgroup of $\G_{\DDD}^{pma}$. Given a subset $J\subseteq I$, we let $\PP_{\DDD}^{ma+}(J)$ denote the standard parabolic subgroup of $\G_{\DDD}^{pma}$ of type $J$ (see \cite[\S 3.10]{Rousseau}). We also set $\DDD(J):=(J,A|_J,\Lambda,(c_i)_{i\in J},(h_i)_{i\in J})$ where $A|_J=(a_{ij})_{i,j\in J}$ and $\Delta_+(J):=\Delta_+\cap \bigoplus_{j\in J}{\ZZ\alpha_j}$.

\begin{lemma}\label{lemma:basic_facts_KMpro-p}
Let $\DDD$ be a Kac--Moody root datum with associated GCM $A=(a_{ij})_{i,j\in I}$ and let $k$ be a finite field of characteristic $p$. Then the following hold:
\begin{enumerate}
\item
If $\G_{\DDD}^{pma}(k)$ contains an open pro-$q$ subgroup for some prime $q$, then $q=p$.
\item
Every maximal pro-$p$ subgroup of $\G_{\DDD}^{pma}(k)$ is conjugate to $\U_A^{ma+}(k)$. 
\item
The normaliser of $\U_A^{ma+}(k)$ in $\G_{\DDD}^{pma}(k)$ is the standard Borel subgroup $\B_{\DDD}^{ma+}(k)$.
\item
The subgroups of $\G_{\DDD}^{pma}(k)$ containing $\B_{\DDD}^{ma+}(k)$ are precisely the standard parabolic subgroups of $\G_{\DDD}^{pma}(k)$.
\item
For any subset $J\subset I$, one has a Levi decomposition $\PP_{\DDD}^{ma+}(J)=\G^{pma}_{\DDD(J)}\ltimes\U^{ma}_{\Delta_+\setminus\Delta_+(J)}$. Moreover, $$\bigcap_{g\in \PP_{\DDD}^{ma+}(J)}{g\U_A^{ma+}(k)g\inv}=(Z'_{\DDD(J)}\cap \U^{ma}_{\Delta_+(J)}(k))\ltimes \U^{ma}_{\Delta_+\setminus\Delta_+(J)}(k).$$
\end{enumerate}
\end{lemma}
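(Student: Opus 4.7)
The plan is to establish the five items in sequence, leveraging throughout the fact that $\U_A^{ma+}(k)$ is an open pro-$p$ subgroup of $\G_{\DDD}^{pma}(k)$ while $\T_{\DDD}(k)=\Hom_{\gr}(\Lambda,k^\times)$ is finite of order prime to $p$. The first holds because each finite quotient $\U_A^{ma+}(k)/\U^{ma}_n(k)$ is built from the finite-dimensional $\FF_p$-vector spaces $(\nn^+_k)_\alpha$, hence is a finite $p$-group; the second because $|k^\times|=q-1$ is coprime to $p$. Assertion~(1) is then immediate: any open pro-$q$ subgroup $Q$ has open, hence non-trivial, intersection with $\U_A^{ma+}(k)$, and this intersection is simultaneously pro-$p$ and pro-$q$, forcing $p=q$. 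For (2), a pro-$p$ subgroup $P$ is compact, hence acts with bounded orbits on the locally finite building $X_+$; a $p$-group fixed-point argument applied to a sufficiently large ball about the standard chamber (equivalently a CAT(0) fixed-point theorem on the Davis realisation) produces a $P$-fixed chamber. Its stabiliser is a conjugate $g\B_{\DDD}^{ma+}(k)g\inv$, and since $g\T_{\DDD}(k)g\inv$ has order prime to $p$, the unique maximal pro-$p$ subgroup of this stabiliser is $g\U_A^{ma+}(k)g\inv$; maximality of $P$ then gives equality.

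Assertions (3) and (4) are standard consequences of the BN-pair structure $(\B_{\DDD}^{ma+}(k),\N_{\DDD}(k))$ on $\G_{\DDD}^{pma}(k)$. For (3), the containment $\B_{\DDD}^{ma+}(k)\subseteq N_{\G_{\DDD}^{pma}(k)}(\U_A^{ma+}(k))$ comes from the semidirect decomposition of $\B_{\DDD}^{ma+}(k)$. For the converse, writing $g\in\G_{\DDD}^{pma}(k)$ in Bruhat form $g=b_1\widetilde{w}b_2$ one has $g\U_A^{ma+}(k)g\inv=b_1(\widetilde{w}\U_A^{ma+}(k)\widetilde{w}\inv)b_1\inv$; if $w\neq 1$ there is some $\alpha\in\Delta_+^{\re}$ with $w\alpha\in\Delta_-^{\re}$, so $\widetilde{w}$ sends the real root group $\U_{(\alpha)}(k)$ outside $\U_A^{ma+}(k)$ and hence fails to normalise it, forcing $g\in\B_{\DDD}^{ma+}(k)$. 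Item (4) is then the standard classification of overgroups of a Borel in a group with a BN-pair.

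For (5), the Levi decomposition $\PP_{\DDD}^{ma+}(J)=\G_{\DDD(J)}^{pma}\ltimes \U^{ma}_{\Delta_+\setminus\Delta_+(J)}$ is due to Rousseau (\cite{Rousseau}). Since $\U^{ma}_{\Delta_+\setminus\Delta_+(J)}(k)$ is the unipotent radical of $\PP_{\DDD}^{ma+}(J)$, it is normal in the parabolic and contained in $\U_A^{ma+}(k)$, hence lies in every conjugate $g\U_A^{ma+}(k)g\inv$ with $g\in\PP_{\DDD}^{ma+}(J)$. Modding out reduces the problem to computing $\bigcap_{l\in\G_{\DDD(J)}^{pma}(k)}l\thinspace\U^{ma}_{\Delta_+(J)}(k)\thinspace l\inv$ inside $\G_{\DDD(J)}^{pma}(k)$, which I claim is precisely $Z'_{\DDD(J)}\cap \U^{ma}_{\Delta_+(J)}(k)$. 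The inclusion $\subseteq$ follows from $\U^{ma}_{\Delta_+(J)}\subseteq \B_{\DDD(J)}^{ma+}$ together with the definition $Z'_{\DDD(J)}=\bigcap_l l\B_{\DDD(J)}^{ma+}(k)l\inv$; the reverse inclusion uses that, as recalled in \S\ref{subsection:GKS}, $Z'_{\DDD(J)}\cap \U^{ma}_{\Delta_+(J)}(k)$ is a normal subgroup of $\G_{\DDD(J)}^{pma}(k)$ contained in $\U^{ma}_{\Delta_+(J)}(k)$, and therefore in every conjugate $l\thinspace\U^{ma}_{\Delta_+(J)}(k)\thinspace l\inv$. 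The main technical obstacles are making the fixed-point argument in (2) fully rigorous and invoking the Levi decomposition in (5) with the correct identification of $\G_{\DDD(J)}^{pma}$ as a subgroup of $\G_{\DDD}^{pma}$.
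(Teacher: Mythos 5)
Your proof is correct and follows essentially the same route as the paper: the paper's own proof of this lemma consists almost entirely of citations (R\'emy for (1)--(2), standard BN-pair theory for (3)--(4), Rousseau for the Levi decomposition, and the identification of $Z'_{\DDD(J)}\cap\U^{ma}_{\Delta_+(J)}(k)$ as the largest normal subgroup of $\G^{pma}_{\DDD(J)}(k)$ contained in $\U^{ma}_{\Delta_+(J)}(k)$ for the intersection formula in (5)), and your arguments are precisely the standard ones behind those citations. The one step you flag yourself --- passing from a fixed point in the Davis realisation to a fixed chamber in (2) --- is handled by the usual counting observation that a spherical residue contains $\sum_{w\in W_J}q^{\ell(w)}\equiv 1\ (\modulo p)$ chambers, so a $p$-group stabilising it fixes one of them.
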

\begin{proof}
To prove (1), let $V$ be an open pro-$q$ subgroup of $\G_{\DDD}^{pma}(k)$. Then $V':=V\cap \U_A^{ma+}(k)$ is open in $V$, hence an open pro-$q$ subgroup of $\U_A^{ma+}(k)$ (see e.g. \cite[Proposition~1.11(i)]{padicanalytic}). Since $\U_A^{ma+}(k)$ is pro-$p$, the same argument implies that $V'$ is pro-$p$, and hence $q=p$.

The second statement follows from \cite[1.B.2]{Remytopolsimple} (see also \cite[Section~2]{RCap}). The statements (3) and (4) are standard (see e.g. \cite[Theorem~6.43]{BrownAbr}). The Levi decomposition in (5) follows from \cite[3.10]{Rousseau}. 

Let us now prove the identity in (5). Since $\U^{ma}_{\Delta_+\setminus\Delta_+(J)}(k)$ is normal in $\PP_{\DDD}^{ma+}(J)$ (see the above Levi decomposition) and since $Z'_{\DDD(J)}\cap \U^{ma}_{\Delta_+(J)}(k)$ is the Gabber--Kac kernel of $\G^{pma}_{\DDD(J)}$ (hence is conjugate under any element of $\PP_{\DDD}^{ma+}(J)=\G^{pma}_{\DDD(J)}\ltimes\U^{ma}_{\Delta_+\setminus\Delta_+(J)}$ to an element of $\U^{ma+}_A(k)$), the inclusion from right to left is clear. Conversely, since the Gabber--Kac kernel $Z'_{\DDD(J)}\cap \U^{ma}_{\Delta_+(J)}(k)$ is the largest normal subgroup of $\G^{pma}_{\DDD(J)}$ that is contained in $\U^{ma}_{\Delta_+(J)}(k)$, the image of $\bigcap_{g\in \PP_{\DDD}^{ma+}(J)}{g\U_A^{ma+}(k)g\inv}$ under the quotient map $$\U^{ma+}_A(k)=\U^{ma}_{\Delta_+(J)}(k)\ltimes \U^{ma}_{\Delta_+\setminus\Delta_+(J)}(k)\to \U^{ma}_{\Delta_+(J)}(k)$$ (see \cite[Lemme~3.3(c)]{Rousseau}) is contained in $Z'_{\DDD(J)}\cap \U^{ma}_{\Delta_+(J)}(k)$, as desired.
\end{proof}

To lighten the notation, we will write $H/Z'_A:=H/(H\cap Z'_A)$ for any subgroup $H$ of $\G^{pma}_A(k)$.

\begin{lemma}\label{lemma:isom1}
Let $\DDD,\DDD'$ be Kac--Moody root data with associated GCM $A=(a_{ij})_{i,j\in I}$ and $A'=(a'_{ij})_{i,j\in I'}$, respectively. Let also $k,k'$ be finite fields. If $\alpha\co\G_{\DDD}^{pma}(k)/Z'_{\DDD}\to\G_{\DDD'}^{pma}(k')/Z'_{\DDD'}$ is an isomorphism of topological groups, then $k\cong k'$ and there exist an inner automorphism $\gamma$ of $\G_{\DDD'}^{pma}(k')/Z'_{\DDD'}$ and a bijection $\sigma\co I\to I'$ such that 
$$\gamma\alpha(\U_{A|_{\{i,j\}}}^{ma+}(k)/Z'_{\DDD})=\U_{A'|_{\{\sigma(i),\sigma(j)\}}}^{ma+}(k')/Z'_{\DDD'}\quad\textrm{for all distinct $i,j\in I$.}$$ 
\end{lemma}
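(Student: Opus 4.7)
The plan is to recover, from the topological group structure alone, the standard subgroups of $\G_{\DDD}^{pma}(k)/Z'_{\DDD}$ (maximal pro-$p$ subgroup, Borel, parabolics, torus, Levi factors), using the characterisations provided by Lemma~\ref{lemma:basic_facts_KMpro-p}, and to iteratively adjust $\alpha$ by inner automorphisms of the target so as to match each of these to its counterpart on the other side.

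First I would invoke Lemma~\ref{lemma:basic_facts_KMpro-p}(1) to deduce that $\charact k=\charact k'=:p$, since both $\U_A^{ma+}(k)Z'_{\DDD}/Z'_{\DDD}$ and $\U_{A'}^{ma+}(k')Z'_{\DDD'}/Z'_{\DDD'}$ are open pro-$p$ subgroups. By part~(2) these are maximal pro-$p$ and all maximal pro-$p$ subgroups are conjugate, so after composing $\alpha$ with a suitable inner automorphism $\gamma_0$ of the target I may assume they are exchanged. Parts~(3) and~(4) then ensure that $\gamma_0\alpha$ maps the Borel subgroup to the Borel subgroup (as the normaliser of a maximal pro-$p$ subgroup) and induces an order-isomorphism of the lattices of standard parabolic subgroups, which are indexed by subsets of the respective index sets; the rank-$1$ parabolics (the atoms above the Borel) then produce the required bijection $\sigma\co I\to I'$ with $\gamma_0\alpha(\PP_{\DDD}^{ma+}(J)Z'/Z')=\PP_{\DDD'}^{ma+}(\sigma(J))Z'/Z'$ for all $J\subseteq I$.

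Next I would refine $\gamma_0$ in order to also match the tori. Inside the Borel, $\T_{\DDD}(k)Z'/Z'$ is a complement to $\U_A^{ma+}(k)Z'/Z'$; its image is another such complement in the target Borel, and since any two such complements are conjugate by an element of the unipotent radical, one can find an inner automorphism $\gamma_1$ induced by an element of $\U_{A'}^{ma+}(k')Z'/Z'$ (which preserves the earlier matching of maximal pro-$p$ subgroups) such that $\gamma:=\gamma_1\gamma_0$ maps $\T_{\DDD}(k)Z'/Z'$ to $\T_{\DDD'}(k')Z'/Z'$. For any pair $J=\{i,j\}\subseteq I$, Lemma~\ref{lemma:basic_facts_KMpro-p}(5) identifies the Levi factor $\G_{\DDD(J)}^{pma}(k)$ of $\PP_{\DDD}^{ma+}(J)$ as the unique Levi complement of $\U^{ma}_{\Delta_+\setminus\Delta_+(J)}(k)$ containing $\T_{\DDD}(k)$; since $\gamma\alpha$ now preserves both the parabolic and the torus, it must send $\G_{\DDD(J)}^{pma}(k)Z'/Z'$ to $\G_{\DDD'(\sigma(J))}^{pma}(k')Z'/Z'$. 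Intersecting with $\U_A^{ma+}(k)Z'/Z'$ and using the equality $\U_{A|_J}^{ma+}(k)=\U_A^{ma+}(k)\cap\G_{\DDD(J)}^{pma}(k)$ (as the positive root groups of the Levi are precisely those with root in $\Delta_+(J)$) then yields assertion~(1).

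The matched rank-$1$ Levi subgroups $\G_{\DDD(\{i\})}^{pma}(k)$ and $\G_{\DDD'(\{\sigma(i)\})}^{pma}(k')$, which are $\SL_2$-type groups over $k$ and $k'$ respectively, will finally give $k\cong k'$ via the classical rigidity of $\SL_2$ (or its quotient by the centre) over finite fields. I anticipate the main technical difficulty to lie in the rigorous treatment of Levi decompositions modulo $Z'_{\DDD}$: one must verify, as in the classical reductive setting, that Levi complements of the unipotent radical of a parabolic are conjugate under that unipotent radical, and that the corresponding inner automorphism $\gamma_1$ can be chosen simultaneously for all pairs $\{i,j\}$ without disturbing the previously established matching of maximal pro-$p$ subgroups.
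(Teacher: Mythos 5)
Your first half coincides with the paper's proof: Lemma~\ref{lemma:basic_facts_KMpro-p}(1)--(2) to match the maximal pro-$p$ subgroups after an inner automorphism, (3) to match the Borels, and (4) to match the lattices of standard parabolics and extract $\sigma$. The divergence, and the gap, is in how you pass from the matched rank-$2$ parabolics $\PP_{\DDD}^{ma+}(\{i,j\})$ to the subgroups $\U_{A|_{\{i,j\}}}^{ma+}(k)$. Your argument via Levi complements presupposes that $\gamma\alpha$ carries $\U^{ma}_{\Delta_+\setminus\Delta_+(\{i,j\})}(k)/Z'_{\DDD}$ to $\U^{ma}_{\Delta_+\setminus\Delta_+(\{\sigma(i),\sigma(j)\})}(k')/Z'_{\DDD'}$: without this you cannot even formulate ``Levi complement of $\U^{ma}_{\Delta_+\setminus\Delta_+(J)}$'' on the target side, nor carry out the final intersection. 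You never establish this, and it is not automatic --- in particular, $\U^{ma}_{\Delta_+\setminus\Delta_+(J)}$ need not be the largest normal pro-$p$ subgroup of the parabolic, precisely because the Levi factor may have a nontrivial Gabber--Kac kernel. This is exactly the point of the intersection formula in Lemma~\ref{lemma:basic_facts_KMpro-p}(5): the paper identifies $\bigcap_{g\in \PP_{\DDD}^{ma+}(J)}{g\U_A^{ma+}(k)g\inv}=(Z'_{\DDD(J)}\cap \U^{ma}_{\Delta_+(J)}(k))\ltimes \U^{ma}_{\Delta_+\setminus\Delta_+(J)}(k)$ as an intrinsically defined subgroup (an intersection of conjugates of the already-matched maximal pro-$p$ subgroup over the already-matched parabolic), deduces that the radicals correspond, and then uses the decomposition $\U_A^{ma+}=\U^{ma}_{\Delta_+(J)}\ltimes\U^{ma}_{\Delta_+\setminus\Delta_+(J)}$ to conclude. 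Your route omits this step entirely.

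Two further points. First, the uniqueness of the Levi complement containing a prescribed torus, which your argument relies on, is asserted but not proved and is not among the facts recorded in Lemma~\ref{lemma:basic_facts_KMpro-p}; likewise the conjugacy (under the unipotent radical) of complements of $\U_{A'}^{ma+}(k')/Z'_{\DDD'}$ in the Borel requires a profinite Schur--Zassenhaus argument that the paper's proof avoids altogether --- the paper never needs to match the tori. Second, for $k\cong k'$ the paper simply observes that the panels of the (common) building have cardinality $|k|+1$; your route through rigidity of the rank-$1$ Levi factors would also work in principle, but it inherits the gap above since it again depends on having matched the Levis.
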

\begin{proof}
By Lemma~\ref{lemma:basic_facts_KMpro-p}(1) and (2), there exists an inner automorphism $\gamma$ of $\G_{\DDD'}^{pma}(k')/Z'_{\DDD'}$ such that $\gamma\alpha$ maps $\U_A^{ma+}(k)/Z'_{\DDD}$ to $\U_{A'}^{ma+}(k')/Z'_{\DDD'}$. Then $\gamma\alpha$ maps $\B_{\DDD}^{ma+}(k)/Z'_{\DDD}$ to $\B_{\DDD'}^{ma+}(k')/Z'_{\DDD'}$ by Lemma~\ref{lemma:basic_facts_KMpro-p}(3). Hence Lemma~\ref{lemma:basic_facts_KMpro-p}(4) implies that $\gamma\alpha$ maps maximal chains of standard parabolic subgroups in $\G_{\DDD}^{pma}(k)/Z'_{\DDD}$ to maximal chains of standard parabolic subgroups in $\G_{\DDD'}^{pma}(k')/Z'_{\DDD'}$. In particular, $|I|=|I'|$ and there exists a bijection $\sigma\co I\to I'$ such that $$\gamma\alpha(\PP_{\DDD}^{ma+}(\{i\})/Z'_{\DDD})=\PP_{\DDD'}^{ma+}(\{\sigma(i)\})/Z'_{\DDD'}\quad\textrm{for all $i\in I$.}$$ Hence $$\gamma\alpha(\PP_{\DDD}^{ma+}(\{i,j\})/Z'_{\DDD})=\PP_{\DDD'}^{ma+}(\{\sigma(i),\sigma(j)\})/Z'_{\DDD'}\quad\textrm{for all $i,j\in I$.}$$ It then follows from Lemma~\ref{lemma:basic_facts_KMpro-p}(5) that $$\gamma\alpha(\U^{ma}_{\Delta_+\setminus\Delta_+(\{i,j\})}(k)/Z'_{\DDD})=\U^{ma}_{\Delta_+\setminus\Delta_+(\{\sigma(i),\sigma(j)\})}(k')/Z'_{\DDD'}$$ and hence that $$\gamma\alpha(\U^{ma}_{\Delta_+(\{i,j\})}(k)/Z'_{\DDD})=\U^{ma}_{\Delta_+(\{\sigma(i),\sigma(j)\})}(k')/Z'_{\DDD'}\quad\textrm{for all $i,j\in I$}$$ because 
$$\U_A^{ma+}=\U^{ma}_{\Delta_+(J)}(k)\ltimes \U^{ma}_{\Delta_+\setminus\Delta_+(J)}(k)\quad\textrm{for all $J\subseteq I$},$$
and similarly for $\U_{A'}^{ma+}$. As $\U^{ma}_{\Delta_+(\{i,j\})}(k)=\U_{A|_{\{i,j\}}}^{ma+}(k)$, it thus remains to prove that $k\cong k'$.

Since each panel of the building $X_+$ of $\G_{\DDD}^{pma}(k)/Z'_{\DDD}$ (respectively, $X_+'$ of $\G_{\DDD'}^{pma}(k')/Z'_{\DDD'}$) is of cardinality $|k|+1$ (respectively, $|k'|+1$)(see for instance \cite[Chapter~7]{BrownAbr}), and since $X_+=X_+'$ (as simplicial complexes) by the above discussion, we deduce that $|k|=|k'|=:q$, and hence that $k\cong \FF_q\cong k'$. This concludes the proof of the lemma. 
\end{proof}

\begin{remark}\label{remark:alpha_lifts}
In the notation of Lemma~\ref{lemma:isom1}, if $\alpha$ lifts to an isomorphism $\alpha\co\G_{\DDD}^{pma}(k)\to\G_{\DDD'}^{pma}(k')$ and if $\G_{\DDD}^{pma}(k)$ is of rank $2$ (that is, $|I|=2$), then Lemma~\ref{lemma:basic_facts_KMpro-p}(1) and (2) implies that $$\gamma\alpha(\U_A^{ma+}(k))=\U_{A'}^{ma+}(k')$$ for some inner automorphism $\gamma$ of $\G_{\DDD'}^{pma}(k')$.
\end{remark}

\begin{lemma}\label{lemma:isom2}
Let $A=(a_{ij})_{i,j\in I}$ and $B=(b_{ij})_{i,j\in I}$ be GCM indexed by $I$ and let $k$ be a finite field with $p=\charact k>M_A,M_B$. Assume that the groups $\U_A^{ma+}(k)$ and $\U_B^{ma+}(k)$ are isomorphic. Then the following hold:
\begin{enumerate}
\item
$\sum_{\height(\alpha)=n}\dim\g(A)_{\alpha}=\sum_{\height(\alpha)=n}\dim\g(B)_{\alpha}$ for all $n\geq 1$.
\item
If $I=\{i,j\}$, then $B=(\begin{smallmatrix}2 & a_{ij}\\ a_{ji} & 2\end{smallmatrix})$ or $B=(\begin{smallmatrix}2 & a_{ji}\\ a_{ij} & 2\end{smallmatrix})$.
\end{enumerate}
\end{lemma}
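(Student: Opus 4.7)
The plan for part (1) is to exploit the identification, under the hypothesis $\charact k>\max(M_A,M_B)$, of the sequence $(\U^{ma}_{A,n}(k))_{n\geq 1}$ with the abstract lower central series of $\U_A^{ma+}(k)$ provided by Proposition~\ref{prop:suite_centrale} and Remark~\ref{remark:LCS}, and similarly for $B$. An abstract group isomorphism $\varphi\co\U_A^{ma+}(k)\stackrel{\sim}{\to}\U_B^{ma+}(k)$ must then preserve the lower central series, and hence induces bijections between the successive quotients $\U^{ma}_{A,n}(k)/\U^{ma}_{A,n+1}(k)\cong\U^{ma}_{B,n}(k)/\U^{ma}_{B,n+1}(k)$ for every $n\geq 1$. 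Each such quotient is abelian (commutators of elements of height $\geq n$ have height $\geq 2n\geq n+1$ when $n\geq 1$), and by Proposition~\ref{prop formal sum Rousseau}(2) it inherits a natural $k$-vector space structure of dimension $d_n(X):=\sum_{\height(\alpha)=n}\dim\g(X)_\alpha$, hence has cardinality $|k|^{d_n(X)}$. Comparing cardinalities then forces $d_n(A)=d_n(B)$ for all $n\geq 1$.

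For part (2), write $A=(\begin{smallmatrix}2&-m\\ -n&2\end{smallmatrix})$ and $B=(\begin{smallmatrix}2&-m'\\ -n'&2\end{smallmatrix})$ with $m:=|a_{ij}|$, $n:=|a_{ji}|$; the goal is to show $\{m,n\}=\{m',n'\}$. I would first observe that the sequence $(d_k(\cdot))_{k\geq 1}$ depends only on the unordered pair $\{m,n\}$, since transposing $A$ amounts to relabeling the simple roots and yields an isomorphic graded Lie algebra $\nn^+$. Then, using (1) together with low-height root computations: axiom (C3) gives $d_2=\dim\g(A)_{\alpha_1+\alpha_2}=1$ if $m,n\geq 1$ and $=0$ if $m=n=0$, so $d_2$ isolates the case $\{m,n\}=\{0,0\}$. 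Assuming from now on $m,n,m',n'\geq 1$, the Serre relations in $\nn^+$ give $\dim\g(A)_{2\alpha_1+\alpha_2}=1$ iff $m\geq 2$, and symmetrically for $n$, so $d_3$ counts how many of $m,n$ equal at least $2$; in particular the subcase $d_3=0$ yields $\{m,n\}=\{1,1\}=\{m',n'\}$.

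It then remains, in the case $d_3\in\{1,2\}$, to distinguish pairs $\{m,n\}$ with at least one entry $\geq 2$ via the higher values $d_k$. For finite and affine rank $2$ types ($mn\leq 4$), the positive roots are explicitly tabulated and the sequence $(d_k)$ unambiguously recovers the Cartan type among $A_2$, $B_2$, $G_2$, $A_1^{(1)}$, and $A_2^{(2)}$. For hyperbolic rank $2$ types ($mn>4$), imaginary roots contribute in addition to the real ones; their multiplicities are computable via Kac's formulas (rank $2$ GCM are always symmetrisable), and a short explicit calculation of the first few $d_k$ will separate the remaining pairs. The hardest part will be precisely this hyperbolic case: distinct pairs $\{m,n\}$ sharing the same product $mn$ (and hence the same Weyl-group growth rate governed by the spectral radius of $s_1 s_2$) must nonetheless be distinguished through the refined numerical data encoded in $(d_k)$ at low heights.
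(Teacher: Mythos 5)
Your part (1) is correct and is essentially the paper's argument: under $p>M_A,M_B$ the filtration $(\U^{ma}_{X,n}(k))_{n\ge 1}$ is the lower central series (Proposition~\ref{prop:suite_centrale} together with Remark~\ref{remark:LCS}), so any isomorphism respects it, and the successive quotients are elementary abelian of order $|k|^{d_n(X)}$ by \cite[Lemme~3.3]{Rousseau}; comparing cardinalities gives $d_n(A)=d_n(B)$.

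For part (2), however, there is a genuine gap. Your reductions via $d_2$ and $d_3$, and the finite/affine tables, are fine, but the entire content of the statement in the hyperbolic case ($mn>4$) is deferred to the assertion that ``a short explicit calculation of the first few $d_k$ will separate the remaining pairs.'' This is precisely what has to be proved, for \emph{all} pairs $\{m,n\}\neq\{m',n'\}$ simultaneously, and the route you propose --- computing imaginary root multiplicities via the Peterson/Kac recursions --- is an infinite family of comparisons with no uniform mechanism supplied; nothing in your write-up identifies a specific height at which two distinct pairs must differ. The paper avoids root multiplicities entirely by a change of viewpoint: write $\nn^+(X)=\tilde{\nn}^+/\ii^+(X)$ where $\tilde{\nn}^+$ is the free Lie algebra on $e_i,e_j$ and $\ii^+(X)$ is the ideal generated by the two Serre elements $(\ad e_i)^{1+|x_{ij}|}e_j$ and $(\ad e_j)^{1+|x_{ji}|}e_i$. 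Since $\dim\tilde{\nn}^+_n$ is independent of the GCM, part (1) is equivalent to $\dim\ii^+_n(A)=\dim\ii^+_n(B)$ for all $n$. The lowest degree in which $\ii^+(X)$ is nonzero is $\min(|x_{ij}|,|x_{ji}|)+2$, and the dimension there is $2$ or $1$ according to whether the two off-diagonal entries are equal or not; this recovers $\min$ and settles the symmetric case. In the asymmetric case $m=|a_{ij}|<|a_{ji}|=m'$, one compares at degree $m'+2$ the dimension of the full ideal with that of the subideal generated by the single relation of degree $m+2$ (whose graded dimensions depend only on $m$): the second generator contributes an extra dimension $1$ exactly at degree $m'+2$, forcing $|b_{ji}|=m'$ (after the transposition symmetry you correctly noted). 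If you want to salvage your approach, you would effectively have to compare $d_n$ with the Witt numbers $\dim\tilde{\nn}^+_n$, which amounts to rediscovering this ideal-dimension argument; as written, the hyperbolic case is not proved.
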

\begin{proof}
Let $\alpha\co \U_A^{ma+}(k)\to \U_B^{ma+}(k)$ be an isomorphism. Then $\alpha$ maps $\U_{A,n}^{ma}(k)$ to $\U_{B,n}^{ma}(k)$ for each $n\geq 1$ by Proposition~\ref{prop:suite_centrale}, and hence induces isomorphisms of the quotients $$\U_{A,n}^{ma}(k)/\U_{A,n+1}^{ma}(k)\cong \U_{B,n}^{ma}(k)/\U_{B,n+1}^{ma}(k)\quad\textrm{for all $n\geq 1$}.$$
In turn, this yields isomorphisms of the additive groups $\bigoplus_{\height(\alpha)=n}\nn^+_k(A)_{\alpha}\cong\bigoplus_{\height(\alpha)=n}\nn^+_k(B)_{\alpha}$ by \cite[Lemma~3.3(e)]{Rousseau}. Hence (1) follows from the fact that if $d_n(A)=\sum_{\height(\alpha)=n}\dim\g(A)_{\alpha}$, then $|k|^{d_n(A)}$ is the cardinality of $\bigoplus_{\height(\alpha)=n}\nn^+_k(A)_{\alpha}$.

Assume now that $I=\{i,j\}$. For $X\in\{A,B\}$, let $\ii^+(X)$ be the ideal of the free Lie algebra $\tilde{\nn}^{+}(X)$ generated by the Serre relations $x_{ij}^{+}(X)=\ad (e_i)^{1+|X_{ij}|}e_j$ and $x_{ji}^{+}(X)=\ad (e_j)^{1+|X_{ji}|}e_i$. For each $n\geq 1$, let also $\tilde{\nn}_n^{+}(X)$ denote the subspace of elements of $\tilde{\nn}^{+}(X)$ of total degree $n$, that is, the linear span of all brackets of the form $[e_{i_1},\dots,e_{i_n}]$ ($i_s\in I$). In particular, since $\ii^+(X)$ is graded, 
$$\tilde{\nn}_n^{+}(X)/\ii_n^+(X)=\nn_n^+(X)\quad\textrm{for all $n\geq 1$}$$
as vector spaces, where $\ii_n^+(X):=\ii^+(X)\cap \tilde{\nn}_n^{+}(X)$ and $\nn_n^+(X):=\bigoplus_{\height(\alpha)=n}\nn^+(X)_{\alpha}$.
The above discussion now implies that 
$$\dim \ii_n^+(A)=\dim \tilde{\nn}_n^{+}(A)-\dim \nn_n^+(A)=\dim \tilde{\nn}_n^{+}(B)-\dim \nn_n^+(B)=\dim \ii_n^+(B)\quad\textrm{for all $n\geq 1$}.$$
If $|a_{ij}|=|a_{ji}|=m$, then $\dim \ii_n^+(A)=0$ for all $n\leq m+1$, while $\dim \ii_{m+2}^+(A)=2$. The corresponding assertion for $B$ then implies that $|b_{ij}|=|b_{ji}|=m$, proving (2) in this case.

Assume now that $a_{ij}\neq a_{ji}$, say $m=|a_{ij}|<|a_{ji}|=m'$. Then $\dim \ii_n^+(A)=0$ for all $n\leq m+1$, while $\dim \ii_{m+2}^+(A)=1$. Again, the corresponding assertion for $B$ implies that $m=|b_{ij}|<|b_{ji}|$ or that $m=|b_{ji}|<|b_{ij}|$. Say $m=|b_{ij}|<|b_{ji}|=m''$. 
For $X\in\{A,B\}$, let $\ii^+_{ij}(X)$ denote the ideal of $\tilde{\nn}^{+}(X)$ generated by $x_{ij}^{+}(X)=\ad (e_i)^{1+m}e_j$. Assume for a contradiction that $m'\neq m''$, say $m'<m''$ (the case $m'>m''$ being similar). Then
$$\dim \ii^+_{m'+2}(A)=\dim (\ii^+_{ij}(A)\cap \tilde{\nn}_{m'+2}^{+}(A))+1=\dim (\ii^+_{ij}(B)\cap \tilde{\nn}_{m'+2}^{+}(B))+1=\dim\ii^+_{m'+2}(B)+1,$$
yielding the desired contradiction. This concludes the proof of (2).
\end{proof}

\begin{theorem}\label{thm:isomorphism}
Let $k,k'$ be finite fields, and let $A=(a_{ij})_{i,j\in I}$ and $B=(b_{ij})_{i,j\in J}$ be GCM. Assume that $p=\charact k>M_A,M_B$ and that all rank $2$ subgroups of $\G_{A}^{pma}(k)$ and $\G_{B}^{pma}(k')$ are GK-simple. 

If $\alpha\co\G_{A}^{pma}(k)/Z'_{A}\to\G_{B}^{pma}(k')/Z'_{B}$ is an isomorphism of topological groups, then $k\cong k'$, and there exist an inner automorphism $\gamma$ of $\G_{B}^{pma}(k')/Z'_{B}$ and a bijection $\sigma\co I\to J$ such that 
\begin{enumerate}
\item
$\gamma\alpha(\U_{A|_{\{i,j\}}}^{ma+}(k))=\U_{B|_{\{\sigma(i),\sigma(j)\}}}^{ma+}(k')$ for all distinct $i,j\in I$. 
\item
$B|_{\{\sigma(i),\sigma(j)\}}\in \big\{(\begin{smallmatrix}2 & a_{ij} \\ a_{ji} & 2 \end{smallmatrix}),(\begin{smallmatrix}2 & a_{ji} \\ a_{ij} & 2 \end{smallmatrix})\big\}$ for all distinct $i,j\in I$.
\end{enumerate}
\end{theorem}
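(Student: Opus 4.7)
The plan is to combine Lemmas~\ref{lemma:isom1} and~\ref{lemma:isom2}, with the GK-simplicity hypothesis on all rank~$2$ subgroups serving as the bridge between them.

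As a first step, I would apply Lemma~\ref{lemma:isom1} directly to $\alpha$. This immediately produces $k\cong k'$, an inner automorphism $\gamma$ of $\G_B^{pma}(k')/Z'_B$, and a bijection $\sigma\co I\to J$ such that
$$\gamma\alpha\big(\U_{A|_{\{i,j\}}}^{ma+}(k)/Z'_A\big)=\U_{B|_{\{\sigma(i),\sigma(j)\}}}^{ma+}(k')/Z'_B\quad\textrm{for all distinct $i,j\in I$.}$$
The remaining task is to upgrade this equality, which \emph{a priori} takes place only in the quotients, to the assertions~(1) and~(2) of the statement.

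The main obstacle, and the heart of the argument, is to establish for every distinct $i,j\in I$ the triviality
$$Z'_A\cap\U_{A|_{\{i,j\}}}^{ma+}(k)=\{1\}$$
(and the analogous statement for $B$), so that $\U_{A|_{\{i,j\}}}^{ma+}(k)$ injects into $\G_A^{pma}(k)/Z'_A$. To this end, fix $J=\{i,j\}$. Since the Gabber--Kac kernel $Z'_A\cap\U_A^{ma+}(k)$ is a normal subgroup of $\G_A^{pma}(k)$ contained in $\U_A^{ma+}(k)$, it sits inside every conjugate of $\U_A^{ma+}(k)$. Restricting to $\PP_A^{ma+}(J)$-conjugates and invoking Lemma~\ref{lemma:basic_facts_KMpro-p}(5), I would obtain
$$Z'_A\cap\U_A^{ma+}(k)\subseteq \big(Z'_{A|_J}\cap\U_{\Delta_+(J)}^{ma}(k)\big)\ltimes\U_{\Delta_+\setminus\Delta_+(J)}^{ma}(k).$$
The GK-simplicity hypothesis on the rank~$2$ subgroup $\G_{A|_J}^{pma}(k)$ kills the first factor, so the left-hand side lies in $\U_{\Delta_+\setminus\Delta_+(J)}^{ma}(k)$. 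Intersecting with $\U_{A|_J}^{ma+}(k)=\U_{\Delta_+(J)}^{ma}(k)$ and using that $\U_{\Delta_+(J)}^{ma}(k)\cap\U_{\Delta_+\setminus\Delta_+(J)}^{ma}(k)=\{1\}$ (a direct consequence of the uniqueness of the normal form in Proposition~\ref{prop formal sum Rousseau}(2)) then yields the claimed triviality.

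With this injectivity in hand, the identity displayed above promotes to an isomorphism of topological groups
$$\gamma\alpha\co\U_{A|_{\{i,j\}}}^{ma+}(k)\xrightarrow{\ \sim\ }\U_{B|_{\{\sigma(i),\sigma(j)\}}}^{ma+}(k'),$$
which is precisely assertion~(1). Assertion~(2) is then an immediate application of Lemma~\ref{lemma:isom2}(2): after identifying the indexing sets $\{i,j\}$ and $\{\sigma(i),\sigma(j)\}$ via $\sigma$, the above provides an isomorphism of rank~$2$ unipotent groups over the common field $k\cong k'$, and the characteristic hypothesis $p>M_A,M_B$ immediately forces the required bound $p>M_{A|_J},M_{B|_{\sigma(J)}}$ for Lemma~\ref{lemma:isom2}.
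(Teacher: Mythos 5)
Your proposal is correct and follows essentially the same route as the paper: the published proof of Theorem~\ref{thm:isomorphism} likewise consists of invoking Lemma~\ref{lemma:isom1} for assertion~(1) and Lemma~\ref{lemma:isom2} for assertion~(2), with the rank~$2$ GK-simplicity hypothesis serving exactly to identify $\U_{A|_{\{i,j\}}}^{ma+}(k)/Z'_A$ with $\U_{A|_{\{i,j\}}}^{ma+}(k)$. Your explicit verification that $Z'_A\cap\U_{A|_{\{i,j\}}}^{ma+}(k)=\{1\}$ via Lemma~\ref{lemma:basic_facts_KMpro-p}(5) is precisely the step the paper leaves implicit, and it is carried out correctly.
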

\begin{proof}
Since all rank $2$ subgroups of $\G_{A}^{pma}(k)$ and $\G_{B}^{pma}(k')$ are GK-simple by assumption, (1) follows from Lemma~\ref{lemma:isom1} and (2) follows from Lemma~\ref{lemma:isom2}.
\end{proof}

\begin{remark}\label{remark:rank2isom}
In the notation of Theorem~\ref{thm:isomorphism}, if $\alpha$ lifts to an isomorphism $\alpha\co\G_{A}^{pma}(k)\to\G_{B}^{pma}(k')$ and if $\G_{A}^{pma}(k)$ is of rank $2$, then the conclusion of Theorem~\ref{thm:isomorphism} holds without any GK-simplicity assumption using Remark~\ref{remark:alpha_lifts} and Lemma~\ref{lemma:isom2}.
\end{remark}

We conclude this section with two further observations on the isomorphism problem, using the results from the previous sections.

\begin{lemma}
Let $A=(a_{ij})_{i,j\in I}$ and $B=(b_{ij})_{i,j\in I}$ be GCM, and let $k=\FF_q$ with $\charact k=p$. If $M_A<p$ and $M_B\geq q$, then $\U_A^{ma+}(k)$ and $\U_B^{ma+}(k)$ are not isomorphic.
\end{lemma}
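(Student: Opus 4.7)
The plan is to argue by contradiction. Suppose $\alpha\co\U_A^{ma+}(k)\to\U_B^{ma+}(k)$ is a (topological) isomorphism. Since the subgroups $\gamma_n(\U_X^{ma+}(k))$ defined at the start of Section~\ref{section:isomproblem} are built purely from continuous group-theoretic operations (commutators and closures), $\alpha$ identifies $\gamma_n(\U_A^{ma+}(k))$ with $\gamma_n(\U_B^{ma+}(k))$ for every $n\ge 1$. On the $A$-side, the assumption $M_A<p$ lets me invoke Proposition~\ref{prop:suite_centrale} to get $\gamma_n(\U_A^{ma+}(k))=\U_{A,n}^{ma}(k)$ for all $n$; no such identification is available on the $B$-side, where only the universal inclusion $\gamma_{n+1}(\U_B^{ma+}(k))\subseteq \U_{B,n+1}^{ma}(k)$ from \cite[Lemme~3.3]{Rousseau} is at my disposal.

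Next I would exploit this asymmetry at level $n=2$. For $X\in\{A,B\}$, the natural map identifies $\U_X^{ma+}(k)/\U_{X,2}^{ma}(k)$ with the additive group $\bigoplus_{i\in I}(\nn^+_k(X))_{\alpha_i}\cong k^{|I|}$ of order $q^{|I|}$, so both indices $[\U_A^{ma+}(k):\U_{A,2}^{ma}(k)]$ and $[\U_B^{ma+}(k):\U_{B,2}^{ma}(k)]$ equal $q^{|I|}$. Transporting across $\alpha$ gives $[\U_B^{ma+}(k):\gamma_2(\U_B^{ma+}(k))]=[\U_A^{ma+}(k):\U_{A,2}^{ma}(k)]=q^{|I|}$. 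Combined with $\gamma_2(\U_B^{ma+}(k))\subseteq\U_{B,2}^{ma}(k)$, equality of these finite indices forces $\gamma_2(\U_B^{ma+}(k))=\U_{B,2}^{ma}(k)$.

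To reach the contradiction, I use that $M_B\ge q$ to produce indices $i,j\in I$ with $|b_{ij}|\ge q$ and invoke Proposition~\ref{prop:element_not_dense}(1): the twisted exponential $g:=[\exp][e_i,e_j]\in\U_B^{ma+}(k)$ lies in $\U_{B,2}^{ma}(k)$ (its only nonzero homogeneous component sits in degree $\alpha_i+\alpha_j$ of height $2$) yet $g\notin\overline{[\U_B^{ma+}(k),\U_B^{ma+}(k)]}=\gamma_2(\U_B^{ma+}(k))$, contradicting the equality just derived. The main subtlety I foresee is justifying the identification $\alpha(\gamma_2(\U_A^{ma+}(k)))=\gamma_2(\U_B^{ma+}(k))$ rigorously: this is immediate when $\alpha$ is assumed topological, and otherwise requires invoking that any abstract isomorphism between topologically finitely generated profinite groups is automatically continuous (for instance via Nikolov--Segal applied to the pro-$p$ group $\U_A^{ma+}(k)$, which is finitely generated since it is topologically generated by the finitely many finite simple root groups). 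Once this transfer is pinned down, the rest of the argument is a direct application of Propositions~\ref{prop:suite_centrale} and~\ref{prop:element_not_dense}.
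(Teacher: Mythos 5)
Your proof is correct and follows essentially the same route as the paper: Proposition~\ref{prop:suite_centrale} pins the index of the closed commutator subgroup of $\U_A^{ma+}(k)$ at $q^{|I|}$, while Proposition~\ref{prop:element_not_dense}(1) shows the corresponding index for $\U_B^{ma+}(k)$ is strictly larger than $q^{|I|}$. The paper phrases this as a direct comparison of the cardinalities of the two abelianizations rather than transporting indices through a hypothetical isomorphism, but the content is identical.
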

\begin{proof}
By Proposition~\ref{prop:suite_centrale}, the quotient of $\U_A^{ma+}(k)$ by its commutator subgroup has cardinality $q^{|I|}$. On the other hand, it follows from Proposition~\ref{prop:element_not_dense} that the quotient of $\U_A^{ma+}(k)$ by its commutator subgroup has cardinality strictly larger than $q^{|I|}$. This proves the claim.
\end{proof}

\begin{prop}
Let $A=(a_{ij})_{i,j\in I}$ and $B=(b_{ij})_{i,j\in I}$ be GCM with $B\leq A$, and let $k$ be a finite field with $\charact k>M_A$.
If $\U_A^{ma+}(k)$ and $\U_B^{ma+}(k)$ are isomorphic, then $B=A$.
\end{prop}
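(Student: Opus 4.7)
The plan is to combine the functoriality provided by Theorem~\ref{thm:construction_pi} with the dimension invariant of Lemma~\ref{lemma:isom2}(1), and then to force $A = B$ through a Serre-relation argument at a single well-chosen weight.

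First I would suppose $\U_A^{ma+}(k) \cong \U_B^{ma+}(k)$. Since $B\leq A$, Theorem~\ref{thm:construction_pi} together with Lemma~\ref{lemma:Zreg_ex1} furnishes a surjective, grading-preserving Lie algebra morphism relating $\nn^+(A)$ and $\nn^+(B)$, under which $\g(A)_\alpha$ is mapped onto $\g(B)_\alpha$ (or conversely) for each $\alpha\in Q_+$. This yields a termwise comparison of $\dim\g(A)_\alpha$ with $\dim\g(B)_\alpha$ for all $\alpha$. On the other hand, Lemma~\ref{lemma:isom2}(1) applied to the assumed isomorphism gives $\sum_{\height(\alpha)=n}\dim\g(A)_\alpha = \sum_{\height(\alpha)=n}\dim\g(B)_\alpha$ for every $n\geq 1$. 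Combining these two facts forces the termwise inequalities to be equalities: $\dim\g(A)_\alpha = \dim\g(B)_\alpha$ for every $\alpha \in Q_+$.

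I would then conclude $A = B$ by examining the Serre relations. Suppose for contradiction that $b_{ij} < a_{ij}$ for some $i\neq j$, so that $|b_{ij}| > |a_{ij}|$, and consider the weight $\beta := (1+|a_{ij}|)\alpha_i + \alpha_j$. In the free Lie algebra on $e_i,e_j$, the weight space at $\beta$ is one-dimensional and spanned by $\ad(e_i)^{1+|a_{ij}|}e_j$. In $\nn^+(A)$ this generator is killed by the Serre relation, so $\dim\g(A)_\beta = 0$; in $\nn^+(B)$, however, it is nonzero because $1+|a_{ij}|\leq|b_{ij}|$, so $\dim\g(B)_\beta \geq 1$. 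This contradicts the equality of weight dimensions established above, so $a_{ij} = b_{ij}$ for all $i\neq j$, i.e.\ $A = B$.

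The main obstacle is the characteristic hypothesis. Lemma~\ref{lemma:isom2} relies, via Proposition~\ref{prop:suite_centrale}, on the identification of the lower central series of $\U_X^{ma+}(k)$ with $\big(\U_{X,n}^{ma}(k)\big)_{n\geq 1}$, which requires $\charact k > M_X$ for each $X\in\{A,B\}$; the assumption $\charact k > M_A$ alone does not suffice, since $B\leq A$ only guarantees $M_A\leq M_B$. The gap $M_A < \charact k \leq M_B$ must therefore be closed separately, e.g.\ by invoking the preceding lemma of the section when $M_B\geq |k|$, or by refining the argument via the one-sided inclusion $\gamma_n(\U_B^{ma+}(k))\subseteq \U_{B,n}^{ma}(k)$ coming from \cite[Lemme~3.3]{Rousseau}.
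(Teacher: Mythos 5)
Your argument is correct in substance, but it takes a genuinely different route from the paper. The paper's proof is a two-line Hopficity argument: since $\charact k>M_A$, the group $\U_A^{ma+}(k)$ is a topologically finitely generated residually finite pro-$p$ group, hence Hopfian; if $B\neq A$, the surjection $\widehat{\pi}_{AB}\co\U_A^{ma+}(k)\to\U_B^{ma+}(k)$ of Theorem~\ref{thm:construction_pi} has nontrivial kernel (it kills the root group $\U_{(\beta)}^A(k)$ for any $\beta\in\Delta_+(A)\setminus\Delta_+(B)$), so composing it with the assumed isomorphism would produce a surjective non-injective endomorphism of $\U_A^{ma+}(k)$, a contradiction. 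Your route instead extracts the graded dimension data: Lemma~\ref{lemma:isom2}(1) gives equality of the height-graded dimensions, the graded surjection $\pi_{AB}$ gives a termwise inequality, and a Serre-relation computation at the single weight $(1+M)\alpha_i+\alpha_j$ (where $M$ is the smaller of $|a_{ij}|,|b_{ij}|$) produces a weight where the multiplicities differ. This is valid and in fact proves more (termwise equality of all root multiplicities), but it routes through Proposition~\ref{prop:suite_centrale} and the identification of the lower central series with the $\U^{ma}_n$-filtration, whereas the paper's argument needs only residual finiteness and topological finite generation.

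Two remarks on your write-up. First, the ``main obstacle'' you flag is not an actual gap: throughout the paper the relation $B\leq A$ is used to mean that the off-diagonal entries of $B$ are smaller \emph{in absolute value} than those of $A$ --- this is the only reading compatible with the conclusion $\Delta_+(B)\subseteq\Delta_+(A)$ of Lemma~\ref{lemma:Zreg_ex1}, with the direction of the surjection $\nn^+(A)\to\nn^+(B)$ you yourself invoke, and with Example~\ref{example:funny} and Lemma~\ref{lemma:GDA}. Hence $M_B\leq M_A$ and the hypothesis $\charact k>M_A$ already yields $\charact k>M_B$, so Lemma~\ref{lemma:isom2}(1) applies without further work. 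Second, and relatedly, your Serre-relation step is written with the opposite convention ($|b_{ij}|>|a_{ij}|$); under the intended convention you should take $\beta:=(1+|b_{ij}|)\alpha_i+\alpha_j$, which is killed in $\nn^+(B)$ but survives in $\nn^+(A)$. The argument is symmetric, so this is a matter of consistency rather than correctness, but as written the two halves of your proof use incompatible readings of $B\leq A$.
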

\begin{proof}
Since $\U_A^{ma+}(k)$ is a finitely generated residually finite prop-$p$ group by \cite[\S 2.2]{RCap}, it is Hopfian, in the sense that every surjective homomorphism from $\U_A^{ma+}(k)$ to itself is an isomorphism (see Lemma~\ref{lemma:Schupp} below). Assume now for a contradiction that $B\neq A$. Then by Theorem~\ref{thm:construction_pi}, there is a surjective group homomorphism $\widehat{\pi}_{AB}\co \U_A^{ma+}(k)\to \U_B^{ma+}(k)$ with nontrivial kernel. Hence $\U_A^{ma+}(k)$ and $\U_B^{ma+}(k)$ cannot be isomorphic, for this would contradict the fact that $\U_A^{ma+}(k)$ is Hopfian.
\end{proof}

The following lemma and its proof are a straightforward adaptation of \cite[Theorem~4.10]{LyndonSchupp}.
\begin{lemma}\label{lemma:Schupp}
Let $G$ be a finitely generated residually finite pro-$p$ group. Then $G$ is Hopfian, i.e. every surjective homomorphism $G\to G$ is an isomorphism.
\end{lemma}
\begin{proof}
Let $\theta\co G\to G$ be a surjective homomorphism, and let $K$ be the kernel of $\theta$. Let $n\in\NN^*$. By \cite[Proposition~1.6 and Theorem~1.17]{padicanalytic}, there are only finitely many subgroups of $G$ of index $n$, say $M_1,\dots,M_r$. Then the subgroups $L_i:=\theta\inv(M_i)$ ($i=1,\dots,r$) are pairwise distinct and of index $n$ in $G$. Thus $\{M_1,\dots,M_r\}=\{L_1,\dots,L_r\}$. In particular, $$K\subseteq \bigcap_{i=1}^rL_i=\bigcap_{i=1}^rM_i,$$
and since $n$ was arbitrary, we deduce that $K$ is contained in the intersection of all finite-index subgroups of $G$. Since $G$ is residually finite, this implies that $K=\{1\}$, as desired.
\end{proof}

\begin{remark}
Lemma~\ref{lemma:Schupp} also holds when $G$ is a finitely generated residually finite profinite group. Indeed, the main result of \cite{NS07} (which relies on the classification of finite simple groups) asserts that finite-index subgroups of a finitely generated profinite group $G$ are automatically open, and hence $G$ has only finitely many subgroups of index $n$ for any given $n\in\NN^*$ by \cite[Proposition~1.6]{padicanalytic}. The proof of Lemma~\ref{lemma:Schupp} thus also holds in that case.
\end{remark}

\section{Zassenhaus--Jennings--Lazard series}\label{section:MDS}
This section is devoted to the proof of Proposition~\ref{thmintro:ZJLseries}. The general reference for this section is \cite[Chapter~11]{padicanalytic}.

Given a group $G$, as well as some positive natural number $n$, we write $G^n$ for the subgroup of $G$ generated by the elements of the form $g^n$, $g\in G$. We also let $\gamma_n(G)$ denote the lower central series of $G$: $$\gamma_1(G)=G\quad\textrm{and}\quad \gamma_{n+1}(G)=[G,\gamma_n(G)]\quad\textrm{for all $n\geq 1$}.$$
[Here, we consider lower central series in the category of abstract groups; as noticed in Remark~\ref{remark:LCS}, when $G$ is a finitely generated pro-$p$ group, this coincides with the lower central series defined at the beginning of \S\ref{section:isomproblem}.]

Let $k=\FF_q$ be a finite field of characteristic $p$, let $A$ be a GCM, and set $G:=\U_A^{ma+}(k)$. Then $G$ is a prop-$p$ group. Set $\Gamma_n=\gamma_n(G)$, and let $D_n=D_n(G)$ be the series of characteristic subgroups of $G$ defined by $D_1:=G$ and for $n>1$, 
$$D_n:=D_{n^{*}}^p\cdot\prod_{i+j=n}{\big[D_i,D_j\big]},$$
where $n^*:=\lceil n/p \rceil$ is the least integer $r$ such that $pr\geq n$. The series $(D_n)_{n\geq 1}$ is called the \emph{Zassenhaus--Jennings--Lazard series} of $G$. The subgroups $D_n$ are also called the \emph{dimension subgroups} of $G$.

For each $n\geq 1$, the quotient $L_n:=D_n/D_{n+1}$ is an elementary abelian $p$-group. We view it as a vector space over $\FF_p$ and write the group operation additively. Then 
$$L:=\bigoplus_{n= 1}^{\infty}{L_n}$$
is a graded Lie algebra over $\FF_p$ for the Lie bracket
$$(\overline{x},\overline{y}):=[x,y]D_{i+j+1}\in L_{i+j},$$
where $\overline{x}=xD_{i+1}\in L_i$ and $\overline{y}=yD_{j+1}\in L_j$ (see \cite[p.280]{padicanalytic}). It is called the \emph{Zassenhaus--Jennings--Lazard Lie algebra} of $G$.
Note that the $p$-operation
$$[p]\co L_i\to L_{pi}:\overline{x}=xD_{i+1}\mapsto \overline{x}^{[p]}:=x^pD_{pi+1}$$
extends to a $p$-operation on $L$, turning $L$ into a restricted Lie algebra (\cite[Theorem~12.8]{padicanalytic}).

\begin{lemma}\label{lemma:ppower}
$\U^{ma}_n(k)^p\subseteq \U^{ma}_{np}(k)$ for all $n\geq 1$.
\end{lemma}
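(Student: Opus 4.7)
The key identity is the characteristic-$p$ binomial formula $(1+y)^p = 1 + y^p$, valid in any $\FF_p$-algebra for every element $y$ (which commutes with itself), since $p \mid \binom{p}{k}$ for $1\leq k \leq p-1$. The plan is to realise $\U^{ma}_n(k)$ inside $\widehat{\UU}_k^+$ via Proposition~\ref{prop formal sum Rousseau} and then apply this identity directly. To this end, introduce the closed two-sided ideal
$$F_n := \prod_{\substack{\alpha\in Q_+\\ \height(\alpha)\geq n}} (\UU_\alpha\otimes_{\ZZ}k) \subseteq \widehat{\UU}_k^+,$$
which satisfies $F_m\cdot F_n\subseteq F_{m+n}$, and in particular $F_n^p\subseteq F_{np}$.

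The crux of the argument is the characterisation
$$\U^{ma}_m(k) = \U_A^{ma+}(k)\cap (1+F_m) \qquad \text{for every } m\geq 1.$$
The inclusion $\subseteq$ is immediate: any nonzero element of $\NN\Psi(m)$ is a positive integer combination of roots of height $\geq m$ (with at least one positive coefficient), and hence itself has height $\geq m$, so every element of $\U^{ma}_m(k)\subseteq \widehat{\UU}_k(\Psi(m))$ has the form $1+y$ with $y\in F_m$. For the reverse inclusion, given $u\in \U_A^{ma+}(k)\cap (1+F_m)$, I would apply Proposition~\ref{prop formal sum Rousseau}(2) with a standard $\ZZ$-basis $\BBBB_{\Delta_+}$ ordered so that the elements $x$ with $\height(\deg x)<m$ come first, and write $u=u_-u_+$ with $u_+\in \U^{ma}_m(k)$ and $u_-$ a product indexed by $\BBBB_{<m}:=\{x\in \BBBB_{\Delta_+} : \height(\deg x)<m\}$. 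Then $u_-=u u_+^{-1}$ lies again in $\U_A^{ma+}(k)\cap (1+F_m)$, so has no nonzero components in degrees of height $<m$. By uniqueness in Proposition~\ref{prop formal sum Rousseau}(2), an induction on the minimal height of a degree $\alpha$ carrying a nonzero coefficient $\lambda_x$ then forces every $\lambda_x=0$: for such a minimal $\alpha$, the $\alpha$-component of $u_-$ is exactly $\sum_{\deg x=\alpha}\lambda_x x\in (\nn_\ZZ^+)_\alpha\otimes k$, and its vanishing kills all the relevant coefficients.

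Granted this characterisation, the lemma becomes a one-line computation: for $u\in \U^{ma}_n(k)$, setting $y:=u-1\in F_n$, the binomial identity in the $\FF_p$-algebra $\widehat{\UU}_k^+$ yields
$$u^p = 1 + y^p \in 1+F_n^p\subseteq 1+F_{np},$$
so that $u^p\in \U_A^{ma+}(k)\cap (1+F_{np}) = \U^{ma}_{np}(k)$. The main obstacle is the characterisation step, where one must extract the lowest-height component of a formal product of twisted exponentials and invoke the uniqueness of the product representation; the $p$-th power calculation itself is then a formal consequence of characteristic-$p$ arithmetic.
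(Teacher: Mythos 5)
Your proof is correct and takes essentially the same route as the paper's: realise $\U_A^{ma+}(k)$ inside $\widehat{\UU}_k^+$, write $g=1+y$ with $y$ supported in heights $\geq n$, and apply the characteristic-$p$ identity $(1+y)^p=1+y^p$ together with $F_n^p\subseteq F_{np}$. The only difference is that you spell out the characterisation $\U^{ma}_m(k)=\U_A^{ma+}(k)\cap(1+F_m)$ via the uniqueness of the product decomposition of Proposition~\ref{prop formal sum Rousseau}(2), a step the paper's proof uses implicitly.
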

\begin{proof}
We realise as usual $\U_A^{ma+}(k)$ inside $\widehat{\UU}_k^+$.
For each $m\geq 1$, we set
$$\widehat{\UU}^+_{\geq m}:=\prod_{\height(\alpha)\geq m}{(\UU^+_{\alpha}\otimes_{\ZZ}k)}\subseteq \widehat{\UU}_k^+.$$
Let $g\in \U^{ma}_n(k)$. Then $g=1+x$ for some $x\in \widehat{\UU}^+_{\geq n}$, and hence 
$$g^p=(1+x)^p=1+x^p\in 1+ \widehat{\UU}^+_{\geq np}.$$
In particular, $g^p\in \U^{ma}_{np}(k)$, as desired.
\end{proof}

\begin{lemma}\label{lemma:comparison_series}
$\Gamma_n\leq D_n\leq \U^{ma}_n(k)$ for all $n\geq 1$.
\end{lemma}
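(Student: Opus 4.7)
The plan is to prove both inclusions by induction on $n$, using the two defining ingredients of $D_n$: the $p$-th powers and the commutators of earlier terms, together with the basic facts that $\U^{ma}_n(k)$ is a descending filtration with $[\U^{ma}_i(k),\U^{ma}_j(k)]\subseteq \U^{ma}_{i+j}(k)$ (this is contained in \cite[Lemme~3.3]{Rousseau}) and that Lemma~\ref{lemma:ppower} controls the $p$-th power operation on these terms.

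For the inclusion $\Gamma_n\leq D_n$, I would argue by induction on $n$. The case $n=1$ is clear since $\Gamma_1=G=D_1$. For the inductive step, assuming $\Gamma_n\leq D_n$, one writes
\[
\Gamma_{n+1}=[G,\Gamma_n]=[D_1,\Gamma_n]\leq [D_1,D_n]\leq D_{n+1},
\]
where the last inclusion is immediate from the definition $D_{n+1}=D_{(n+1)^*}^p\cdot\prod_{i+j=n+1}[D_i,D_j]$, which has $[D_1,D_n]$ as one of its factors. This is the standard fact that the ZJL series contains the lower central series, so no real difficulty arises here.

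For the inclusion $D_n\leq \U^{ma}_n(k)$, I would again proceed by induction. The case $n=1$ is trivial. For the inductive step, assume $D_i\leq \U^{ma}_i(k)$ for all $i<n$. For the commutator factors with $i+j=n$, the induction hypothesis together with $[\U^{ma}_i(k),\U^{ma}_j(k)]\subseteq \U^{ma}_{i+j}(k)=\U^{ma}_n(k)$ gives $[D_i,D_j]\subseteq \U^{ma}_n(k)$. For the $p$-power factor, the induction hypothesis yields $D_{n^*}^p\leq \U^{ma}_{n^*}(k)^p$, and Lemma~\ref{lemma:ppower} gives $\U^{ma}_{n^*}(k)^p\subseteq \U^{ma}_{n^*p}(k)\subseteq \U^{ma}_n(k)$, since by definition $n^*p\geq n$. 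Since $\U^{ma}_n(k)$ is a normal subgroup of $G$, the product of all these factors still lies in $\U^{ma}_n(k)$, which concludes the induction.

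No serious obstacle is expected: the main content is Lemma~\ref{lemma:ppower} together with the sub-additivity of the filtration $(\U^{ma}_n(k))_{n\geq 1}$ with respect to commutators, both of which are already available. The mild subtlety is simply to verify that the inequality $n^*p\geq n$ (which holds by the definition $n^*=\lceil n/p\rceil$) is exactly what is needed to control the $p$-power factor in $D_n$.
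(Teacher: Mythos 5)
Your proof is correct and follows essentially the same route as the paper: the first inclusion is the standard fact that the ZJL series dominates the lower central series, and the second rests on exactly the two ingredients the paper invokes, namely Lemma~\ref{lemma:ppower} and the inclusion $[\U^{ma}_i(k),\U^{ma}_j(k)]\subseteq \U^{ma}_{i+j}(k)$. The only difference is presentational: the paper appeals to the characterisation of $(D_n)$ as the fastest descending series with $D_i^p\leq D_{pi}$ and $[D_i,D_j]\leq D_{i+j}$, whereas you unwind that universal property into an explicit strong induction, which is fine since $n^*<n$ and $i,j<n$ for $n\geq 2$.
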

\begin{proof}
The first inclusion follows by induction on $n$, since $\Gamma_1=G=D_1$ and since if $\Gamma_n\subseteq D_n$, then $$\Gamma_{n+1}=[G,\Gamma_n]\subseteq [D_1,D_n]\subseteq D_{n+1}.$$

Since $\big[\U^{ma}_i(k),\U^{ma}_j(k)\big]\subseteq \U^{ma}_{i+j}(k)$ for all $i,j\geq 1$ by \cite[Lemme~3.3]{Rousseau}, the second inclusion follows from Lemma~\ref{lemma:ppower} and the fact that $(D_n)_{n\geq 1}$ is the fastest descending series with $D_1=G$ such that $D_i^p\leq D_{pi}$ and $[D_i,D_j]\leq D_{i+j}$ for all $i,j\geq 1$.
\end{proof}

\begin{corollary}\label{corollary:ZJL}
Assume that $p>M_A$. Then $\Gamma_n=D_n=\U^{ma}_n(k)$ for all $n\geq 1$.
\end{corollary}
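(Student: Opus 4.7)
The plan is to combine the three ingredients already assembled in the paper. By Lemma~\ref{lemma:comparison_series} we already have the chain of inclusions $\Gamma_n \leq D_n \leq \U^{ma}_n(k)$, so it suffices to establish the reverse inclusion $\U^{ma}_n(k) \leq \Gamma_n$.

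First I would invoke Proposition~\ref{prop:suite_centrale}, which under the assumption $p=\charact k > M_A$ identifies the topological lower central series $\overline{\gamma_n}(\U_A^{ma+}(k))$ with $\U^{ma}_n(k)$ for every $n\geq 1$. This gives the desired equality at the level of \emph{closures} of iterated commutator subgroups, but we need it without the closure, i.e.\ for $\Gamma_n=\gamma_n(G)$ as defined in this section.

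To bridge this gap, I would appeal to Remark~\ref{remark:LCS}: since $p>M_A$, the group $G=\U_A^{ma+}(k)$ is a finitely generated pro-$p$ group, and a standard fact (the exercise from \cite{padicanalytic} cited there) then guarantees that the algebraic lower central series $(\gamma_n(G))_{n\geq 1}$ automatically consists of closed subgroups, so that it agrees with its own topological version. Hence $\Gamma_n=\overline{\gamma_n}(G)=\U^{ma}_n(k)$, which together with the inclusions from Lemma~\ref{lemma:comparison_series} forces $\Gamma_n=D_n=\U^{ma}_n(k)$ for all $n\geq 1$.

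No substantial obstacle is expected: the result is essentially a bookkeeping consequence of Proposition~\ref{prop:suite_centrale}, Remark~\ref{remark:LCS}, and Lemma~\ref{lemma:comparison_series}. The only point requiring mild care is the passage from the topologically-closed lower central series (as used in Proposition~\ref{prop:suite_centrale}) to the algebraic one (as used in the definition of $\Gamma_n$), which is precisely what the finitely generated pro-$p$ hypothesis in Remark~\ref{remark:LCS} is there to supply.
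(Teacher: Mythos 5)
Your proposal is correct and uses exactly the same three ingredients as the paper's (one-line) proof: Lemma~\ref{lemma:comparison_series} for the inclusions $\Gamma_n\leq D_n\leq \U^{ma}_n(k)$, Proposition~\ref{prop:suite_centrale} for the reverse containment at the level of the topological lower central series, and Remark~\ref{remark:LCS} to identify that series with the abstract one. The only difference is that you spell out the bookkeeping that the paper leaves implicit.
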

\begin{proof}
The equality $\Gamma_n=\U^{ma}_n(k)$ follows from Remark~\ref{remark:LCS} and Proposition~\ref{prop:suite_centrale}. The lemma then follows from Lemma~\ref{lemma:comparison_series}.
\end{proof}

For each $n\geq 1$, set $(\nn^+_k)_n:=\bigoplus_{\height(\alpha)=n}{(\nn^+_k)_{\alpha}}$. Then $L_n(\U_A^{ma+}(k)):=\U^{ma}_n(k)/\U^{ma}_{n+1}(k)$ is isomorphic to the additive group of $(\nn^+_k)_n$ by \cite[Lemme~3.3(e)]{Rousseau}. We view it as an $\FF_p$-vector space and write the group operation additively. Set $$L(\U_A^{ma+}(k)):=\bigoplus_{n=1}^{\infty}{L_n(\U_A^{ma+}(k))},$$ which we endow with the graded Lie algebra structure given by the Lie bracket $$(\overline{x},\overline{y}):=[x,y]\U^{ma}_{i+j+1}(k)$$
for $\overline{x}=x\U^{ma}_{i+1}(k)\in L_i(\U_A^{ma+}(k))$ and $\overline{y}=y\U^{ma}_{j+1}(k)\in L_j(\U_A^{ma+}(k))$.
\begin{lemma}\label{lemma:ZJL}
Let $k$ be a finite field of characteristic $p$. The map $\nn^+_k\to L(\U_A^{ma+}(k))$ mapping a homogeneous element $x\in\nn^+_k$ with $\height(\deg(x))=n$ to $([\exp]x)\U^{ma}_{n+1}(k)$ defines an isomorphism of Lie algebras over $\FF_p$.
\end{lemma}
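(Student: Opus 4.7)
The plan is to verify in turn that the stated map is well-defined on each graded piece, that it is bijective on each piece (hence globally), and finally that it respects the bracket. Throughout, I would realise $\U_A^{ma+}(k)\subseteq \widehat{\UU}_k^+$ as in Proposition~\ref{prop formal sum Rousseau}, and decompose all computations according to the $Q_+$-gradation on $\widehat{\UU}_k^+$ (filtered by height).

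First I would note that for $x\in(\nn^+_k)_\alpha$ homogeneous of height $n$, any exponential sequence satisfies $x^{[i]}\in\UU_{i\alpha}$, hence of height $in\geq n$, so $[\exp]x\in\U^{ma}_n(k)$ and the assignment $x\mapsto([\exp]x)\U^{ma}_{n+1}(k)$ sends $(\nn^+_k)_n$ into $L_n(\U_A^{ma+}(k))$. Moreover, the components of $[\exp]x-1$ of height exactly $n$ reduce to $x$ itself, so the induced map on the $n$th graded piece is precisely the natural $\FF_p$-linear identification $(\nn^+_k)_n\xrightarrow{\sim} \U^{ma}_n(k)/\U^{ma}_{n+1}(k)$ furnished by \cite[Lemme~3.3]{Rousseau}. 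This shows graded piece by graded piece that the total map is an isomorphism of $\FF_p$-vector spaces; additivity on each piece follows from the identity $[\exp](x+y)\equiv[\exp]x\cdot[\exp]y\pmod{\U^{ma}_{n+1}(k)}$ (which in turn follows by comparing components of height $n$ in the group-like products in $\widehat{\UU}_k^+$).

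The heart of the argument is compatibility with brackets. Given homogeneous $x,y\in\nn^+_k$ of heights $m,n$, I want to prove
\[
\bigl[[\exp]x,[\exp]y\bigr]\equiv [\exp][x,y]\pmod{\U^{ma}_{m+n+1}(k)}.
\]
Write $[\exp]x=1+x+R_x$ and $[\exp]y=1+y+R_y$ in $\widehat{\UU}_k^+$, where by construction $R_x$ (resp.\ $R_y$) only involves components of height $\geq 2m$ (resp.\ $\geq 2n$). Since $[\exp]x$ is group-like with constant term $1$, its inverse is $1-x+(\text{terms of height}\geq 2m)$, and similarly for $[\exp]y$. Expanding the group commutator $uvu^{-1}v^{-1}$ in the associative algebra $\widehat{\UU}_k^+$ and retaining only terms of height at most $m+n$, all contributions involving an $R_x$ or $R_y$ factor are killed (they push the height past $m+n$), and what survives is $1+(xy-yx)=1+[x,y]$. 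On the other hand, $[\exp][x,y]=1+[x,y]+(\text{terms of height}\geq 2(m+n))$, so both sides agree modulo height $>m+n$, i.e.\ modulo $\U^{ma}_{m+n+1}(k)$.

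The main obstacle is the bookkeeping around the twisted exponentials $[\exp]$ for imaginary roots: unlike for real roots, $[\exp]x$ is not simply $\exp(x)$, and the auxiliary terms $x^{[i]}$ are only defined up to lower-height ambiguity. The key observation that makes the calculation go through regardless is that every $x^{[i]}$ lies in $\UU_{i\deg(x)}$, so the remainders $R_x,R_y$ are genuinely supported in height strictly greater than $m$ or $n$, and the commutator in $\widehat{\UU}_k^+$ does not mix heights in a way that could spoil the leading term. Combining the three steps yields the isomorphism of graded restricted Lie algebras claimed.
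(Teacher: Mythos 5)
Your overall strategy is the same as the paper's: the printed proof consists precisely of the congruence $\big[[\exp]x,[\exp]y\big]\equiv[\exp][x,y]$ modulo $\U^{ma}_{m+n+1}(k)$, asserted as the key fact, and your preliminary steps (well-definedness, and the identification of the induced map on each graded piece with the isomorphism $(\nn^+_k)_n\cong \U^{ma}_n(k)/\U^{ma}_{n+1}(k)$ of \cite[Lemme~3.3]{Rousseau}) are correct.

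However, the justification you give for the commutator congruence contains a genuine error. You claim that in the expansion of $uvu^{-1}v^{-1}$, with $u=[\exp]x=1+x+R_x$ and $v=[\exp]y=1+y+R_y$, every contribution involving $R_x$ or $R_y$ has height strictly greater than $m+n$. This is false: $R_x$ is supported in heights $\geq 2m$, so a term such as $R_x\cdot 1\cdot 1\cdot 1$ can have height $2m\leq m+n$ whenever $m\leq n$ (and height exactly $m+n$ when $m=n$, where $x^{[2]}$ lands in the critical degree). Moreover, if one actually discards $R_x$, $R_y$ and the height-$\geq 2m$, $\geq 2n$ tails of the inverses, the truncated product $(1+x)(1+y)(1-x)(1-y)$ equals $1+[x,y]-x^2-y^2+(\textrm{higher})$, so spurious terms of heights $2m$ and $2n$ survive; they are cancelled only by the remainder terms you discarded. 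The congruence itself is true, and the clean way to see it is the exact identity
$$uvu^{-1}v^{-1}-1=(uv-vu)u^{-1}v^{-1}=(ab-ba)\thinspace u^{-1}v^{-1},\qquad a:=u-1,\ b:=v-1,$$
valid in $\widehat{\UU}_k^+$. Since $a$ and $b$ are supported in heights $\geq m$ and $\geq n$ respectively, the right-hand side is supported in heights $\geq m+n$, and its component in height exactly $m+n$ is $xy-yx=[x,y]$ (the constant term of $u^{-1}v^{-1}$ being $1$). Comparing with $[\exp][x,y]=1+[x,y]+(\textrm{height}\geq 2(m+n))$ yields the congruence. With this repair your argument is complete and follows the same route as the paper.
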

\begin{proof}
This readily follows from the fact that if $x,y\in\nn^+_k$ are homogeneous with $\height(\deg(x))=i$ and $\height(\deg(y))=j$, then
$$\big[[\exp]x,[\exp]y\big]\equiv [\exp][x,y] \quad \modulo \U^{ma}_{i+j+1}(k).$$
\end{proof}

\begin{corollary}
Assume that $p>M_A$. Then $L=L(\U_A^{ma+}(k))\cong \nn^+_k$ as Lie algebras over $\FF_p$.
\end{corollary}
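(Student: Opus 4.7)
The plan is to observe that this corollary is essentially an immediate consequence of the two results that precede it, namely Corollary~\ref{corollary:ZJL} and Lemma~\ref{lemma:ZJL}, so the proof reduces to identifying two \emph{a priori} distinct graded Lie algebra structures on the same set of graded pieces.

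First I would unpack the definitions. By construction, $L=\bigoplus_{n\geq 1}D_n/D_{n+1}$ is defined using the ZJL series $(D_n)_{n\geq 1}$ of $G=\U_A^{ma+}(k)$, while $L(\U_A^{ma+}(k))=\bigoplus_{n\geq 1}\U^{ma}_n(k)/\U^{ma}_{n+1}(k)$ is defined using the filtration by the standard open normal subgroups $\U^{ma}_n(k)$. Under the assumption $p>M_A$, Corollary~\ref{corollary:ZJL} says precisely that these two filtrations coincide term by term, that is, $D_n=\U^{ma}_n(k)$ for every $n\geq 1$.

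Consequently the graded pieces $L_n=D_n/D_{n+1}$ and $L_n(\U_A^{ma+}(k))=\U^{ma}_n(k)/\U^{ma}_{n+1}(k)$ coincide, and the Lie brackets on $L$ and on $L(\U_A^{ma+}(k))$ -- both defined by $(\overline{x},\overline{y})=[x,y]\U^{ma}_{i+j+1}(k)$ for representatives $x\in\U^{ma}_i(k)$ and $y\in\U^{ma}_j(k)$ -- are literally the same. Thus $L=L(\U_A^{ma+}(k))$ as graded Lie algebras over $\FF_p$. Invoking Lemma~\ref{lemma:ZJL} then yields the claimed isomorphism $L\cong\nn^+_k$. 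There is no real obstacle here: the work has already been done in establishing Corollary~\ref{corollary:ZJL} (which is the non-trivial input, and which itself rests on Proposition~\ref{prop:suite_centrale} identifying the lower central series with the filtration $(\U^{ma}_n(k))_{n\geq 1}$) and in Lemma~\ref{lemma:ZJL}.
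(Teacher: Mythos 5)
Your proof is correct and follows exactly the paper's own argument: the paper likewise deduces the corollary immediately from Corollary~\ref{corollary:ZJL} (identifying the ZJL series with the filtration $(\U^{ma}_n(k))_{n\geq 1}$ when $p>M_A$) together with Lemma~\ref{lemma:ZJL}. Your additional unpacking of why the two graded Lie algebra structures literally coincide is a faithful elaboration of the same reasoning.
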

\begin{proof}
This readily follows from Corollary~\ref{corollary:ZJL} and Lemma~\ref{lemma:ZJL}.
\end{proof}

\bibliographystyle{amsalpha} 
\bibliography{these}

\end{document}